\def\@cline#1-#2\@nil{%
  \omit
  \@multicnt#1%
  \advance\@multispan\m@ne
  \ifnum\@multicnt=\@ne\@firstofone{&\omit}\fi
  \@multicnt#2%
  \advance\@multicnt-#1%
  \advance\@multispan\@ne
  \leaders\hrule\@height\arrayrulewidth\hfill
  \cr
  \noalign{\nobreak\vskip-\arrayrulewidth}}
\title{Equivariant 4-genera of strongly invertible and periodic knots} 
\author{Keegan Boyle}  
\author{Ahmad Issa}
\address{Department of Mathematics, University of British Columbia, Canada} 
\email{kboyle@math.ubc.ca}
\address{Department of Mathematics, University of British Columbia, Canada} 
\email{aissa@math.ubc.ca}
\newcommand{\Z}{{\mathbb{Z}}}
\newcommand{\bg}{{\widetilde{bg}}}
\newcommand{\BL}{L_b}
\newcommand{\QBL}{L_{qb}}
\newcommand{\lk}{{\widetilde{lk}}}
\newcommand{\hfootnote}[1]{%
  \makebox[0pt][l]{\ \framebox(6,10){\vspace{-4 pt}\footnotemark}}%
  \footnotetext{#1}%
}
\newcommand{\ufootnote}[1]{%
  \makebox[0pt][l]{\framebox(6,10){\vspace{-4 pt}\footnotemark}}%
  \footnotetext{#1}%
}
\newcommand\restr[2]{{
  \left.\kern-\nulldelimiterspace 
  #1 
  \vphantom{\big|} 
  \right|_{#2} 
  }}
\newtheorem{lemma}{Lemma}
\newtheorem{proposition}{Proposition}
\newtheorem{corollary}{Corollary}
\newtheorem{theorem}{Theorem}
\newtheorem*{thm:Montesinos}{Theorem \ref{thm:Montesinos}}
\newtheorem*{thm:siconcordancebound}{Theorem \ref{thm:siconcordancebound}}
\theoremstyle{definition}
\newtheorem{definition}{Definition}[section]
\newtheorem{remark}[definition]{Remark}
\newtheorem{example}[definition]{Example}
\newtheorem{question}[definition]{Question}
\begin{document}
\begin{abstract}
We study the equivariant genera of strongly invertible and periodic knots. Our techniques include some new strongly invertible concordance group invariants, Donaldson's theorem, and the g-signature. We find many new examples where the equivariant 4-genus is larger than the 4-genus. 
\end{abstract}
\maketitle
\vspace{-0.8 cm}
\section{Introduction}
Edmonds showed that periodic knots bound equivariant minimal genus Seifert surfaces \cite{Edmonds}. By contrast, the equivariant 4-genus of a periodic knot, or more generally any type of symmetric knot, is rather subtle, and may vary between different symmetries on the same knot. A \emph{symmetric knot} $(K, \rho)$ is a knot $K \subset S^3$ together with a finite order diffeomorphism $\rho \colon S^3 \rightarrow S^3$ such that $\rho(K) = K$. Given a symmetric knot $(K, \rho)$, we define the \emph{equivariant $4$-genus} $\widetilde{g}_4(K)$ of $K$ as the minimal genus of an orientable, smoothly properly embedded surface $S \subset B^4$ with boundary $K$ for which $\rho$ extends to a diffeomorphism $\widetilde{\rho} \colon (B^4,S) \rightarrow (B^4,S)$ with the same order as $\rho$. If $\rho$ is orientation preserving, the fixed point set of $\rho$ is either empty or the unknot. In the latter case the fixed point set is either disjoint from $K$, in which case we call $(K, \rho)$ \emph{periodic}, or intersects $K$ in exactly two points, in which we case we call $(K, \rho)$ \emph{strongly invertible}. This paper is mainly concerned with the equivariant $4$-genus of periodic and strongly invertible knots and the strongly invertible concordance group. 

As a motivating example, consider the knot $9_{40}$ with strong inversions $\tau_1, \tau_2$ and 2-periodic symmetry $\rho$ shown in Figure \ref{fig:9_40Intro}. The (non-equivariant) 4-genus of $9_{40}$ is 1. In contrast, we show that $\widetilde{g}_4(9_{40},\tau_1) = 2$ using Donaldson's theorem and that $\widetilde{g}_4(9_{40},\rho) = 3$ using an application of the Riemann-Hurwitz formula. It is unknown whether $\widetilde{g}_4(9_{40},\tau_2)$ is 1 or 2. See Appendix \ref{sec:Donaldson_table} for more examples where we bound $\widetilde{g}_4(K)$. 

\begin{figure}
  \begin{overpic}[width=160pt, grid=false]{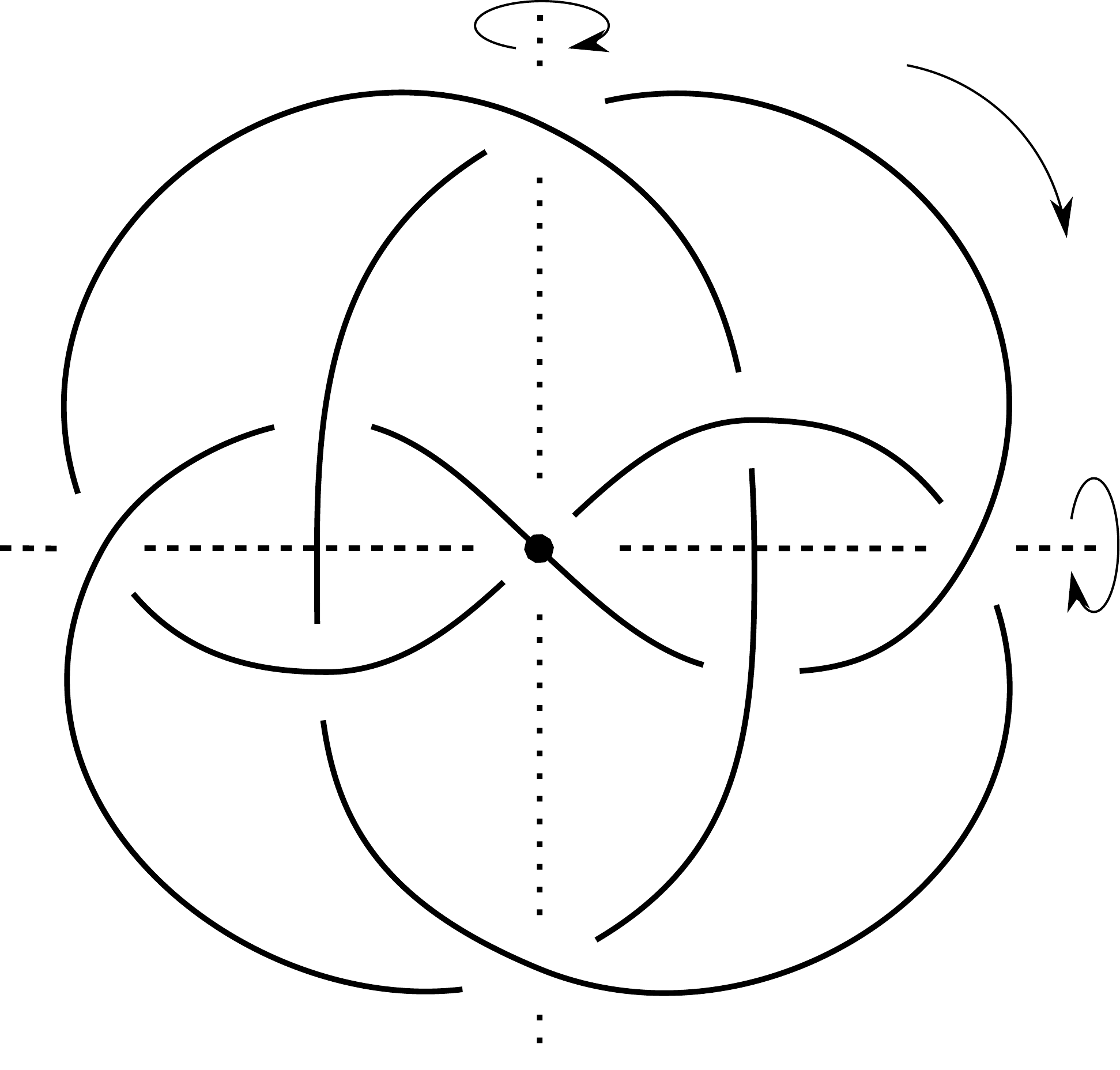}
    \put (56, 92) {$\rho$}
    \put (102, 47) {$\tau_2$}
    \put (94, 85) {$\tau_1$}
  \end{overpic}
\caption{The knot $9_{40}$ with strong inversions $\tau_1, \tau_2$ and 2-periodic symmetry $\rho$. The strong inversion $\tau_1$ is rotation around an axis perpendicular to the plane of the diagram. Here $\widetilde{g}_4(9_{40},\tau_1) = 2$, $\widetilde{g}_4(9_{40},\rho) = 3$, and $\widetilde{g}_4(9_{40},\tau_2) \in \{1,2\}$.}
\label{fig:9_40Intro}
\end{figure}

The above example contrasts with the existing literature which has focused primarily on obstructing slice knots from bounding equivariant slice disks. For a periodic knot $K$, such obstructions have been found using a variety of techniques. In \cite{Naik}, Naik showed that $K$ must have linking number 1 with the axis of symmetry, and obtained further obstructions using equivariant metabolizers and Casson-Gordon invariants. In \cite{Cha-Ko}, Cha and Ko showed that if $K$ is equivariantly slice then certain twists of the quotient knot are slice in a homology 4-ball, and then obtained obstructions by applying Casson-Gordon invariants. In \cite{Davis-Naik}, Davis and Naik used Reidemeister torsion to obtain restrictions on an equivariant version of the Alexander polynomial when $K$ is equivariantly slice.

For strongly invertible knots, equivariant slice disk obstructions have been obtained by Sakuma \cite{Sakuma} using Kojima and Yamasaki's $\eta$-polynomial \cite{KY} and by Dai, Hedden, and Mallick \cite{CorksInvolutions} as a consequence of their cork obstructions coming from Heegaard Floer homology.

In this paper we more generally compare $g_4(K)$ and $\widetilde{g}_4(K)$ for both periodic and strongly invertible knots. Our main results fall into three categories. First, we use Donaldson's theorem to give various examples where $\widetilde{g}_4(K) > g_4(K)$. Second, we use $g$-signatures to define an equivariant version of the signature giving lower bounds on equivariant 4-genera. Third, we define several strongly invertible concordance invariants based on some new topological constructions. 

It has been known since shortly after Donaldson's theorem was proved in the 1980s \cite{Donaldson} that the theorem can often be used to obstruct the existence of a slice disk (see for example \cite{MR2302495}), and more generally can be used to obstruct $g_4(K) = |\sigma(K)|/2$ (see \cite[Proposition 8]{MR3556284}). In the equivariant setting, we show that the symmetry of an equivariant surface in $B^4$ lifts to the double branched cover of $B^4$ over the surface. Donaldson's theorem can often be used to show that $\widetilde{g}_4(K) > |\sigma(K)|/2$ by obstructing the double branched cover of the knot from equivariantly bounding a definite 4-manifold of a certain rank. This obstruction takes the form of the non-existence of a certain equivariant lattice embedding which can be checked combinatorially. Identifying the intersection form of the double branched cover of $B^4$ over a spanning surface with the Gordon-Litherland form, we prove the following theorem.

\begin{restatable}{theorem}{eqembedding}
\label{thm:eq_embedding} Let $K \subset S^3$ be a knot with a periodic or strongly invertible symmetry $\rho \colon S^3 \rightarrow S^3$. Suppose that $F \subset S^3$ is a spanning surface for $K$ for which the Gordon-Litherland pairing $\mathcal{G}_F$ is positive definite and $\rho(F) = F$. If $\widetilde{g}_4(K) = -\sigma(K)/2$ then there is an embedding of lattices $\iota \colon (H_1(F), \mathcal{G}_F) \rightarrow (\Z^k, \mbox{Id})$ such that $\delta \circ \iota = \iota \circ \rho_*$, where $\delta$ is an automorphism of $(\Z^k, \mbox{Id})$ with order$(\delta) = $ order$(\rho)$ and $k = -\sigma(K)/2 + b_1(F)$. In particular, the following diagram commutes.
\end{restatable}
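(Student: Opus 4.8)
The plan is to realize both the Gordon--Litherland form and the equivariant $4$-genus geometrically inside double branched covers of $B^4$, glue them into a closed definite $4$-manifold carrying a lift of $\rho$, and then feed this into Donaldson's theorem while keeping track of the symmetry. First I would fix the two surfaces. Since $\widetilde{g}_4(K) = -\sigma(K)/2$, there is an orientable surface $S \subset B^4$ with $\partial S = K$ of genus $-\sigma(K)/2$ together with a diffeomorphism $\widetilde{\rho}\colon (B^4,S)\to(B^4,S)$ of the same order as $\rho$ extending it. I would also push $F$ into $B^4$ to obtain $\hat F$, noting that $\rho(F)=F$ keeps $\hat F$ invariant. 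Passing to double branched covers, set $W=\Sigma_2(B^4,\hat F)$ and $X=\Sigma_2(B^4,S)$; both have boundary the double branched cover $\Sigma_2(K)$, which is a rational homology sphere because $\mathcal{G}_F$ (hence $\det K$) is nondegenerate. The identification recalled in the introduction gives an isometry $(H_2(W),Q_W)\cong (H_1(F),\mathcal{G}_F)$, so $W$ is positive definite of rank $b_1(F)$, while a signature computation (an orientable spanning surface has vanishing normal Euler number, so $\mathrm{sig}(X)=\sigma(K)$) together with an Euler characteristic count shows that $X$ is negative definite.

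Next I would glue. Reversing the orientation of $W$ to get $\overline{W}$ and gluing along $\Sigma_2(K)$ produces a closed $4$-manifold $Z = X\cup_{\Sigma_2(K)}\overline{W}$; Novikov additivity together with a count of second Betti numbers identifies $Z$ as negative definite of rank $k=-\sigma(K)/2+b_1(F)$. Because $S$ and $\hat F$ are invariant and the branched-cover construction is natural, the lifting result from the introduction assembles the lifts of $\widetilde{\rho}$ on the two pieces into a single diffeomorphism $\Phi\colon Z\to Z$, which I would arrange (by choosing the correct lift) to have order equal to $\mathrm{order}(\rho)$, and under which the invariant submanifold $\overline{W}$ is preserved. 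On homology, $\Phi_*$ restricts to $\rho_*$ on the sublattice $H_2(\overline{W})\cong H_1(F)$, and this sublattice injects into $H_2(Z)$ modulo torsion since $\Sigma_2(K)$ is a rational homology sphere; on the image of this sublattice the form of $Z$ is $-\mathcal{G}_F$.

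Finally I would invoke Donaldson's theorem, which supplies an isometry $\psi\colon (H_2(Z),Q_Z)\to(\Z^k,-\mathrm{Id})$. The observation that makes the conclusion equivariant is that the isometry group of a diagonal lattice consists precisely of signed permutations; hence $\delta:=\psi\circ\Phi_*\circ\psi^{-1}$ is automatically such a signed permutation of $(\Z^k,\mathrm{Id})$, of the same order as $\Phi_*$, and by construction $\delta\circ\psi=\psi\circ\Phi_*$. Restricting $\psi$ to the invariant sublattice $H_2(\overline{W})$ then yields the isometric embedding $\iota\colon (H_1(F),\mathcal{G}_F)\to(\Z^k,\mathrm{Id})$ with $\delta\circ\iota=\iota\circ\rho_*$. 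I expect the genuinely new point, and the main obstacle beyond the non-equivariant Donaldson obstruction, to be producing the lift $\Phi$ of $\widetilde{\rho}$ to the branched cover with the correct order and compatible behavior across the gluing region; once that is secured, equivariance of the embedding is essentially forced by the rigidity of isometries of $(\Z^k,\mathrm{Id})$, and the commuting diagram asserted in the statement is exactly the relation $\delta\circ\iota=\iota\circ\rho_*$.
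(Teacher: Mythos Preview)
Your proposal is correct and follows essentially the same approach as the paper: push $F$ into $B^4$, take the minimal-genus equivariant surface $S$, pass to double branched covers, glue into a closed definite $4$-manifold with a lifted symmetry, and apply Donaldson's theorem. The paper's orientation convention is opposite to yours (it glues $\Sigma(B^4,\hat F)\cup -\Sigma(B^4,S)$ to get a \emph{positive} definite manifold), but this is cosmetic.

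The one place where the paper is more precise than your sketch is exactly the point you flagged as the main obstacle. To make the two lifts agree on $\Sigma_2(K)$ and to pin down that $\Phi_*$ restricts to $+\rho_*$ (rather than $-\rho_*$) on $H_1(F)$, the paper introduces a \emph{distinguished lift}: among the two lifts of $\rho$ to each branched cover, single out the one that pointwise fixes the preimage of a specified component of the fixed set in $B^4\setminus F$. In the strongly invertible case this requires choosing a direction on $K$ so that the distinguished half-axis lies in $F$; with that choice, Proposition~\ref{prop:GL_lattice_lift} gives that the induced map on $(H_1(F),\mathcal G_F)$ is exactly $\rho_*$. Your phrase ``choosing the correct lift'' is thus made concrete by this distinguished-lift construction.
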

\begin{center}\begin{tikzcd}
  {(H_1(F),\mathcal{G}_F)} & {(\mathbb{Z}^k,\mbox{Id})} \\
  {(H_1(F),\mathcal{G}_F)} & {(\mathbb{Z}^k,\mbox{Id})}
  \arrow["{\rho_*}"', from=1-1, to=2-1]
  \arrow["{\iota}", from=1-1, to=1-2]
  \arrow["{\delta}", from=1-2, to=2-2]
  \arrow["{\iota}"', from=2-1, to=2-2]
\end{tikzcd}
\end{center}

Theorem \ref{thm:eq_embedding} can be applied, for example, when $K$ is a periodic or strongly invertible knot with an alternating diagram in which the symmetry is visible\footnote{An alternating knot with a periodic symmetry of order $> 2$ always has an alternating diagram in which the symmetry is visible; see \cite{PeriodicProjections} or \cite{OddOrder}. For symmetries of order $2$, the situation is more complicated \cite{Involutions}.}. In this case, $F$ can be chosen as the positive definite checkerboard surface. Using a computer, we enumerated all alternating symmetric diagrams through 11 crossings and checked the obstruction from Theorem \ref{thm:eq_embedding}. In all examples we found with $\widetilde{g}_4(K) > |\sigma(K)|/2$ we also have that $g_4(K) = -\sigma(K)/2$ so that $\widetilde{g}_4(K) > g_4(K)$. These periodic and strongly invertible examples are listed in Appendix \ref{sec:Donaldson_table}. 

The best lower bound we can obtain with Theorem \ref{thm:eq_embedding} is $\widetilde{g}_4(K) \geq g_4(K) + 1$. However, we prove the following theorem which in particular shows that $\widetilde{g}_4(K)$ can be much larger than $g_4(K)$.

\begin{restatable}{theorem}{eqgsigineq} \label{thm:eq4g_sig_ineq}
Let $K$ be an $n$-periodic knot with quotient knot $\overline{K}$. Then
\[
\widetilde{g}_4(K) \geq \frac{|n \cdot \sigma(\overline{K}) - \sigma(K)|}{2(n-1)}.
\]
\end{restatable}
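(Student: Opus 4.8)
The plan is to reduce the signature inequality to a comparison between the signature of a $4$-manifold with a $\Z/n$-action and the signature of its quotient, via the double branched cover. Let $S \subset B^4$ be an equivariant surface realizing $g := \widetilde{g}_4(K)$, so that $\rho$ extends to an order-$n$ diffeomorphism $\widetilde{\rho}$ of $(B^4,S)$. First I would pass to the double branched cover $W := \Sigma_2(B^4,S)$, whose boundary is $\Sigma_2(S^3,K)$. Since $S$ is orientable with connected boundary $K$, its normal Euler number vanishes and $\operatorname{sign}(W)=\sigma(K)$; a Euler-characteristic count ($\chi(W)=2\chi(B^4)-\chi(S)=1+2g$, together with $b_1(W)=0$) gives $b_2(W)=b_1(S)=2g$. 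As asserted in the discussion preceding Theorem~\ref{thm:eq_embedding}, the symmetry lifts to $W$, and I would take the lift $\widehat{\rho}$ of order exactly $n$, producing a $\Z/n$-action on $W$.

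The second step is to identify the quotient with the corresponding construction for $\overline{K}$. Because $\widehat{\rho}$ commutes with the covering involution, the quotient $\overline{W} := W/\widehat{\rho}$ is the double branched cover of the rational homology ball $B^4/\widetilde{\rho}$ along the image surface $\overline{S} = S/\widetilde{\rho}$, which is an orientable spanning surface for $\overline{K}$ (orientable because $\widetilde{\rho}$ preserves the orientations of $B^4$ and of $K$, hence of $S$). Its boundary is $\Sigma_2(S^3,K)/\widehat{\rho}=\Sigma_2(S^3,\overline{K})$. Thus $\overline{W}$ is a genuine $4$-manifold and, again using vanishing of the normal Euler number and $\operatorname{sign}(B^4/\widetilde{\rho})=0$, we obtain $\operatorname{sign}(\overline{W})=\sigma(\overline{K})$.

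The third step is the signature averaging formula. Transfer gives $H_2(\overline{W};\Q)\cong H_2(W;\Q)^{\Z/n}$ with intersection forms agreeing up to the positive factor $n$, so $\operatorname{sign}(\overline{W})$ equals the signature of the restriction of the form on $W$ to the invariant subspace, which in turn equals $\tfrac{1}{n}\sum_{j=0}^{n-1}\operatorname{sign}(\widehat{\rho}^{\,j},W)$. Isolating the $j=0$ term $\operatorname{sign}(\mathrm{id},W)=\operatorname{sign}(W)$ yields
\[
\sum_{j=1}^{n-1}\operatorname{sign}(\widehat{\rho}^{\,j},W)=n\,\sigma(\overline{K})-\sigma(K).
\]
Writing each $g$-signature as the difference of traces of $\widehat{\rho}^{\,j}$ on maximal positive- and negative-definite $\widehat{\rho}$-invariant subspaces gives $|\operatorname{sign}(\widehat{\rho}^{\,j},W)|\le b_2(W)=2g$. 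The triangle inequality then produces $|n\,\sigma(\overline{K})-\sigma(K)|\le 2(n-1)g$, which rearranges to the claimed bound.

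The main obstacle I expect lies in the geometric bookkeeping of the first two steps rather than in the estimate itself: confirming that the extended symmetry lifts to an honest order-$n$ (not order-$2n$) action whose quotient is precisely the double branched cover of the quotient ball over $\overline{S}$, and pinning down the signature normalizations $\operatorname{sign}(W)=\sigma(K)$ and $\operatorname{sign}(\overline{W})=\sigma(\overline{K})$, including the vanishing of all normal Euler number corrections and the fact that $B^4/\widetilde{\rho}$ is a rational homology ball (so contributes no signature). Once these identifications and the transfer-based averaging formula (which is valid for manifolds with boundary as soon as $\overline{W}$ is a manifold) are established, the only analytic input is the elementary bound $|\operatorname{sign}(\widehat{\rho}^{\,j},W)|\le b_2(W)$.
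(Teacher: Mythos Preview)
Your proposal is correct and follows essentially the same approach as the paper: the paper packages the argument as two pieces, defining $\widetilde{\sigma}(K)$ as the average of the $g$-signatures $\widetilde{\sigma}(\Sigma(B^4,S),\widetilde{\rho}^{\,i})$, proving $|\widetilde{\sigma}(K)|\le 2\widetilde{g}_4(K)$ via the same trace/rank bound and triangle inequality you use (Theorem~\ref{thm:gsigbound}), and then computing $\widetilde{\sigma}(K)=(n\sigma(\overline{K})-\sigma(K))/(n-1)$ from the averaging formula together with the identification $W/\widehat{\rho}\cong\Sigma(B^4/\rho,\overline{S})$ (Theorem~\ref{thm:periodicgsig}). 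The technical points you flag---existence of an order-$n$ lift whose quotient is the double branched cover over $\overline{S}$, and the signature identifications---are handled in the paper by Corollary~\ref{cor:B4_symmetry_lifts}, Definition~\ref{def:periodic_unique_lift}, Proposition~\ref{prop:quotient_top_b4}, and the Kauffman--Taylor result, exactly as you anticipate.
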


We prove this theorem using an equivariant concordance invariant $\widetilde{\sigma}(K)$ which we call the $g$-signature of $K$. The $g$-signature of $K$ is a natural equivariant generalization of the knot signature defined in terms of $g$-signatures of the double cover of $B^4$ branched over an equivariant surface with boundary $K$. For periodic knots, we prove the lower bound $\widetilde{g}_4(K) \geq |\widetilde{\sigma}(K)|/2$. Moreover, if $K$ is periodic with quotient $\overline{K}$ we are able to express $\widetilde{\sigma}(K)$ purely in terms of $\sigma(K)$ and $\sigma(\overline{K})$ (see Theorem \ref{thm:periodicgsig}), from which we obtain Theorem \ref{thm:eq4g_sig_ineq}.

The following theorem gives a family of 2-periodic Montesinos knots $K_n$ for which Theorem \ref{thm:eq4g_sig_ineq} shows that there is an arbitrarily large gap between $\widetilde{g}_4(K_n)$ and $g_4(K_n)$. 

\begin{theorem} \label{thm:Montesinos}
Let $\{K_n\}$ be the family of 2-periodic Montesinos knots\footnote{We follow the convention for Montesinos knots notation from \cite{MR3825858}.} with
\[
K_n = M\left(1;-n, 2n+2,-n\right)
\] 
shown in Figure \ref{fig:UnboundedExampleIntro}, where $n$ is odd and positive. The difference $\widetilde{g}_4(K_n) - g_4(K_n)$ is unbounded. In fact, $\widetilde{g}_4(K_n) = 2n$ and $g_4(K_n) = 1$.
\end{theorem}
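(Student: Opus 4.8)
The plan is to prove the two equalities $g_4(K_n)=1$ and $\widetilde g_4(K_n)=2n$ separately; the unbounded gap is then driven entirely by a signature computation on the quotient knot, while the classical $4$-genus stays pinned at $1$.

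For the classical $4$-genus I would argue $g_4(K_n)\le 1$ by exhibiting an explicit genus-one surface in $B^4$. Although $K_n$ has large Seifert genus, its two $-n$ tangles are interchanged by the symmetry and sit on opposite sides of the central $2n+2$ tangle, so I expect a short sequence of oriented band moves---two saddles reducing $K_n$ to the unknot---whose trace is a connected orientable surface with $\chi=-1$, hence genus one. For the reverse inequality I would show $K_n$ is not slice, most cheaply by computing $\det(K_n)=\lvert\Delta_{K_n}(-1)\rvert$ from a symmetric diagram and checking it is not a perfect square, or equivalently by applying the Fox--Milnor condition to $\Delta_{K_n}$. Together these give $g_4(K_n)=1$.

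For the equivariant $4$-genus the upper bound is the easy direction: by Edmonds' theorem \cite{Edmonds} the periodic knot $K_n$ bounds an equivariant Seifert surface realizing its Seifert genus, and pushing this surface into $B^4$ (extending $\rho$ radially) produces an equivariant surface of the same genus, so $\widetilde g_4(K_n)\le g_3(K_n)$. A Seifert's-algorithm computation on the symmetric Montesinos diagram should give $g_3(K_n)=2n$, hence $\widetilde g_4(K_n)\le 2n$. For the lower bound I would apply Theorem~\ref{thm:eq4g_sig_ineq} with period $2$, which yields
\[
\widetilde g_4(K_n)\;\geq\;\frac{\lvert 2\,\sigma(\overline{K_n})-\sigma(K_n)\rvert}{2}.
\]
The numerics then force the right values: matching the upper bound of $2n$ while keeping $g_4(K_n)=1$ (so $\lvert\sigma(K_n)\rvert\le 2$) is consistent only with $\sigma(K_n)=0$ and $\sigma(\overline{K_n})=\pm 2n$, in which case the bound reads exactly $\widetilde g_4(K_n)\ge 2n$ and the two estimates coincide.

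Thus the main obstacle is the signature computation of the quotient: everything hinges on identifying $\overline{K_n}$ and verifying $\sigma(K_n)=0$ together with $\lvert\sigma(\overline{K_n})\rvert=2n$ for every odd positive $n$. I would carry this out uniformly by writing down Gordon--Litherland (Goeritz) forms for checkerboard surfaces of explicit symmetric diagrams of $K_n$ and of $\overline{K_n}$, and diagonalizing the resulting families of tridiagonal integer forms to read off the signatures as functions of $n$. A secondary point requiring care is the $g_4\le 1$ construction: I must check that the chosen band moves respect the orientation of $K_n$ and genuinely terminate at the unknot, so that the resulting surface is orientable of genus exactly one rather than non-orientable or of higher genus.
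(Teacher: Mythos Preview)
Your overall architecture---band moves for $g_4\le 1$, Seifert's algorithm plus Edmonds for $\widetilde g_4\le 2n$, and a signature-based lower bound for $\widetilde g_4$---matches the paper's. The gap is in the numerics of the lower bound, and it is not repairable by the method you propose.

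The actual values are $\sigma(K_n)=-2$ and the quotient knot is the left-handed torus knot $T(2,n)$, so $\sigma(\overline{K_n})=n-1$. Plugging these into Theorem~\ref{thm:eq4g_sig_ineq} with period $2$ gives
\[
\widetilde g_4(K_n)\;\ge\;\frac{\lvert 2(n-1)-(-2)\rvert}{2}\;=\;n,
\]
which is only half of what you need. Your hoped-for values $\sigma(K_n)=0$ and $\lvert\sigma(\overline{K_n})\rvert=2n$ cannot hold: the quotient $T(2,n)$ has Seifert genus $(n-1)/2$, so its signature is bounded in absolute value by $n-1$. The $g$-signature inequality alone therefore cannot reach $2n$, and the paper says so explicitly. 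To close the gap the paper invokes Proposition~\ref{prop:RH} (the Riemann--Hurwitz bound): the linking number of $K_n$ with the axis is $\pm(2n+3)$, and with $g_4^{top}(T(2,n))=(n-1)/2$ one gets
\[
\widetilde g_4(K_n)\;\ge\;2\cdot\frac{n-1}{2}+\frac{(2-1)\bigl((2n+3)-1\bigr)}{2}\;=\;(n-1)+(n+1)\;=\;2n.
\]
As a side benefit, since $\sigma(K_n)=-2$ you get $g_4(K_n)\ge 1$ for free and do not need the Fox--Milnor argument.
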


\begin{figure}[!htbp]
\begin{overpic}[width=350pt, grid=false]{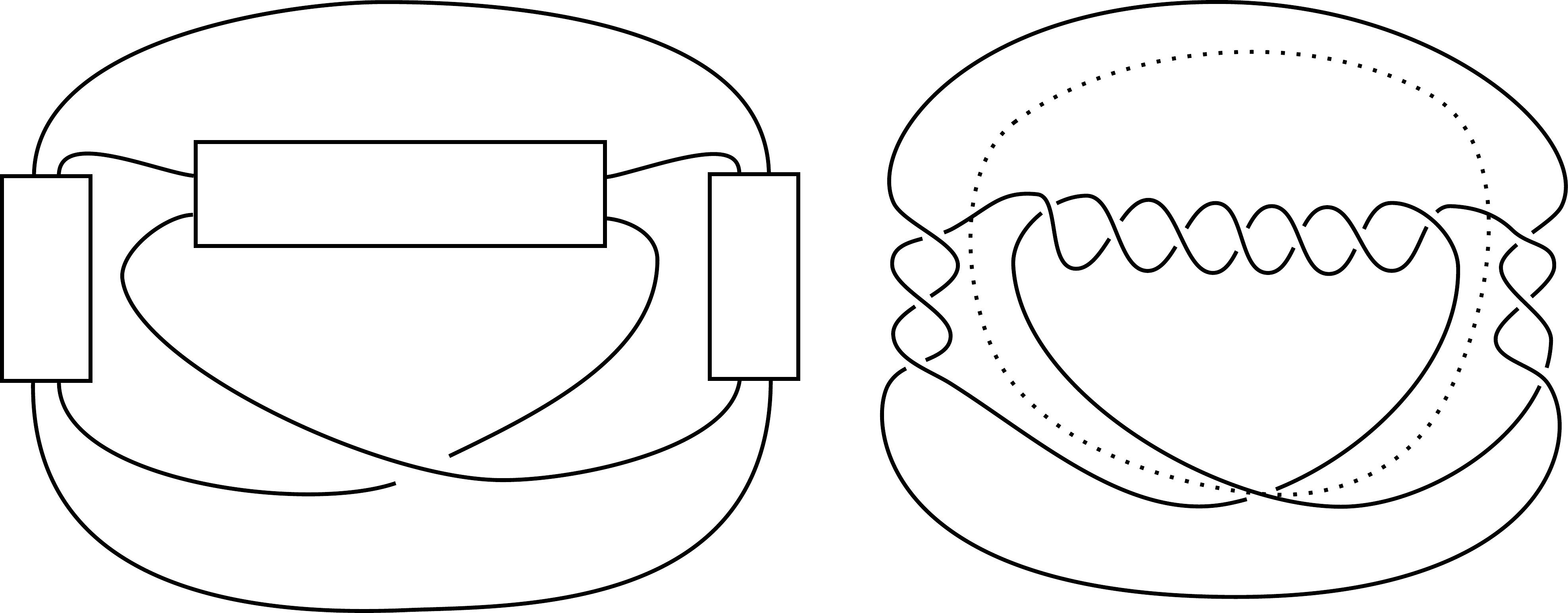}
\put (20.5, 26) {$2n+1$}
\put (1.1, 21) {-$n$}
\put (46.3, 21) {-$n$}
\end{overpic}
\caption{A family of 2-periodic knots $K_n$ (left) for $n$ odd, with $n=3$ shown on the right. The boxes are twist regions with the labeled number of half-twists. The period can be seen by performing a flype on the central crossing region (enclosed by a dotted loop in the right diagram), then rotating the entire diagram by $\pi$ within the plane of the diagram.}
\label{fig:UnboundedExampleIntro}
\end{figure}

Theorem \ref{thm:eq4g_sig_ineq} shows that $\widetilde{g}_4(K_n) \geq n$, but an argument involving the Riemann-Hurwitz formula shows that in fact $\widetilde{g}_4(K_n) \ge 2n$. We also provide an example where Theorem \ref{thm:eq4g_sig_ineq} gives a stronger lower bound than the Riemann-Hurwitz formula argument (see Example \ref{ex:periodic2}).

For a strongly invertible knot $K$, one can attempt to define $\widetilde{\sigma}(K)$ analogously as the $g$-signature of the double cover of $B^4$ branched over an equivariant surface $S$. Unfortunately, this turns out to depend on the choice of $S$. However, if we require that $S$ is a \emph{butterfly surface}, that is, the pointwise fixed arc on $S$ is separating\footnote{The fixed arc plays the role of the butterfly's thorax, separating the surface into symmetric wings.}, then $\widetilde{\sigma}(K)$ no longer depends on the choice of $S$. In fact, we find that $\widetilde{\sigma}(K)$ is an equivariant concordance invariant. This naturally leads us to define the \emph{butterfly 4-genus} $\bg_4(K)$ of $K$ as the minimal genus of a butterfly surface in $B^4$ with boundary $K$. The $g$-signature gives the following lower bound on the butterfly 4-genus.
\begin{theorem} \label{thm:siconcordancebound}
If $K$ is a directed\,\footnote{A direction is a choice of oriented half-axis which we use to define $\widetilde{\sigma}(K)$; see Definition \ref{def:direction} and Figure \ref{fig:ecs}.} strongly invertible knot which bounds a butterfly surface in $B^4$, then $\bg_4(K) \geq |\widetilde{\sigma}(K)|/2$.
\end{theorem}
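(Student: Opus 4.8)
The plan is to adapt the classical argument that $g_4(K) \geq |\sigma(K)|/2$, replacing the ordinary signature of the branched double cover by the $g$-signature. First I would fix a butterfly surface $S \subset B^4$ with $\partial S = K$ realizing the minimal genus, so that $g(S) = \bg_4(K)$. Since $\widetilde{\sigma}(K)$ is independent of the choice of butterfly surface (as established in the discussion preceding the theorem), I may compute it from this particular $S$. Forming the double cover $W = \Sigma_2(B^4,S)$ of $B^4$ branched over $S$, the extended involution $\widetilde{\tau} \colon (B^4,S) \rightarrow (B^4,S)$ lifts to an involution $\widehat{\tau}$ of $W$; the direction on $K$ is used precisely to pin down which of the two lifts (differing by the covering involution) to use, so that $\widetilde{\sigma}(K) = \mathrm{sign}(\widehat{\tau}, W)$ by definition.

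Next I would carry out the homology computation $b_2(W) = 2g(S)$. A butterfly surface is connected and orientable with a single boundary component $K$, so $b_1(S) = 2g(S)$, and the standard computation of the rational homology of the double cover of $B^4$ branched over such a surface gives $b_2(W) = b_1(S) = 2g(S)$.

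The core estimate is then elementary linear algebra over $\C$. The lift $\widehat{\tau}$ acts on $H_2(W;\C)$ preserving the Hermitianized intersection form, and since $\widehat{\tau}^2 = \mathrm{id}$ this space decomposes as an orthogonal sum $V_{+} \oplus V_{-}$ of the $(\pm 1)$-eigenspaces, on which the form restricts with signatures $\sigma_{+}$ and $\sigma_{-}$. By definition $\mathrm{sign}(\widehat{\tau}, W) = \sigma_{+} - \sigma_{-}$, and since $|\sigma_{\pm}| \leq \dim_\C V_{\pm}$ with $\dim_\C V_{+} + \dim_\C V_{-} = b_2(W)$, I obtain
\[
|\widetilde{\sigma}(K)| = |\mathrm{sign}(\widehat{\tau}, W)| \leq b_2(W) = 2g(S) = 2\,\bg_4(K),
\]
which is the desired inequality $\bg_4(K) \geq |\widetilde{\sigma}(K)|/2$.

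The signature-versus-rank bound is harmless; the two steps needing genuine care are the construction and normalization of the lift $\widehat{\tau}$ — making sure the direction singles out a canonical lift and that the resulting $g$-signature agrees with the invariant $\widetilde{\sigma}(K)$ computed from the minimal surface — and the homology computation $b_2(W) = 2g(S)$, where one must be careful that $S$ has boundary (so the intersection form on $W$ may be degenerate, contributing $0$ to the signature) and that a single boundary component yields exactly $2g$ rather than a larger rank. I expect the lift and its normalization to be the main obstacle, since it is where the strongly invertible structure and the choice of direction genuinely enter; the remainder follows the template of the non-equivariant $4$-genus bound.
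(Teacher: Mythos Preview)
Your proposal is correct and follows essentially the same approach as the paper: the paper simply states that the proof is identical to that of Theorem~\ref{thm:gsigbound}, which takes a minimal-genus (butterfly) surface $S$, uses $\operatorname{rank} H_2(\Sigma(B^4,S);\C) = 2g(S)$, and bounds the $g$-signature by decomposing into eigenspaces of the distinguished lift and applying the triangle inequality. Your identification of the two delicate points---the normalization of the lift via the direction and the rank computation---matches exactly the earlier machinery the paper relies on (Definition~\ref{def:invertible_unique_lift} and \cite[Lemma~1]{GilmerLivingston}).
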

In Example \ref{ex:invertible}, we give a family of strongly invertible knots $K_n$ for which Theorem \ref{thm:siconcordancebound} shows that $\bg_4(K_n) - g_4(K_n)$ is unbounded. However, we are unable to answer the analogous question about the equivariant 4-genus.
\begin{question}
Is there a family of strongly invertible knots $K_n$ for which $\widetilde{g}_4(K_n) - g_4(K_n)$ is unbounded?
\end{question}

Since an equivariant slice disc is always a butterfly surface, Theorem \ref{thm:siconcordancebound} can be used to obstruct $\widetilde{g}_4(K) = 0$. We obtain further obstructions to $\widetilde{g}_4(K) = 0$ by defining some new invariants of the strongly invertible concordance group $\widetilde{\mathcal{C}}$. This group, first defined by Sakuma \cite{Sakuma}, consists of equivariant concordance classes of \emph{directed} strongly invertible knots, that is, strongly invertible knots along with an orientation on the axis of symmetry and a choice of half-axis. The group operation is given by equivariant connect sum. In \cite{Sakuma}, Sakuma defined a strongly invertible concordance invariant as the Kojima-Yamasaki $\eta$-polynomial \cite{KY} of the two component link consisting of the axis of symmetry and the quotient of two parallel push-offs of a given strongly invertible knot. Similarly, we define new strongly invertible concordance invariants by associating two new 2-component links $\BL(K)$ and $\QBL(K)$ with a directed strongly invertible knot $K$. 

\begin{figure}[!htbp]
\scalebox{.6}{\includegraphics{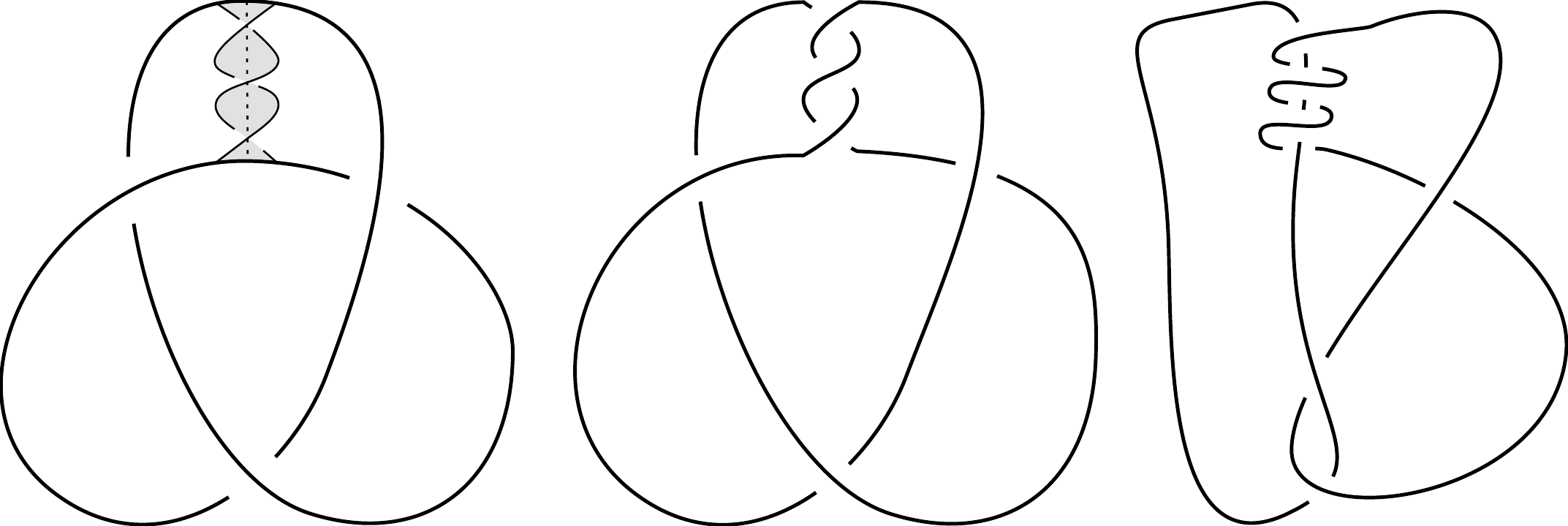}}
\caption{The butterfly link $\BL(3_1^+)$ (center) and quotient butterfly link $\QBL(3_1^+)$ (right) for the directed strong inversion $3_1^+$ (left) on the trefoil. The surgery band is shaded in gray.}
\label{fig:butterflylink}
\end{figure}

First, the \emph{butterfly link} $\BL(K)$ is the 2-periodic link with linking number 0 constructed by performing a band move on $K$ along a band containing the chosen half-axis. Second, the \emph{quotient butterfly link} $\QBL(K)$ is the link consisting of the quotient of $\BL(K)$ by its periodic symmetry, and the axis. See Figure \ref{fig:butterflylink} for an example of $\BL(K)$ and $\QBL(K)$. The terminology ``butterfly link'' refers to the fact that cutting a butterfly Seifert surface for $K$, if it exists, along its pointwise fixed arc gives a surface with boundary $\BL(K)$ (see Figure \ref{fig:8_21}). 

In Section \ref{sec:concordance_invariants}, we use $\BL$ and $\QBL$ to produce several group homomorphisms from the strongly invertible concordance group $\widetilde{\mathcal{C}}$. First, note that if $K$ is equivariantly slice, then $\BL(K)$ and $\QBL(K)$ are (strongly) slice links so that their $\eta$-polynomials vanish. Applying the $\eta$-polynomial to $\BL$ then gives a group homomorphism $\eta \circ \BL\colon \widetilde{\mathcal{C}} \to \mathbb{Z}[t,t^{-1}]$. Second, the linking number of $\QBL(K)$ is an even integer $2\cdot \lk(K)$, and this defines a group homomorphism $\lk:\widetilde{\mathcal{C}} \to \mathbb{Z}$. Finally, we have the following surjective homomorphisms to the smooth concordance group $\mathcal{C}$ and the topological concordance group $\mathcal{C}^{top}$.

\begin{restatable}{theorem}{bandqb}
\label{thm:bandqb}
The maps $\mathfrak{b}:\widetilde{\mathcal{C}} \to \mathcal{C}$ and $\mathfrak{qb}:\widetilde{\mathcal{C}} \to \mathcal{C}^{top}$ defined as follows are surjective group homomorphisms. 
\begin{enumerate}[label=(\roman*)]
\item $\mathfrak{b}(K)$ is the smooth concordance class of one component of $\BL(K)$.
\item $\mathfrak{qb}(K)$ is the topological concordance class of the non-axis component of $\QBL(K)$.
\end{enumerate}
\end{restatable}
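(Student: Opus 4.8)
The plan is to verify three things for each of $\mathfrak{b}$ and $\mathfrak{qb}$: that the map is well defined on equivariant concordance classes, that it is additive under equivariant connect sum, and that it is surjective. The main tool throughout is an equivariant (family) version of the defining band move. For $\mathfrak{b}$ I would work entirely in $S^3 \times [0,1]$ and stay in the smooth category, while for $\mathfrak{qb}$ I would pass to the quotient by the strong inversion, which is where the topological category enters.

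For well-definedness, first note that the two components of $\BL(K)$ are interchanged by the periodic symmetry and hence are smoothly isotopic, so $\mathfrak{b}(K)$ does not depend on the choice of component; similarly the band used in the construction is determined up to equivariant isotopy by the directed half-axis, so $\BL(K)$ and $\QBL(K)$ are well defined up to (equivariant) isotopy. The substantive point is concordance invariance. Given an equivariant concordance $C \subset S^3\times[0,1]$ from $K_0$ to $K_1$, its fixed-point set meets $C$ in arcs, one of which, $\alpha$, is the trace of the chosen half-axes. I would attach a product band (a neighborhood of $\alpha$) to $C$, performing the butterfly band move simultaneously in every slice; an Euler characteristic count, together with the fact that the induced involution is free away from the axis, shows that the result $\widetilde{C}$ is a disjoint union of annuli, i.e.\ an equivariant concordance from $\BL(K_0)$ to $\BL(K_1)$. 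Restricting $\widetilde{C}$ to one of its two ($\tau$-conjugate) components gives a smooth concordance between components of $\BL(K_0)$ and $\BL(K_1)$, proving $\mathfrak{b}$ is well defined. For $\mathfrak{qb}$ I would instead quotient $\widetilde{C}$ by the free involution $\tau\times\mathrm{id}$; the quotient is a concordance between the non-axis components of $\QBL(K_0)$ and $\QBL(K_1)$ inside $(S^3\times[0,1])/\tau$. The main obstacle lies here: the involution on the concordance need not be smoothly standard, so the quotient $4$-manifold can only be identified with $S^3\times[0,1]$ in the topological category, which is precisely why $\mathfrak{qb}$ lands in $\mathcal{C}^{top}$ rather than $\mathcal{C}$.

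For the homomorphism property, recall that the group operation on $\widetilde{\mathcal{C}}$ is equivariant connect sum performed along the directed axes. Since both the connect sum and the butterfly band move are supported near the axis, I would give a local model showing that the band move commutes with equivariant connect sum: one component of $\BL(K_1 \# K_2)$ is the connect sum of one component of $\BL(K_1)$ with one component of $\BL(K_2)$, and likewise the non-axis component of $\QBL(K_1 \# K_2)$ is the connect sum of the non-axis components of $\QBL(K_1)$ and $\QBL(K_2)$. Combined with well-definedness this yields $\mathfrak{b}(K_1 \# K_2)=\mathfrak{b}(K_1) \# \mathfrak{b}(K_2)$ and the analogous identity for $\mathfrak{qb}$.

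Finally, for surjectivity, given a knot $J$ I would build a directed strongly invertible preimage directly. Place $J$ and $\tau(J)$ in disjoint balls interchanged by a rotation $\tau$ of order $2$ about an unknotted axis $A$, so that $L = J \sqcup \tau(J)$ is a split (hence linking number $0$) $2$-periodic link, and then band sum the two components along a $\tau$-invariant band running along a half-axis of $A$. The result $K$ is a directed strongly invertible knot whose butterfly band move recovers $L$, so $\BL(K)=L$ has a component equal to $J$ and $\mathfrak{b}(K)=[J]$. Quotienting, the non-axis component of $\QBL(K)$ is $L/\tau = J$, giving $\mathfrak{qb}(K)=[J]$; since every class in $\mathcal{C}^{top}$ is represented by an honest knot, this shows that both maps are surjective.
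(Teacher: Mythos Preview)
Your approach is correct and uses the same underlying ideas as the paper, though organized slightly differently: the paper reduces concordance invariance to the statement ``equivariantly slice $\Rightarrow$ $\mathfrak{b}(K)$ and $\mathfrak{qb}(K)$ slice'' (invoking Proposition~\ref{prop:BL_bounds_surface} applied to a slice disk and Proposition~\ref{prop:quotient_top_b4} for the quotient), whereas you build the concordance between butterfly links directly via a family version of the same band surgery. Two small imprecisions to fix: the arcs $F\cap C$ are the traces of the two fixed \emph{points} on $K$, not of the half-axes---what you actually want is the disk component of $F\setminus C$ containing both chosen half-axes; and the involution on $S^3\times[0,1]$ is not in general a product $\tau\times\mathrm{id}$, so you should just say ``the extended involution'' (the quotient is still topologically $S^3\times[0,1]$ by the same Freedman argument behind Proposition~\ref{prop:quotient_top_b4}). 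Your surjectivity construction is exactly the paper's map $\mathfrak{r}\colon J\mapsto J\# rJ$.
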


Unlike Sakuma's polynomial, our concordance invariants are sensitive to the choice of direction on strong inversions. For a specific example, $\mathfrak{qb}(3_1^-) = m3_1$ and $\mathfrak{qb}(3_1^+) = 0_1$, where $3_1^+$ and $3_1^-$ are the same strong inversion with oppositely chosen half-axes (see Example \ref{ex:trefoil}). As another application, our concordance invariants show that neither strong inversion on $8_9$ is equivariantly slice, even though their Sakuma polynomials vanish and $8_9$ is slice. See Appendix \ref{sec:table} for the invariants of $8_9$ and some other low-crossing examples.


As a final note we show that strongly invertible knots do not always bound butterfly surfaces using the following theorem, which follows immediately from Proposition \ref{prop:arf} and Proposition \ref{prop:linkobs}.

\begin{theorem} \label{thm:butterfly_obstruction}
Let $K$ be a strongly invertible knot. 
\begin{enumerate}[label=(\roman*)]
\item \label{item:arf}If $K$ bounds a butterfly Seifert surface in $S^3$, then the Arf invariant of $K$ is $0$.
\item \label{item:linking}If $K$ bounds a butterfly surface in $B^4$, then $\lk(K) = 0$.
\end{enumerate}
\end{theorem}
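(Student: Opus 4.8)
The statement packages two logically independent obstructions, one in $S^3$ (part \ref{item:arf}) and one in $B^4$ (part \ref{item:linking}); the plan is to prove each directly from the geometry of butterfly surfaces. The common mechanism is that the separating fixed arc splits the surface into two halves interchanged by the symmetry, which forces the relevant invariant to be ``doubled'' and hence to vanish.

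For part \ref{item:arf}, let $F$ be a butterfly Seifert surface for $K$ with separating fixed arc $\alpha = F \cap \mathrm{Fix}(\rho)$. Cutting $F$ along $\alpha$ produces two subsurfaces $F_+, F_-$, each with a single boundary circle, interchanged by $\rho$. Since they meet only along the contractible arc $\alpha$, Mayer--Vietoris gives a splitting $H_1(F;\F_2) = V_+ \oplus V_-$ with $\rho_* \colon V_+ \xrightarrow{\sim} V_-$; interior cycles of $F_+$ and $F_-$ are disjoint, so this splitting is orthogonal for the intersection form, and each summand is a nondegenerate symplectic $\F_2$-space because $F_\pm$ has connected boundary. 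I would then recall that $\mathrm{Arf}(K) = \mathrm{Arf}(q)$ for the quadratic refinement $q(x) = \mathrm{lk}(x, x^+) \bmod 2$, and observe that $q$ is $\rho_*$-invariant: $\rho$ preserves linking numbers in $S^3$ (being orientation preserving), and the two surface pushoffs of a curve on $F$ have equal self-linking, so $q(\rho_* x) = q(x)$. Over $\F_2$ the map $\rho_*$ also preserves the intersection pairing, so it is an isometry $(V_+, q|_{V_+}) \cong (V_-, q|_{V_-})$. By additivity of the Arf invariant over orthogonal sums, $\mathrm{Arf}(K) = \mathrm{Arf}(q|_{V_+}) + \mathrm{Arf}(q|_{V_-}) = 2\,\mathrm{Arf}(q|_{V_+}) = 0$.

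For part \ref{item:linking}, let $S$ be a butterfly surface in $B^4$ and $\widetilde{\rho}$ the order-$2$ extension of $\rho$; its fixed set $D = \mathrm{Fix}(\widetilde{\rho})$ is a properly embedded surface with $\partial D$ the axis $A$, meeting $S$ exactly in the separating fixed arc $\alpha$. Cutting $S$ along $\alpha$ and pushing off $D$ yields a surface $\Sigma = \Sigma_+ \sqcup \Sigma_-$, interchanged by $\widetilde{\rho}$, with $\partial \Sigma = \BL(K)$ and $\Sigma \cap D = \emptyset$. Passing to the quotient $W = B^4/\widetilde{\rho}$, the images give disjoint surfaces $\overline{\Sigma} \subset W$ with $\partial \overline{\Sigma} = \overline{\BL(K)}$ and $\overline{D} \subset W$ with $\partial \overline{D} = \overline{A}$. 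Since $W$ is a (rational homology) $4$-ball, the linking number of the two components of $\QBL(K) = \overline{\BL(K)} \cup \overline{A}$ equals the intersection number $\overline{\Sigma} \cdot \overline{D} = 0$; as this linking number is $2\,\lk(K)$, we conclude $\lk(K) = 0$.

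The homological splitting and the additivity of the Arf invariant in part \ref{item:arf} I expect to go through routinely. The main obstacle is the bookkeeping in part \ref{item:linking}: one must verify that $\mathrm{Fix}(\widetilde{\rho})$ can be arranged to meet $S$ only in $\alpha$ (so the cut pieces are genuinely disjoint from the branch locus), identify $W$ as a ball in which the formula $\mathrm{lk} = \overline{\Sigma} \cdot \overline{D}$ is valid, and confirm that the quotient of the two components of $\BL(K)$ together with the branch circle is exactly $\QBL(K)$ with linking number $2\,\lk(K)$. I would settle the last point by unwinding the definitions of $\BL$ and $\QBL$ and tracking the two pushed-off copies of $\alpha$ near the axis.
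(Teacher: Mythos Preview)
For part \ref{item:linking} your approach is essentially the paper's (Proposition \ref{prop:linkobs}, via Proposition \ref{prop:BL_bounds_surface}): cut the butterfly surface along the fixed arc, separate the pieces from the fixed disk, pass to the quotient ball, and read off the linking number of $\QBL(K)$ as a vanishing intersection number. The points you flag as bookkeeping (that the fixed set is a disk, that the quotient is a ball, that the cut pieces can be made disjoint from the branch locus) are exactly the ones the paper handles, the last one via the tubular-neighborhood construction in Proposition \ref{prop:BL_bounds_surface}.

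For part \ref{item:arf} your route is genuinely different from the paper's. The paper (Proposition \ref{prop:arf}) argues geometrically: viewing each half $F_\pm$ as a disk with bands, it performs pass moves to unlink the bands of $F_+$ from those of $F_-$, so that $K$ is pass-equivalent to a connected sum $K_0 \# K_1$ with $K_0$ isotopic to $K_1$ (via the strong inversion); then invariance of Arf under pass moves and additivity under connected sum give $a(K) = 2a(K_0) = 0$. Your argument stays entirely inside the algebra of the Seifert quadratic form: the splitting $H_1(F;\F_2) = V_+ \oplus V_-$ is orthogonal for the intersection pairing, $q$ is $\rho_*$-invariant (your remark that the two pushoffs have equal self-linking is exactly what handles the fact that $\rho$ reverses the coorientation of $F$), and additivity of Arf over orthogonal sums of nondegenerate quadratic spaces finishes. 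Both proofs encode the same doubling phenomenon; yours avoids the somewhat informal pass-move manipulation and is cleaner algebraically, while the paper's version makes the identification with $\mathrm{Arf}(\partial F_+)$ and the connected-sum structure more visibly geometric.
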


\subsection{Organization}
In Section \ref{sec:concordance_group} we give some background on the strongly invertible concordance group. In Section \ref{sec:equivariant_genera} we define the equivariant genera of strongly invertible and periodic knots, and prove Theorem \ref{thm:butterfly_obstruction}\ref{item:arf}. In Section \ref{sec:concordance_invariants} we define the butterfly link $\BL(K)$, the quotient butterfly link $\QBL(K)$, the axis-linking number $\lk(K)$, and we prove Theorem \ref{thm:bandqb} and Theorem \ref{thm:butterfly_obstruction}\ref{item:linking}. In Section \ref{sec:Donaldson} we discuss lifting group actions to the double branched cover, then prove Theorem \ref{thm:eq_embedding}. In Section \ref{sec:gsig} we discuss the $g$-signature $\widetilde{\sigma}(K)$, proving Theorem \ref{thm:eq4g_sig_ineq} and Theorem \ref{thm:siconcordancebound}. In Appendix \ref{sec:table} we provide a table of some low-crossing directed strongly invertible knots and their invariants. In Appendix \ref{sec:Donaldson_table} we provide a table of examples where Theorem \ref{thm:eq_embedding} shows that $\widetilde{g}_4(K) > g_4(K)$.

\subsection{Acknowledgments}
We would like to thank Liam Watson for his encouragement and interest in this project, several helpful conversations, and his comments on an earlier draft. We would also like to thank Makoto Sakuma and Danny Ruberman for some helpful comments.

\section{The strongly invertible concordance group} \label{sec:concordance_group}
The strongly invertible concordance group $\widetilde{\mathcal{C}}$ was first defined by Sakuma \cite{Sakuma}; we recall the basic definitions here. This group is an equivariant version of the usual smooth concordance group $\mathcal{C}$. We also recall the definitions of the forgetful homomorphism $\mathfrak{f}\colon \widetilde{\mathcal{C}} \to \mathcal{C}$ and the doubling homomorphism $\mathfrak{r}\colon \mathcal{C} \to \widetilde{\mathcal{C}}$.
\begin{definition} \label{def:direction}
A strongly invertible knot separates the axis of symmetry into two components, each of which we call a \emph{half-axis}. A \emph{direction} on a strongly invertible knot is a choice of half-axis, and a choice of orientation on the axis. We call a strongly invertible knot along with a choice of direction, a \emph{directed strongly invertible knot} $K$. We do not require a choice of orientation on the knot itself (since it is invertible). The \emph{axis-reverse} of $K$ is the same strongly invertible knot with the opposite choice of orientation on the axis, the \emph{mirror} $mK$ of $K$ is (as usual) given by reversing the orientation on the ambient $S^3$, and the \emph{antipode} $K^-$ of $K$ is given by choosing the other choice of half-axis with the same orientation.
\end{definition}
Note that the two fixed points of a directed strongly invertible knot have a natural ordering given by the choice of oriented half-axis. The point at the beginning of the half-axis is the first fixed point, and the point at the end of the half-axis is the second fixed point. 

\begin{definition}
Two strongly invertible knots $(K,\tau)$ and $(K',\tau')$ are \emph{equivariantly isotopic} or \emph{equivalent} if there is an orientation-preserving homeomorphism $\phi\colon (S^3,K) \to (S^3,K')$ such that $\phi \circ \tau = \tau' \circ \phi$. Furthermore, if $K$ and $K'$ are directed and $\phi$ preserves the chosen oriented half-axis then we say that $(K,\tau)$ and $(K',\tau')$ are \emph{equivalent as directed strongly invertible knots}. 
\end{definition}

\begin{definition} \label{def:equi_connect_sum}
Let $K$ and $K'$ be directed strongly invertible knots. The \emph{equivariant connect sum}, $K\widetilde{\#}K'$ is the directed strongly invertible knot obtained by cutting $K$ at its second fixed point, and $K'$ at its first fixed point, then gluing the two knots and axes in the way that is compatible with the orientations on the axes and choosing the half-axis for $K\widetilde{\#}K'$ as the union of the half-axes for $K$ and $K'$ (see Figure \ref{fig:ecs}). Note that there is no twisting ambiguity along the axis since the knots are strongly invertible. 
\end{definition}
\begin{figure}[!htbp]
\scalebox{.5}{\includegraphics{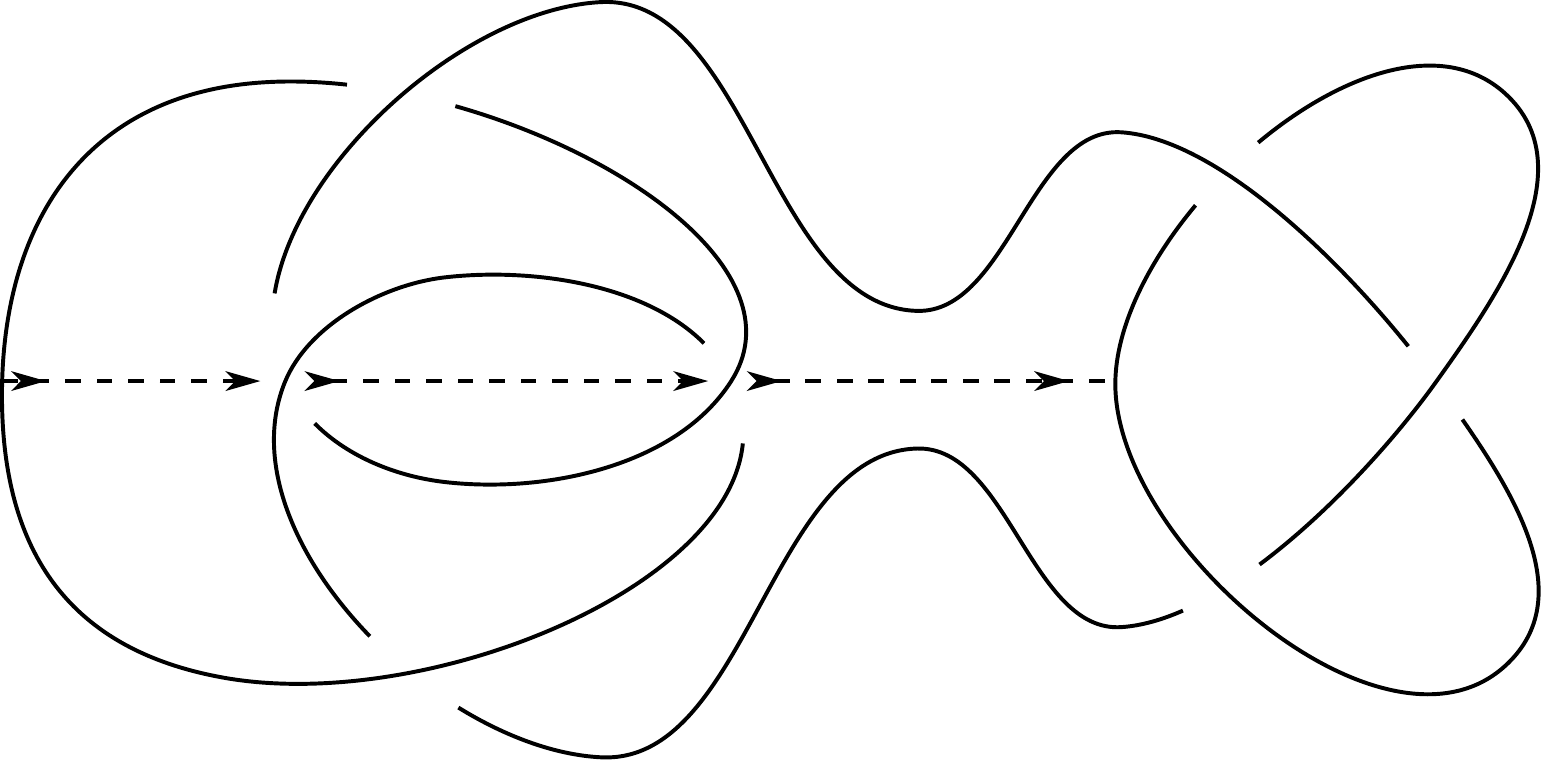}}
\caption{A directed strongly invertible knot constructed by an equivariant connect sum of a directed strong inversion on the figure eight knot and a directed strong inversion on the trefoil.}
 \label{fig:ecs}
\end{figure}

\begin{definition}
Two directed strongly invertible knots $K_1$ and $K_2$ are \emph{(smoothly) equivariantly concordant} if there is a smooth proper embedding $c\colon S^1 \times I \hookrightarrow S^3 \times I$ equivariant with respect to some smooth involution $\tau$ on $S^3 \times I$ such that the following hold.
\begin{enumerate}[label=(\roman*)]
\item $\restr{\tau}{S^3 \times \{i\}}$ is the strong inversion on $K_i$ for $i \in \{1,2\}$.
\item The orientations on the axes of $K_1$ and $K_2$ induce the same orientation on the fixed-point annulus $F$ of $\tau$, and the half-axes of $K_1$ and $K_2$ are contained in the same component of $F \backslash c(S^1 \times I)$.
\end{enumerate}
\end{definition}

We denote by $\widetilde{\mathcal{C}}$ the strongly invertible concordance group, that is, the group with elements given by equivalence classes of directed strongly invertible knots under equivariant concordance, with group operation given by equivariant connect sum. See \cite{Sakuma} for a proof that this group is well-defined. The inverse of a directed strongly invertible knot $K \in \widetilde{\mathcal{C}}$ is the axis-reverse of the mirror of $K$. We use the notation $\mathcal{C}$ for the usual smooth concordance group, and $\mathcal{C}^{top}$ for the topological concordance group.

There is an obvious group homomorphism $\mathfrak{f}\colon \widetilde{\mathcal{C}} \to \mathcal{C}$ given by forgetting about the strong inversion on $K$, which we will refer to as the \emph{forgetful map}. Note that $K$ is isotopic to its reverse (since it is strongly invertible), so we do not need to specify an orientation on $K$ to define $\mathfrak{f}$. There is also a group homomorphism $\mathfrak{r}:\mathcal{C} \to \widetilde{\mathcal{C}}$ induced by $K \mapsto K\#rK$, with the strong inversion on $K \# rK$ exchanging the two summands. Note that this map does not depend on where the connect sum band is attached to $K$. The direction on $\mathfrak{r}(K) = K \# rK$ is defined as follows. Take the edge of the connect summing band contained in $K$ and translate it along the band to coincide with the half-axis contained in the connect summing band (see Figure \ref{fig:k_sum_rk}). This gives an oriented choice of half-axis. See Figure \ref{fig:8_21} for an example of the strong inversion on $\mathfrak{r}(K) = K \# rK$. 

\begin{figure}[!htbp]
  \begin{overpic}[width=230pt, grid=false]{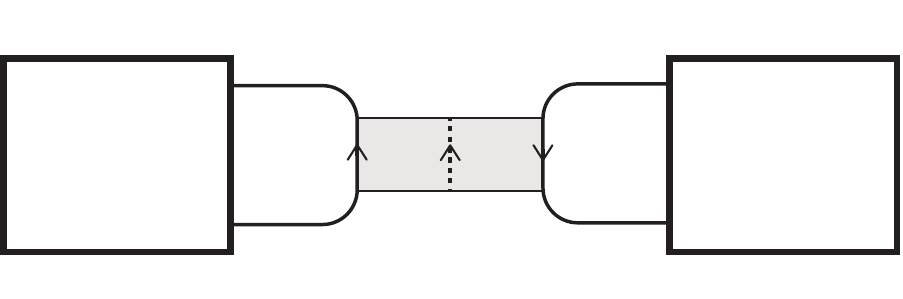}
    \put (10.5, 14) {$K$}
    \put (83, 14) {$rK$}
  \end{overpic}
\caption{A schematic for the chosen directed strong inversion on $\mathfrak{r}(K) = K \# rK$. The shaded band is the connect-summing band for $K \# rK$.}
\label{fig:k_sum_rk}
\end{figure}

\section{Equivariant Genera} \label{sec:equivariant_genera}
In this section, we give a brief introduction to the equivariant 4-genus $\widetilde{g}_4(K)$ and define the butterfly 4-genus $\bg_4(K)$ and butterfly 3-genus $\bg_3(K)$. In Proposition \ref{prop:RH} we give an elementary lower bound on $\widetilde{g}_4(K)$ for periodic knots using the Riemann-Hurwitz formula. In Proposition \ref{prop:arf} we prove that the Arf invariant obstructs strongly invertible knots from bounding a butterfly Seifert surface in $S^3$.

\begin{definition} \label{def:extension}
Let $K$ be a periodic or strongly invertible knot in $S^3$ with $\rho\colon (S^3,K) \to (S^3,K)$ the periodic symmetry or strong inversion. A surface $F \subset B^4$ with $\partial F = K \subset \partial B^4$ is an \emph{equivariant} surface for $(K,\rho)$ if $F$ is connected, smoothly properly embedded in $B^4$, and there exists a diffeomorphism $\overline{\rho}\colon (B^4,F) \to (B^4,F)$ restricting to $\rho$ on $\partial B^4$ with order$(\overline{\rho}) = $ order$(\rho)$. We call $\overline{\rho}$ an \emph{extension} of $\rho$.
\end{definition}
In Definition \ref{def:extension}, $\rho$ can always be extended to $B^4$ by taking the cone of $\rho$, although exotic extensions with a knotted fixed point disk are also possible (see for example \cite{exotic_extension}). The difficulty is in finding an extension which respects a given surface.
\begin{definition} \label{def:equivariant_4genus}
The \emph{equivariant 4-genus} $\widetilde{g}_4(K,\rho)$ of a periodic or strongly invertible knot $(K,\rho)$ is the minimal genus of an orientable equivariant surface for $K$. If $\rho$ is clear from context, we simply write $\widetilde{g}_4(K)$.
\end{definition}

In order to study the equivariant genus, it will be helpful to look at symmetric diagrams which we define precisely here; see Appendix \ref{sec:Donaldson_table} for some examples.
\begin{definition} \label{def:symmetric_diagrams}
Let $(K,\rho)$ be a periodic or strongly invertible knot. A knot diagram for $K$ is 
\begin{enumerate}[label=(\roman*)]
\item \emph{transvergent} if the order of $\rho$ is 2, and $\rho$ acts as rotation around an axis contained within the plane of the diagram, and
\item \emph{intravergent} if $\rho$ acts as rotation around an axis perpendicular to the plane of the diagram. 
\end{enumerate}
In either case, the diagram is called \emph{symmetric}.
\end{definition}

Hiura proved that strongly invertible knots bound equivariant Seifert surfaces by constructing such a surface starting with a transvergent diagram \cite{Ryota}. Another way to see this is to first observe that every periodic or strongly invertible knot admits an intravergent diagram. Applying Seifert's algorithm to such a diagram then produces an equivariant Seifert surface, which shows the following proposition.

\begin{proposition}
Every periodic or strongly invertible knot bounds an equivariant Seifert surface. 
\end{proposition}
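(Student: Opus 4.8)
The plan is to follow the two-step route sketched in the paragraph immediately preceding the statement: first show that every periodic or strongly invertible knot admits an \emph{intravergent} diagram, and then show that Seifert's algorithm applied to such a diagram yields an equivariant Seifert surface. The first step is the geometric heart of the argument. Since $\rho$ is orientation-preserving of finite order with fixed-point set the unknot, I would use the positive resolution of the Smith conjecture to conclude that the action is conjugate to a standard (linear) rotation of $S^3$; concretely, there is a diffeomorphism carrying the fixed axis to a standard unknotted circle on which $\rho$ acts as rotation by $2\pi/n$ about a line. Projecting $S^3 \setminus \{\infty\}$ onto a plane perpendicular to that rotation axis realizes $\rho$ as rotation by $2\pi/n$ \emph{within the plane of the diagram}, which is exactly the intravergent condition from Definition \ref{def:symmetric_diagrams}. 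After a small equivariant isotopy to make the projection of $K$ generic (transverse double points, with the two fixed points of the strongly invertible case projecting to the rotation center), we obtain an intravergent diagram $D$ for $(K,\rho)$.

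The second step is to verify that Seifert's algorithm respects the symmetry. Because $\rho$ is realized as an in-plane rotation, it acts on the diagram $D$ carrying crossings to crossings and preserving the over/under information and the orientation of the projection plane. Here I must be slightly careful about the knot orientation used by Seifert's algorithm. In the periodic case $\rho$ preserves an orientation of $K$, so the oriented smoothing at each crossing is carried to the oriented smoothing at the image crossing, the resulting Seifert circles are permuted by $\rho$, and attaching a twisted band at each crossing produces a surface $F$ with $\rho(F) = F$. In the strongly invertible case $\rho$ reverses the orientation of $K$; but since an in-plane rotation also reverses the orientation of the projection plane's normal, the \emph{oriented} resolution of a crossing is still sent to the oriented resolution of its image, so the same argument applies. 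Thus the Seifert surface $F$ built from $D$ satisfies $\rho(F)=F$.

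It then remains to upgrade the symmetry $\rho$ of the pair $(S^3, F)$ to an honest equivariant surface in the sense needed: $F$ is connected (Seifert's algorithm yields a connected surface from a connected diagram) and $\rho|_F$ has the same order as $\rho$. Since $\rho$ acts freely away from its axis and the axis meets $F$ in a controlled way (not at all in the periodic case, and along a symmetric arc/pair of points in the strongly invertible case), the restriction $\rho|_F$ is a finite-order diffeomorphism of $F$ of order $n = \mathrm{order}(\rho)$. This gives an equivariant Seifert surface in $S^3$, which is what the proposition asserts.

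I expect the main obstacle to be the first step — producing the intravergent diagram rigorously. The clean existence of such a diagram rests on the linearization of the finite cyclic action (the Smith-conjecture input together with standardness of the unknotted axis), and making the projection genuinely generic while staying equivariant requires an equivariant general-position / transversality argument near the rotation axis, especially handling the two fixed points in the strongly invertible case so they project to the center without creating nontransverse behavior. By contrast, the Seifert's-algorithm step is essentially bookkeeping once the diagram is in hand, with the only subtlety being the orientation-reversal in the strongly invertible case, which I have indicated is harmless because the in-plane rotation also reverses the normal orientation.
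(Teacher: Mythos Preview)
Your approach is exactly the one the paper sketches (intravergent diagram via linearization of the action, then Seifert's algorithm), and the level of detail you provide is appropriate. There is, however, one incorrect sentence in your handling of the strongly invertible case: you write that ``an in-plane rotation also reverses the orientation of the projection plane's normal.'' This is false --- a rotation about an axis perpendicular to the projection plane fixes that axis and hence preserves the normal direction (and the over/under data at each crossing). Fortunately your conclusion survives for a different, simpler reason: the oriented smoothing at a crossing depends only on the unordered pair of strand orientations, so reversing \emph{both} strands yields the same smoothing. Thus even though $\rho$ reverses the orientation of $K$, it carries Seifert circles to Seifert circles and bands to bands, and the resulting surface is $\rho$-invariant. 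With that correction your argument goes through and matches the paper's.
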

This proposition shows that the equivariant 4-genus always exists by equivariantly pushing the Seifert surface into $B^4$. Furthermore, for an alternating diagram, Seifert's algorithm constructs a minimal genus surface \cite{MR99665,MR99664}. Thus for knots with an alternating intravergent diagram, we obtain that $\widetilde{g}_4(K) \leq g_3(K)$. Note however, Hiura gave examples of strongly invertible alternating knots for which there is no equivariant minimal-genus Seifert surface \cite{Ryota}. The following proposition gives another useful way to obtain upper bounds on $\widetilde{g}_4(K)$ from a symmetric diagram.
\begin{proposition} \label{prop:crossing_change_genus}
Let $(K,\rho)$ and $(K',\rho')$ be strongly invertible or periodic knots. If there are symmetric diagrams for $K$ and $K'$ which are related by $n$ equivariant crossing changes, then
\[
|\widetilde{g}_4(K) - \widetilde{g}_4(K')| \leq n.
\]
In particular, if $K'$ is the unknot, then $\widetilde{g}_4(K) \leq n$.
\end{proposition}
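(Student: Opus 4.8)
The plan is to reduce the statement to the single-crossing-change case and then build an explicit equivariant cobordism in $S^3 \times I$ realizing one equivariant crossing change. First I would observe that it suffices to prove $|\widetilde{g}_4(K) - \widetilde{g}_4(K')| \leq 1$ when the symmetric diagrams for $K$ and $K'$ differ by a single equivariant crossing change, since $n$ such changes can be applied one at a time through a sequence of intermediate symmetric knots, and the bound then follows by the triangle inequality. The key point is that an \emph{equivariant} crossing change means either a single crossing change fixed by an intravergent symmetry, or (in the transvergent/strongly invertible case) a pair of crossing changes exchanged by $\rho$; in both cases the move is compatible with the symmetry.

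The heart of the argument is the standard fact that a crossing change is realized by attaching a genus-$1$ cobordism (equivalently, a single saddle together with the trace of an ambient isotopy), and I would make this equivariant. Concretely, given an equivariant surface $F \subset B^4$ for $(K,\rho)$ of minimal genus $\widetilde{g}_4(K)$, I would take the extension $\overline{\rho} \colon (B^4, F) \to (B^4, F)$ and build a surface for $K'$ by attaching an equivariant ``crossing-change cobordism'' in a collar $S^3 \times I$ glued onto $\partial B^4$. For an intravergent symmetry whose crossing change occurs at a fixed crossing, the local model for the crossing-change cobordism can be chosen to be invariant under the rotation; for a transvergent symmetry, one attaches the two $\rho$-exchanged crossing-change pieces simultaneously, so the total cobordism is $\rho$-equivariant. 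In either case the genus of the resulting surface increases by at most $1$, and the symmetry $\overline{\rho}$ extends over the cobordism (again by coning, or by using the product structure of the collar), giving an equivariant surface for $K'$ of genus at most $\widetilde{g}_4(K) + 1$. Hence $\widetilde{g}_4(K') \leq \widetilde{g}_4(K) + 1$, and by symmetry $\widetilde{g}_4(K) \leq \widetilde{g}_4(K') + 1$.

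For the final clause, if $K'$ is the unknot then $\widetilde{g}_4(K') = 0$, since the unknot bounds an equivariant disk (for instance the cone on the symmetry), so $\widetilde{g}_4(K) \leq \widetilde{g}_4(K') + n = n$.

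I expect the main obstacle to be verifying that the crossing-change cobordism can genuinely be made equivariant with an extension of the \emph{same order} as $\rho$, and that this is done without increasing the genus by more than $1$ per equivariant move. In the intravergent case one must check that the crossing being changed really can be taken invariant (or that the symmetry fixes that crossing), so that a single rotation-invariant saddle suffices; in the transvergent case one must confirm that the two exchanged crossing changes contribute genus at most $1$ to the symmetric quotient picture and that the fixed-point data of $\overline{\rho}$ (empty, or an unknotted arc/disk) is preserved. Carefully setting up these local equivariant models, and ensuring compatibility with Definition~\ref{def:extension}, is where the real care is needed; the genus bookkeeping and the reduction to $n=1$ are routine.
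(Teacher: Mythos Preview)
Your approach is essentially the same as the paper's: build an equivariant genus-$n$ cobordism in a collar realizing the crossing changes and glue it to a minimal equivariant surface. The paper states this in one line (``attaching a pair of equivariant bands for each crossing''), whereas you reduce to $n=1$ first, which is fine but unnecessary.

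Two small points are worth flagging. First, your worry about the transvergent case stems from a miscount: in the paper's convention, ``$n$ equivariant crossing changes'' means $n$ total crossings changed, with the \emph{set} of crossings $\rho$-invariant. Thus a $\rho$-exchanged pair of crossings counts as $n=2$, contributes four bands (two equivariant pairs), and gives a genus-$2$ equivariant cobordism --- exactly matching the bound. You do not need to squeeze a symmetric pair into genus $1$; that would indeed fail. (Compare Example~\ref{ex:9_40_lattice}, where an equivariant pair of crossing changes yields $\widetilde{g}_4 \le 2$.) Second, your claim that the unknot has $\widetilde{g}_4 = 0$ via ``the cone on the symmetry'' is not quite self-contained in the strongly invertible case: one needs that any strong inversion on the unknot is standard, which the paper handles by citing \cite[Proposition~2]{MR474265}. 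With those two clarifications your argument is complete.
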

\begin{proof}
The $n$ equivariant crossing changes give an equivariant genus $n$ cobordism from $K$ to $K'$, obtained by attaching a pair of equivariant bands for each crossing. Gluing an equivariant surface for $K$ (or $K'$) to this cobordism gives the stated inequality. Finally, the equivariant 4-genus of the unknot is always $0$ (in the strongly invertible case, this follows from \cite[Proposition 2]{MR474265}).
\end{proof}

\subsection{The equivariant 4-genera of a strongly invertible knot}
Given a strongly invertible knot bounding an orientable equivariant surface in $B^4$, there is a fixed arc connecting the fixed points on the knot. If this fixed arc is separating, we call the surface a \emph{butterfly surface}. (See Figure \ref{fig:8_21} for justification of the term butterfly surface.) To be precise, we give the following definition.
\begin{definition}
A \emph{butterfly surface} is an orientable compact connected surface $S$ along with an involution $\rho$ on $S$ such that the pointwise fixed set of $\rho$ contains an arc disconnecting $S$. Let $(K,\tau)$ be a strongly invertible knot. An equivariant surface $S \subset B^4$ with $\partial S = K$ is a \emph{butterfly surface} for $K$ if $(S,\restr{\overline{\tau}}{S})$ is a butterfly surface, where $\overline{\tau}$ is an extension of $\tau$ to $B^4$. If $K$ is directed, an equivariant Seifert surface $S$ for $K$ is a \emph{butterfly Seifert surface} for $K$ if $(S,\restr{\tau}{S})$ is a butterfly surface and the pointwise fixed arc on $S$ coincides with the chosen half-axis.
\end{definition}
We have a few immediate comments about butterfly surfaces. First, butterfly surfaces never contain pointwise fixed circles since the fixed arc separates the surface into two components which are exchanged by the involution. Second, the quotient of a butterfly surface is orientable. Third, the symmetry on a butterfly surface guarantees that its genus is always even. Fourth, the genus determines the equivariant homeomorphism type of a butterfly surface (see for example \cite{Dugger}). Finally, any disk with an orientation-reversing involution is necessarily a butterfly surface since any fixed arc disconnects a disk.
\begin{figure}
\scalebox{1}{\includegraphics{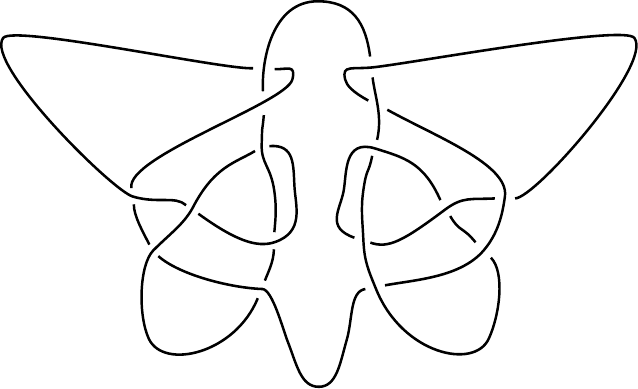}}
\caption{The bounded checkerboard surface for this diagram is a butterfly surface with boundary $8_{21} \# r8_{21}$. The strong inversion is rotation around a vertical axis.}
\label{fig:8_21}
\end{figure}

\begin{definition} \label{def:butterfly_4genus}
The \emph{butterfly 4-genus} $\bg_4(K)$ of a strongly invertible knot $K$ is the minimal genus of a butterfly surface in $B^4$ for $K$. 
\end{definition}
Note that $\widetilde{g}_4(K) \leq \bg_4(K)$ and that $\widetilde{g}_4(K)$ and $\bg_4(K)$ are both strongly invertible concordance invariants. 

\subsection{The equivariant 4-genus of a periodic knot} \label{subsec:periodicgenera}
Unlike in the strongly invertible case, an equivariant surface in $B^4$ with boundary a periodic knot has a 0-dimensional pointwise fixed set. Thus the quotient is a surface with boundary the quotient of the periodic knot. This leads to an inequality on the equivariant 4-genus coming from the Riemann-Hurwitz formula. To begin, we need the following well-known proposition.
\begin{proposition}
\label{prop:quotient_top_b4}
If $\rho:B^4 \to B^4$ is a finite order diffeomorphism with fixed-point set a disk then the quotient of $B^4$ by $\rho$ is homeomorphic to $B^4$.
\end{proposition}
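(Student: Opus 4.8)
The plan is to realize the quotient $Q := B^4/\rho$ as a compact contractible topological $4$-manifold whose boundary is a homotopy $3$-sphere, and then invoke Freedman's topological $4$-dimensional Poincar\'e theorem. First, by Cartan's linearization (the slice theorem for finite group actions), near the fixed disk $D$ the action is smoothly conjugate to the linear model $D^2 \times \C$ with $\rho$ acting trivially on $D^2$ and by a primitive $n$-th root of unity on $\C$, where $n = \mathrm{order}(\rho)$; off $D$ the action is free (the relevant extensions are semifree, which is the content of ``fixed set a disk''). Since $\C/(\Z/n) \cong \C$ via $z \mapsto z^n$, the branch locus $\overline{D} = \pi(D)$ consists of manifold points, and away from $D$ the free quotient is manifestly a manifold. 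Thus $Q$ is a topological $4$-manifold, oriented because a codimension-$2$ linear fixed-set model forces $\rho$ to be orientation preserving, with $\partial Q = S^3/\rho$. As a warm-up and sanity check, for the standard cone extension one has $B^4 = \mathrm{Cone}(S^3)$ and $\rho = \mathrm{Cone}(\rho|_{S^3})$, so $Q = \mathrm{Cone}(S^3/\rho) = \mathrm{Cone}(S^3) = B^4$ directly; the work below handles the possibly exotic case where $\rho$ is not a cone.

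The next step is to show $Q$ is contractible. For $\pi_1$, I would use the meridian. The complement $B^4 \setminus D$ has $\pi_1$ normally generated by the meridian $\mu$ of $D$ (killing $\mu$ fills in $D$ and yields $\pi_1(B^4) = 1$). Since $\rho$ acts freely on $B^4 \setminus D$, we get $\pi_1(Q \setminus \overline{D})$ containing the downstairs meridian $\overline{\mu}$ with $\overline{\mu}^{\,n} = \mu$ and with $\overline{\mu}$ surjecting onto the deck group $\Z/n$. Gluing in the branch-locus neighborhood $\overline{D} \times D^2$ kills $\overline{\mu}$; this simultaneously kills the $\Z/n$ factor and kills $\mu = \overline{\mu}^{\,n}$, hence all of $\pi_1(B^4 \setminus D)$, so $\pi_1(Q) = 1$. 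For homology, the transfer argument gives $\widetilde{H}_*(Q;\Q) = \widetilde{H}_*(B^4;\Q)^{\rho} = 0$, so $Q$ is a rational homology ball. Since $\partial Q = S^3/\rho$ is a simply connected (same meridian argument) closed $3$-manifold, it is a homotopy sphere, hence $\partial Q \cong S^3$ by the Poincar\'e conjecture.

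Finally, cap off to form the closed oriented simply connected $4$-manifold $\widehat{Q} := Q \cup_{S^3} B^4$. Van Kampen gives $\pi_1(\widehat{Q}) = 1$, and capping does not change $H_2$, so $H_2(\widehat{Q};\Q) = 0$. But $H_2$ of a closed oriented simply connected $4$-manifold is free (Poincar\'e duality plus the universal coefficient theorem force $H_2 \cong H^2 \cong \operatorname{Hom}(H_2,\Z)$), so the rational vanishing upgrades to $H_2(\widehat{Q};\Z) = 0$, and $\widehat{Q}$ is a homotopy $4$-sphere. By Freedman's theorem $\widehat{Q} \cong S^4$, and removing the capping ball (using topological Schoenflies in $S^4$) yields $Q \cong B^4$. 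The main obstacle, and the reason the conclusion is only up to homeomorphism, is precisely this reliance on Freedman: in dimension four the smooth $h$-cobordism theorem is unavailable, so the recognition of a contractible $4$-manifold bounded by $S^3$ cannot be done smoothly. The secondary point requiring care is verifying that the action is semifree (so that no power $\rho^k$ fixes a set of dimension $\neq 0, 2$), which must be checked in the setting at hand before the local model applies.
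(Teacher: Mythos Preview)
Your argument is correct and follows the same skeleton as the paper's proof: establish that $Q = B^4/\rho$ is a topological manifold (codimension-two fixed locus), that it is simply connected with the homology of a ball, and then appeal to Freedman. The paper simply cites Bredon for the first two algebraic-topology facts, whereas you unpack them by hand---the meridian/van Kampen argument for $\pi_1(Q)=1$ and the transfer for rational homology---and you add an explicit identification $\partial Q \cong S^3$ (via the Poincar\'e conjecture) before capping off and invoking Freedman on the closed manifold. These are expository differences rather than a different route; what your version buys is self-containment, at the cost of importing the 3-dimensional Poincar\'e conjecture, while the paper's version is terser but leans on Bredon as a black box. Your closing caveat about semifreeness is well placed: the paper's one-line ``since the fixed-point set is a disk, the quotient is a topological manifold'' is really using that no proper power of $\rho$ acquires extra fixed points, which is implicit in the applications (extensions of rotations of $S^3$) but not literally forced by the bare hypothesis when the order is composite.
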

\begin{proof}
By \cite[Corollary II.6.3 and Theorem III.5.4]{Bredon} the quotient of $B^4$ is a simply connected homology 4-ball. Moreover, since the fixed-point set is a disk, the quotient is a topological manifold and hence homeomorphic to $B^4$ by work of Freedman \cite{Freedman}.
\end{proof}
By Edmonds' theorem \cite{Edmonds}, every periodic knot has a minimal genus Seifert surface which is equivariant. However $\widetilde{g}_4(K)$ is not equal to $g_4(K)$ in general. One way to see this is the following well-known consequence of Proposition \ref{prop:quotient_top_b4} and the Riemann-Hurwitz formula; see the proof of \cite[Corollary 3.6]{Naik}. For an example application, see the periodic symmetry on $11_{161}$ in Appendix \ref{sec:Donaldson_table} where $g_4(11_{161}) = 1$ and $\widetilde{g}_4(11_{161}) = 3$.

\begin{proposition}
\label{prop:RH}
Consider an $n$-periodic knot $K$ with quotient $\overline{K}$, and linking number $\lambda$ between $K$ and the axis of symmetry (arbitrarily oriented). Then 
\[
\widetilde{g}_4(K) \geq n\cdot g^{top}_4(\overline{K}) + \dfrac{(n-1)(|\lambda|-1)}{2}.
\]
\end{proposition}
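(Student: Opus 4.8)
The plan is to take a minimal-genus orientable equivariant surface $F \subset B^4$ for $(K,\rho)$, with extension $\overline{\rho}$ of order $n$, and to analyze the quotient map $F \to F/\overline{\rho}$ as a cyclic branched cover, extracting the inequality from Riemann--Hurwitz together with Proposition \ref{prop:quotient_top_b4}. First I would record the fixed-point data. Since $(K,\rho)$ is periodic, the fixed set of $\rho$ on $S^3$ is the axis $A$ (an unknot) disjoint from $K$, and the fixed set of $\overline{\rho}$ on $B^4$ is a properly embedded disk $D$ with $\partial D = A$. After an equivariant isotopy I may assume $F$ meets $D$ transversally and set $r = |F \cap D|$. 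Each such intersection point lies on the pointwise-fixed disk $D$, hence is fixed by the entire group $\langle \overline{\rho} \rangle$; transversality then forces $\overline{\rho}$ to act on the $2$-plane $T_p F$ by a rotation of order $n$, so every point of $F \cap D$ is a fully ramified branch point with ramification index $n$.

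Next I would pass to the quotient. By Proposition \ref{prop:quotient_top_b4}, $B^4/\overline{\rho}$ is homeomorphic to $B^4$, and the image $\overline{F}$ of $F$ is a locally flat surface with a single boundary component equal to the quotient knot $\overline{K}$; write $\overline{g}$ for its genus. The quotient map restricts to an $n$-fold cyclic branched cover $F \to \overline{F}$ in which each point $p$ has ramification index $e_p = |\mathrm{Stab}(p)|$, so Riemann--Hurwitz reads $\chi(F) = n\,\chi(\overline{F}) - \sum_p (e_p - 1)$. The points of $F \cap D$ contribute exactly $r(n-1)$ to the ramification sum, while any further branch points (arising from intermediate powers $\overline{\rho}^k$) contribute nonnegatively, so $\sum_p (e_p - 1) \geq r(n-1)$. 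Writing $g = \widetilde{g}_4(K)$ for the genus of $F$ and using $\chi(F) = 1 - 2g$, $\chi(\overline{F}) = 1 - 2\overline{g}$, this rearranges to $g \geq n\overline{g} + \tfrac{(n-1)(r-1)}{2}$.

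Finally I would convert this into the stated bound. Since $\overline{F}$ is a locally flat surface bounding $\overline{K}$ in a topological $B^4$, we have $\overline{g} \geq g^{top}_4(\overline{K})$, which is exactly why the topological $4$-genus appears. For the intersection count, the algebraic intersection number of the properly embedded surfaces $F$ and $D$ with disjoint boundaries equals $\mathrm{lk}(K, A) = \lambda$, so $r = |F \cap D| \geq |\lambda|$; as $n - 1 \geq 0$, monotonicity of $x \mapsto \tfrac{(n-1)(x-1)}{2}$ gives $\tfrac{(n-1)(r-1)}{2} \geq \tfrac{(n-1)(|\lambda|-1)}{2}$, and combining yields the result. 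The steps requiring the most care are the branch-point analysis---verifying full ramification at $F \cap D$ and that extra branch points only help the inequality---and the topological-category bookkeeping, namely the local flatness of $\overline{F}$ and the use of $g^{top}_4$, which are forced on us because the quotient is only homeomorphic, not diffeomorphic, to $B^4$.
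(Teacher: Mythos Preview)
Your proof is correct and follows essentially the same approach as the paper: take a minimal equivariant surface, pass to the quotient via Proposition~\ref{prop:quotient_top_b4}, apply Riemann--Hurwitz, bound the branch-point count below by $|\lambda|$, and bound the quotient genus below by $g^{top}_4(\overline{K})$. Your version is somewhat more careful than the paper's in two places---you explicitly allow for branch points of intermediate ramification (the paper writes $\chi(S)=n\chi(\overline S)-(n-1)b$ as an equality) and you spell out why only the topological $4$-genus can appear---but these are refinements of the same argument rather than a different route.
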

\begin{proof}
Take a minimal genus orientable equivariant surface $S$ in $B^4$ with $\partial S = K$, equivariant with respect to an extension $\rho:B^4 \to B^4$ of the periodic symmetry. By Proposition \ref{prop:quotient_top_b4}, the quotient of $B^4$ by $\rho$ is homeomorphic to $B^4$. Furthermore, since the periodic symmetry preserves the orientation on $K$, it preserves the orientation on $S$. Hence the fixed-point set of $S$ is a finite set of points and the quotient is therefore a surface $\overline{S} \subset B^4$ with $\partial \overline{S} = \overline{K} \subset S^3$.

Now $g(\overline{S}) \geq g^{top}_4(\overline{K})$, and the axis of symmetry intersects $\overline{S}$ in at least $|\lambda|$ points so that $S$ is a branched cover of $\overline{S}$ over at least $|\lambda|$ points. The Riemann-Hurwitz formula gives that
\[
\chi(S) = n\chi(\overline{S}) - (n-1)b,
\]
where $b$ is the number of branch points on $\overline{S}$. Using that $|\lambda| \leq b$ and solving for $g_4(S) = \widetilde{g}_4(K)$ gives the stated inequality.

\end{proof}
\subsection{The equivariant 3-genera of a strongly invertible knot} \label{subsec:3genera}
The main focus of this paper is on the 4-genus, but in this section we take a brief detour to discuss some interesting properties of the equivariant 3-genus and butterfly 3-genus of strongly invertible knots.
\begin{definition}
The \emph{butterfly 3-genus} $\bg_3(K)$ of a directed strongly invertible knot $K$ is the minimal genus of a butterfly Seifert surface for $K$. If no such surface exists, then we write $\bg_3(K) = \infty$. See Figure \ref{fig:52butterfly} for some examples.
\end{definition}

\begin{figure}[!htbp]
\scalebox{.65}{\includegraphics{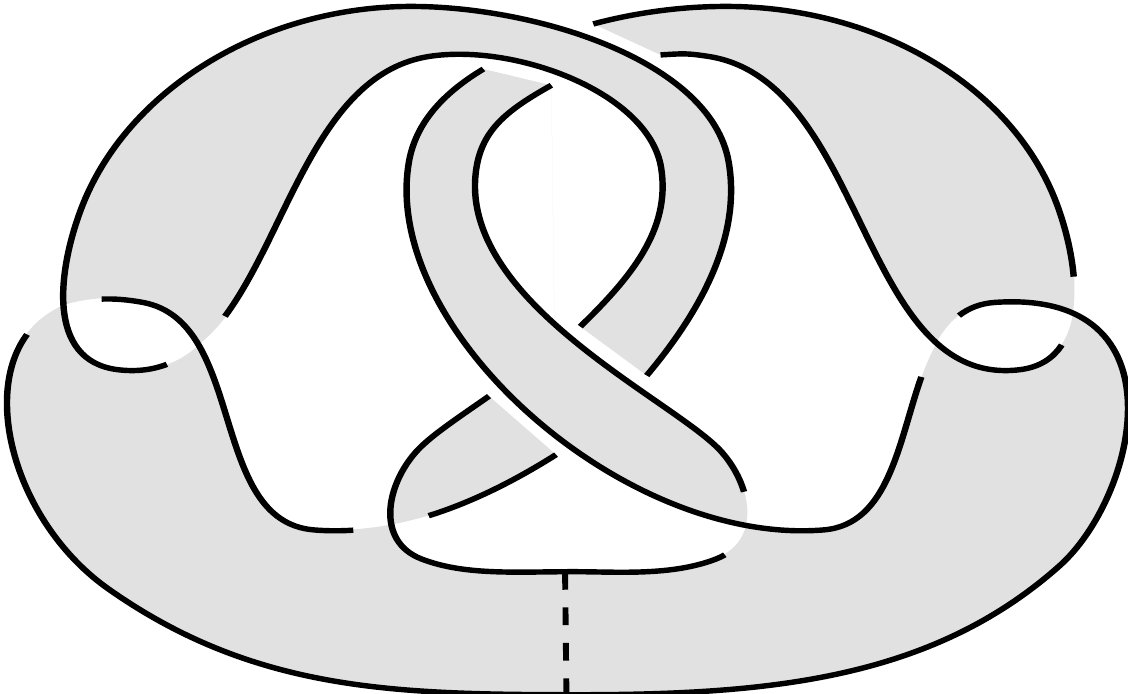}}\ \ \ \scalebox{.6}{\includegraphics{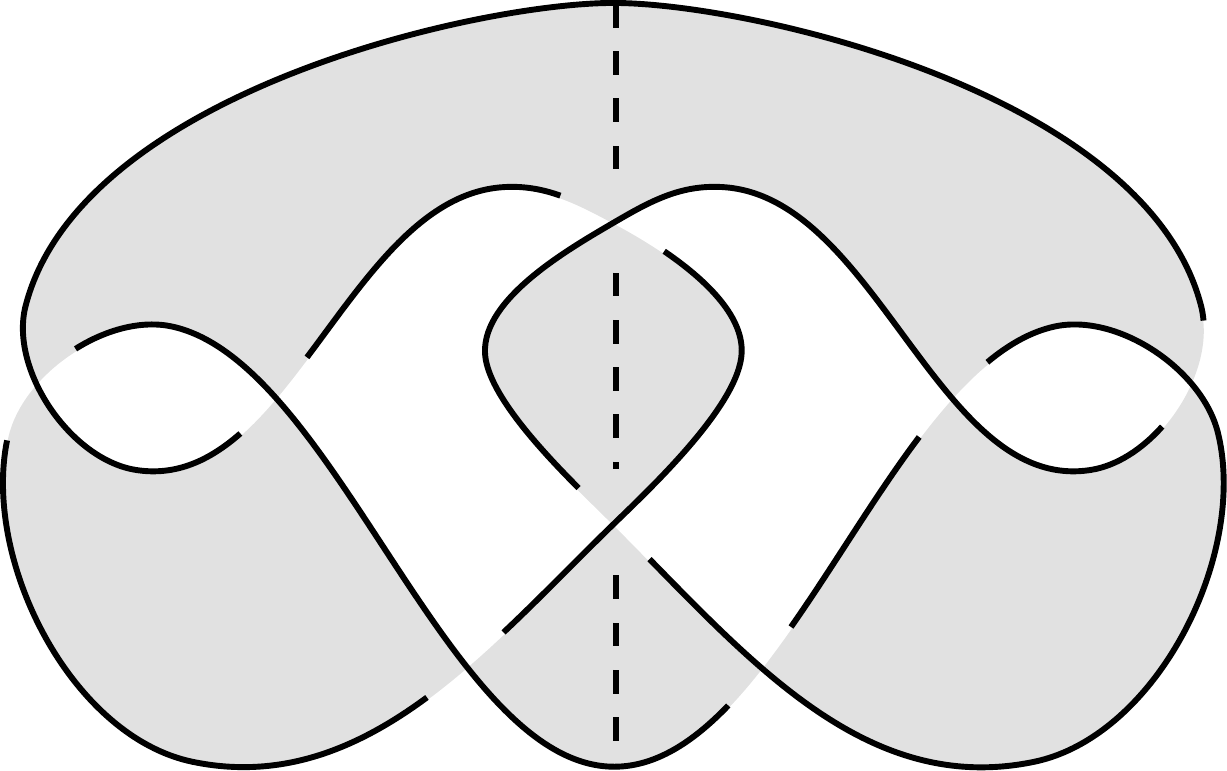}}
\caption{A butterfly Seifert surface for the directed strong inversion $5_2b^-$ (left) on $5_2$, showing that $\bg_3(5_2b^-) \leq 2$, and a butterfly Seifert surface for the directed strong inversion $6_1b^-$ (right) on $6_1$, showing that $\bg_3(6_1b^-) \leq 2$. The fixed arcs are indicated as dotted lines.}
 \label{fig:52butterfly}
\end{figure}

Note that unlike the butterfly 4-genus, the definition of the butterfly 3-genus depends on the direction on the strongly invertible knot. The following proposition shows that butterfly Seifert surfaces do not always exist.

\begin{proposition} \label{prop:arf}
Let $K$ be a directed strongly invertible knot. If the Arf invariant $a(K) = 1$, then $\bg_3(K) = \infty$.
\end{proposition}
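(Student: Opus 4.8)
The plan is to prove the contrapositive: if $K$ bounds a butterfly Seifert surface, then $a(K) = 0$. Recall that the Arf invariant of $K$ can be computed from any Seifert surface $S$ as $a(K) = \mathrm{Arf}(q)$, where $q \colon H_1(S;\Z/2) \to \Z/2$ is the quadratic refinement $q(x) = V(x,x) \bmod 2$ of the mod $2$ intersection form, and $V(x,y) = \mathrm{lk}(x, y^+)$ is the Seifert form (with $y^+$ the positive pushoff of $y$ off $S$). So I would fix a butterfly Seifert surface $S$ for $K$ with strong inversion $\tau$, whose pointwise fixed arc $\alpha$ separates $S$ into two wings $S_+$ and $S_-$ exchanged by $\tau$, and compute $\mathrm{Arf}(q)$.

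First I would record the homological consequences of the butterfly structure. Since $S_+$ and $S_-$ meet only along the contractible arc $\alpha$, a Mayer--Vietoris argument gives a direct sum decomposition $H_1(S;\Z/2) = A \oplus B$ with $A = H_1(S_+;\Z/2)$ and $B = H_1(S_-;\Z/2)$, and $\tau_*$ interchanges $A$ and $B$. Moreover, curves supported in the interiors of the two wings are disjoint, so $A$ and $B$ are orthogonal with respect to the intersection form; as this form is nondegenerate on the Seifert surface of a knot, its restrictions to $A$ and to $B$ are each nondegenerate (symplectic).

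The crux is to understand how $\tau$ interacts with the Seifert data. The key observation is that although $\tau$ preserves the orientation of $S^3$ (it is a strong inversion), its restriction to $S$ is \emph{orientation-reversing}: near an interior point of $\alpha$ it acts as a reflection across $\alpha$. Consequently $\tau$ sends the positive pushoff to the negative pushoff, and since linking numbers are preserved by the orientation-preserving map $\tau$ on $S^3$, one obtains the transpose relation $V(\tau_* x, \tau_* y) = V(y,x)$. Passing to $\Z/2$, this immediately yields two facts: $\tau_*$ preserves the mod $2$ intersection form, and $q(\tau_* x) = V(\tau_* x, \tau_* x) = V(x,x) = q(x)$, so $q$ is $\tau_*$-invariant. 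Establishing this orientation-reversal and the resulting transpose relation cleanly is the main obstacle; everything else is formal.

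Finally I would assemble the pieces. Because $A \perp B$, the form $q$ splits as the orthogonal sum $q|_A \perp q|_B$, so $\mathrm{Arf}(q) = \mathrm{Arf}(q|_A) + \mathrm{Arf}(q|_B)$ by additivity of the Arf invariant. Since $\tau_*$ restricts to an isomorphism $A \to B$ preserving both the symplectic form and $q$, it identifies $(B, q|_B)$ with $(A, q|_A)$, whence $\mathrm{Arf}(q|_B) = \mathrm{Arf}(q|_A)$. Therefore $\mathrm{Arf}(q) = 2\,\mathrm{Arf}(q|_A) \equiv 0 \pmod 2$, so $a(K) = 0$. Taking the contrapositive, $a(K) = 1$ forces the nonexistence of any butterfly Seifert surface, i.e.\ $\bg_3(K) = \infty$.
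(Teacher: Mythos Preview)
Your proof is correct and takes a genuinely different route from the paper's. The paper argues geometrically: it realizes each wing $S_i$ as a disk with bands, performs pass moves to unlink the bands of $S_0$ from those of $S_1$, and then observes that the resulting surface is a boundary connect sum whose boundary is $K_0 \# K_1$ with $K_0$ isotopic to $K_1$; since the Arf invariant is a pass-move invariant and is additive under connect sum, $a(K) = a(K_0) + a(K_1) = 0$. Your argument is purely algebraic on the Seifert form: you use the butterfly splitting $H_1(S;\Z/2) = A \oplus B$ with $A \perp B$ for the intersection form, establish the transpose relation $V(\tau_* x,\tau_* y) = V(y,x)$ from the fact that $\tau$ reverses orientation on $S$ while preserving it on $S^3$, and conclude that $\tau_*$ carries $(A,q|_A)$ isomorphically onto $(B,q|_B)$, forcing $\mathrm{Arf}(q) = 2\,\mathrm{Arf}(q|_A) = 0$.

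Both arguments encode the same underlying symmetry, but yours avoids the pass-move machinery entirely and stays at the level of the quadratic form; the paper's version has the advantage of making visible the knots $K_0 \cong K_1$ bounding the wings, which connects more directly to the butterfly link $\BL(K)$ used elsewhere in the paper.
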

\begin{proof}
  Suppose that $\bg_3(K) < \infty$ so that there exists a butterfly surface $S \subset S^3$ with $\partial S = K$. By definition the fixed arc $\alpha$ separates $S$ into two surfaces $S_0$ and $S_1$. Let $K_0 = \partial S_0$ and $K_1 = \partial S_1$. By thinking of each of $S_0$ and $S_1$ as a disk with bands attached we see that away from $\alpha$, we can perform pass moves between the bands of $S_0$ and $S_1$ until they are unlinked (see \cite{Kauffman} for a definition of pass moves). Denote the isotoped unlinked surfaces $S_0'$ and $S_1'$ respectively. A boundary connect sum of $S_0'$ and $S_1'$ near $\alpha$ gives a surface $S'$ which is pass move equivalent to $S$. Hence $a(K) = a(\partial S) = a(\partial S')$ since the Arf invariant is preserved by pass moves \cite{Kauffman}. However, the additivity of the Arf invariant under connect sum implies that $a(\partial S') = a(K_0) + a(K_1) = 0$ where we use that $K_0$ and $K_1$ are isotopic since the strong inversion exchanges them. 
\end{proof}
There are additional obstructions to the existence of a butterfly Seifert surface (see for example Proposition \ref{prop:linkobs}). We also consider the equivariant 3-genus. 

\begin{definition}
The \emph{equivariant 3-genus} $\widetilde{g}_3(K)$ is the minimal genus of an orientable surface $S \subset S^3$ which has boundary $K$, and is equivariant with respect to the strong inversion.
\end{definition}
Edmonds' theorem \cite{Edmonds} states that the equivariant 3-genus of a periodic knot is equal to its (non-equivariant) 3-genus. If an analogous result were to hold for strongly invertible knots then the equivariant 3-genus would be additive under equivariant connect sum. However, Hiura showed that for strongly invertible knots the equivariant 3-genus is sometimes larger than the 3-genus \cite{Ryota}. Nonetheless we are able to show this additivity directly. We can also show the additivity of the butterfly 3-genus.


\begin{proposition}
\label{prop:add}
Given directed strongly invertible knots $K_1$ and $K_2$ and an arbitrary knot $K$, the following hold.
\begin{enumerate}[label=(\roman*)]
  \item \label{item:bg} $\bg_3(K_1)+\bg_3(K_2) = \bg_3(K_1\widetilde{\#}K_2)$.
  \item \label{item:g} $\widetilde{g}_3(K_1) + \widetilde{g}_3(K_2) = \widetilde{g}_3(K_1\widetilde{\#}K_2)$.
  \item $\widetilde{g}_3(K \# rK) = \bg_3(K \# rK) = 2g_3(K)$. 
\end{enumerate}
Here we use the natural directed strong inversion on $K \# rK$; see the end of Section \ref{sec:concordance_group}. Recall that $\widetilde{\#}$ refers to the equivariant connect sum; see Definition \ref{def:equi_connect_sum}.

\end{proposition}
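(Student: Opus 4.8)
The plan is to prove each equality by establishing the two inequalities separately, running (i) and (ii) in parallel, and then deducing (iii) from ordinary Seifert genus additivity together with one explicit construction.

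For the ``$\le$'' directions in (i) and (ii) I would argue by equivariant gluing. The equivariant connect sum is carried out inside a $\tau$-invariant ball $B$ about the identified fixed point, outside of which the knot is the disjoint union of the two summand tangles. Given equivariant Seifert surfaces $S_1, S_2$ realizing $\widetilde{g}_3(K_1), \widetilde{g}_3(K_2)$ (resp.\ butterfly surfaces realizing $\bg_3$), I would form their boundary connect sum along a band lying in $B$ whose core runs along the half-axis through the connect-sum point, chosen so that $\tau$ carries the band to itself. This yields an equivariant Seifert surface for $K_1 \widetilde{\#} K_2$ of genus $g(S_1)+g(S_2)$; in the butterfly case the two fixed arcs join across the band into a single arc that still disconnects the result and coincides with the chosen half-axis, so the glued surface is again butterfly. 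Hence $\widetilde{g}_3(K_1\widetilde{\#}K_2) \le \widetilde{g}_3(K_1)+\widetilde{g}_3(K_2)$ and likewise for $\bg_3$ (with the convention that $\infty$ dominates).

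The content is the ``$\ge$'' direction, which I would obtain by equivariantly cutting a minimal surface. Let $S$ realize $\widetilde{g}_3(K_1\widetilde{\#}K_2)$ (resp.\ $\bg_3$), and let $\Sigma = \partial B$ be the $\tau$-invariant decomposing $2$-sphere; it meets the axis in two fixed points $c, d$ and meets $K_1\widetilde{\#}K_2$ in two points $P, Q$ with $\tau(P)=Q$. After an equivariant transverse perturbation, $S \cap \Sigma$ is a $\tau$-invariant $1$-manifold consisting of a single arc $\beta$ from $P$ to $Q$ together with a (possibly empty) collection of circles; since $\tau$ swaps the endpoints of $\beta$, it fixes the interior point $m = \beta \cap \mathrm{axis} \in \{c,d\}$. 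I would then remove the circles by an equivariant innermost-disk surgery: an innermost disk $D \subset \Sigma$ has $\tau$-orbit either a disjoint pair $\{D, \tau D\}$ or a single invariant disk, and surgering $S$ along the orbit neither increases genus nor (after discarding any closed components) destroys connectedness or orientability, while taking place away from $\beta$ and the fixed arc. Choosing $S$ to minimize the number of such circles forces $S \cap \Sigma = \beta$. Cutting $S$ along $\Sigma$ then gives $S_i = S \cap \overline{B_i}$, where $\tau$ preserves each complementary ball (it preserves each summand); these are equivariant Seifert surfaces for $K_1, K_2$ with $g(S_1)+g(S_2) = g(S)$ by the Euler characteristic count for cutting along a separating arc, and in the butterfly case $\beta$ meets the fixed arc exactly at $m$, so cutting splits the fixed arc into the two half-axes and each $S_i$ is butterfly. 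Minimizing over $S$ yields the reverse inequalities (and, run in reverse, shows that finiteness of $\bg_3$ on the sum forces it on each summand, handling the $\infty$ case).

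For (iii), the natural strong inversion $\rho$ on $K \# rK$ interchanges the summands, so I would take a minimal genus Seifert surface $\Sigma_K$ for the summand $K$ and boundary-connect-sum it with its image $\rho(\Sigma_K)$, a Seifert surface for the other summand, along the invariant connect-summing band; the result is a butterfly surface of genus $2g_3(K)$ whose fixed arc runs along the band, giving $\widetilde{g}_3(K\# rK) \le \bg_3(K \# rK) \le 2 g_3(K)$. For the matching lower bound, every equivariant Seifert surface is in particular a Seifert surface, so $\widetilde{g}_3(K\# rK) \ge g_3(K \# rK) = 2 g_3(K)$ by ordinary additivity of the Seifert genus together with $g_3(rK) = g_3(K)$; combining forces all three quantities to equal $2g_3(K)$. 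I expect the main obstacle to be the equivariant innermost-disk surgery in the ``$\ge$'' step: one must check that invariant intersection circles --- on which $\tau$ acts freely or with two fixed points on $\Sigma$ --- can be removed by an appropriately chosen invariant or paired surgery that keeps the surface a connected orientable (and, where relevant, butterfly) Seifert surface, the remaining verifications being routine Euler characteristic bookkeeping.
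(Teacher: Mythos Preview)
Your approach matches the paper's: both prove $\leq$ by equivariant boundary-connect-summing surfaces, prove $\geq$ via the equivariant innermost-disk argument on the decomposing sphere (the paper simply cites Lickorish for the non-equivariant template and asserts the equivariant analog), and handle (iii) by boundary-summing a minimal Seifert surface for $K$ with its image under $\rho$ together with ordinary genus additivity. The obstacle you flag does not actually arise: since $\tau|_S$ is orientation-reversing while $\tau$ preserves each complementary ball of $\Sigma$, a $\tau$-invariant intersection circle would force $\tau$ to swap the two local sides of $S$ along that circle and hence swap $B_1$ and $B_2$, a contradiction, so every circle comes paired with a disjoint $\tau$-translate and the paired surgery suffices.
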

\begin{proof}
First, given a pair of butterfly surfaces for $K_1$ and $K_2$ in $S^3$, the equivariant boundary connect sum of these surfaces is a butterfly surface for $K_1 \widetilde{\#} K_2$ so that $\bg_3(K_1)+\bg_3(K_2) \geq \bg_3(K_1\widetilde{\#}K_2)$. Next, given a minimal genus butterfly surface $\widetilde{S}$ for $K_1 \widetilde{\#} K_2$, consider the equivariant sphere $\Sigma \subset S^3$ decomposing $K_1 \widetilde{\#} K_2$ as a connected sum. After performing an equivariant isotopy we may assume that $\Sigma$ and $\widetilde{S}$ intersect transversely. Then the same type of argument as in the non-equivariant setting (see for example \cite[Chapter 2]{Lickorish}) decomposes $\widetilde{S}$ as the boundary sum of an equivariant Seifert surface $S_1$ for $K_1$ and an equivariant Seifert surface $S_2$ for $K_2$. Since the fixed arc in $\widetilde{S}$ is separating, so are the fixed arcs in $S_1$ and $S_2$. Hence $\bg_3(K_1)+\bg_3(K_2) \leq \bg_3(K_1\widetilde{\#}K_2)$, proving \ref{item:bg}. A similar argument proves \ref{item:g}.

Finally, a boundary sum of any minimal genus Seifert surface for $K$ with itself gives a butterfly Seifert surface of genus $2g_3(K)$ for $K\# rK$.
\end{proof}

\section{Some new strongly invertible concordance invariants} \label{sec:concordance_invariants}
In this section we associate to $K$ a new 2-component link $\BL(K)$ with linking number 0 which we call the butterfly link, and another 2-component link $\QBL(K)$ with even linking number $2 \cdot \lk(K)$ which we call the quotient butterfly link. The integer $\lk(K)$ is a strongly invertible concordance invariant, and we apply Kojima and Yamasaki's eta polynomial (see Definition \ref{def:KY}) to each of $\BL(K)$ and (when $\lk(K) = 0$) $\QBL(K)$ to get strongly invertible concordance invariants. Unlike Sakuma's invariant \cite{Sakuma}, our strongly invertible concordance invariants are able to distinguish some knots from their antipode and to obstruct the strong inversions on $8_9$ from being equivariantly slice. We also define the group homomorphisms $\mathfrak{b}$ and $\mathfrak{qb}$. 

\begin{definition} \label{def:butterfly_link}
Let $K$ be a directed strongly invertible knot. Consider a band which attaches to $K$ at the two fixed points of the strong inversion, and which runs parallel to the chosen half-axis. Performing a band move on $K$ along this band produces a 2-component link, with linking number depending on the number of twists in the band. The \emph{butterfly link} $\BL(K)$ of $K$ is the 2-component 2-periodic link with linking number 0 obtained by this band move on $K$. Both components of $\BL(K)$ are oriented to agree with the orientation on the axis on the strands of $\BL(K)$ parallel to the half-axis. See Figure \ref{fig:butterflylink} in the introduction for an example.
\end{definition}

The following proposition allows us to relate a butterfly surface for a knot $K$ to an equivariant surface for $\BL(K)$.
\begin{proposition}
\label{prop:BL_bounds_surface}
Let $S \subset B^4$ be a butterfly surface for a directed strongly invertible knot $(K,\tau)$. Then the butterfly link $\BL(K)$ bounds a two component orientable surface $S' = S_1 \cup S_2$ in $B^4$ with $g(S_1) = g(S_2) = \frac{1}{2}g(S)$, and the components are exchanged by the extension of $\tau$ to $B^4$ corresponding to $S$. Moreover, there exists an equivariant $(I \times D^2) \subset B^4$ such that $(\partial I) \times D^2 \subset S'$, $I \times (\partial D^2) \subset (S \cup B)$ and $S' = [S \backslash (I \times \partial D^2)] \cup [(\partial I) \times D^2]$, where $B\subset S^3$ is a band with the corresponding band move on $K$ giving $\BL(K)$.
\end{proposition}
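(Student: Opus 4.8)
The plan is to realize the proposition as the four-dimensional analogue of cutting the butterfly surface $S$ open along its fixed arc, carried out equivariantly. Let $\overline{\tau}$ be the extension of $\tau$ determined by $S$ and set $D = \mathrm{Fix}(\overline{\tau})$. Since $\overline{\tau}$ is an orientation-preserving involution of the $\Z/2$-acyclic manifold $B^4$, Smith theory makes $D$ a $\Z/2$-acyclic surface, hence a (possibly knotted) disk with $\partial D$ the fixed axis of $\tau$, and the normal action is $-\mathrm{Id}$. Because $S$ is a butterfly surface it meets $D$ in exactly the separating arc $\alpha$, whose endpoints are the two fixed points $p_1, p_2$ on $K$. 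First I would record the local model along $\alpha$: averaging a metric and applying the equivariant tubular neighborhood theorem gives coordinates $(x_1,x_2,x_3,x_4)$ in which $\overline{\tau} = \mathrm{diag}(1,1,-1,-1)$, the arc $\alpha$ is the $x_1$-axis, $D$ is the $(x_1,x_2)$-plane, and $S$ is the $(x_1,x_3)$-plane, the two wings $S_A = \{x_3 > 0\}$ and $S_B = \{x_3 < 0\}$ being exchanged by $\overline{\tau}$.

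Next I would build the surgery region. Choose the band $B$ so that its core is the chosen half-axis $h$; then $B$ is $\overline{\tau}$-invariant (reflected across $h$), the band move along $B$ produces the $2$-periodic link $\BL(K)$, and $S \cup B$ is a connected spanning surface of genus $g(S)$. The circle $c = \alpha \cup h$ lies in $(S \cup B) \cap D$ and is pointwise fixed by $\overline{\tau}$. Because $D$ is a disk and $\alpha$ is properly embedded in it, $c$ bounds a subdisk $\Delta \subset D$ whose interior is disjoint from $S \cup B$. Pushing $\Delta$ off $D$ along the two normal directions—interchanged by $\overline{\tau}$—produces two parallel caps and the equivariant region $I \times D^2 \subset B^4$ of the statement, with $I \times \partial D^2$ an annular neighborhood of $c$ in $S \cup B$ and $(\partial I)\times D^2$ the two caps. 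Then $S' = [(S \cup B)\setminus (I \times \partial D^2)]\cup [(\partial I)\times D^2]$ is a genuine surface surgery along $c$, and $\overline{\tau}(I \times D^2) = I \times D^2$ forces $\overline{\tau}(S') = S'$.

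With $S'$ constructed, the three assertions follow. The circle $c$ separates $S \cup B$—cutting $S$ along $\alpha$ and $B$ along its core—so the surgery disconnects $S'$ into $S_1 \supset S_A$ and $S_2 \supset S_B$; as $\overline{\tau}$ interchanges the wings and the two caps, it interchanges $S_1$ and $S_2$. The surgery is supported in the interior, so $\partial S' = \partial(S \cup B) = \BL(K)$, and the disjointness of $S_1$ and $S_2$ forces their linking number to be $0$, identifying the boundary with the linking-number-zero butterfly link. For the genera, the net effect on $S$ is to cut it along the properly embedded arc $\alpha$, which raises the Euler characteristic by $1$; thus $\chi(S') = \chi(S)+1 = 2 - 2g(S)$, and since $S_1, S_2$ have equal genus and each bounds a single component of $\BL(K)$, solving $2 - 4g(S_1) = 2 - 2g(S)$ gives $g(S_1) = g(S_2) = \frac{1}{2}g(S)$.

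The main obstacle is the equivariant construction of the second paragraph. One must check that the caps obtained by pushing $\Delta$ off the fixed disk remain embedded and meet $S \cup B$ only in the required pushoffs of $c$—this uses crucially that $S \cap D = \alpha$ and that the normal $-\mathrm{Id}$ action is compatible with the framing of the annular neighborhood of $c$. Once the region $I \times D^2$ is produced, identifying the boundary with $\BL(K)$ is immediate from disjointness of $S_1$ and $S_2$, and the genus statement is a routine Euler characteristic count; so essentially all of the content is in making the cut along $\alpha$ both embedded and equivariant.
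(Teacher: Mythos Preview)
Your approach is essentially the paper's: both use the subdisk $\Delta$ of the fixed-point disk bounded by $\alpha$ and the chosen half-axis as the core of an equivariant $I\times D^2$ along which to cut $S$, and both pin down the band afterward via the linking-number-zero condition forced by the disjointness of $S_1$ and $S_2$. The one place the paper is more explicit is precisely the framing issue you flag at the end: rather than fixing $B$ first and then asserting compatibility, the paper lets the equivariant tubular neighborhood of $\Delta$ determine $B$, choosing the twisting so that the $S^0$-subbundle on $\partial\Delta$ traced by $S$ along $\alpha$ and by $B$ along $h$ extends over all of $\Delta$---this simultaneously selects the correct $B$ and guarantees that the pushoff caps glue to $S$ along the required annulus.
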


\begin{proof}
Let $\overline{\tau}\colon (B^4,S) \to (B^4,S)$ be an extension of $\tau$, with fixed set a disk $F \subset B^4$. Then $S \cap F$ is an arc $\alpha$ separating $F$ into two components. Since $(K,\tau)$ is directed, there is a preferred choice of half-axis in $S^3$ which is contained in a component $D$ of $F - \alpha$. We can further choose an equivariant tubular neighborhood $N(D)$ of $D$ and identify $\partial N(D)$ with the unit tangent bundle over $D$. Then $\partial N(D)\cap S$ is an $S^0$-subbundle of $\partial N(D)|_{\alpha}$. Consider an equivariant band $B$ in $S^3$ containing the chosen half-axis and intersecting $K$ in a neighborhood of each fixed point. We can choose $B$ so that $\partial B - K$ is contained in $\partial N(D)$ and hence $B \cup S$ intersects $\partial N(D)$ in an $S^0$-subbundle of $\partial N(D)|_{\partial D}$. Note that there are furthermore many choices of $B$ corresponding to twists around the axis. Choose $B$ so that the $S^0$-subbundle extends across all of $D$ to an $S^0$-subbundle $E$ of $\partial N(D)$, which naturally has two sections $D_1$ and $D_2$. We can now perform a band move on $K$ along $B$ to obtain a link $L$ which bounds the (disconnected) surface 
\[
[S - N(D)] \cup D_1 \cup D_2.
\]
Then $L$ is a 2-component link with linking number 0 since the components bound disjoint surfaces in $B^4$. Hence $L = \BL(K)$. Finally, we can extend $E$ to an equivariant $I$-subbundle of $N(D)$ to get the desired $(I \times D^2) \subset S^4$ with the properties stated in the proposition.
\end{proof}

\begin{remark}
In Proposition \ref{prop:BL_bounds_surface} we moreover have that if the 2-periodic link $\BL(K)$ bounds an equivariant 2-component surface in $B^4$ (which is reconnected by the surgery band), then reattaching the surgery band gives an equivariant surface for the strongly invertible knot. In particular, if $\BL(K)$ is an equivariantly slice link, then $K$ is an equivariantly slice strongly invertible knot.
\end{remark}
Now that we have defined the butterfly link $\BL(K)$, we can obtain a strongly invertible concordance invariant by applying the Kojima-Yamasaki $\eta$-polynomial, which we now recall. 
\begin{definition}[\cite{KY}] \label{def:KY}
Let $L = K_1 \cup K_2$ be a 2-component link with lk$(K_1,K_2) = 0$ and let $X = S^3 \backslash K_1$. Let $\widetilde{X}$ be the infinite cyclic cover of $X$ so that $H_1(\widetilde{X})$ is naturally a $\mathbb{Z}[t,t^{-1}]$-module where $t$ generates the group of covering transformations. Let $\ell$ be a homological longitude of $K_2$ and let $\widetilde{\ell}$ and $\widetilde{K}_2$ be nearby lifts of $\ell$ and $K_2$ to $\widetilde{X}$. Let $f(t) \in \mathbb{Z}[t,t^{-1}]$ so that $f(t)[\widetilde{\ell}] = 0 \in H_1(\widetilde{X})$. Finally, let $\zeta$ be a 2-chain in $\widetilde{X}$ with $\partial \zeta = f(t)\cdot\widetilde{\ell}$ and $\zeta$ transverse to $t^i\widetilde{K}_2$ for all $i$. Then the \emph{Kojima-Yamasaki $\eta$-polynomial} is
\[
\eta(L) = \frac{1}{f(t)}\sum_{i=-\infty}^{\infty} \mbox{Int}(\zeta,t^i\widetilde{K}_2)t^i \in \mathbb{Z}[t,t^{-1}], 
\] 
where Int$(\zeta,t^i\widetilde{K}_2)$ is the signed count of intersection points between $\zeta$ and $t^i\widetilde{K}_2$ in $\widetilde{X}$. 
\end{definition}
\begin{remark}
The coefficient of $t^i$ in $\eta(L)$ can be thought of as the linking number between $t^i\widetilde{K}_2$ and $\widetilde{K}_2$ in $\widetilde{X}$. In general, $\eta(L)$ depends on the order of $K_1$ and $K_2$. 
\end{remark}
We will use the following properties of the $\eta$-polynomial.
\begin{theorem}[\cite{KY}]
\label{thm:eta_properties}
Let $L = K_1 \cup K_2$ be a 2-component link with lk$(K_1,K_2) = 0$. Then $\eta(L)$ has the following properties.
\begin{enumerate}[label=(\roman*)]
\item $\eta(L)(t) = \eta(L)(t^{-1})$
\item $\eta(L)(1) = 0$
\item $\eta(L)$ does not depend on the orientation of $L$.
\item $\eta(L)$ is a topological link concordance invariant. More precisely, if $L \subset S^3 \times \{0\}$ and $L' \subset S^3 \times \{1\}$ bound a pair of disjoint topologically locally flatly embedded cylinders in $S^3 \times I$, then $\eta(L) = \eta(L')$. Moreover if $L$ is the unlink then $\eta(L) = 0$.
\end{enumerate}
\end{theorem}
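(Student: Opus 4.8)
The plan is to read the statement off from the interpretation of $\eta(L)$ recorded in the Remark after Definition \ref{def:KY}: once we divide by $f(t)$, the coefficient of $t^i$ is the ordinary linking number $\operatorname{lk}(t^i\widetilde K_2,\widetilde K_2)$ computed inside the infinite cyclic cover $\widetilde X$. My first step would therefore be to establish that $\eta(L)$ is independent of the auxiliary data (the longitude representative $\ell$, the lift $\widetilde K_2$, the annihilator $f$, and the bounding $2$-chain $\zeta$): any two choices of $\zeta$ differ by a $2$-cycle and any two annihilators differ by an element of the annihilator ideal of $[\widetilde\ell]$, neither of which changes the normalized intersection counts. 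With this \emph{equivariant self-linking} description in hand, properties (i)--(iii) reduce to short homological arguments while (iv) becomes a cobordism argument.

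For (i), write $c_i$ for the coefficient of $t^i$, so $c_i=\operatorname{lk}(t^i\widetilde K_2,\widetilde K_2)$ in $\widetilde X$. The deck transformation $t^{-i}$ is a homeomorphism, hence preserves linking numbers, and ordinary linking is symmetric, so
\[
c_i=\operatorname{lk}(t^i\widetilde K_2,\widetilde K_2)=\operatorname{lk}(\widetilde K_2,t^{-i}\widetilde K_2)=\operatorname{lk}(t^{-i}\widetilde K_2,\widetilde K_2)=c_{-i},
\]
which is precisely $\eta(L)(t)=\eta(L)(t^{-1})$. For (ii) I would collapse the cover: the projection $\pi(\zeta)$ is a $2$-chain in $X$ with $\partial\pi(\zeta)=f(1)\,\ell$, and the total count $\sum_i\operatorname{Int}(\zeta,t^i\widetilde K_2)=\operatorname{Int}(\pi(\zeta),K_2)=\operatorname{lk}(f(1)\ell,K_2)=f(1)\operatorname{lk}(\ell,K_2)$. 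Since $\ell$ is a homological ($0$-framed) longitude we have $\operatorname{lk}(\ell,K_2)=0$, so the numerator of $\eta$ vanishes at $t=1$ and hence $\eta(L)(1)=0$, choosing $f$ with $f(1)\neq0$ as we may by the independence established above. For (iii), reversing the orientation of $K_1$ inverts the deck group and so sends $t\mapsto t^{-1}$, while reversing $K_2$ negates both $\widetilde\ell$ and $\widetilde K_2$ and thus leaves each $\operatorname{lk}(t^i\widetilde K_2,\widetilde K_2)$ unchanged; combining these with the symmetry from (i) shows $\eta(L)$ is unchanged under any reversal.

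The substantive part is (iv), and this is where I expect the real work. Given disjoint locally flat concordance cylinders $A_1,A_2\subset S^3\times I$ from $L$ to $L'$, I would form the exterior $Y=(S^3\times I)\setminus\nu(A_1)$, a homology cobordism between $X$ and $X'$ whose first homology is infinite cyclic, generated by the meridian of $A_1$ and restricting to the meridians of $K_1$ and $K_1'$ at the two ends. This yields an infinite cyclic cover $\widetilde Y$ restricting to $\widetilde X$ and $\widetilde X'$. Because $\operatorname{lk}(K_1,K_2)=\operatorname{lk}(K_1',K_2')=0$, the annulus $A_2$ lifts to an annulus $\widetilde A_2\subset\widetilde Y$ cobounding $\widetilde K_2$ and a lift $\widetilde K_2'$, and similarly the longitude annulus lifts. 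The key is then to transport the normalized intersection counts across the cobordism: I would extend the bounding $2$-chain $\zeta$ through $\widetilde Y$ and argue, using Poincar\'e--Lefschetz duality in $\widetilde Y$ together with the fact that $\widetilde A_2$ is an equivariant homology between $t^i\widetilde K_2$ and $t^i\widetilde K_2'$, that $\operatorname{Int}(\zeta,t^i\widetilde K_2)=\operatorname{Int}(\zeta',t^i\widetilde K_2')$ for every $i$. The unlink case is then immediate, since each $\widetilde K_2$ bounds a disk in $\widetilde X$ disjoint from all its translates, forcing every coefficient to vanish. The main obstacle, as usual, is the bookkeeping in $\widetilde Y$: checking that the relevant relative classes are well-defined over $\Z[t,t^{-1}]$ and that the signed intersection numbers are genuinely carried across the cobordism. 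This is the technical heart of Kojima and Yamasaki's original argument, and it is the step I would expect to occupy most of the proof.
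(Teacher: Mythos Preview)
The paper does not prove this theorem; it is stated with attribution to Kojima and Yamasaki \cite{KY} and used as a black box. So there is no ``paper's own proof'' to compare against.

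That said, your outline is a faithful sketch of the standard argument and tracks what one finds in \cite{KY}. The well-definedness step and parts (i)--(iii) are correct as written: the symmetry via deck translations, the push-forward argument for $\eta(L)(1)=0$ (your observation that one may take $f=\Delta_{K_1}$ so that $f(1)=\pm1$ is exactly the right fix), and the orientation analysis are all fine. For (iv) your plan is the right one---build the infinite cyclic cover of the exterior of the first concordance annulus, lift the second annulus, and transport the equivariant linking numbers across---and you have correctly identified where the genuine work lies: verifying that the relevant $\Z[t,t^{-1}]$-module classes extend over the cobordism and that the intersection pairing is carried across. One small addition worth making explicit is why $[\widetilde\ell']$ is annihilated by some $f'(t)$ compatible with your chosen $f$ on the other end; in practice one takes a common annihilator (e.g.\ the product of the two Alexander polynomials), which your independence-of-$f$ step already allows. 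Nothing in your proposal is wrong; it simply reproduces the argument the paper is citing rather than anything the paper itself supplies.
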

\begin{definition}
The \emph{butterfly polynomial} $\eta(\BL(K))$ of a strongly invertible knot $K$ is Kojima-Yamasaki's eta polynomial applied to the butterfly link of $K$. Note that this does not depend on an ordering of the components of the link since $\BL(K)$ is symmetric.
\end{definition}
The following lemma shows that the $\eta$-polynomial is additive in certain circumstances.
\begin{lemma}
\label{lemma:eta_link_additivity}
For $i \in \{1,2\}$, let $L_i = J_i \cup K_i$ be a 2-component link in $S^3$. Let $L = J \cup K$ be a component-wise connect sum of $L_1$ and $L_2$ for which there is a connect-summing sphere $\Sigma \subset S^3$ separating $L_1$ and $L_2$ which simultaneously decomposes $J = J_1 \# J_2$ and $K = K_1 \# K_2$. If lk$(J_i,K_i) = 0$ for $i \in \{1,2\}$, then
\[
\eta(L) = \eta(L_1) + \eta(L_2).
\]
\end{lemma}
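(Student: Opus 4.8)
The plan is to prove additivity of the $\eta$-polynomial under this split connect sum directly from the definition, by assembling the chains used to compute $\eta(L_1)$ and $\eta(L_2)$ into a single chain computing $\eta(L)$. The key geometric input is the separating sphere $\Sigma$, which decomposes the ambient $S^3$ into two balls $B_1 \ni L_1$ and $B_2 \ni L_2$, and which is arranged so that it meets $J$ in a single point and meets $K$ in a single point (realizing both connect sums simultaneously). Let $X = S^3 \setminus J$ and $X_i = S^3 \setminus J_i$, with infinite cyclic covers $\widetilde{X}, \widetilde{X}_i$ on which the $\eta$-polynomials are computed. Since $\operatorname{lk}(J_i, K_i) = 0$ for each $i$ and hence $\operatorname{lk}(J,K)=0$, all three $\eta$-polynomials are defined.

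\emph{First} I would set up the cover-level decomposition. The complement $X$ is a union of $X \cap B_1$ and $X \cap B_2$ glued along $\Sigma \setminus J$, which is an annulus (a disk punctured once by $J$). This annulus lifts to the infinite cyclic cover as a disjoint union of translates, and one checks via Mayer--Vietoris that $\widetilde{X}$ is built from copies of the relevant pieces of $\widetilde{X}_1$ and $\widetilde{X}_2$. The crucial point is that the homological longitude $\ell$ of $K$ can be taken as a band sum $\ell_1 \natural \ell_2$ of the longitudes $\ell_i$ of $K_i$, with the connecting band lying in a neighborhood of $\Sigma$; choosing a common annihilating polynomial $f(t)$ that works for both $\ell_1$ and $\ell_2$ (e.g.\ the product, or the order of the relevant module element), we get $f(t)\,[\widetilde{\ell}] = 0$ in $H_1(\widetilde{X})$.

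\emph{Next} comes the main computation: I would take the bounding $2$-chains $\zeta_1, \zeta_2$ with $\partial \zeta_i = f(t)\,\widetilde{\ell}_i$ in $\widetilde{X}_i$, push them into the corresponding pieces of $\widetilde{X}$, and glue them along lifts of the connecting band to produce a $2$-chain $\zeta$ with $\partial \zeta = f(t)\,\widetilde{\ell}$. The intersection count $\operatorname{Int}(\zeta, t^i \widetilde{K}_2)$ then splits, summand by summand, into the contributions from $\zeta_1$ meeting the $K_1$-part and $\zeta_2$ meeting the $K_2$-part; the only possible extra intersections occur in the gluing region near $\Sigma$, and these can be arranged to cancel or vanish because the band and the longitude push-offs can be isotoped off one another there. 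Dividing by the common $f(t)$ and collecting coefficients of $t^i$ then yields $\eta(L) = \eta(L_1) + \eta(L_2)$.

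\emph{The main obstacle} I expect is the bookkeeping of the infinite cyclic cover and the verification that the glued chain $\zeta$ has exactly the prescribed boundary with no stray intersections introduced in the gluing region near $\Sigma$. One must be careful that the lifts $\widetilde{\ell}_i$ and $\widetilde{K}_i$ chosen in each piece are compatible under the identification of covers, and that the choice of common $f(t)$ does not alter the normalized polynomial (this is fine since dividing by $f(t)$ makes $\eta$ independent of the annihilator chosen). An alternative, possibly cleaner, route is to invoke the concordance invariance from Theorem \ref{thm:eta_properties}: one can exhibit $L$ as concordant to a split union realizing both $L_1$ and $L_2$ on separate $S^3$ factors and appeal to a direct additivity for split/distant unions, though setting up the concordance carefully still requires the same control near $\Sigma$. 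I would present the direct chain-level argument as the primary proof and note the concordance-theoretic perspective as motivation.
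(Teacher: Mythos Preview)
Your proposal is correct and follows essentially the same approach as the paper: decompose the infinite cyclic cover $\widetilde{X}_J$ along the lift of $\Sigma \setminus J$, choose a common annihilator $f(t)$, build bounding $2$-chains $\zeta_1,\zeta_2$ on each side, and glue them along bands to obtain $\zeta$ whose intersection numbers with the translates of $\widetilde{K}$ split additively. One small wording correction: the lift of the annulus $\Sigma \setminus J$ to $\widetilde{X}_J$ is a single connected infinite strip $\mathbb{R}\times(0,1)$ (since the core of the annulus is a meridian of $J$), not a disjoint union of translates; this does not affect your argument.
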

\begin{proof}
Let $\widetilde{X}_J$ be the infinite cyclic cover of $S^3 \backslash J$. In this infinite cyclic cover, $\Sigma \backslash J$ lifts to an infinite strip $\widetilde{\Sigma} \cong \mathbb{R} \times (0,1)$. Cutting $\widetilde{X}_J$ along $\widetilde{\Sigma}$ splits $\widetilde{X}_J$ into two components: $\widetilde{X}_{J_1}$ and $\widetilde{X}_{J_2}$, which are the infinite cyclic covers of $S^3 \backslash J_1$ and $S^3 \backslash J_2$ respectively. Let $\widetilde{K}, \widetilde{\ell}_K \subset \widetilde{X}_J$ be nearby lifts of $K$ and the homological longitude of $K$ respectively. Then $\widetilde{K} = \widetilde{K}_1 \# \widetilde{K}_2$ and $\widetilde{\ell}_K = \widetilde{\ell}_{K_1} \# \widetilde{\ell}_{K_2}$, where $\widetilde{K_i}$ and $\widetilde{\ell}_{K_i}$ are nearby lifts of $K_i$ and a homological longitude of $K_i$ for $i \in \{1,2\}$. Let $f(t) \in \mathbb{Z}[t,t^{-1}]$ such that $f(t)[\widetilde{\ell}_{K_1}] = 0 \in H_1(\widetilde{X}_{J_1})$ and $f(t)[\widetilde{\ell}_{K_2}] = 0 \in H_1(\widetilde{X}_{J_2})$. Then for $i \in \{1,2\}$, let $\zeta_i$ be a 2-chain in $\widetilde{X}_{J_i} \subset \widetilde{X}_J$ with $\partial \zeta_i = f(t) \cdot \widetilde{\ell}_{K_i}$ transverse to all translates of $\widetilde{K}_i$. Let $\zeta$ be the 2-chain in $\widetilde{X}_J$ with $\partial \zeta = f(t) \cdot \widetilde{K}$ which is obtained by gluing $\zeta_1$ and $\zeta_2$ together along bands realizing translates of the connect sum $\widetilde{\ell}_K = \widetilde{\ell}_{K_1} \# \widetilde{\ell}_{K_2}$. Then the points of intersection between $\zeta$ and $t^i\widetilde{K}$ are precisely the points of intersection between $\zeta_1$ and $t^i\widetilde{K}_1$ and between $\zeta_2$ and $t^i\widetilde{K}_2$. Thus for all $i$ we have that
\[
\mbox{Int}(\zeta,t^i\widetilde{K}) = \mbox{Int}(\zeta_1,t^i\widetilde{K}_1) + \mbox{Int}(\zeta_2,t^i\widetilde{K}_2).
\]
Hence $\eta(L) = \eta(L_1) + \eta(L_2)$.
\end{proof}
As a consequence of this lemma, we obtain the following proposition. 
\begin{proposition}
\label{prop:BL}
The butterfly polynomial $\eta(\BL(-))\colon \widetilde{\mathcal{C}} \to \mathbb{Z}[t,t^{-1}]$ is a group homomorphism from the strongly invertible concordance group to the additive group of (symmetric) Laurent polynomials.
\end{proposition}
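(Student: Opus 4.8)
The plan is to establish two facts about the function $K \mapsto \eta(\BL(K))$ defined on directed strongly invertible knots (not yet concordance classes): that it is additive under equivariant connect sum, and that it vanishes on equivariantly slice knots. Together with the existence of inverses in $\widetilde{\mathcal{C}}$, these two facts force the function to descend to a well-defined group homomorphism on $\widetilde{\mathcal{C}}$. Note that the codomain claim---that the image lies in the symmetric Laurent polynomials---is immediate from Theorem \ref{thm:eta_properties}(i).

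The heart of the argument is additivity, $\eta(\BL(K_1 \widetilde{\#} K_2)) = \eta(\BL(K_1)) + \eta(\BL(K_2))$, which I would deduce from Lemma \ref{lemma:eta_link_additivity}. The key geometric claim is that $\BL(K_1 \widetilde{\#} K_2)$ is a componentwise connect sum of $\BL(K_1)$ and $\BL(K_2)$ admitting a single connect-summing sphere. To see this, let $\Sigma$ be the connect-summing sphere of the equivariant connect sum (Definition \ref{def:equi_connect_sum}); near $\Sigma$ the knot and the chosen half-axis run parallel and cross $\Sigma$ transversally. Choosing the butterfly band $B$ for $K_1 \widetilde{\#} K_2$ as the concatenation of the butterfly bands for $K_1$ and $K_2$, so that $B$ runs parallel to the full half-axis, I would check that each of the two components of $\BL(K_1 \widetilde{\#} K_2)$ meets $\Sigma$ in exactly two points: one where the underlying knot arc crosses $\Sigma$ and one where the new band edge crosses $\Sigma$. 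Thus $\Sigma$ simultaneously splits both components, and capping off the pieces on either side with arcs in $\Sigma$ reassembles the inside pieces into $\BL(K_1)$ and the outside pieces into $\BL(K_2)$, exhibiting the required componentwise connect sum. Since every butterfly link has linking number $0$, the hypothesis $\mathrm{lk}=0$ of Lemma \ref{lemma:eta_link_additivity} holds for each summand, and additivity follows (orientation compatibility is a non-issue, as $\eta$ is orientation-independent by Theorem \ref{thm:eta_properties}(iii)). Verifying this simultaneous splitting carefully---in particular that each band edge crosses $\Sigma$ exactly once and that the two sides genuinely reassemble into $\BL(K_1)$ and $\BL(K_2)$---is the main obstacle.

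For the vanishing, suppose $K$ is equivariantly slice, bounding an equivariant disk $D \subset B^4$. As remarked after the definition of butterfly surface, any disk with an orientation-reversing involution is automatically a butterfly surface, since any fixed arc disconnects a disk; hence $D$ is a genus-$0$ butterfly surface. Applying Proposition \ref{prop:BL_bounds_surface} with $S = D$ shows that $\BL(K)$ bounds two disjoint surfaces of genus $\tfrac{1}{2}g(D) = 0$, that is, two disjoint disks, so $\BL(K)$ is a slice link. A slice link is concordant to the unlink, so Theorem \ref{thm:eta_properties}(iv) gives $\eta(\BL(K)) = \eta(\text{unlink}) = 0$.

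Finally, I would combine the two facts to obtain the homomorphism. If $K_1$ and $K_2$ are equivariantly concordant, then $K_1 \widetilde{\#} K_2^{-1}$ is equivariantly slice, where $K_2^{-1}$ denotes the inverse in $\widetilde{\mathcal{C}}$ (the axis-reverse of the mirror). Additivity and vanishing give $\eta(\BL(K_1)) + \eta(\BL(K_2^{-1})) = 0$ and likewise $\eta(\BL(K_2)) + \eta(\BL(K_2^{-1})) = 0$, whence $\eta(\BL(K_1)) = \eta(\BL(K_2))$. Thus $\eta \circ \BL$ is constant on equivariant concordance classes, and the additivity established above then shows that the induced map $\widetilde{\mathcal{C}} \to \mathbb{Z}[t,t^{-1}]$ is a group homomorphism.
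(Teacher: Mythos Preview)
Your proof is correct and follows essentially the same route as the paper: additivity via Lemma \ref{lemma:eta_link_additivity} using the equivariant connect-summing sphere, and vanishing on equivariantly slice knots via Proposition \ref{prop:BL_bounds_surface} and Theorem \ref{thm:eta_properties}. You spell out a few details the paper leaves implicit (why the sphere splits each component in two points, why the slice disk is a butterfly surface, and why additivity plus vanishing forces well-definedness), but the overall strategy is identical.
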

\begin{proof}
Let $K$ and $K'$ be directed strongly invertible knots. It is enough to check that if $K$ is equivariantly slice, then $\eta(\BL(K)) = 0$, and that 
\[
\eta(\BL(K\widetilde{\#}K')) = \eta(\BL(K)) + \eta(\BL(K')).
\]
If $K$ is slice, then the 2-component link $\BL(K)$ is also slice by Proposition \ref{prop:BL_bounds_surface}. Hence by Theorem \ref{thm:eta_properties}, $\eta(\BL(K)) = 0$. Next, the equivariant connect-summing sphere for $K\widetilde{\#}K'$ realizes $\BL(K\widetilde{\#}K')$ as a component-wise connect sum of $\BL(K)$ and $\BL(K')$ as in Lemma \ref{lemma:eta_link_additivity}. Hence $\eta(\BL(K\widetilde{\#}K')) = \eta(\BL(K)) + \eta(\BL(K')).$
\end{proof}
Using the butterfly link, we define some other interesting invariants.
\begin{definition} \label{def:axis_linking}
The \emph{axis-linking} number $\lk(K)$ of $K$ is the linking number between one component of $\BL(K)$ and the axis of symmetry. 
\end{definition}

\begin{definition} \label{def:quotient_butterfly_link}
The \emph{quotient butterfly link} $\QBL(K)$ of $K$ is the 2-component link consisting of the quotient of $\BL(K)$ by the 2-periodic action and the axis in the quotient.
\end{definition}
Note that the linking number between the components of $\QBL(K)$ is $2 \cdot \lk(K)$. The following proposition shows that $\lk(K)$ provides an obstruction to the existence of a butterfly surface in $B^4$ with boundary $K$.
\begin{proposition}
\label{prop:linkobs}
Let $K$ be a directed strongly invertible knot. If $K$ bounds a butterfly surface of genus $g$ in $B^4$, then $\QBL(K)$ bounds an orientable two component surface topologically locally flatly embedded in $B^4$ consisting of a disk with boundary the axis and a genus $g/2$ surface bounding the other component of $\QBL(K)$. In particular, $\lk(K) = 0$.
\end{proposition}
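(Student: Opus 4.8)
The plan is to take a genus $g$ butterfly surface $S$ for $K$ in $B^4$ and pass to the quotient, using the butterfly structure to control exactly what happens to the axis and to the link $\BL(K)$ under the quotient map. First I would invoke Proposition \ref{prop:BL_bounds_surface} to decompose $S$ near the chosen half-axis: there is an equivariant $(I \times D^2) \subset B^4$ with $(\partial I) \times D^2 \subset S'$ and $S' = [S \backslash (I \times \partial D^2)] \cup [(\partial I) \times D^2] = S_1 \cup S_2$, where $S_1$ and $S_2$ are disjoint genus $g/2$ surfaces bounding the two components of $\BL(K)$ in $B^4$ and exchanged by the extension $\overline{\tau}$. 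The extension $\overline{\tau}$ has fixed set a disk $F \subset B^4$, and the chosen half-axis is contained in a component $D$ of $F - \alpha$ where $\alpha = S \cap F$. The key geometric point is that since $S$ is a \emph{butterfly} surface, the fixed arc $\alpha$ is separating on $S$, so $\overline{\tau}$ acts freely on $S'$ (the fixed disk $F$ has been cut out in forming $S'$) and exchanges $S_1$ and $S_2$ without fixed points on them.

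Next I would quotient by $\overline{\tau}$. By Proposition \ref{prop:quotient_top_b4} the quotient $B^4/\overline{\tau}$ is homeomorphic to $B^4$ (the fixed set is the disk $F$). Since $\overline{\tau}$ exchanges $S_1$ and $S_2$ freely, the image of $S' = S_1 \cup S_2$ in the quotient is a single genus $g/2$ surface $\Sigma_1$, homeomorphic to either $S_i$, with boundary the non-axis component of $\QBL(K)$ (which is the quotient of one component of $\BL(K)$). Separately, the fixed disk $F$ descends to a disk in the quotient whose boundary is the axis component of $\QBL(K)$; this gives the disk $\Sigma_2$ bounding the axis. These two surfaces are disjoint in the quotient $B^4$ because $S'$ was disjoint from a neighborhood of $F$ after removing the $I\times \partial D^2$ piece. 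Because the quotient is only guaranteed to be \emph{homeomorphic} to $B^4$ (not diffeomorphic), I would phrase the conclusion topologically: $\QBL(K)$ bounds a topologically locally flatly embedded two-component surface in $B^4$, a disk for the axis and a genus $g/2$ surface for the other component.

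Finally, the linking-number conclusion follows formally: if the two components of $\QBL(K)$ bound disjoint surfaces in $B^4$, then their linking number in $S^3 = \partial B^4$ is $0$, since linking number is the intersection number of one component with a Seifert surface for the other, and disjoint bounding surfaces in $B^4$ force this count to vanish. As the linking number between the components of $\QBL(K)$ is $2\cdot\lk(K)$, we conclude $\lk(K) = 0$. The main obstacle I anticipate is the careful bookkeeping that the quotient of $S'$ is connected of genus $g/2$ and that it remains \emph{disjoint} from the descended fixed disk $F$; this requires using the equivariant product neighborhood $(I \times D^2)$ from Proposition \ref{prop:BL_bounds_surface} to see that, after the surgery producing $S'$, no part of $S'$ meets $F$, so the two pieces of $\QBL(K)$'s spanning surface genuinely stay separated through the quotient. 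A secondary subtlety is the smooth-versus-topological distinction forced by Proposition \ref{prop:quotient_top_b4}, which is why the embeddings are only claimed to be locally flat rather than smooth.
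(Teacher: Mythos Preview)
Your proof is correct and follows essentially the same approach as the paper's: invoke Proposition \ref{prop:BL_bounds_surface} to obtain the two-component surface $S' = S_1 \cup S_2$ exchanged by $\overline{\tau}$, observe $S'$ is disjoint from the fixed disk $F$, quotient using Proposition \ref{prop:quotient_top_b4}, and read off the genus and linking number. Your write-up is more detailed than the paper's (which simply asserts ``This surface is disjoint from $F$''), though your parenthetical that ``the fixed disk $F$ has been cut out in forming $S'$'' is slightly imprecise---only a neighborhood of the component $D$ is removed, and the cleaner reason $S' \cap F = \emptyset$ is just that $\overline{\tau}$ exchanges the disjoint components $S_1$ and $S_2$, so no point of $S'$ can be fixed.
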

\begin{proof}
Suppose $K$ bounds a genus $g$ butterfly surface in $B^4$, invariant with respect to an extension $\tau\colon B^4 \to B^4$ of the strong inversion and let $F \subset B^4$ be the disk of fixed points. Then by Proposition \ref{prop:BL_bounds_surface}, the 2-component link $\BL(K)$ bounds an orientable two component surface $S = S_1 \cup S_2$, with the components exchanged by $\tau$. This surface is disjoint from $F$. Taking the quotient of $(B^4,S,F)$ by $\tau$ gives a pair of orientable surfaces with boundary $\QBL(K)$ in a topological 4-ball by Proposition \ref{prop:quotient_top_b4}. The quotient of $F$ is a disk, and the quotient of $S$ is a genus $g/2$ surface. Hence the components of $\QBL(K)$ have linking number 0 so that $\lk(K) = 0$.
\end{proof}

\begin{proposition}
\label{prop:QBL_lk_additive}
The axis-linking number $\lk(-):\widetilde{\mathcal{C}} \to \mathbb{Z}$ and the eta polynomial of the quotient butterfly link $\eta(\QBL(-))\colon \mbox{ker}(\lk) \to \mathbb{Z}[t,t^{-1}]$ are group homomorphisms.
\end{proposition}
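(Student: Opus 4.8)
The plan is to verify for each map the two properties that, as in the proof of Proposition~\ref{prop:BL}, suffice to produce a well-defined group homomorphism out of $\widetilde{\mathcal{C}}$: that the invariant vanishes on equivariantly slice knots, and that it is additive under the equivariant connect sum $\widetilde{\#}$. Granting that $\widetilde{\mathcal{C}}$ is a group with identity the class of equivariantly slice knots, these two properties together give concordance invariance: if $K_1$ and $K_2$ are equivariantly concordant then $K_1 \widetilde{\#} K_2^{-1}$ is equivariantly slice, and additivity forces the invariants of $K_1$ and $K_2$ to agree (using that $K_2 \widetilde{\#} K_2^{-1}$ is also slice to identify the contribution of $K_2^{-1}$).

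For $\lk$, vanishing on equivariantly slice knots is immediate from Proposition~\ref{prop:linkobs}, since an equivariant slice disk is a genus-$0$ butterfly surface and hence forces $\lk(K) = 0$. For additivity, I would use the observation already exploited in the proof of Proposition~\ref{prop:BL}: the equivariant connect-summing sphere realizes $\BL(K \widetilde{\#} K')$ as a component-wise connect sum of $\BL(K)$ and $\BL(K')$, while the axes themselves connect-sum. Since linking number is additive under a simultaneous connect sum of the two relevant strands, $\lk(K \widetilde{\#} K') = \lk(K) + \lk(K')$. This shows $\lk$ is a homomorphism, and in particular $\ker(\lk)$ is a subgroup of $\widetilde{\mathcal{C}}$, closed under $\widetilde{\#}$.

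For $\eta(\QBL(-))$, restricting to $\ker(\lk)$ is exactly what makes the invariant defined, since the two components of $\QBL(K)$ have linking number $2\lk(K)$, which vanishes precisely when $K \in \ker(\lk)$. Vanishing on equivariantly slice knots follows from Proposition~\ref{prop:linkobs} with $g = 0$: in that case $\QBL(K)$ bounds two disjoint topologically locally flat disks in $B^4$, so it is topologically slice and $\eta(\QBL(K)) = 0$ by Theorem~\ref{thm:eta_properties}. For additivity, I would take the equivariant connect-summing sphere $\Sigma$ for $K \widetilde{\#} K'$; it is invariant under the strong inversion, meets the axis in two (fixed) points, and meets each of the two components of $\BL(K \widetilde{\#} K')$ in two points. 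Passing to the quotient of $S^3$ by the strong inversion (which is again $S^3$), the image $\overline{\Sigma}$ is a sphere meeting the quotient axis in two points and the non-axis component of $\QBL(K \widetilde{\#} K')$ in two points, and thus simultaneously decomposes both components of $\QBL(K \widetilde{\#} K')$ as connect sums of the corresponding components of $\QBL(K)$ and $\QBL(K')$. Because $K, K' \in \ker(\lk)$, each summand has linking number $0$, so Lemma~\ref{lemma:eta_link_additivity} applies and gives $\eta(\QBL(K \widetilde{\#} K')) = \eta(\QBL(K)) + \eta(\QBL(K'))$.

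The hard part will be the additivity of $\eta(\QBL(-))$, and specifically the bookkeeping in the quotient. I would need to check that $\Sigma$ can be arranged (after an equivariant isotopy) to meet the axis and $\BL(K \widetilde{\#} K')$ transversely in exactly the stated numbers of points, that the two intersection points on the two $\BL$-components are exchanged in pairs by the involution so that they descend to a single connect-sum region for the non-axis component, and that $\overline{\Sigma}$ genuinely separates the two quotient summands with the component labeling (axis to axis, non-axis to non-axis) preserved. Once this geometric picture is pinned down, the conclusion is a direct application of Lemma~\ref{lemma:eta_link_additivity}, whose linking-number hypothesis is exactly guaranteed by working inside $\ker(\lk)$.
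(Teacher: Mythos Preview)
Your proposal is correct and follows essentially the same route as the paper: vanishing on equivariantly slice knots via Proposition~\ref{prop:linkobs} and Theorem~\ref{thm:eta_properties}, and additivity via the equivariant connect-summing sphere and Lemma~\ref{lemma:eta_link_additivity}. The only cosmetic difference is that the paper derives additivity of $\lk$ from the quotient sphere decomposing $\QBL(K\widetilde{\#}K')$ as a component-wise connect sum (using that the linking number of $\QBL$ is $2\lk$), whereas you argue upstairs with $\BL$ and the axis; both are equally valid, and your extra bookkeeping paragraph about the quotient sphere is more detail than the paper itself provides.
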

\begin{proof}
Let $K$ and $K'$ be directed strongly invertible knots. It is enough to check the following. 
\begin{enumerate}
\item If $K$ is equivariantly slice then $\lk(K) = 0$ and $\eta(\QBL(K)) = 0$. 
\item $\lk(K\widetilde{\#}K') = \lk(K) + \lk(K')$. 
\item If $\lk(K) = \lk(K') = 0$, then $\eta(\QBL(K\widetilde{\#}K')) = \eta(\QBL(K)) + \eta(\QBL(K'))$.
\end{enumerate}
To show (1), suppose $K$ is equivariantly slice. By Proposition \ref{prop:linkobs}, $\lk(K) = 0$ and $\QBL(K)$ bounds a pair of disjoint disks in $B^4$. Hence by Theorem \ref{thm:eta_properties}, $\eta(\QBL(K)) = 0$. To show (2) and (3), consider the quotient of the equivariant connect-summing sphere for $K\widetilde{\#}K'$ by $\tau$. This quotient sphere decomposes $\QBL(K\widetilde{\#}K')$ as a component-wise connect sum of $\QBL(K)$ and $\QBL(K')$ as in Lemma \ref{lemma:eta_link_additivity}. This decomposition implies $\lk(K) + \lk(K') = \lk(K\widetilde{\#}K')$, and by Lemma \ref{lemma:eta_link_additivity} that $\eta(\QBL(K\widetilde{\#}K')) = \eta(\QBL(K)) + \eta(\QBL(K'))$ provided $\lk(K) = \lk(K') = 0$.
\end{proof}

\begin{example}
Consider the knot $K$ = K13n1496 with the strong inversion shown in Figure \ref{fig:13n1496}. A straightforward computation shows that $K$ has trivial Alexander polynomial, which by work of Freedman \cite[Theorem 1.13]{Freedman} implies that $K$ is topologically slice. On the other hand, Rasmussen's $s$-invariant is 2 so that $K$ is not smoothly slice. With respect to the strong inversion, $\eta(\QBL(K)) = 2t^{-2}-4+2t^2 \neq 0$. Hence by a similar argument as in Proposition \ref{prop:linkobs}, a topological slice disk can not be made equivariant with respect to any diffeomorphism $B^4 \to B^4$ extending the strong inversion on $K$. In particular, Freedman's construction of a topological slice disk can not be made equivariantly with respect to a diffeomorphism extending this strong inversion. 
\end{example}
\begin{figure}[!htbp]
\scalebox{.6}{\includegraphics{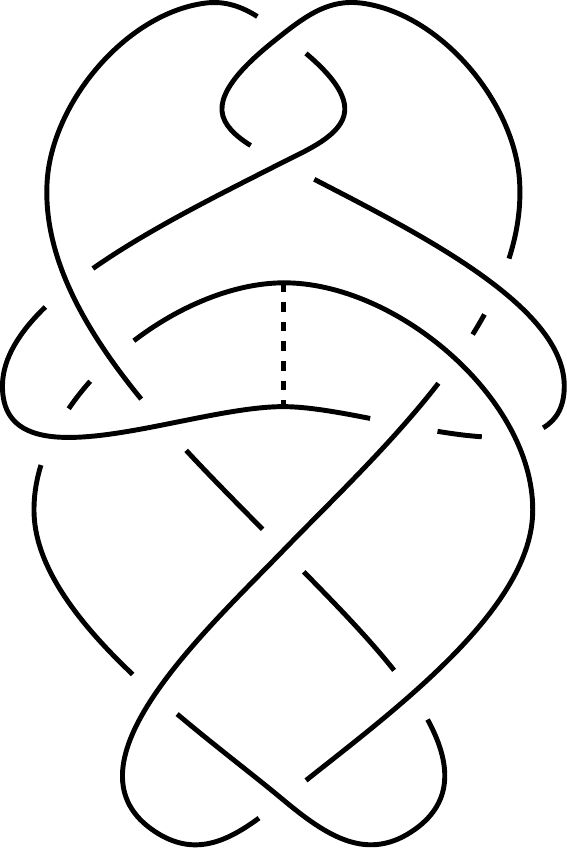}}
\caption{A directed strong inversion K13n1496$^+$.}
\label{fig:13n1496}
\end{figure}

\bandqb*

\begin{proof}
It suffices to show that if $K$ is equivariantly smoothly slice then $\mathfrak{b}(K)$ is smoothly slice and $\mathfrak{qb}(K)$ is topologically slice, that $\mathfrak{b}(K_1\widetilde{\#}K_2)$ is isotopic (and hence smoothly concordant) to $\mathfrak{b}(K_1)\#\mathfrak{b}(K_2)$, and that $\mathfrak{qb}(K_1 \widetilde{\#} K_2)$ is isotopic (and hence topologically concordant) to $\mathfrak{qb}(K_1) \# \mathfrak{qb}(K_2)$.

Suppose $K$ is equivariantly slice. Then as we have seen in the proof of Proposition \ref{prop:BL}, each component of $\BL(K)$ bounds a smooth disk in $B^4$. In particular $\mathfrak{b}(K)$ is smoothly slice. Furthermore, the two components bound a disjoint equivariant pair of disks, so that the image of one disk in the quotient has no self-intersection points. Then since the quotient is a topological $B^4$ by Proposition \ref{prop:quotient_top_b4}, $\mathfrak{qb}(K)$ is topologically slice. 

Now consider $K_1 \widetilde{\#} K_2$ and an equivariant sphere $S$ separating $K_1$ and $K_2$. Then the same $S$ realizes $\mathfrak{b}(K_1 \widetilde{\#} K_2)$ as $\mathfrak{b}(K_1) \# \mathfrak{b}(K_2)$. Furthermore, since $S$ is equivariant the quotient realizes $\mathfrak{qb}(K_1 \widetilde{\#} K_2)$ as $\mathfrak{qb}(K_1) \# \mathfrak{qb}(K_2)$. 
\end{proof}
\begin{remark}
Note that $\mathfrak{b}(K)$ can be easily computed directly as the union of the chosen half-axis and (either) half of the strongly invertible knot. For example in Figure \ref{fig:52butterfly}, $\mathfrak{b}(5_2b^-) = 3_1$ and $\mathfrak{b}(6_1b^-) = 4_1$ are both immediately visible from the diagrams for the butterfly surfaces.
\end{remark}
Recall the group homomorphism $\mathfrak{r}\colon \mathcal{C} \to \widetilde{\mathcal{C}}$ given by $\mathfrak{r}(K) = K \# rK$ defined at the end of Section \ref{sec:concordance_group}. Note that $\mathfrak{b}(\mathfrak{r}(K)) = K$ and $\mathfrak{qb}(\mathfrak{r}(K)) = K$. This implies that $\mathfrak{r}$ is injective, and $\mathfrak{b}$ and $\mathfrak{qb}$ may be thought of as retracts from $\widetilde{\mathcal{C}} \to \mathcal{C}$ and $\widetilde{\mathcal{C}} \to \mathcal{C}^{top}$ respectively. 

The invariants $\eta(\BL(K)), \eta(\QBL(K)),$ and $\lk(K)$ behave nicely under taking the axis-reverse (reversing the orientation on the axis) or mirror of $K$ (see Proposition \ref{prop:reverse_mirror_invariants}), but seem to have subtle behavior when taking the antipode (taking the other choice of half-axis). This is an advantage over Sakuma's polynomial however, since these invariants sometimes distinguish antipodes in the strongly invertible concordance group as seen in the following example.

\begin{example} \label{ex:trefoil}
Consider the two elements of the strongly invertible concordance group $3_1^+$ and $3_1^-$ given by the two choices of half-axis for the unique strong inversion on the right-handed trefoil (see the table in Appendix \ref{sec:table}). We have that $\mathfrak{qb}(3_1^+)$ is the unknot, but $\mathfrak{qb}(3_1^-)$ is the left-handed trefoil. Since the unknot and trefoil are not concordant in the topological concordance group, $3_1^+$ and $3_1^-$ are not strongly invertibly concordant by Theorem \ref{thm:bandqb}. In particular $3_1^+ \widetilde{\#} $inv$(3_1^-)$, where inv$(3_1^-)$ is the inverse of $3_1^-$ in $\widetilde{\mathcal{C}}$, is slice but not equivariantly slice. 
\end{example}

\begin{proposition} 
\label{prop:reverse_mirror_invariants}
Let $K$ be a directed strongly invertible knot with mirror $mK$ and axis-reverse $aK$. (The axis-reverse is $K$ with the orientation on the axis reversed.) Then
\begin{enumerate}[label=(\roman*)]
\item $\eta(\BL(K)) = \eta(\BL(aK)) = -\eta(\BL(mK))$, 
\item $\eta(\QBL(K)) = \eta(\QBL(aK)) = -\eta(\QBL(mK))$ provided $\eta(\QBL(K))$ is defined, and
\item $\lk(K) = \lk(aK) = -\lk(mK)$.
\end{enumerate}
\end{proposition}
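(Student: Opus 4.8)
The plan is to track how each of the three topological operations—axis-reversal, mirroring, and (where relevant) the construction of $\BL$ and $\QBL$—transforms the relevant link, and then feed this through the $\eta$-polynomial's known symmetry properties from Theorem \ref{thm:eta_properties}. The guiding principle is that axis-reversal should preserve the links up to isotopy (changing only orientations, to which $\eta$ is insensitive), whereas mirroring reflects each link in $S^3$, which negates $\eta$.

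First I would handle the axis-reverse. The axis-reverse $aK$ has the same underlying strong inversion and the same half-axis, with only the orientation on the axis reversed. Since the butterfly band move in Definition \ref{def:butterfly_link} uses only the \emph{location} of the half-axis (the band runs parallel to it and attaches at the two fixed points), reversing the axis orientation produces the same underlying link $\BL(aK) = \BL(K)$ as an unoriented link, with orientations possibly reversed on the strands. By Theorem \ref{thm:eta_properties}(iii), $\eta$ is independent of orientation, so $\eta(\BL(K)) = \eta(\BL(aK))$. The identical argument applies to $\QBL$ (its construction is the quotient of $\BL$ together with the axis, again insensitive to axis orientation) and to $\lk(K)$, which as an unsigned statement about the underlying links is unchanged; more carefully, reversing both the axis orientation and nothing else leaves the linking number invariant since linking numbers are unaffected by simultaneously not reorienting either component. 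This gives the first equality in each of (i)--(iii).

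Next I would handle the mirror. Taking $mK$ reverses the orientation on the ambient $S^3$, which reflects the entire diagram. Under such a reflection, $\BL(mK)$ is the mirror image (as an unoriented link in $S^3$) of $\BL(K)$, and similarly for $\QBL$. The key facts are that the Kojima--Yamasaki $\eta$-polynomial negates under mirroring of the link, and that linking number negates under mirroring. For $\eta$, one sees this by inspecting Definition \ref{def:KY}: reflecting the link reflects the infinite cyclic cover $\widetilde{X}$ and reverses the sign of every intersection number $\mbox{Int}(\zeta, t^i \widetilde{K}_2)$, so $\eta(\overline{L}) = -\eta(L)$. For linking number, mirroring negates $\lk$ by the standard reflection property of linking numbers. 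This yields the second equality ($=-\eta(\BL(mK))$, etc.) in each part, and $\lk(K) = -\lk(mK)$ in (iii).

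The main obstacle I expect is bookkeeping the interaction between mirroring and the \emph{direction} data, ensuring that $mK$ with its inherited direction genuinely produces the reflected $\BL$ and $\QBL$ rather than some reflected-and-reoriented variant that might, combined with the orientation conventions of Definition \ref{def:butterfly_link} (both components oriented to agree with the axis), shift the answer. One must verify that the orientation reversal on the axis induced by the mirror is exactly absorbed by orientation-independence of $\eta$, so that the only surviving effect is the reflection and hence the sign change. Once this compatibility is checked—essentially confirming that the direction is preserved appropriately under mirroring so that the band in Definition \ref{def:butterfly_link} mirrors to the correct band—the signs fall out cleanly from Theorem \ref{thm:eta_properties} and the reflection property of linking numbers.
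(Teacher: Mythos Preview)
Your approach for the axis-reverse is essentially the same as the paper's: both observe that $\BL(aK)$ is $\BL(K)$ with at most orientations reversed, then invoke orientation-independence of $\eta$ (Theorem~\ref{thm:eta_properties}(iii)); for $\lk$ one notes that both the axis and the component of $\BL$ reverse, so the linking number is unchanged. (Your phrasing ``reversing both the axis orientation and nothing else'' is slightly garbled; in fact \emph{both} the axis and the $\BL$-component reverse, which is why the linking number survives.)

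For the mirror, however, you take a genuinely different route. You argue directly that $\BL(mK)$ is the mirror link of $\BL(K)$ and then appeal to the reflection behavior of $\eta$ and of linking numbers, sketching the former from Definition~\ref{def:KY}. The paper instead exploits the group-theoretic structure already built: since $maK$ is the inverse of $K$ in $\widetilde{\mathcal{C}}$ and $\eta \circ \BL$, $\eta \circ \QBL$, and $\lk$ are group homomorphisms (Propositions~\ref{prop:BL} and~\ref{prop:QBL_lk_additive}), one gets $\eta(\BL(K)) = -\eta(\BL(maK))$ for free, and then the already-proved axis-reverse equality gives $\eta(\BL(maK)) = \eta(\BL(mK))$. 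The paper's argument is more self-contained within the framework it has set up and avoids needing the mirror property of $\eta$, which is not among the listed properties in Theorem~\ref{thm:eta_properties}. Your argument is more elementary and would work even without the homomorphism propositions, but requires you to supply (or cite) the fact that $\eta$ negates under mirroring; your one-line justification via intersection numbers is plausible but would benefit from a reference or a slightly more careful check that the covering-space and $2$-chain data transform as claimed.
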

\begin{proof}
Theorem \ref{thm:eta_properties} implies that $\eta(\BL(K)) = \eta(\BL(aK))$ and $\eta(\QBL(K)) = \eta(\QBL(aK))$. Since reversing the orientation of both components of the link does not change the linking number, we also have that $\lk(K) = \lk(aK)$. Now since $\eta\circ\BL$ is additive by Proposition \ref{prop:BL}, we know that $\eta(\BL(K)) = -\eta(\BL(maK)) = -\eta(\BL(mK))$ (since $maK$ is the inverse of $K$ in $\widetilde{\mathcal{C}}$), and similarly for $\lk$ and $\eta\circ\QBL$ by Proposition \ref{prop:QBL_lk_additive}.
\end{proof}

\section{Equivariant Donaldson's theorem obstruction} \label{sec:Donaldson}
In this section we prove Theorem \ref{thm:eq_embedding} and use it to give examples of strongly invertible and periodic knots where the equivariant $4$-genus is different from the (non-equivariant) $4$-genus (see Appendix \ref{sec:Donaldson_table} for a table of examples). To do so we consider the question of when a $3$-manifold with a symmetry bounds an equivariant positive-definite $4$-manifold and use Donaldson's theorem to obtain an obstruction in some cases. We start by showing that certain symmetries of $B^4$ can be lifted to $\Sigma(B^4,F)$, the double branched cover of $B^4$ over an equivariant surface $F$. 

\begin{proposition}\label{prop:symmetry_lifts} Let $F \subset S^4$ be a closed connected (not necessarily orientable) surface and let $\rho \colon S^4 \rightarrow S^4$ be an orientation-preserving finite order diffeomorphism such that $\rho(F) = F$. Suppose that $\rho$ has a fixed point in $S^4 \backslash F$. Let $q \colon \Sigma(S^4,F) \rightarrow S^4$ be the double branched covering projection. Then $\rho$ lifts to a smooth map $\widetilde{\rho} \colon \Sigma(S^4,F) \rightarrow \Sigma(S^4,F)$, that is, $q \circ \widetilde{\rho} = \rho \circ q$ and order$(\widetilde{\rho}) =$ order$(\rho)$. Furthermore there are exactly two such lifts, and if $A$ is the fixed-point set of $\rho$ and $C$ is a component of $A \backslash F$, then exactly one of these two lifts fixes $q^{-1}(C)$ pointwise.
\end{proposition}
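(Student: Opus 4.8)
The plan is to build $\widetilde\rho$ first on the complement of the branch locus, where the problem reduces to a standard covering-space lifting question, and then to extend across the branch set and analyze the order using the hypothesized fixed point. Write $X = S^4 \setminus \nu(F)$ for the complement of an open tubular neighborhood of $F$, so that $\rho(F)=F$ makes $\rho$ restrict to a diffeomorphism $\rho|_X$. The first key point is that the double branched cover is \emph{canonical}: since $F$ is connected, Alexander duality gives $H_1(X;\Z/2)\cong\Z/2$, so there is a unique surjection $\phi\colon\pi_1(X)\to\Z/2$, and the connected double cover $\widetilde X$ of $X$ is the one associated to $\ker\phi$. Because $\phi$ is the only epimorphism $\pi_1(X)\to\Z/2$, we have $\phi\circ\rho_* = \phi$, hence $\rho_*(\ker\phi)=\ker\phi$. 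By the lifting criterion for covering spaces, $\rho|_X$ therefore lifts to $\widetilde X$, and since the deck group is $\Z/2=\{\mathrm{id},\iota\}$, with $\iota$ the covering involution, there are exactly two lifts, $\widetilde\rho_0$ and $\iota\circ\widetilde\rho_0$.

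Next I would extend these lifts across the branch locus. Near $F$ the branched covering $q$ is modeled, in each normal disk, on the map $z\mapsto z^2$, and since $\rho$ is a diffeomorphism with $\rho(F)=F$, each lift of $\rho|_X$ extends uniquely and smoothly over $q^{-1}(F)$ to a diffeomorphism of $\Sigma(S^4,F)$; this uses the standard fact that $\Sigma(S^4,F)$ is a smooth manifold and $q$ a smooth branched cover. I again call the two resulting lifts $\widetilde\rho_0$ and $\iota\widetilde\rho_0$, each satisfying $q\circ\widetilde\rho = \rho\circ q$ by construction.

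Finally I would use the fixed point to single out the correct lift and compute its order. Let $p \in A\setminus F$ lie in the component $C$, and write $q^{-1}(p)=\{p_1,p_2\}$, which $\iota$ interchanges. Each lift permutes $\{p_1,p_2\}$ since $q\widetilde\rho(p_i)=\rho(p)=p$, so exactly one of the two lifts fixes $p_1$; and because $C$ is connected and fixed pointwise by $\rho$, that lift in fact fixes $q^{-1}(C)$ pointwise while the other globally swaps the two sheets over $C$. Denote by $\widetilde\rho$ the lift fixing $q^{-1}(C)$ pointwise. Since $\mathrm{Aut}(\Z/2)$ is trivial, $\iota$ is central among the lifts, and $\widetilde\rho^{\,n}$, where $n=\mathrm{order}(\rho)$, covers $\rho^n=\mathrm{id}$, so $\widetilde\rho^{\,n}\in\{\mathrm{id},\iota\}$. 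As $\widetilde\rho^{\,n}$ fixes $p_1$ but $\iota$ does not, we conclude $\widetilde\rho^{\,n}=\mathrm{id}$; and any power of $\widetilde\rho$ equal to the identity projects to a power of $\rho$ equal to the identity, so $\mathrm{order}(\widetilde\rho)$ is a multiple of $n$ and therefore equals $n$. The two lifts $\widetilde\rho,\iota\widetilde\rho$ are then the two lifts in the statement, exactly one of which fixes $q^{-1}(C)$ pointwise.

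The conceptual steps — canonicity of the cover and the lifting criterion — are routine; the step I expect to require the most care is using the fixed-point hypothesis to pin down the order-preserving lift, since without a fixed point off $F$ one cannot in general rule out $\widetilde\rho^{\,n}=\iota$, which would force order $2n$ when $n$ is odd. The smooth extension across the branch locus also deserves attention through its explicit local model, though it is standard.
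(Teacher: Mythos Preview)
Your proof is correct and follows essentially the same route as the paper's: both use that $\ker(\pi_1(X)\to\Z/2)$ is the unique index-two subgroup, hence characteristic, to invoke the covering-space lifting criterion, then extend across the branch locus and use the fixed point off $F$ to pin down a lift of the correct order. The paper is slightly more explicit in one place: it invokes the equivariant tubular neighborhood theorem to guarantee that $\rho$ preserves $\nu(F)$ (so that $\rho|_X$ is genuinely defined) and that $\rho$ acts fiber-preservingly on $N(F)$, which is what makes the smooth extension over $q^{-1}(F)$ go through cleanly; you flagged this step as needing care, and that is exactly the ingredient you would supply.
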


\begin{proof} By the Equivariant Tubular Neighborhood theorem there is a $\rho$-equivariant tubular neighborhood of $F$ which we denote by $N(F) \subset S^4$. Let $X = S^4 \backslash \mbox{int}(N(F))$ be the exterior of $F$ in $S^4$. Denote by $\widetilde{X}$ the two-fold cyclic cover of $X$ corresponding to the kernel of $\pi_1(X) \to H_1(X;\mathbb{Z}/2\mathbb{Z}) \cong \mathbb{Z}/2\mathbb{Z}$. Let $s \in X$ be a fixed point under $\rho$ and let $\widetilde{s} \in q^{-1}(s) \subset \widetilde{X}$ be a lift of $s$ to $\widetilde{X}$. We implicitly use $s$ (resp. $\widetilde{s}$) as the basepoint of $\pi_1(X)$ (resp. $\pi_1(\widetilde{X})$).

Let $G \leq \pi_1(X)$ denote the image of $\pi_1(q) \colon \pi_1(\widetilde{X}) \rightarrow \pi_1(X)$. Since $G$ is the unique index $2$ (and hence normal) subgroup of $\pi_1(X)$, it is a characteristic subgroup. Hence, the image of $\pi_1(\rho \circ q) \colon \pi_1(\widetilde{X}) \rightarrow \pi_1(X)$ is also $G$. By the covering space lifting property, since $\mbox{Im}\ \pi_1(\rho \circ q) \subseteq \mbox{Im}\ \pi_1(q)$, there exists a unique map $\widetilde{\rho} \colon (\widetilde{X}, \widetilde{s}) \rightarrow (\widetilde{X}, \widetilde{s})$ such that $q \circ \widetilde{\rho} = \rho \circ q$.

The tubular neighborhood $N(F) \subset S^4$ has the structure of a $D^2$-bundle over $F$. The branched cover decomposes as $\Sigma(S^4,F) = \widetilde{X} \cup_{\partial \widetilde{X}} W$, where $W = q^{-1}(N(F))$ is naturally a $D^2$-bundle over $F \cong q^{-1}(F)$ by lifting the fibers of $N(F)$ to $\Sigma(S^4,F)$. It suffices to show that $\restr{\widetilde{\rho}}{\partial \widetilde{X}}$ extends to a map on $W$ such that $q \circ \widetilde{\rho} = \rho \circ q$ holds on all of $\Sigma(S^4,F)$. By abuse of notation we also denote the zero section of $W$ by $F$. Denote by $\restr{W}{x}$ the $D^2$ fiber of $W$ over $x \in F \subset W$. By the equivariant tubular neighborhood theorem, the action of $\rho$ on $N(F) \subset S^4$ is fiber preserving. Hence, the map $\widetilde{\rho}$ restricts to $\partial\restr{W}{x} \rightarrow \partial\restr{W}{\rho(x)}$ and this can be extended over the disk $\restr{W}{x}$ to a map $\restr{W}{x} \rightarrow \restr{W}{\rho(x)}$. This can be done smoothly over all $x \in F \subset W$ so that $q \circ \widetilde{\rho} = \rho \circ q$.

To see that order$(\widetilde{\rho}) = $ order$(\rho)$, note by induction that $q \circ \widetilde{\rho}^{\, i} = \rho^i \circ q$ for $i \geq 1$. In particular this implies that $\widetilde{\rho}^{\, i} \neq $ Id for $1 \leq i <$ order$(\rho)$ and $q \circ \widetilde{\rho}^{\, n} = q$ for $n = $ order$(\rho)$. Hence $\widetilde{\rho}^{\, n}$ is a branched covering transformation fixing the point $\widetilde{s}$ which is disjoint from the branch set $q^{-1}(F)$. Thus $\widetilde{\rho}^{\, n} = $ Id so order$(\widetilde{\rho}) = n = $ order$(\rho)$.

The above construction produces the unique lift $\widetilde{\rho}$ fixing $\widetilde{s}$. Composing with the (branched) deck transformation gives another lift exchanging the two points in $q^{-1}(s)$, and these are the only two lifts of $\rho$. Let $C$ be a component of $A \backslash F$ where $A$ is the fixed-point set of $\rho$, and let $\widetilde{C} = q^{-1}(C)$. We argue that there is a unique lift $\widetilde{\rho}:\Sigma(S^4,F) \to \Sigma(S^4,F)$ of $\rho$ which fixes $\widetilde{C}$ pointwise. To see the existence of such a lift, first choose $s \in C$ and let $\widetilde{s}$ be a lift of $s$ to $\widetilde{C}$ in the construction above. Note that $\widetilde{\rho}|_{\widetilde{C}}$ is a deck transformation of the two-fold cover $q|_{\widetilde{C}}$. Since $\widetilde{\rho}$ has a fixed point $\widetilde{s}$ and $C$ is connected, $\widetilde{\rho}$ must restrict to the identity on $\widetilde{C}$. Thus the lift $\widetilde{\rho}|_{\widetilde{X}}$ is independent of the choice of $\widetilde{s}$. The uniqueness of $\widetilde{\rho}$ follows from the uniqueness of the above construction by noting that $\widetilde{\rho}$ is independent of the particular choice of $\widetilde{s} \in \widetilde{C}$ since all of $\widetilde{C}$ is fixed pointwise.
\end{proof}
As a corollary we have the following version of Proposition \ref{prop:symmetry_lifts} for surfaces in $B^4$.
\begin{corollary}
\label{cor:B4_symmetry_lifts} Let $K\subset S^3$ be a knot and $F \subset B^4$ a properly embedded, connected (not necessarily orientable) surface with $\partial F = K$. Let $\rho \colon B^4 \rightarrow B^4$ be an orientation-preserving finite order diffeomorphism such that $\rho(F) = F$. Suppose that $\rho$ has a fixed point in $B^4 \backslash F$. Let $q \colon \Sigma(B^4,F) \rightarrow B^4$ be the double branched covering projection. Then $\rho$ lifts to a smooth map $\widetilde{\rho} \colon \Sigma(B^4,F) \rightarrow \Sigma(B^4,F)$, that is, $q \circ \widetilde{\rho} = \rho \circ q$ and order$(\widetilde{\rho}) =$ order$(\rho)$. Furthermore there are exactly two such lifts, and if $A$ is the fixed-point set of $\rho$ and $C$ is a component of $A \backslash F$, then exactly one of these two lifts fixes $q^{-1}(C)$ pointwise.

\end{corollary}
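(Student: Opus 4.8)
The plan is to deduce Corollary \ref{cor:B4_symmetry_lifts} directly from Proposition \ref{prop:symmetry_lifts} by a doubling argument, converting the statement about a surface in $B^4$ into the statement about a closed surface in $S^4$ to which the proposition applies. First I would form the double of the pair: glue $(B^4, F)$ to a copy of itself along the boundary $(S^3, K)$ to obtain $(S^4, \widehat{F})$, where $S^4 = B^4 \cup_{S^3} B^4$ is the standard double of the $4$-ball and $\widehat{F} = F \cup_K F$ is the closed (not necessarily orientable) surface obtained by doubling $F$ along its knotted boundary $K$. The given diffeomorphism $\rho \colon B^4 \to B^4$ satisfies $\rho(F) = F$ and restricts on $\partial B^4 = S^3$ to a symmetry of $(S^3, K)$; because this boundary restriction is itself a diffeomorphism of the pair $(S^3, K)$, the two copies of $\rho$ on the two copies of $B^4$ agree along the gluing region and assemble into a single orientation-preserving finite-order diffeomorphism $\widehat{\rho} \colon S^4 \to S^4$ with $\widehat{\rho}(\widehat{F}) = \widehat{F}$ and $\mathrm{order}(\widehat{\rho}) = \mathrm{order}(\rho)$.

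Next I would check the fixed-point hypothesis of Proposition \ref{prop:symmetry_lifts}: since $\rho$ has a fixed point in $B^4 \backslash F$ by assumption, that same point lies in $S^4 \backslash \widehat{F}$ and is fixed by $\widehat{\rho}$, so the proposition applies to $(S^4, \widehat{F}, \widehat{\rho})$. The double branched cover behaves well under this doubling: $\Sigma(S^4, \widehat{F})$ is the double of $\Sigma(B^4, F)$ along $\Sigma(S^3, K)$, and the branched covering projection $\widehat{q} \colon \Sigma(S^4, \widehat{F}) \to S^4$ restricts on each half to $q \colon \Sigma(B^4, F) \to B^4$. Proposition \ref{prop:symmetry_lifts} then produces a lift $\widetilde{\widehat{\rho}} \colon \Sigma(S^4, \widehat{F}) \to \Sigma(S^4, \widehat{F})$ with $\widehat{q} \circ \widetilde{\widehat{\rho}} = \widehat{\rho} \circ \widehat{q}$, of the same order, together with the two-lifts statement and the pointwise-fixing property for components of the fixed set minus $\widehat{F}$.

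Finally I would restrict everything back to one copy of $B^4$. The content here is that the lift $\widetilde{\widehat{\rho}}$ is compatible with the decomposition of $\Sigma(S^4, \widehat{F})$ into the two copies of $\Sigma(B^4, F)$ and hence restricts to a lift $\widetilde{\rho}$ on a single $\Sigma(B^4, F)$ satisfying $q \circ \widetilde{\rho} = \rho \circ q$ and $\mathrm{order}(\widetilde{\rho}) = \mathrm{order}(\rho)$. The exactly-two-lifts assertion and the pointwise-fixing assertion for a component $C$ of $A \backslash F$ transfer because they are detected on the interior of $B^4$ away from the branch locus, exactly as in the proof of the proposition: one chooses a basepoint (or, for the pointwise-fixing claim, a point of $C$) in $\mathrm{int}(B^4) \backslash F$ and runs the same covering-space lifting argument.

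The main obstacle I anticipate is verifying the compatibility at the seam—namely that the two copies of $\rho$ really do glue smoothly along $S^3$ and that the restriction of the globally-constructed lift to one half is well-defined and reproduces the local lifting construction. An alternative, and perhaps cleaner, route would be to bypass doubling entirely and simply re-run the proof of Proposition \ref{prop:symmetry_lifts} verbatim in the $B^4$ setting: the key ingredients—the Equivariant Tubular Neighborhood theorem, the characterization of the index-two subgroup $G \leq \pi_1(X)$ where $X = B^4 \backslash \mathrm{int}(N(F))$ as characteristic, the covering-space lifting property, and the extension over the disk bundle $W = q^{-1}(N(F))$—all go through identically for a properly embedded surface $F \subset B^4$ with $\partial F = K$, since none of these steps used that the ambient manifold was closed. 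I would likely present the corollary via this direct repetition, remarking that the proof of Proposition \ref{prop:symmetry_lifts} applies mutatis mutandis, which avoids the bookkeeping of the seam altogether.
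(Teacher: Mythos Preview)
Your proposal is correct and follows essentially the same approach as the paper: double $(B^4,F)$ to obtain a closed surface $\overline{F}\subset S^4$, apply Proposition \ref{prop:symmetry_lifts}, and restrict the resulting lift on $\Sigma(S^4,\overline{F})$ back to $\Sigma(B^4,F)$. The paper's proof is stated tersely in exactly these terms, and your additional remarks about verifying the fixed-point hypothesis and the seam compatibility simply flesh out the routine details the paper leaves implicit.
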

\begin{proof}
Double $(B^4,F)$ to obtain a closed connected surface $\overline{F}$ in $S^4$. Apply Proposition \ref{prop:symmetry_lifts} to get a lift $\widetilde{\rho}\colon \Sigma(S^4,\overline{F}) \to \Sigma(S^4,\overline{F})$. Restricting $\widetilde{\rho}$ to $\Sigma(B^4,F)$ gives the desired lift. 
\end{proof}

In order to have well-defined invariants, we now pin down one of the two lifts from Corollary \ref{cor:B4_symmetry_lifts}.
\begin{definition} \label{def:periodic_unique_lift}
  Let $K \subset S^3$ be a periodic knot bounding an equivariant surface $(F,\rho)$ in $B^4$. Let $A$ be the fixed point set of $\rho$ and let $C$ be the component of $A \backslash F$ containing the axis in $S^3$. The \emph{distinguished lift} of $\rho$ is the unique lift $\widetilde{\rho} \colon \Sigma(B^4, F) \rightarrow \Sigma(B^4,F)$ pointwise fixing $q^{-1}(C)$, where $q \colon \Sigma(B^4,F) \rightarrow B^4$ is the branched covering projection.
\end{definition}

\begin{definition} \label{def:invertible_unique_lift}
  Let $K \subset S^3$ be a directed strongly invertible knot bounding an equivariant surface $(F,\rho)$ in $B^4$. The axis of symmetry in $S^3$ is made up of two half-axes: the half-axis distinguished by the direction on $K$, and the other half-axis $\eta$. Let $C$ be the component of $A \backslash F$ containing $\eta$. The \emph{distinguished lift} of $\rho$ is the unique lift $\widetilde{\rho} \colon \Sigma(B^4, F) \rightarrow \Sigma(B^4,F)$ pointwise fixing $q^{-1}(C)$, where $q \colon \Sigma(B^4,F) \rightarrow B^4$ is the branched covering projection.
\end{definition}

\begin{remark}
In the periodic case the distinguished lift in fact pointwise fixes all of $q^{-1}(A)$, however we will not make use of this. In the strongly invertible case the non-distinguished lift fixes $q^{-1}(C')$ where $C'$ is the component of $A \backslash F$ containing the half-axis distinguished by the direction on $K$. 
\end{remark}

Before proving Theorem \ref{thm:eq_embedding}, we show that for an equivariant spanning surface the action of the distinguished lift on the intersection form can be simply described in terms of the Gordon-Litherland form \cite{Gordon-Litherland}. To begin, we briefly recall the Gordon-Litherland form. Let $K$ be a knot in $S^3$ and let $F$ be a compact, connected (not necessarily orientable) surface embedded in $S^3$ with $\partial F = K$. Gordon and Litherland define a bilinear form $\mathcal{G}_F \colon H_1(F) \times H_1(F) \rightarrow \Z$ as follows. Suppose $\alpha, \beta \in H_1(F)$ are represented by embedded oriented multicurves $a$, $b$ in $F$. We can push off $2b$ into $S^3\backslash F$, obtaining $\overline{b}$. (Locally, one copy of $b$ is pushed off each side of $F$.) Define $\mathcal{G}_F(\alpha, \beta) := \mbox{Lk}(a, \overline{b})$ to be the linking number of $a$ and $\overline{b}$. Gordon and Litherland show that the form $\mathcal{G}_F$ is a well-defined symmetric bilinear form. If $F$ is orientable, $\mathcal{G}_F$ is the symmetrized Seifert form.

Let $\widehat{F}$ be the surface obtained from $F$ by pushing $\mbox{int}(F)$ into $\mbox{int}(B^4)$. Let $\Sigma(B^4,\widehat{F})$ denote the double branched cover of $B^4$ over $\widehat{F}$. Gordon and Litherland showed that $(H_1(F), \mathcal{G}_F) \cong (H_2(\Sigma(B^4,\widehat{F})), Q)$, where $Q$ is the intersection pairing on $H_2(\Sigma(B^4,\widehat{F}))$.

\begin{proposition} \label{prop:GL_lattice_lift}
Let $K$ be a knot in $S^3$ and suppose it has a strongly invertible or periodic symmetry $\rho \colon (S^3, K) \rightarrow (S^3, K)$. Let $F$ be a compact, connected embedded surface in $S^3$ with $\partial F = K$ and $\rho(F) = F$. Let $\widetilde{\rho} \colon \Sigma(B^4,\widehat{F}) \rightarrow \Sigma(B^4,\widehat{F})$ be a lift of $\rho$ (as in Corollary \ref{cor:B4_symmetry_lifts}). Then under the identification of the intersection form  $(H_2(\Sigma(B^4,\widehat{F})), Q) \cong (H_1(F), \mathcal{G}_F)$, the induced map of lattices $\widetilde{\rho}_* \colon (H_2(\Sigma(B^4,\widehat{F})), Q) \rightarrow (H_2(\Sigma(B^4,\widehat{F})), Q)$ is equivalent to $\pm (\restr{\rho}{F})_* \colon (H_1(F), \mathcal{G}_F) \rightarrow (H_1(F), \mathcal{G}_F)$. Furthermore:
\begin{enumerate}[label=(\roman*)]
\item If $K$ is periodic and $\widetilde{\rho}$ is the distinguished lift of $\rho$ (as in Definition \ref{def:periodic_unique_lift}), then the induced map on lattices is equivalent to $+(\rho|_F)_*$.
\item Suppose $K$ is strongly invertible and directed, and $\widetilde{\rho}$ is the distinguished lift of $\rho$ (as in Definition \ref{def:invertible_unique_lift}). The direction distinguishes a half-axis $\eta^+$; call the other half-axis $\eta^-$. The induced map on lattices is equivalent to $+(\rho|_F)_*$ if $F$ contains $\eta^+$, and is equivalent to $-(\rho|_F)_*$ if $F$ contains $\eta^-$.
\end{enumerate}
\end{proposition}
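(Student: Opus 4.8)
The plan is to make the Gordon--Litherland isomorphism explicit at the level of $2$-cycles and then track how the lift $\widetilde{\rho}$ permutes these cycles together with the two local sheets of the branched cover. First I would fix the standard generators of $H_2(\Sigma(B^4,\widehat{F}))$ underlying Gordon and Litherland's identification: given $\alpha \in H_1(F)$ represented by an embedded oriented multicurve $a \subset F$, one pushes $a$ off $\widehat{F}$ to both sides inside $B^4$ and closes up using the two sheets of the branched cover, producing a closed surface $\Sigma_\alpha \subset \Sigma(B^4,\widehat{F})$ whose class depends only on $\alpha$ and satisfies $Q([\Sigma_\alpha],[\Sigma_\beta]) = \mathcal{G}_F(\alpha,\beta)$. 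This gives the isomorphism $\Phi \colon (H_1(F),\mathcal{G}_F) \to (H_2(\Sigma(B^4,\widehat{F})),Q)$, and the whole proof reduces to computing $\widetilde{\rho}_* \circ \Phi$ in terms of $\Phi \circ (\rho|_F)_*$.

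Next I would prove the ``$\pm$'' statement by naturality. Since $q \circ \widetilde{\rho} = \rho \circ q$ and $\rho$ restricts to $\rho|_F$ on the branch locus, the image $\widetilde{\rho}(\Sigma_\alpha)$ is exactly the cycle built by the same recipe from the multicurve $\rho(a) \subset F$, so $\widetilde{\rho}_*(\Phi \alpha) = \epsilon \, \Phi((\rho|_F)_* \alpha)$ for a sign $\epsilon \in \{+1,-1\}$. The essential observation is that $\epsilon$ is \emph{global}: it records only whether $\widetilde{\rho}$ respects or reverses the sheet structure of the cover normal to $\widehat{F}$, a datum that is locally constant and hence constant since $F$ is connected. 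In particular $\epsilon$ is insensitive to whether $\rho|_F$ preserves the orientation of $F$ (the periodic case) or reverses it (the strongly invertible case), as that is tangential rather than normal information. Finally, the two lifts of $\rho$ from Corollary \ref{cor:B4_symmetry_lifts} differ by the branched covering involution $\sigma$, which interchanges the two sheets and therefore reverses the orientation of every class $[\Sigma_\alpha]$; thus $\sigma_* = -\mathrm{Id}$ on $H_2$, the two lifts induce $\widetilde{\rho}_*$ and $-\widetilde{\rho}_*$, and both signs are realized. This establishes that $\widetilde{\rho}_*$ is equivalent to $\pm(\rho|_F)_*$.

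It remains to pin down $\epsilon$ for the distinguished lift, which I expect to be the main obstacle. The distinguished lifts of Definitions \ref{def:periodic_unique_lift} and \ref{def:invertible_unique_lift} are normalized by fixing $q^{-1}(C)$ pointwise for a prescribed component $C$ of $A \backslash F$, so $\epsilon$ must be read off from a local model of $\widetilde{\rho}$ on the $D^2$-bundle neighborhood $W = q^{-1}(N(\widehat{F}))$ appearing in the proof of Proposition \ref{prop:symmetry_lifts}. Concretely I would linearize $\rho$ along the fixed arc $A \cap \widehat{F}$: writing $T_pB^4 = T_pF \oplus N_p$, orientation-preservation of $\rho$ on $B^4$ pins down the action on the normal plane $N_p$ (a rotation in the periodic case, a reflection in the strongly invertible case), and the branched model $z \mapsto z^2$ on $N_p$ has exactly two lifts differing by the deck involution. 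The condition that $q^{-1}(C)$ be pointwise fixed selects one of these, and comparing the resulting action on the normal sheets with the orientation convention defining $\Sigma_\alpha$ produces $\epsilon$. In the periodic case this computation gives $\epsilon = +1$, proving (i). In the strongly invertible case, switching the half-axis lying on $F$ from $\eta^+$ to $\eta^-$ reverses the co-orientation of $\widehat{F}$ relative to the pointwise-fixed component $C$, which flips the selected lift's effect on the sheets and hence flips $\epsilon$, giving the $+(\rho|_F)_*$ and $-(\rho|_F)_*$ dichotomy of (ii). The delicate point, and the crux of the whole argument, is precisely this orientation bookkeeping correlating the pointwise-fixed preimage $q^{-1}(C)$ with the sign with which $\Sigma_\alpha$ is oriented.
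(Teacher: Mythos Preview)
Your overall strategy matches the paper's: identify the Gordon--Litherland isomorphism at the cycle level, observe that the sign $\epsilon$ records whether $\widetilde{\rho}$ preserves or exchanges the two ``sheets'' of the branched cover, and then pin down $\epsilon$ for the distinguished lift. Your argument for the $\pm$ statement, including the observation that the deck involution acts as $-\mathrm{Id}$ on $H_2$ so the two lifts realize both signs, is fine.

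The gap is in your sign determination, which you yourself flag as the crux. Your plan is to linearize $\rho$ along ``the fixed arc $A \cap \widehat{F}$'' and read $\epsilon$ from a local normal model. In the periodic case there is no such arc: $\rho|_F$ preserves orientation, so the fixed set in $F$ is a finite set of points and may well be empty (the axis need not meet $F$). Even in the strongly invertible case you never actually execute the bookkeeping correlating $q^{-1}(C)$ with the sheet orientations; you just assert the answers. The paper avoids this entirely by using Gordon and Litherland's explicit model $\Sigma(B^4,\widehat{F}) = D_1 \cup_N D_2$ with $D_i \cong B^4$ and $\phi([a]) = [(\text{cone of }a\text{ in }D_1) - (\text{cone of }a\text{ in }D_2)]$. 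In that model the two ``sheets'' are literally the two copies of $B^4$, so $\epsilon = +1$ iff $\widetilde{\rho}(\mathrm{int}\,D_1) = \mathrm{int}\,D_1$, and this is decided simply by checking whether the pointwise-fixed set $q^{-1}(C)$ meets $\partial D_1 \setminus N$ (a copy of $S^3 \setminus F$). Since $C$ is by definition a component of $A \setminus F$ containing a specified half-axis in $S^3$, this check is immediate in the periodic case and in the strongly invertible case with $\eta^+ \subset F$; the remaining case $\eta^- \subset F$ needs a short extra argument that $\widetilde{\rho}$ swaps the two components of $q^{-1}(\mathrm{Int}(\eta^+))$. I would recommend you replace your local-model approach by this $D_1 \cup D_2$ argument, which turns the ``delicate orientation bookkeeping'' into a one-line fixed-point check.
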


\begin{proof}
The symmetry $\rho \colon S^3 \rightarrow S^3$ extends to a symmetry $\rho\colon B^4 \to B^4$ given by the cone of $\rho$. Following \cite[proof of Theorem 3]{Gordon-Litherland}, $\Sigma(B^4,\widehat{F})$ can be constructed as follows. Let $D_1$ denote the manifold obtained by cutting open $B^4$ along the trace of an equivariant isotopy which pushes $\mbox{int} (F)$ into $\mbox{int} (B^4)$. The manifold $D_1$ is homeomorphic to $B^4$ and the part exposed by the cut is given by an equivariant tubular neighborhood $N$ of $F$ in $S^3$. Let $\iota \colon N \rightarrow N$ be the involution given by reflecting each fiber. Let $D_2$ be another copy of $D_1$. Then
\[
\Sigma(B^4,\widehat{F}) = (D_1 \cup D_2) / (x \in N \subset D_1 \sim \iota(x) \in N \subset D_2).
\]
The isomorphism $\phi \colon (H_1(F), \mathcal{G}_F) \rightarrow (H_2(\Sigma(B^4,\widehat{F})), Q)$ is given as follows. Let $a$ be a $1$-cycle in $F$, then
$$\phi([a]) = [\mbox{(cone on }a\mbox{ in }D_1\mbox{)} - \mbox{(cone on }a\mbox{ in }D_2\mbox{)}].$$
Let $\widetilde{\rho} \colon \Sigma(B^4,\widehat{F}) \rightarrow \Sigma(B^4,\widehat{F})$ be a lift of $\rho\colon B^4 \to B^4$ as in Corollary \ref{cor:B4_symmetry_lifts}.
Note that there are two cases: $\widetilde{\rho}$ maps the interior of $D_1$ to the interior of $D_1$, or $\widetilde{\rho}$ maps the interior of $D_1$ to the interior of $D_2$. We first assume that it is mapped into the interior of $D_1$. The map $\widetilde{\rho}$ restricted to the branched set $F$ is equal to $\restr{\rho}{F}$. Thus, $\widetilde{\rho}(a) = \rho(a)$. For $i\in\{1,2\}$, the cone of $a$ in $D_i$ is mapped to a disk in $D_i$ with boundary $\rho(a)$. Hence, 
\begin{align*}
   \widetilde{\rho}_*(\phi([a])) &= [\mbox{(cone on }\rho(a)\mbox{ in }D_1\mbox{)} - \mbox{(cone on }\rho(a)\mbox{ in }D_2\mbox{)}] \\
					   &= \phi(\rho([a])).
\end{align*}
If instead $\widetilde{\rho}$ maps the interior of $D_1$ into the interior of $D_2$, we similarly get
\begin{align*}
  \widetilde{\rho}_*(\phi([a])) &= [\mbox{(cone on }\rho(a)\mbox{ in }D_2\mbox{)} - \mbox{(cone on }\rho(a)\mbox{ in }D_1\mbox{)}] \\
					   &= -\phi(\rho([a])).
\end{align*}

Now let $q\colon \Sigma(B^4,\widehat{F}) \to B^4$ be the branched covering map. If $K$ is periodic with axis $\gamma$, and $\widetilde{\rho}$ is the distinguished lift (see Definition \ref{def:periodic_unique_lift}), then $q^{-1}(\gamma) \subset \Sigma(B^4,\widehat{F})$ is pointwise fixed by $\widetilde{\rho}$. Since $\partial D_1 \backslash N$ contains points of $q^{-1}(\gamma)$, $\widetilde{\rho}(\mbox{int}(D_1)) = \mbox{int}(D_1)$. Hence the induced map of lattices is given by $+(\rho|_F)_*$.

Alternatively, suppose $K$ is strongly invertible and directed. The fixed-point axis in $S^3$ consists of a half-axis $\eta^+$ distinguished by the direction on $K$, and another half-axis $\eta^-$. Suppose furthermore that $\eta^+ \subset F$ and that $\widetilde{\rho}$ is the distinguished lift (see Definition \ref{def:invertible_unique_lift}). Then $\widetilde{\rho}$ fixes $q^{-1}(\eta^-)$ pointwise and $q^{-1}(\eta^-) \not\subset N$. Hence $\widetilde{\rho}$ has fixed points in $\partial D_1 \backslash N$ and so $\widetilde{\rho}(\mbox{int}(D_1)) = \mbox{int}(D_1)$. Thus the induced map of lattices is given by $+(\rho|_F)_*$. 

If $\widetilde{\rho}$ pointwise fixes $q^{-1}(\eta^-)$ then $\widetilde{\rho}$ pointwise fixes $q^{-1}(\eta^+) \cup q^{-1}(\eta^-)$, which is topologically a graph with two vertices and four parallel edges. This is impossible since the fixed-point set of $\widetilde{\rho}|_{\partial \Sigma(B^4,\widehat{F})}$ must be a 1-manifold. Hence $\widetilde{\rho}$ interchanges the two components of $q^{-1}(\mbox{Int}(\eta^+))$. If $F$ contains $\eta^-$ then $q^{-1}(\mbox{Int}(\eta^+))$ has a component in $D_1 \backslash N$ and a component in $D_2 \backslash N$. Hence $\widetilde{\rho}$ must be the lift interchanging int$(D_1)$ and int$(D_2)$. Thus the induced map of lattices is given by $-(\rho|_F)_*$. 
\end{proof}
We now have the tools needed to prove the main theorem of this section, which can sometimes be used to show that $\widetilde{g}_4(K) > |\sigma(K)|/2$.
\eqembedding*
\begin{center}\begin{tikzcd}
  {(H_1(F),\mathcal{G}_F)} & {(\mathbb{Z}^k,\mbox{Id})} \\
  {(H_1(F),\mathcal{G}_F)} & {(\mathbb{Z}^k,\mbox{Id})}
  \arrow["{\rho_*}"', from=1-1, to=2-1]
  \arrow["{\iota}", from=1-1, to=1-2]
  \arrow["{\delta}", from=1-2, to=2-2]
  \arrow["{\iota}"', from=2-1, to=2-2]
\end{tikzcd}
\end{center}

\begin{proof}
Let $\rho_F:B^4 \to B^4$ be the cone on $\rho:S^3 \to S^3$ and let $\hat{F} \subset B^4$ be a properly embedded surface equivariantly isotopic to $F$. Let $S \subset B^4$ be an orientable surface which is equivariant with respect to some extension $\rho_S\colon B^4 \to B^4$ of $\rho$ such that $\partial S = K$ and $g(S) = \widetilde{g}_4(K) = -\sigma(K)/2$. If $K$ is strongly invertible, we choose a direction on $K$ so that the distinguished half-axis is contained in $F$. Now let $\widetilde{\rho}_{\hat{F}}:\Sigma(B^4,\hat{F}) \to \Sigma(B^4,\hat{F})$ be the distinguished lift of $\rho_{\hat{F}}$ and $\widetilde{\rho}_S\colon \Sigma(B^4,S) \to \Sigma(B^4,S)$ be the distinguished lift of $\rho_S$ as in Definition \ref{def:periodic_unique_lift} or \ref{def:invertible_unique_lift}. Gluing these together, we obtain 
\[
(X,\widetilde{\rho}) = (\Sigma(B^4,\hat{F}), \widetilde{\rho}_{\hat{F}}) \cup (-\Sigma(B^4,S), \widetilde{\rho}_S).
\]
Since rank$(H_2(\Sigma(B^4,S))) =$ rank$(2g(S)) = -\sigma(K)$ (see for example \cite[Lemma 1]{GilmerLivingston}) and $\sigma(-\Sigma(B^4,S)) = -\sigma(K)$ (see \cite{KauffmanTaylor}), we have that $-\Sigma(B^4,S)$ is positive definite. The intersection form on $\Sigma(B^4,\hat{F})$ is isomorphic to $\mathcal{G}_F$ by \cite[Theorem 3]{Gordon-Litherland}. Hence $\Sigma(B^4,\hat{F})$ is positive definite by hypothesis and therefore $X$ is positive definite (see for example \cite[Proposition 7]{IssaMcCoy}). 

Since $X$ is smooth and positive definite, Donaldson's theorem \cite{Donaldson} implies that the intersection form $(H_2(X),Q_X)$ is isomorphic to $(\mathbb{Z}^k,\mbox{Id})$, where 
\[
k = \mbox{rank}\, H_2(X) = \mbox{rank}\, H_2(\Sigma(B^4,S)) + \mbox{rank}\, H_2(\Sigma(B^4,\hat{F})) = -\sigma(K) + b_1(F).
\]
Let $\iota:\Sigma(B^4,\hat{F}) \to X$ be the inclusion map. We then have the commutative diagram on the left of Figure \ref{fig:lattice_commutative} where all maps preserve intersection pairings. The commutative diagram on the right of Figure \ref{fig:lattice_commutative} is then obtained by identifying $(H_2(\Sigma(B^4,\hat{F})),Q_{\Sigma(B^4,\hat{F})}) \cong (H_1(F),\mathcal{G}_F)$ and $(H_2(X),Q_X) \cong (\mathbb{Z}^k,\mbox{Id})$, and noting that $\left(\restr{\widetilde{\rho}}{\Sigma(B^4,F)}\right)_* = \left(\restr{\rho}{F}\right)_*$ by Proposition \ref{prop:GL_lattice_lift}. Finally, $\widetilde{\rho}_*:\mathbb{Z}^k \to \mathbb{Z}^k$ is an automorphism since $\widetilde{\rho}$ is a diffeomorphism, and order$(\widetilde{\rho}_*) = $ order$(\widetilde{\rho}) = $ order$(\rho)$.

\begin{figure}
\begin{tikzcd}
  {H_2(\Sigma(B^4,\hat{F}))} && {H_2(X)} & {(H_1(F),\mathcal{G}_F)} && {(\mathbb{Z}^k,Id)} \\
  \\
  {H_2(\Sigma(B^4,\hat{F}))} && {H_2(X)} & {(H_1(F),\mathcal{G}_F)} && {(\mathbb{Z}^k,Id)}
  \arrow["{\widetilde{\rho}_*}"{name=0}, from=1-3, to=3-3]
  \arrow["{\iota_*}"', from=3-1, to=3-3]
  \arrow["{\iota_*}", from=1-1, to=1-3]
  \arrow["{\left(\restr{\widetilde{\rho}}{\Sigma(B^4,F)}\right)_*}"', from=1-1, to=3-1]
  \arrow["{\widetilde{\rho}_*}", from=1-6, to=3-6]
  \arrow["{\iota_*}", from=1-4, to=1-6]
  \arrow["{\iota_*}"', from=3-4, to=3-6]
  \arrow["{\left(\restr{\rho}{F}\right)_*}"{name=1, swap}, from=1-4, to=3-4]
  \arrow[Rightarrow, "{=}", from=0, to=1, shorten <=7pt, shorten >=7pt, phantom, no head]
\end{tikzcd}
\caption{A commutative diagram for the equivariant lattice embedding used in the proof of Theorem \ref{thm:eq_embedding}.}
\label{fig:lattice_commutative}
\end{figure}

\end{proof}

To apply this theorem, it will be convenient to recall that given a knot diagram $K$, the Gordon-Litherland form of a checkerboard surface is determined by the corresponding checkerboard graph which we briefly describe (see \cite{Gordon-Litherland} for details). Choosing a black and white checkerboard coloring of the knot diagram determines a black checkerboard surface $F$ spanning $K$. We denote by $G(F)$ the planar multigraph with vertices corresponding to white regions of the checkerboard coloring and edges corresponding to crossings between pairs of distinct white regions. Additionally, we weight the edges of $G(F)$ with $\pm 1$ depending on the sign of the corresponding crossing; $1$ for a right half-twist between white regions, and $-1$ for a left half-twist. We weight the vertices of $G(F)$ with negative the sum of the weights of incident edges and denote by $w(e)$ and $w(v)$ the weight of an edge $e$ and a vertex $v$ respectively. See Figure \ref{fig:9_40lattice} for an example. We call $G(F)$ the \emph{checkerboard graph} of $F$.

Now $H_1(F) \cong \mathbb{Z}\langle v_1, v_2, \dots, v_n\rangle/(v_1 + v_2 + \dots + v_n)$ where $v_1, v_2, \dots, v_n$ are the vertices of $G(F)$. This isomorphism is given by mapping $v_i$ to the class of the loop in $F$ going counterclockwise once around the white region corresponding to $v_i$. Under this isomorphism, the Gordon-Litherland form $\mathcal{G}_F$ is determined by 
\[
\langle v_i,v_j \rangle =
\begin{cases} 
  w(v_i), & i = j \\
  \sum_{e \in E(v_i,v_j)} w(e), & i \neq j, \\
\end{cases}
\]
where $E(v_i,v_j)$ is the set of edges between $v_i$ and $v_j$.

Suppose $(K,\rho)$ is a periodic or strongly invertible knot which has an alternating intravergent or transvergent diagram (see Definition \ref{def:symmetric_diagrams}). This diagram then has an equivariant checkerboard surface $F$ with positive definite Gordon-Litherland form and the symmetry of the diagram induces a planar symmetry of the checkerboard graph. The map $\rho_*\colon H_1(F) \to H_1(F)$ can then be determined directly from the planar symmetry of the checkerboard graph.

Theorem \ref{thm:eq_embedding} may be used to potentially obstruct $\widetilde{g}_4(K) = -\sigma(K)/2$ by showing that there is no embedding of lattices $\iota:(H_1(F),\mathcal{G}_F) \to (\mathbb{Z}^k, \mbox{Id})$ making the diagram in Theorem \ref{thm:eq_embedding} commute. More precisely, first enumerate all lattice embeddings $(H_1(F),\mathcal{G}_F) \to (\mathbb{Z}^k, \mbox{Id})$ (there are finitely many) which could potentially serve as such an $\iota$. For each embedding, then show that there does not exist a map $\delta\colon (\mathbb{Z}\langle e_1, \dots,e_k\rangle, \mbox{Id}) \to (\mathbb{Z}\langle e_1, \dots,e_k\rangle, \mbox{Id})$ making the diagram commute. Note that $\delta$ maps each $e_i$ to $\pm e_j$ for some $j$ and hence there are only finitely many possibilities for $\delta$. In some cases this $\delta$ may exist so that Theorem \ref{thm:eq_embedding} does not provide an obstruction.

In the case that $g_4(K) = -\sigma(K)/2$, this strategy can be used to produce examples with $\widetilde{g}_4(K) > g_4(K)$; see Example \ref{ex:9_40_lattice} and more generally Appendix \ref{sec:Donaldson_table}. Finally, we note that the signature is easy to compute in this setting. Choosing an arbitrary orientation on $K$, the Gordon-Litherland formula for the signature gives $\sigma(K) = \sigma(\mathcal{G}_F) - n$, where $n$ is the number of positive crossings in the alternating diagram.

\begin{figure}[!htbp]
  \begin{overpic}[width=300pt, grid=false]{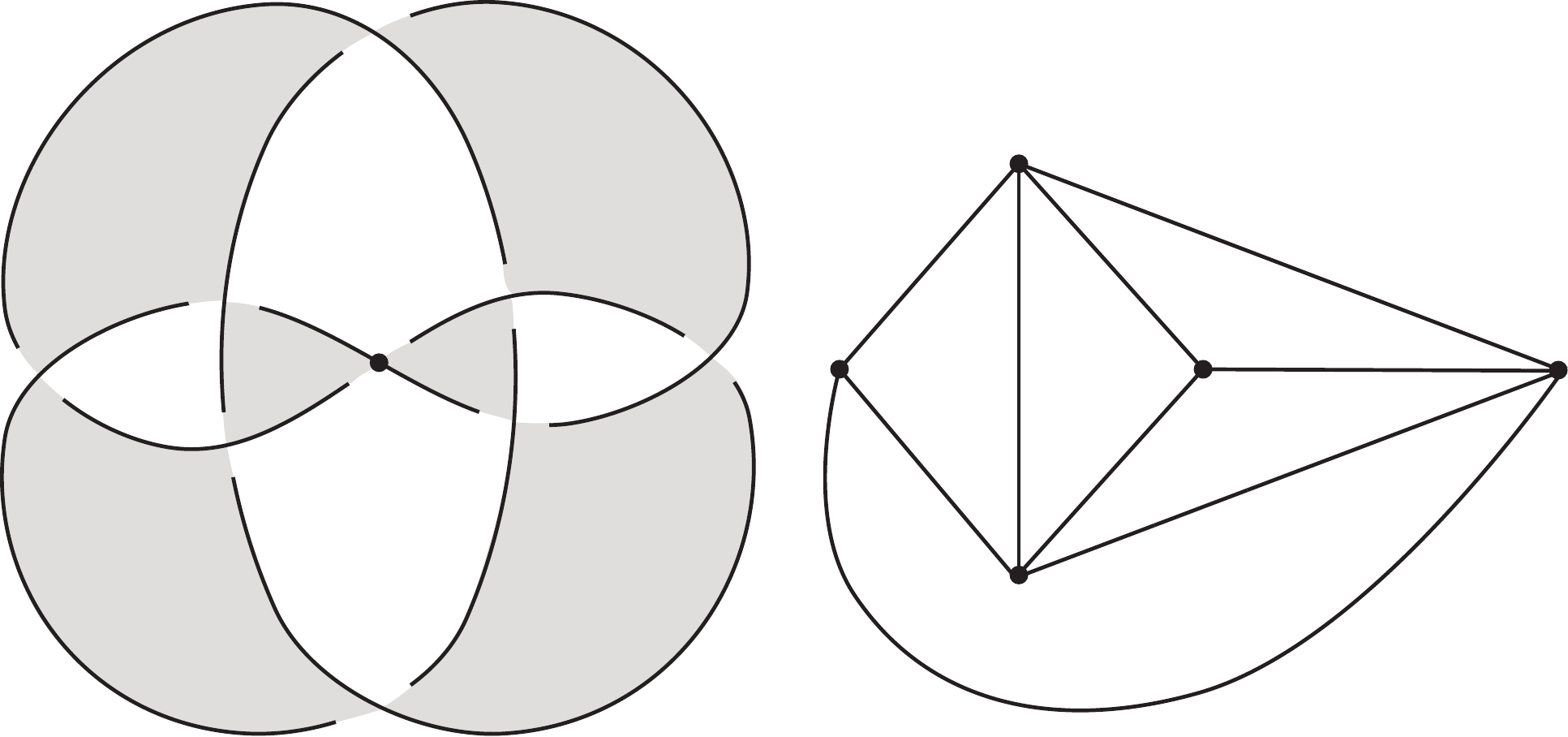}
    \put (101, 22.5) {$4$}
    \put (64, 38) {$4$}
    \put (72.7, 22.5) {$3$}
    \put (64, 6.3) {$4$}
    \put (56, 22.5) {$3$}
  \end{overpic}
\caption{The knot $9_{40}$ with a strong inversion given by rotation around an axis perpendicular to the page (left) and the checkerboard graph (right) for the shaded checkerboard surface. Every edge has weight $-1$, and the vertex weights are labeled.}
\label{fig:9_40lattice}
\end{figure}

\begin{example} \label{ex:9_40_lattice}
Consider the knot $K = 9_{40}$ with shaded checkerboard surface $F$ as shown in Figure \ref{fig:9_40lattice}. We see that $K$ has an alternating diagram in which a strong inversion is given by rotation around an axis perpendicular to the plane of the diagram and $F$ is equivariant with respect to this strong inversion. The Gordon-Litherland pairing is described by the corresponding checkerboard graph as shown in the figure. One can check that $\sigma(K) = -2$ and $g_4(K) = 1$ (\cite{MurakamiSugishita}; see also \cite{knotinfo}). By Theorem \ref{thm:eq_embedding}, if $\widetilde{g}_4(K) = -\sigma(K)/2 = 1$ then there exists an equivariant lattice embedding $\iota \colon (H_1(F), \mathcal{G}_F) \rightarrow (\Z^{6}, \mbox{Id})$. We used a computer program to check that there are exactly two distinct lattice embeddings $\iota_1, \iota_2\colon (H_1(F), \mathcal{G}_F) \rightarrow (\Z^{6}, \mbox{Id})$ up to automorphisms of $(\Z^{6}, \mbox{Id})$, and it so happens that $\rho_* \circ \iota_i = \iota_j$ for $\{i,j\} = \{1,2\}$ (see Figure \ref{fig:9_40_embedding}). If there were an automorphism $\delta:(\Z^{6}, \mbox{Id}) \to (\Z^{6}, \mbox{Id})$ such that $\iota_i \circ \rho_* = \delta \circ \iota_i$, then $\iota_j = \delta \circ \iota_i$, contradicting that $\iota_i$ and $\iota_j$ are distinct up to automorphisms of $(\Z^{6}, \mbox{Id})$ for $\{i,j\} = \{1,2\}$. Thus $\widetilde{g}_4(K) > 1 = g_4(K)$. In fact, one can show that $\widetilde{g}_4(K) = 2$. Indeed, performing an equivariant pair of crossing changes on a pair of opposite crossings closest to the axis in Figure \ref{fig:9_40lattice} gives the unknot, and thus by Proposition \ref{prop:crossing_change_genus}, $\widetilde{g}_4(K) \leq 2$.
\end{example}

\begin{figure}[!htbp]
\begin{overpic}[width=350pt, grid=false]{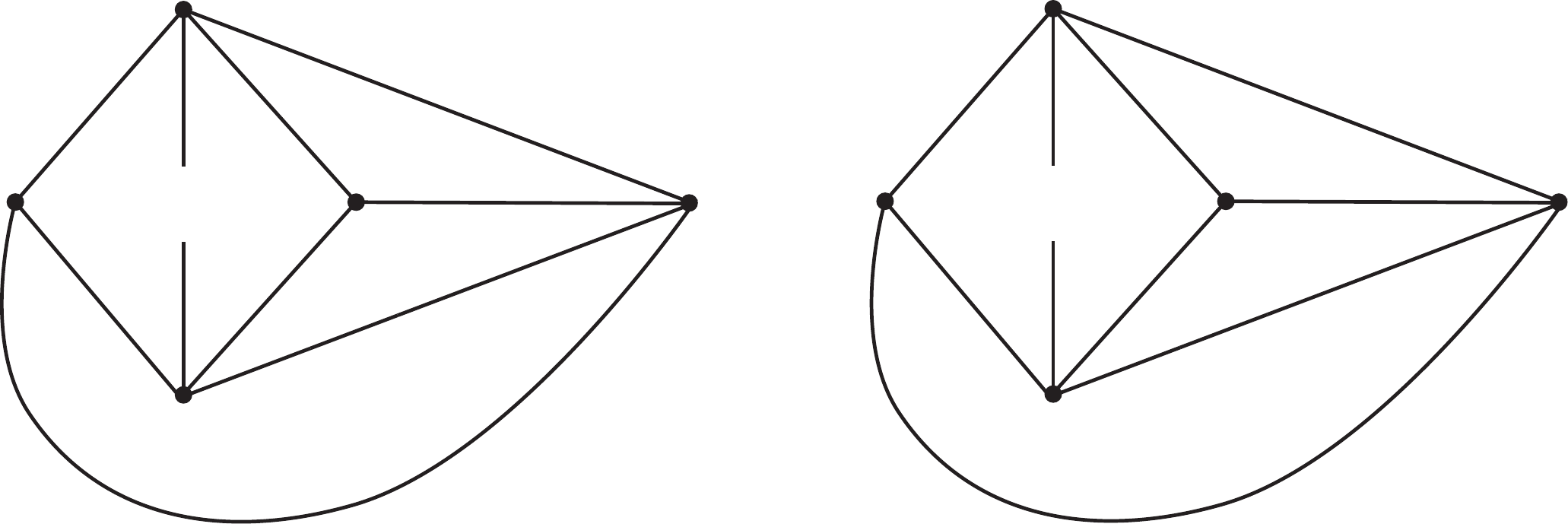}
    \put (13, 33) {\footnotesize $e_1$--$e_2$--$e_3$+$e_4$}
    \put (3, 20) {\footnotesize $e_1$+$e_2$+$e_3$}
    \put (5, 6) {\footnotesize --$e_1$+$e_2$--$e_3$--$e_5$}
    \put (22, 21.6) {\footnotesize --$e_4$+$e_5$+$e_6$}
    \put (36, 24) {\footnotesize --$e_1$--$e_2$+$e_3$--$e_6$}

    \put (58, 20) {\footnotesize --$e_4$+$e_5$+$e_6$}
    \put (68.5, 33) {\footnotesize --$e_1$+$e_2$--$e_3$--$e_5$}
    \put (62.5, 6) {\footnotesize $e_1$--$e_2$--$e_3$+$e_4$}
    \put (78, 21.6) {\footnotesize $e_1$+$e_2$+$e_3$}
    \put (92, 24) {\footnotesize --$e_1$--$e_2$+$e_3$--$e_6$}
  \end{overpic}
\caption{The unique pair of lattice embeddings (up to automorphism) of $H_1$ of a checkerboard surface for the knot $9_{40}$ into $\Z \langle e_1,e_2,e_3,e_4,e_5,e_6 \rangle$.}
\label{fig:9_40_embedding}
\end{figure}

\begin{remark}
In Example \ref{ex:9_40_lattice}, Theorem \ref{thm:eq_embedding} was used to show that the smooth equivariant 4-genus of $K$ is $2$. In fact, we can show that the topological equivariant 4-genus of $K$ is also $2$ by essentially the same argument. In Theorem \ref{thm:eq_embedding} the smoothness is only needed in order to apply Donaldson's theorem to conclude that a certain closed positive definite 4-manifold $X$ has diagonalizable intersection form. However, in this example $b_2(X) = 6$ and in fact for algebraic reasons there is only one positive definite unimodular lattice of each rank $\leq 7$, namely the diagonal lattice (\cite{MR90606}, see also \cite[Section 6]{MR0506372}). 
\end{remark}

\section{The \texorpdfstring{$g$}\ -signature of knots} \label{sec:gsig}
In this section, we use the $g$-signature for 4-manifolds (see for example \cite{Gordongsig}) to generalize the knot signature to the equivariant setting. We use this generalization to prove a lower bound on the equivariant 4-genus of a periodic knot and on the butterfly 4-genus of a strongly invertible knot. We begin by recalling the definition of the $g$-signature for 4-manifolds.

\begin{definition} \cite[Section 1]{Gordongsig}
Let $X$ be a compact oriented 4-manifold with a finite order orientation-preserving diffeomorphism $\rho:X \to X$. The intersection form on $H_2(X;\mathbb{Z})$ induces a hermitian form $\varphi\colon H \times H \to \mathbb{C}$ where $H = H_2(X;\mathbb{C}) = H_2(X;\mathbb{Z}) \otimes \mathbb{C}$, by
\[
\varphi(x \otimes a, y\otimes b) = a \overline{b}(x \cdot y).
\]
Then $\varphi(x \otimes a, y \otimes b) = \varphi(\rho_* x \otimes a,\ \rho_* y \otimes b)$ for all $(x \otimes a),(y \otimes b)\in H$. We may choose a $\rho$-invariant direct sum decomposition $H = H^+ \oplus H^- \oplus H^0$ which is orthogonal with respect to $\varphi$ and where $\varphi$ is positive definite, negative definite, and zero on $H^+,H^-,$ and $H^0$ respectively. Then the $g$-signature is 
\[
\widetilde{\sigma}(X,\rho) = \mbox{trace}(\rho_*|_{H^+}) - \mbox{trace}(\rho_*|_{H^-}).
\]
\end{definition}
It is usually easier to compute the $g$-signature using the following well-known proposition.
\begin{proposition}
\label{prop:compute_gsig}
Let $X$ be a compact oriented 4-manifold with a finite order $n$ orientation-preserving diffeomorphism $\rho:X \to X$, and let $\varphi:H \times H \to \mathbb{C}$ be the intersection pairing where $H = H_2(X;\mathbb{C})$. Then 
\[
\widetilde{\sigma}(X,\rho) = \sum_{j = 0}^{n-1} \omega^j \sigma(\restr{\varphi}{H(\omega^j)}),
\]
where $\omega = e^{2\pi i/n}$ is an $n^{th}$ root of unity and $H(\omega^j)$ is the $\omega^j$-eigenspace of $\rho_*:H_2(X;\mathbb{C}) \to H_2(X; \mathbb{C})$. 
\end{proposition}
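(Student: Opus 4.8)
The plan is to diagonalize $\rho_*$ and reduce the computation of $\widetilde{\sigma}(X,\rho)$ to one eigenspace at a time. Since $\rho$ has order $n$, the induced automorphism $\rho_* \colon H \to H$ of $H = H_2(X;\C)$ satisfies $\rho_*^{\,n} = \mbox{Id}$, so its minimal polynomial divides $t^n - 1$ and $\rho_*$ is diagonalizable with eigenvalues among the $n$-th roots of unity. This yields the eigenspace decomposition $H = \bigoplus_{j=0}^{n-1} H(\omega^j)$, with $\rho_*$ acting on $H(\omega^j)$ as multiplication by the scalar $\omega^j$.

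First I would show that this decomposition is $\varphi$-orthogonal. As recorded in the definition, $\varphi$ is $\rho$-invariant, meaning $\varphi(\rho_* u, \rho_* v) = \varphi(u,v)$ for all $u,v \in H$. Taking $u \in H(\omega^i)$ and $v \in H(\omega^j)$, and using that $\varphi$ is linear in the first variable and conjugate-linear in the second, gives
\[
\varphi(u,v) = \varphi(\rho_* u, \rho_* v) = \omega^i\,\overline{\omega^j}\,\varphi(u,v) = \omega^{i-j}\varphi(u,v),
\]
so that $(\omega^{i-j}-1)\varphi(u,v) = 0$. When $i \neq j$ we have $\omega^{i-j} \neq 1$, forcing $\varphi(u,v) = 0$. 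Hence eigenspaces for distinct eigenvalues are orthogonal with respect to $\varphi$.

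Next I would refine the eigenspace decomposition into one of the form appearing in the definition of $\widetilde{\sigma}$. The restriction $\restr{\varphi}{H(\omega^j)}$ is a hermitian form, so each eigenspace splits $\varphi$-orthogonally as $H(\omega^j) = H(\omega^j)^+ \oplus H(\omega^j)^- \oplus H(\omega^j)^0$ into positive definite, negative definite, and null parts. Setting $H^{\pm} = \bigoplus_j H(\omega^j)^{\pm}$ and $H^0 = \bigoplus_j H(\omega^j)^0$ produces a decomposition that is $\rho$-invariant, since each summand lies in a single eigenspace, and $\varphi$-orthogonal, being orthogonal within each eigenspace by construction and across eigenspaces by the previous step. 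On these summands $\varphi$ is respectively positive definite, negative definite, and zero, so this is a permissible choice in the definition of the $g$-signature, and computing $\widetilde{\sigma}$ with it suffices.

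Finally, since $\rho_*$ acts as the scalar $\omega^j$ on $H(\omega^j)$, and hence on each $H(\omega^j)^{\pm}$, we have $\mbox{trace}(\restr{\rho_*}{H(\omega^j)^{\pm}}) = \omega^j \dim_{\C} H(\omega^j)^{\pm}$. Writing $b_j^{\pm} = \dim_{\C} H(\omega^j)^{\pm}$ and summing over $j$,
\[
\widetilde{\sigma}(X,\rho) = \mbox{trace}(\restr{\rho_*}{H^+}) - \mbox{trace}(\restr{\rho_*}{H^-}) = \sum_{j=0}^{n-1} \omega^j (b_j^+ - b_j^-) = \sum_{j=0}^{n-1} \omega^j\, \sigma(\restr{\varphi}{H(\omega^j)}),
\]
using $b_j^+ - b_j^- = \sigma(\restr{\varphi}{H(\omega^j)})$. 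The main obstacle is the orthogonality step together with the verification that the assembled triple $(H^+,H^-,H^0)$ genuinely satisfies all the defining conditions (especially $\rho$-invariance and $\varphi$-orthogonality across eigenspaces), so that it is a legitimate decomposition for computing $\widetilde{\sigma}$; once this is established, the trace computation is immediate.
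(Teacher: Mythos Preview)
Your proof is correct and follows essentially the same approach as the paper: both arguments diagonalize $\rho_*$ and identify $\sigma(\restr{\varphi}{H(\omega^j)})$ with the difference of the dimensions of the $\omega^j$-eigenspaces inside $H^+$ and $H^-$. The paper starts from an arbitrary admissible decomposition $H = H^+ \oplus H^- \oplus H^0$ and reads off the eigenvalue multiplicities, whereas you go in the reverse direction by first decomposing into eigenspaces and then assembling a particular admissible $H^\pm$; the content is the same and your version is slightly more explicit about the orthogonality step that the paper leaves implicit.
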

\begin{proof}
Let $\lambda_1,\dots, \lambda_k$ be the eigenvalues of $\restr{\rho_*}{H^+}$ and $\lambda_{k+1}, \dots, \lambda_l$ be the eigenvalues of $\restr{\rho_*}{H^-}$. Since $\rho^n = $ Id, the $\lambda_j$ are $n^{th}$ roots of unity. Then 
\begin{align*}
\widetilde{\sigma}(X,\rho) &= \mbox{trace}(\rho_*|_{H^+}) - \mbox{trace}(\rho_*|_{H^-}) \\
&= \sum_{j=1}^k \lambda_j - \sum_{j=k+1}^l \lambda_j = \sum_{j = 0}^{n-1} \omega^j (p_j - n_j)\\
&= \sum_{j = 0}^{n-1} \omega^j \sigma(\restr{\varphi}{H(\omega^j)}),
\end{align*}
where $p_j = |\{i \leq k : \lambda_i = \omega^j\}|$ and $n_j = |\{i > k : \lambda_i = \omega^j\}|$.
\end{proof}

With this in hand, we define the $g$-signature for certain group actions on knots.
\subsection{Periodic knots and the g-signature} \label{subsec:periodic_gsig}
\begin{definition}
\label{def:periodic_gsig}
Let $K$ be an $n$-periodic knot in $S^3$ with $\rho\colon (S^3,K) \to (S^3,K)$ a generator of the periodic symmetry. Let $F \subset B^4$ be an orientable equivariant surface for $(K,\rho)$, and $\overline{\rho}\colon (B^4, F) \to (B^4, F)$ be an extension of $\rho$. Let $\widetilde{\rho}:\Sigma(B^4,F) \to \Sigma(B^4,F)$ be the distinguished lift of $\overline{\rho}$ to the branched double cover (see Definition \ref{def:periodic_unique_lift}). The \emph{g-signature} $\widetilde{\sigma}(K,\rho)$ of $K$ is the average of the $g$-signatures $\widetilde{\sigma}(\Sigma(B^4,F),\widetilde{\rho}^{\,i})$, that is,
\[
\widetilde{\sigma}(K,\rho) = \frac{1}{n-1}\sum_{i=1}^{n-1}\widetilde{\sigma}(\Sigma(B^4,F),\widetilde{\rho}^{\,i}).
\]
When the periodic symmetry is clear we write $\widetilde{\sigma}(K)$ for $\widetilde{\sigma}(K,\rho)$.
\end{definition}
Note that in Definition \ref{def:periodic_gsig}, since we take the average of the $g$-signatures, $\widetilde{\sigma}(K)$ is independent of the choice of generator $\rho$ for the $n$-periodic symmetry. Additionally, the lift $\widetilde{\rho}$ is independent of the orientation on $F$ and hence $\widetilde{\sigma}(K)$ does not depend on the orientation of $K$. We now show that the $g$-signature is well-defined.
\begin{theorem}
The $g$-signature of an $n$-periodic knot $(K,\rho)$ is independent of the choice of orientable equivariant surface and extension of the periodic symmetry to $B^4$. 
\end{theorem}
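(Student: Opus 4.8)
The plan is to reduce the statement to the vanishing of an ordinary signature by gluing the two choices into a closed manifold. Suppose $(F_0,\overline{\rho}_0)$ and $(F_1,\overline{\rho}_1)$ are two orientable equivariant surfaces with their extensions (the two choices may differ in both the surface and the extension, and the argument treats them simultaneously). Double across the common boundary $(S^3,K)$ to form a closed orientable surface $\widehat{F} = F_0 \cup_K \overline{F_1} \subset S^4 = B^4 \cup_{S^3} \overline{B^4}$, invariant under the glued order-$n$ diffeomorphism $\widehat{\rho} = \overline{\rho}_0 \cup \overline{\rho}_1$. The branched double cover then decomposes as $W := \Sigma(S^4,\widehat{F}) = \Sigma(B^4,F_0) \cup_Y \overline{\Sigma(B^4,F_1)}$ along $Y = \Sigma(S^3,K)$. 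Since each distinguished lift $\widetilde{\rho}_i$ is characterized by pointwise fixing the preimage of the axis component (Definition \ref{def:periodic_unique_lift}), the two restrict to the same lift of $\rho$ on $Y$ and hence glue to an order-$n$ lift $\widehat{\widetilde{\rho}}$ of $\widehat{\rho}$.

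First I would establish equivariant Novikov additivity along $Y$: for each $i$,
\[
\widetilde{\sigma}(W,\widehat{\widetilde{\rho}}^{\,i}) = \widetilde{\sigma}(\Sigma(B^4,F_0),\widetilde{\rho}_0^{\,i}) - \widetilde{\sigma}(\Sigma(B^4,F_1),\widetilde{\rho}_1^{\,i}),
\]
where the minus sign comes from the reversed orientation on the second piece. The potential Wall non-additivity correction is a Maslov-type term supported on $H_1(Y;\mathbb{C})$ and its $\widehat{\widetilde{\rho}}$-eigenspaces; because $Y = \Sigma(S^3,K)$ is a rational homology sphere, $H_1(Y;\mathbb{C}) = 0$ and the correction vanishes. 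Thus the theorem reduces to showing $\sum_{i=1}^{n-1}\widetilde{\sigma}(W,\widehat{\widetilde{\rho}}^{\,i}) = 0$.

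Next I would evaluate this sum with the eigenspace formula of Proposition \ref{prop:compute_gsig}. Writing $\widetilde{\sigma}(W,\widehat{\widetilde{\rho}}^{\,i}) = \sum_{j=0}^{n-1}\omega^{ij}\,\sigma(\restr{\varphi}{H(\omega^j)})$ and summing the geometric series (which equals $n-1$ when $n \mid j$ and $-1$ otherwise) gives
\[
\sum_{i=1}^{n-1}\widetilde{\sigma}(W,\widehat{\widetilde{\rho}}^{\,i}) = n\,\sigma(\restr{\varphi}{H_2(W;\mathbb{C})^{\widehat{\widetilde{\rho}}}}) - \sigma(W),
\]
so it suffices to show both signatures on the right vanish. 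For $\sigma(W)$, Novikov additivity (again with no correction, as $Y$ is a rational homology sphere) gives $\sigma(W) = \sigma(\Sigma(B^4,F_0)) - \sigma(\Sigma(B^4,F_1))$, and each summand equals $\sigma(K)$ by Kauffman--Taylor \cite{KauffmanTaylor}, independent of the surface, so $\sigma(W) = 0$. For the invariant-part signature, the transfer isomorphism identifies it with $\sigma(\overline{W})$, where $\overline{W} = W/\widehat{\widetilde{\rho}}$ is the double branched cover of $S^4/\widehat{\rho}$ over the quotient surface $\overline{\widehat{F}} = \overline{F_0} \cup_{\overline{K}} \overline{F_1}$; decomposing $\overline{W}$ along $\Sigma(S^3,\overline{K})$ and applying Kauffman--Taylor to the quotient surfaces $\overline{F_i}$ of $\overline{K}$ yields $\sigma(\overline{W}) = \sigma(\overline{K}) - \sigma(\overline{K}) = 0$.

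The main obstacle will be the two invocations of equivariant Novikov additivity: one must verify that the Wall correction genuinely vanishes on each $\widehat{\widetilde{\rho}}$-eigenspace, for which the rational homology sphere hypothesis on $\Sigma(S^3,K)$ (and $\Sigma(S^3,\overline{K})$) is exactly what is needed. A secondary technical point is the invariant-part computation: after identifying $\sigma(\restr{\varphi}{H_2(W)^{\widehat{\widetilde{\rho}}}})$ with $\sigma(\overline{W})$, one applies Kauffman--Taylor over the quotient ball $B^4/\overline{\rho}_i$, which by Proposition \ref{prop:quotient_top_b4} is only a topological homology ball when $n > 2$; one must check that the signature of its branched cover still depends only on $\overline{K}$. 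Everything else---the gluing, the geometric-series bookkeeping, and the surface-independence of the ordinary branched-cover signatures---is routine.
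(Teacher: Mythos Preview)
Your argument is correct, but it takes a different route from the paper's. The paper shows that each individual term $\widetilde{\sigma}(\Sigma(S^4,\widehat{F}),\widehat{\widetilde{\rho}}^{\,j})$ vanishes by a five-dimensional bounding argument: using \cite[Proposition 4.1]{Naik} it finds an equivariant $3$-manifold $M\subset S^4$ with $\partial M=\widehat{F}$, pushes $M$ into $B^5$, and lifts the symmetry to $\Sigma(B^5,\widehat{M})$; cobordism invariance of the $g$-signature then gives $\widetilde{\sigma}(\Sigma(S^4,\widehat{F}),\widehat{\widetilde{\rho}}^{\,j})=0$ for each $j$, and Novikov additivity finishes. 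Your approach stays in dimension four and only shows the \emph{sum} vanishes, by rewriting it as $n\,\sigma(\overline{W})-\sigma(W)$ and killing each piece with Kauffman--Taylor applied to $K$ and to the quotient knot $\overline{K}$.

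What this buys: the paper's proof yields the stronger conclusion that each $\widetilde{\sigma}(\Sigma(B^4,F),\widetilde{\rho}^{\,i})$ is itself independent of the choices, and it needs neither Kauffman--Taylor nor the identification $W/\widehat{\widetilde{\rho}}\cong\Sigma(S^4/\widehat{\rho},\overline{\widehat{F}})$. Your proof avoids the equivariant $3$-manifold and the five-dimensional step entirely; indeed, it is essentially the computation behind Theorem~\ref{thm:periodicgsig}, and you could streamline it further by applying Gordon's identity $n\,\sigma(X/G)-\sigma(X)=\sum_{g\neq 1}\widetilde{\sigma}(X,g)$ directly to a single piece $X=\Sigma(B^4,F_0)$ rather than to the glued $W$: this gives $\sum_i\widetilde{\sigma}(\Sigma(B^4,F_0),\widetilde{\rho}_0^{\,i})=n\,\sigma(\overline{K})-\sigma(K)$, which is visibly independent of $F_0$ and proves both well-definedness and Theorem~\ref{thm:periodicgsig} simultaneously. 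The caveat you flag about Kauffman--Taylor over the (merely topological) quotient ball is real but mild: the paper itself relies on exactly this in the proof of Theorem~\ref{thm:periodicgsig}, and it follows from the branched-cover signature formula over a homology $4$-sphere together with Novikov additivity.
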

\begin{proof}
For $i \in \{1,2\}$, let $F_i$ be an orientable equivariant surface for $K$ in $B^4$ and let $\overline{\rho}_i\colon (B^4,F_i) \to (B^4,F_i)$ be an extension of $\rho$ to $B^4$. Then let $(S^4,F) = (B^4,F_1) \cup (-B^4, -F_2)$ with $\overline{\rho}:(S^4,F) \to (S^4,F)$ the symmetry which restricts to $\overline{\rho}_i$ on $(B^4,F_i)$ for $i \in \{1,2\}$. By the same argument as in Proposition 4.1 of \cite{Naik}, the surface $F$ bounds an equivariant 3-manifold $M \subset S^4$. 

Thinking of $S^4 = \partial B^5$, we can extend $\overline{\rho}$ to an $n$-periodic symmetry (which we again refer to as $\overline{\rho}$) of $B^5$ by taking the cone of $\overline{\rho}$. Under this extension, we can perform an equivariant isotopy pushing $M$ into the interior of $B^5$ fixing $\partial M$ to get a 3-manifold $\widehat{M}$ properly embedded in $B^5$. The symmetry $\overline{\rho}$ on $B^5$ lifts to a symmetry $\widetilde{\rho}$ on the double branched cover $\Sigma(B^5,\widehat{M})$ of $B^5$ over $\widehat{M}$ by a similar argument to that of Proposition \ref{prop:symmetry_lifts}. This lift can be chosen to restrict to the lift of $\overline{\rho}_i$ on each $\Sigma(B^4,F_i) \subset \Sigma(S^4,F)$ used in the definition of $\widetilde{\sigma}(K)$. For $0 < j < n$, since $\partial (\Sigma(B^5,\widehat{M}),\widetilde{\rho}^{\, j}) = (\Sigma(S^4,F), \widetilde{\rho}^{\, j})$, we have that $\widetilde{\sigma}(\Sigma(S^4,F), \widetilde{\rho}^{\, j}) = 0$ by \cite[Section 1]{Gordongsig}. Hence by Novikov additivity (again, see \cite[Section 1]{Gordongsig}),
\[
0 = \widetilde{\sigma}(\Sigma(S^4,F), \widetilde{\rho}^{\, j}) = \widetilde{\sigma}(\Sigma(B^4,F_1), \widetilde{\rho}^{\, j}) - \widetilde{\sigma}(\Sigma(B^4,F_2), \widetilde{\rho}^{\, j}),
\]
for $0< j < n$. Averaging these gives the desired result.
\end{proof}
\begin{proposition} \label{prop:gsig_concordance}
The $g$-signature of a periodic knot is an equivariant concordance invariant.
\end{proposition}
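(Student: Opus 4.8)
The plan is to mimic the well-definedness argument just completed, but now in a one-higher-dimensional concordance setting. Suppose $K_0$ and $K_1$ are periodic knots that are equivariantly concordant, so there is a periodic annulus $C \cong S^1 \times I$ smoothly and equivariantly embedded in $S^3 \times I$ with $\partial C = K_0 \sqcup K_1$, equivariant with respect to an involution (or $n$-periodic symmetry) $\rho_C$ extending the given symmetries on the two ends. The goal is to show $\widetilde{\sigma}(K_0) = \widetilde{\sigma}(K_1)$. First I would choose orientable equivariant surfaces $F_0 \subset B^4$ and $F_1 \subset B^4$ for $K_0$ and $K_1$, together with extensions $\overline{\rho}_i$, which compute $\widetilde{\sigma}(K_i)$. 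Then I would glue: cap off one end of the concordance cylinder $S^3 \times I$ with $(B^4, F_0)$ and the other end with $(-B^4, -F_1)$, producing a closed $4$-manifold $S^4$ containing a closed equivariant surface $\widehat{F} = F_0 \cup C \cup (-F_1)$, with a global finite-order symmetry $\overline{\rho}$ restricting to $\overline{\rho}_i$ on each cap.

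The key point is then to run the same null-cobordism argument as in the just-proved well-definedness theorem. Since $K$ is periodic and the concordance is between periodic knots with the symmetry respecting orientations, the surface $\widehat{F}$ should bound an equivariant $3$-manifold $M$ in $S^4$ (by the same Alexander-duality-type argument from Proposition 4.1 of \cite{Naik} used above). Thinking of $S^4 = \partial B^5$ and coning the symmetry, I would push $M$ into the interior of $B^5$ equivariantly to get $\widehat{M}$, lift $\overline{\rho}$ to $\widetilde{\rho}$ on the double branched cover $\Sigma(B^5, \widehat{M})$, and observe that $\partial(\Sigma(B^5,\widehat{M}), \widetilde{\rho}^{\,j}) = (\Sigma(S^4,\widehat{F}), \widetilde{\rho}^{\,j})$. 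As before, bounding a branched cover of $B^5$ forces $\widetilde{\sigma}(\Sigma(S^4,\widehat{F}), \widetilde{\rho}^{\,j}) = 0$ for each $0 < j < n$. Novikov additivity then gives
\[
0 = \widetilde{\sigma}(\Sigma(B^4,F_0), \widetilde{\rho}^{\,j}) - \widetilde{\sigma}(\Sigma(B^4,F_1), \widetilde{\rho}^{\,j}),
\]
and averaging over $0 < j < n$ yields $\widetilde{\sigma}(K_0) = \widetilde{\sigma}(K_1)$.

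The main obstacle I expect is verifying that the glued surface $\widehat{F}$ in $S^4$ really does bound an equivariant $3$-manifold, so that the whole null-cobordism machine applies. In the well-definedness proof, the two caps were surfaces for the \emph{same} knot, so the doubled surface was null-homologous for a transparent reason; here the middle piece is a concordance annulus rather than a cap being glued directly to its mirror, so I would need to confirm that $\widehat{F}$ is still null-homologous mod $2$ in $S^4$ (it is, since the concordance is an annulus and the two knots are nullhomologous boundaries of the caps, so $[\widehat{F}] = 0 \in H_2(S^4;\mathbb{Z}/2)$), which is exactly the hypothesis needed to invoke the Naik-style argument producing an equivariant Seifert $3$-manifold. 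A secondary technical point is checking that the lift $\widetilde{\rho}$ on $\Sigma(B^5,\widehat{M})$ can be chosen to restrict to the distinguished lifts $\widetilde{\rho}_i$ on each end used to define $\widetilde{\sigma}(K_i)$; this is handled by the same compatibility-of-lifts reasoning appearing in the well-definedness proof, using that the fixed-point data (the axis components) match up across the concordance. Once these two homological/lift-compatibility checks are in place, the signature-vanishing and Novikov-additivity steps are formal.
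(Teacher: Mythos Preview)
Your argument is correct but takes a longer route than the paper. The paper's proof is three sentences: pick an equivariant surface $S$ for $K$, glue on the concordance annulus to obtain an equivariant surface $S' = S \cup C$ for $K'$, and observe that the inclusion $\Sigma(B^4,S) \hookrightarrow \Sigma(B^4,S')$ induces an equivariant isomorphism on $H_2(-;\mathbb{C})$ (the branched cover over the annular piece has $b_2 = 0$), so the $g$-signatures coincide. This uses the already-proved well-definedness as a black box: since $\widetilde{\sigma}(K')$ may be computed from \emph{any} equivariant surface, one simply chooses $S'$.

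Your proposal instead reruns the full $B^5$ null-cobordism machinery. In effect you are re-proving well-definedness for the pair of surfaces $F_0 \cup C$ and $F_1$, both of which bound $K_1$; once you notice this, the whole argument collapses to ``apply the previous theorem.'' Your version does work (and the two checks you flag---that $\widehat{F}$ bounds an equivariant $3$-manifold, and that the distinguished lifts are compatible across the concordance---go through for the reasons you indicate), but it duplicates effort. The paper's approach buys economy; yours buys self-containment if one wanted to prove concordance invariance without first isolating well-definedness.
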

\begin{proof}
Let $K$ and $K'$ be equivariantly concordant. Then the union of an equivariant surface $S$ in $B^4$ with boundary $K$ and the equivariant concordance cylinder is an equivariant surface $S'$ in $B^4$ with boundary $K'$. The map $H_2(\Sigma(B^4,S);\mathbb{C}) \to H_2(\Sigma(B^4,S');\mathbb{C})$ induced by inclusion is an equivariant isomorphism. Hence $\widetilde{\sigma}(K) = \widetilde{\sigma}(K')$.
\end{proof}
\begin{theorem}
\label{thm:gsigbound}
If $K$ is a periodic knot, then $|\widetilde{\sigma}(K)| \leq 2\widetilde{g}_4(K)$.
\end{theorem}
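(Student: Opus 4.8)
The plan is to compute $\widetilde{\sigma}(K)$ using a genus-minimizing orientable equivariant surface and to bound each individual $g$-signature appearing in the averaging formula by the rank of the second homology of the branched cover. Concretely, I would fix an orientable equivariant surface $F \subset B^4$ with $\partial F = K$ realizing the minimal genus, so that $g(F) = \widetilde{g}_4(K)$, and let $\widetilde{\rho} \colon \Sigma(B^4,F) \to \Sigma(B^4,F)$ be the distinguished lift as in Definition \ref{def:periodic_unique_lift}. By the well-definedness of $\widetilde{\sigma}(K)$ established above, this surface computes $\widetilde{\sigma}(K) = \frac{1}{n-1}\sum_{i=1}^{n-1}\widetilde{\sigma}(\Sigma(B^4,F),\widetilde{\rho}^{\,i})$, so it suffices to bound each summand.

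The key step is an elementary eigenvalue estimate on a single $g$-signature. Write $X = \Sigma(B^4,F)$ and $H = H_2(X;\mathbb{C})$. Since $\widetilde{\rho}$ has finite order, each power $\widetilde{\rho}^{\,i}$ acts on $H$ with eigenvalues that are roots of unity, hence of modulus $1$. From the definition $\widetilde{\sigma}(X,\widetilde{\rho}^{\,i}) = \mbox{trace}(\widetilde{\rho}^{\,i}_*|_{H^+}) - \mbox{trace}(\widetilde{\rho}^{\,i}_*|_{H^-})$, the triangle inequality gives
\[
|\widetilde{\sigma}(X,\widetilde{\rho}^{\,i})| \leq \dim_{\mathbb{C}} H^+ + \dim_{\mathbb{C}} H^- \leq \mbox{rank}\, H_2(\Sigma(B^4,F)).
\]
This is just the observation that a signed sum of unit-modulus numbers has absolute value at most the number of terms, together with $\dim H^+ + \dim H^- \leq b_2(X)$.

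Finally I would identify the rank. Since $F$ is orientable, $\mbox{rank}\, H_2(\Sigma(B^4,F)) = 2g(F)$ by \cite[Lemma 1]{GilmerLivingston} (the same fact used in the proof of Theorem \ref{thm:eq_embedding}). Combining this with the per-summand bound and averaging over $i = 1, \dots, n-1$ yields
\[
|\widetilde{\sigma}(K)| \leq \frac{1}{n-1}\sum_{i=1}^{n-1} |\widetilde{\sigma}(\Sigma(B^4,F),\widetilde{\rho}^{\,i})| \leq 2g(F) = 2\widetilde{g}_4(K),
\]
as desired. I do not expect a genuine obstacle here: the proof reduces entirely to the trivial bound $|\widetilde{\sigma}| \leq b_2$ for a finite-order action plus the known rank computation. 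The only point requiring care is making sure the minimal-genus surface is a legitimate input to the $g$-signature formula, which is guaranteed precisely because $\widetilde{\sigma}(K)$ was already shown to be independent of the choice of equivariant surface.
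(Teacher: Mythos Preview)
Your proof is correct and follows essentially the same approach as the paper: choose a minimal-genus equivariant surface, bound each individual $g$-signature by $b_2(\Sigma(B^4,F)) = 2g(F)$, and average. The only cosmetic difference is that the paper obtains the per-summand bound via the eigenspace formula of Proposition~\ref{prop:compute_gsig} (bounding $\bigl|\sum_j \omega^j \sigma(\varphi|_{H(\omega^j)})\bigr|$ by $\sum_j \dim H(\omega^j)$), whereas you bound the traces directly using that the eigenvalues have modulus one; these are equivalent rephrasings of the same estimate.
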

\begin{proof}
Let $S \subset B^4$ be a minimal genus orientable equivariant surface with $\partial S = K$. Then it is well known (see for example \cite[Lemma 1]{GilmerLivingston}) that
\[
\mbox{rank}(H_2(\Sigma(B^4,S);\mathbb{C})) = \mbox{rank}(H_1(S;\mathbb{C})) = 2g(S) = 2\widetilde{g}_4(K).
\]
Fix $i \in \{1,2,\dots,n-1\}$. Applying Proposition \ref{prop:compute_gsig} to $(\Sigma(B^4,S),\widetilde{\rho}^{\,i})$, where $\widetilde{\rho}$ is the lift of $\rho$ from the definition of $\widetilde{\sigma}(K)$, we have
\begin{align*}
|\widetilde{\sigma}(\Sigma(B^4,S),\widetilde{\rho}^{\,i})| = \bigg{|}\sum_{j = 0}^{n-1} \omega^j \sigma(\restr{\varphi}{H(\omega^j)})\bigg{|} \leq \sum_{j = 0}^{n-1} |\sigma(\restr{\varphi}{H(\omega^j)})| \leq \mbox{rank}(H_2(\Sigma(B^4,S);\mathbb{C})) = 2\widetilde{g}_4(K).
\end{align*}
Hence by the triangle inequality, $|\widetilde{\sigma}(K)| \leq 2\widetilde{g}_4(K)$.
\end{proof}
The following theorem allows us to express the $g$-signature purely in terms of the signature of $K$ and the signature of the quotient $\overline{K}$. 

\begin{theorem}
\label{thm:periodicgsig}
Let $K$ be n-periodic with quotient knot $\overline{K}$. Then 
\[
\widetilde{\sigma}(K) = \frac{n \cdot \sigma(\overline{K}) - \sigma(K)}{n-1}.
\]
\end{theorem}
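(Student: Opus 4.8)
The plan is to unwind Definition \ref{def:periodic_gsig} using the eigenspace formula of Proposition \ref{prop:compute_gsig}, and then reduce the whole computation to two numbers attached to the intersection form $\varphi$ on $W := \Sigma(B^4,F)$: its total signature and the signature of its $\widetilde{\rho}_*$-invariant part. Writing $\omega = e^{2\pi i/n}$, I would decompose $H_2(W;\C) = \bigoplus_{j=0}^{n-1} H(\omega^j)$ into eigenspaces of the distinguished lift $\widetilde{\rho}_*$. Since $\varphi$ is $\widetilde{\rho}_*$-invariant and Hermitian, eigenspaces for distinct eigenvalues are $\varphi$-orthogonal, so each restriction $\varphi|_{H(\omega^j)}$ has a well-defined signature $\sigma_j$; moreover $\widetilde{\rho}^{\,i}_*$ acts as the scalar $\omega^{ij}$ on $H(\omega^j)$. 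Diagonalizing $\varphi$ compatibly with this decomposition then gives $\widetilde{\sigma}(W,\widetilde{\rho}^{\,i}) = \sum_{j=0}^{n-1}\sigma_j\,\omega^{ij}$ for \emph{every} power, exactly as in Proposition \ref{prop:compute_gsig}.

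Averaging over $i$ and interchanging the two summations, I would invoke the root-of-unity identity $\sum_{i=1}^{n-1}\omega^{ij} = n-1$ for $j=0$ and $=-1$ for $j \neq 0$, which collapses the average to
\[
\widetilde{\sigma}(K) = \frac{1}{n-1}\Big((n-1)\sigma_0 - \sum_{j=1}^{n-1}\sigma_j\Big) = \frac{n\sigma_0 - \sum_{j=0}^{n-1}\sigma_j}{n-1}.
\]
Orthogonality of the eigenspaces gives $\sum_{j=0}^{n-1}\sigma_j = \sigma(\varphi) = \sigma(W)$, and the standard identification of the intersection form of a double branched cover with the symmetrized Seifert form yields $\sigma(W) = \sigma(K)$ (the fact used in the proof of Theorem \ref{thm:eq_embedding}, via \cite{GilmerLivingston, KauffmanTaylor}). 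Everything thus reduces to the single claim $\sigma_0 = \sigma(\overline{K})$, where $\sigma_0$ is the signature of $\varphi$ on the invariant subspace $H(1) = H_2(W;\C)^{\widetilde{\rho}}$.

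To compute $\sigma_0$, I would pass to the quotient $Y = W/\widetilde{\rho}$. Using the well-definedness theorem just established, I am free to take $F$ to be an equivariant Seifert surface pushed into $B^4$ and $\overline{\rho}$ the cone of $\rho$, with fixed set the cone disk $\Gamma$ on the axis; then the quotient surface $\overline{F} = F/\overline{\rho}$ is the corresponding pushed-in Seifert surface for $\overline{K}$. Because the distinguished lift pointwise fixes $q^{-1}(\Gamma)$, a local-coordinate check at a transverse point of $F \cap \Gamma$ shows $\widetilde{\rho}$ is locally $(z,u)\mapsto(\omega z, u)$; hence its fixed set is $2$-dimensional and $Y$ is a genuine topological $4$-manifold, identified with the double branched cover $\Sigma(B^4/\overline{\rho},\overline{F}) \cong \Sigma(B^4,\overline{F})$ (using Proposition \ref{prop:quotient_top_b4} to see $B^4/\overline{\rho}\cong B^4$). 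The quotient map $\pi\colon W \to Y$ has degree $n$, so $\pi^*$ identifies $H_2(Y;\C)$ with $H(1)$ and, via $\pi^*(a\smile b)=\pi^*a\smile\pi^*b$ together with $\pi_*[W]=n[Y]$, satisfies $\langle \pi^* a,\pi^* b\rangle_W = n\langle a,b\rangle_Y$. As $n>0$, this positive scaling does not change signatures, so $\sigma_0 = \sigma(Y) = \sigma(\Sigma(B^4,\overline{F})) = \sigma(\overline{K})$, again by the Seifert-form identification. Substituting into the displayed formula gives the result.

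The main obstacle is exactly this middle claim $\sigma_0 = \sigma(\overline{K})$: one must correctly identify $Y$ as the branched cover of the quotient ball over the quotient surface, verify that $Y$ is an honest (topological) $4$-manifold despite the branch locus $F$ meeting the fixed disk $\Gamma$ in interior points (the local model $(z,u)\mapsto(\omega z,u)$ is the crucial check), and make the transfer/projection-formula argument precise for manifolds with boundary so that the possibly degenerate form $\varphi$ and its restriction to $H(1)$ behave as claimed. By contrast, the opening root-of-unity manipulation is purely formal, and the two appeals to ``signature of the double branched cover equals the knot signature'' are standard.
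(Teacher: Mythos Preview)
Your argument is correct and arrives at the result by essentially the same mechanism as the paper, but packaged differently. The paper's proof is a two-line application of the averaging formula
\[
n\cdot\sigma(X/G)-\sigma(X)=\sum_{g\in G\setminus\{1\}}\widetilde{\sigma}(X,g)
\]
quoted from \cite[Section 6]{Gordongsig}, together with the identification $X/G=\Sigma(B,\overline{S})$ and the Kauffman--Taylor identities $\sigma(X)=\sigma(K)$, $\sigma(X/G)=\sigma(\overline{K})$. What you do is reprove that averaging formula from scratch: your root-of-unity computation reducing $\widetilde{\sigma}(K)$ to $(n\sigma_0-\sum_j\sigma_j)/(n-1)$, followed by the transfer argument showing $\sigma_0=\sigma(W/\widetilde{\rho})$, is exactly the content of Gordon's formula specialized to a cyclic group. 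Your version is thus more self-contained and makes the quotient identification $W/\widetilde{\rho}\cong\Sigma(B^4,\overline{F})$ explicit (including the local check that the fixed set is $2$-dimensional so the quotient is a manifold), whereas the paper simply asserts $X/G=\Sigma(B,\overline{S})$ and moves on. Either route is fine; the paper's is shorter, yours is more transparent about where $\sigma(\overline{K})$ actually comes from.
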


\begin{proof}
Let $S \subset B^4$ be an orientable equivariant surface with $\partial S = K$, equivariant with respect to $\rho:B^4 \to B^4$ extending the periodic action on $K$. Let $\widetilde{\rho}\colon \Sigma(B^4,S) \to \Sigma(B^4,S)$ be the lift of $\rho$ as in Definition \ref{def:periodic_gsig}. The quotient of $(B^4,S)$ by $\rho$ is $(B,\overline{S})$ where $B$ is a topological 4-ball by Proposition \ref{prop:quotient_top_b4} and $\overline{S}$ is an orientable surface with boundary $\overline{K}$.

Let $G$ be a finite group of order $n$ acting on a compact connected oriented 4-manifold $X$. Then by \cite[Section 6]{Gordongsig}, 
\[
n \cdot \sigma(X/G) - \sigma(X) = \sum_{g \in G \backslash \{Id\}} \widetilde{\sigma}(X,g).
\]
Taking $X = \Sigma(B^4,S)$ and $G = \langle\rho \rangle$, we get that $X/G = \Sigma(B, \overline{S})$ and hence
\[
n \cdot \sigma(\overline{K}) - \sigma(K) = \sum_{i=1}^{n-1} \widetilde{\sigma}(\Sigma(B^4,S),\widetilde{\rho}^{\, i}),
\]
where we use that $\sigma(X) = \sigma(K)$ and $\sigma(X/G) = \sigma(\overline{K})$ by \cite{KauffmanTaylor}. Dividing both sides by $n-1$ gives the desired equality. 
\end{proof}
As a corollary of Theorem \ref{thm:gsigbound} and Theorem \ref{thm:periodicgsig} we obtain the following key theorem.

\eqgsigineq*

The following two examples use Theorem \ref{thm:eq4g_sig_ineq} to obtain a difference of more than 1 between the 4-genus and the equivariant 4-genus. The first example, which we state as a theorem, gives a family of 2-periodic knots $K_n$ for which $|\widetilde{g}_4(K_n) - g_4(K_n)|$ is unbounded.

\begin{thm:Montesinos}
Let $\{K_n\}$ be the family of 2-periodic Montesinos knots\footnote{We follow the convention for Montesinos knots notation from \cite{MR3825858}.} with
\[
K_n = M\left(1;-n, 2n+2,-n\right)
\] 
shown in Figure \ref{fig:UnboundedExample}, where $n$ is odd and positive. The difference $\widetilde{g}_4(K_n) - g_4(K_n)$ is unbounded. In fact, $\widetilde{g}_4(K_n) = 2n$ and $g_4(K_n) = 1$.
\end{thm:Montesinos}

\begin{figure}[!htbp]
\begin{overpic}[width=350pt, grid=false]{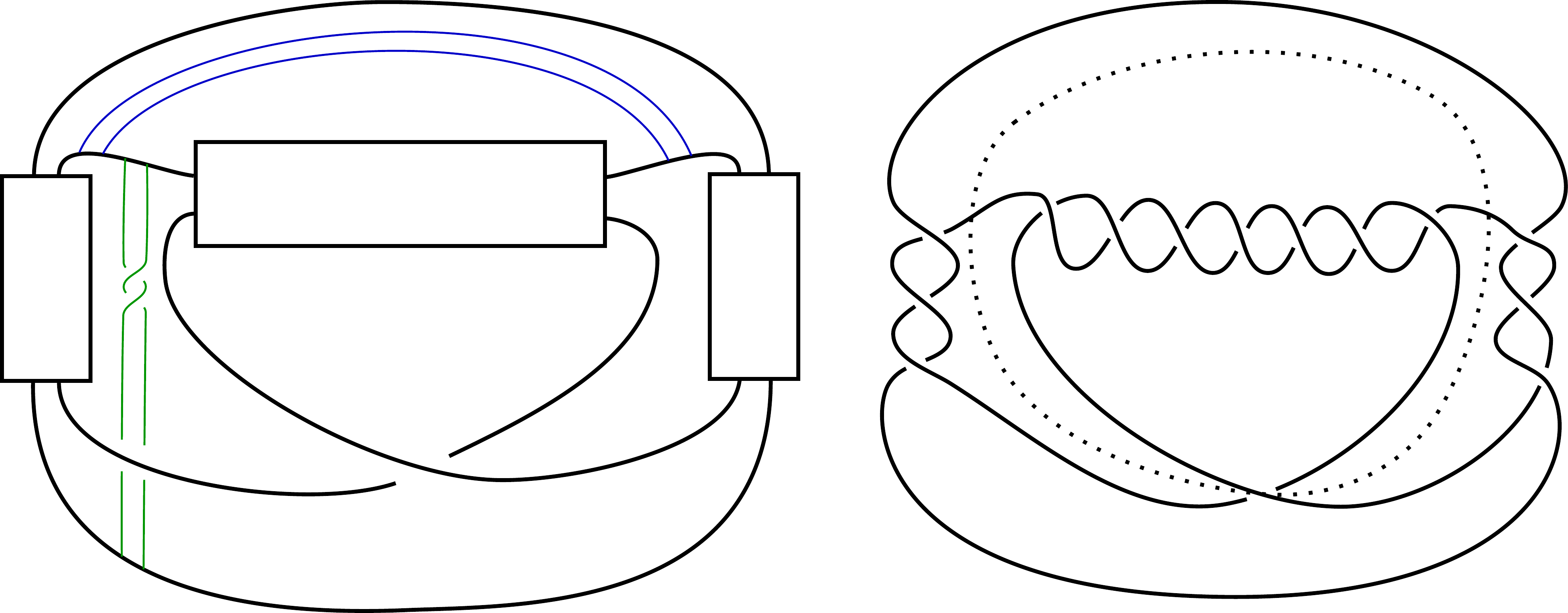}
\put (20.5, 26) {$2n+1$}
\put (1.1, 21) {-$n$}
\put (46.3, 21) {-$n$}
\end{overpic}
\caption{A family of 2-periodic knots $K_n$ (left) for $n$ odd. The boxes are twist regions with the labeled number of half-twists. When $n = 3$, we have $K_3 =$ K14a19410 (right). The period can be seen by performing a flype on the central crossing region (enclosed by a dotted loop in the right diagram), then rotating the entire diagram by $\pi$ within the plane of the diagram. Performing the green and blue band moves shown gives the unknot so that $g_4(K_n) = 1$.}
\label{fig:UnboundedExample}
\end{figure}

\begin{proof}
For all positive odd $n$, $K_n$ has signature $-2$, and the quotient knot is the left-handed torus knot $T(2,n)$ which has signature $n-1$. Thus by Theorem \ref{thm:eq4g_sig_ineq}, we have that $\widetilde{g}_4(K_n) \geq n$. On the other hand, performing the pair of band moves shown in Figure \ref{fig:UnboundedExample} gives the unknot and hence $g_4(K_n) \leq 1$. We also have $g_4(K_n) \geq 1$, since $\sigma(K) = -2$ so that $g_4(K_n) = 1$. For an upper bound on the equivariant 4-genus, we note that performing Seifert's algorithm on the diagram in Figure \ref{fig:UnboundedExample} gives a genus $2n$ surface for $K_n$. Hence by Edmonds' theorem, $\widetilde{g}_4(K_n) \leq 2n$.  We note that since the linking number with the axis is $\pm(2n+3)$, Proposition \ref{prop:RH} gives the stronger lower bound $\widetilde{g}_4(K_n) \geq 2n$ so that in fact $\widetilde{g}_4(K) = 2n$.
\end{proof}

\begin{figure}[!htbp]
\scalebox{.5}{\includegraphics{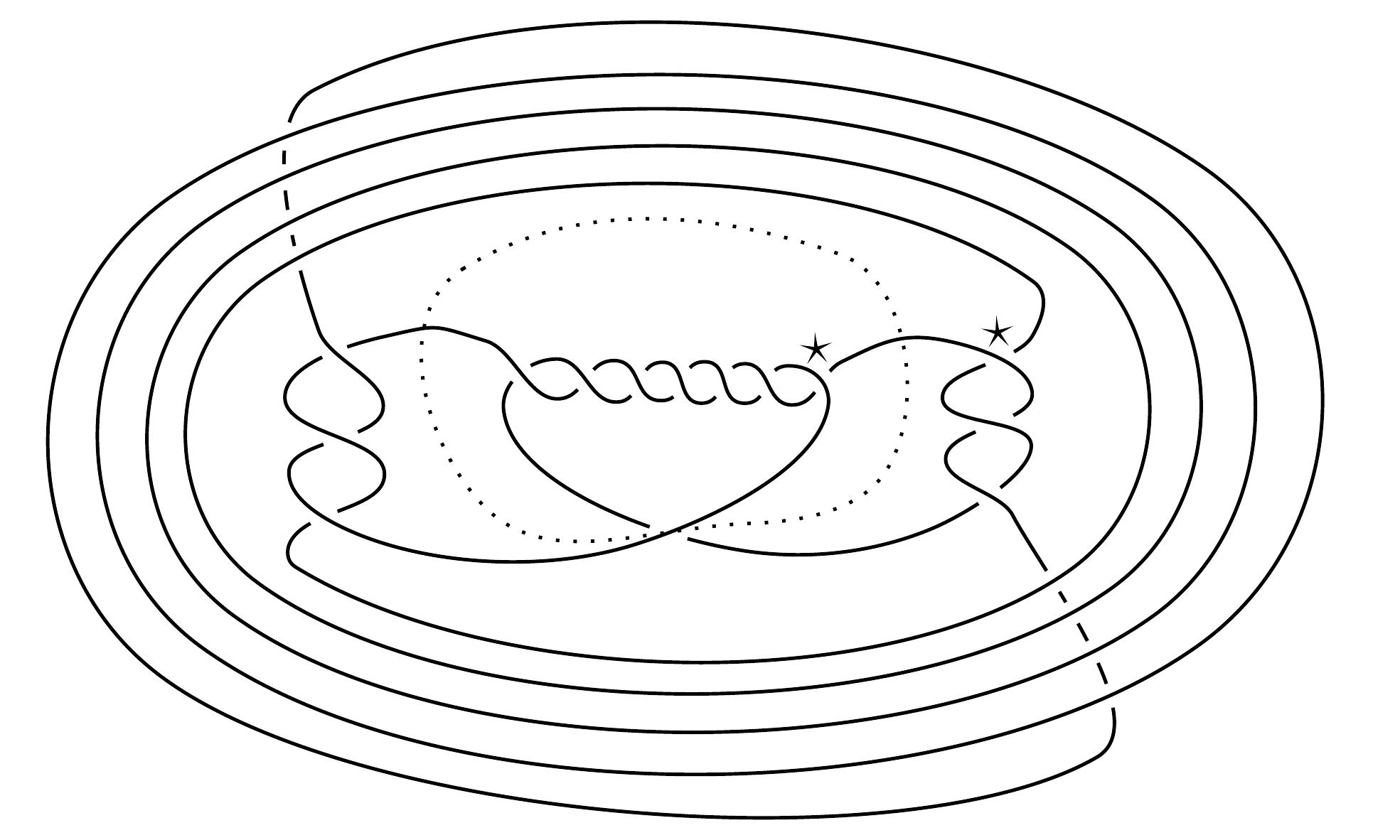}}
\caption{A 22-crossing periodic knot. The period can be seen by performing a flype on the tangle enclosed by the dotted loop, then rotating the entire diagram by $\pi$ within the plane of the diagram.}
\label{fig:gsignature22}
\end{figure}

As in the previous example, Proposition \ref{prop:RH} sometimes gives a better lower bound than Theorem \ref{thm:eq4g_sig_ineq}. However, we provide the following example where Theorem \ref{thm:eq4g_sig_ineq} gives a stronger bound than Proposition \ref{prop:RH}.

\begin{example}
\label{ex:periodic2}
Consider the periodic knot $K$ shown in Figure \ref{fig:gsignature22} which has $\sigma(K) = -4$, and observe that the quotient knot is the left-handed trefoil which has signature $2$. By Theorem \ref{thm:eq4g_sig_ineq}, we have that $\widetilde{g}_4(K) \geq 4$. On the other hand, the linking number between $K$ and the axis of symmetry is 1 and the genus of the trefoil is 1, so that Proposition \ref{prop:RH} only gives that $\widetilde{g}_4(K) \geq 2$. Furthermore, changing the starred crossings gives a genus 1 cobordism (see for example \cite[Lemma 5(ii)]{MR3938580}) to 12n466 which has 4-genus 2 (see for example \cite{knotinfo}) so that $g_4(K) \leq 3$. 
\end{example}

\subsection{Strongly invertible knots and the g-signature}
\label{subsec:strongly_invertible_gsig}
\begin{definition}
\label{def:strongly_invertible_gsig}
Let $(K,\tau)$ be a directed strongly invertible knot which bounds a butterfly surface $F \subset B^4$, and let $\widetilde{\tau}\colon \Sigma(B^4,F) \to \Sigma(B^4,F)$ be the distinguished lift from Definition \ref{def:invertible_unique_lift}. The $g$-signature $\widetilde{\sigma}(K,\tau)$ of $K$ is the $g$-signature $\widetilde{\sigma}(\Sigma(B^4,F),\widetilde{\tau})$. If the directed strong inversion is clear we write $\widetilde{\sigma}(K)$ for $\widetilde{\sigma}(K,\tau)$, and if $(K,\tau)$ does not bound a butterfly surface, we define $\widetilde{\sigma}(K,\tau) = \infty$.
\end{definition}
To prove that the $g$-signature of a directed strongly invertible knot is well-defined as stated, we need the following lemma.
\begin{lemma}
\label{lemma:butterfly_bounds_eq3}
Let $(S^4,F) = (B^4,F_1) \cup (-B^4, -F_2)$ where $F_1$ and $F_2$ are butterfly surfaces for a directed strongly invertible knot $(K,\tau)$. Then $F \subset S^4 = \partial B^5$ bounds a properly embedded $3$-manifold $M \subset B^5$ which is equivariant with respect to an involution on $B^5$ extending the involution of $(S^4,F)$. 
\end{lemma}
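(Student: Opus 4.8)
The plan is to build $M$ by passing to the quotient of $S^4$ by the involution, constructing a bounding $3$-manifold downstairs that meets the branch locus in a controlled way, pulling it back, and pushing it into $B^5$; this is the strongly invertible analogue of the appeal to \cite[Proposition~4.1]{Naik} used in the periodic case, the new feature being that the involution no longer acts freely on $F$. First I would record the symmetry of $S^4$. Writing $\overline{\tau}_i\colon(B^4,F_i)\to(B^4,F_i)$ for the butterfly extensions, each $\overline{\tau}_i$ is orientation-preserving with fixed-point set a disk $D_i$ meeting $S^3$ in the axis. Hence the assembled involution $T\colon S^4\to S^4$ of $(S^4,F)$ is orientation-preserving with fixed-point set the $2$-sphere $P=D_1\cup D_2$ (glued along the axis). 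Since $T|_F$ is the butterfly involution on each half, it is orientation-reversing, $F\cap P=\gamma$ is the circle formed by joining the two fixed arcs, and $\gamma$ separates $F$ into the two wings that $T$ interchanges.

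Next I would pass to the quotient. By the argument of Proposition~\ref{prop:quotient_top_b4} (the fixed set has codimension $2$, so \cite{Bredon} and Freedman \cite{Freedman} apply), the quotient $Y=S^4/T$ is homeomorphic to $S^4$, and $p\colon S^4\to Y$ is the double cover branched over the $2$-sphere $B=p(P)$. Here $\overline{F}=F/T$ is one wing: a compact orientable surface with a single boundary circle $\overline{\gamma}=p(\gamma)\subset B$, and $p^{-1}(\overline{F})=F$. Fix one of the two disks $R\subset B$ with $\partial R=\overline{\gamma}$. It then suffices to find a compact $3$-manifold $\overline{M}\subset Y$ with $\partial\overline{M}=\overline{F}\cup_{\overline{\gamma}}R$ and $\overline{M}\cap B=R$: a local computation near $\gamma$ (where $T$ fixes $P$ and rotates the normal $2$-plane by $\pi$) shows that $M_0:=p^{-1}(\overline{M})$, the double of $\overline{M}$ along $R$, is then a genuine $T$-invariant $3$-manifold in $S^4$ with $\partial M_0=p^{-1}(\overline{F})=F$.

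To produce $\overline{M}$ I would cap off $\overline{F}$ with $R$ to get a closed orientable surface $\widehat{F}=\overline{F}\cup_{\overline{\gamma}}R$ in $Y\cong S^4$; being null-homologous, it bounds a compact orientable $3$-manifold $W\subset Y$. \emph{The main obstacle is that $W$ need not meet $B$ only along $R$.} After making $\mathrm{int}(W)$ transverse to $B$, the intersection $\mathrm{int}(W)\cap B$ is a disjoint union of circles lying in the complementary disk $R'=B\setminus\mathrm{int}(R)$ (no arcs arise, since near the collar of $R\subset\partial W$ the manifold $W$ is disjoint from $B$). Choosing an innermost such circle $c$, which bounds a disk $\delta\subset R'$ with $\delta\cap W=c$, and surgering $W$ along $\delta$ removes $c$ while preserving $\partial W=\widehat{F}$ and fixing $R$; iterating and discarding closed components yields the desired $\overline{M}$ with $\overline{M}\cap B=R$. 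This innermost-disk argument in $B\cong S^2$ is the crux, replacing the free-action input of \cite{Naik}.

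Finally I would push $M_0$ into $B^5$. Coning $T$ produces an orientation-preserving involution $\widehat{T}\colon B^5\to B^5$, with fixed-point set the cone $D^3$ on $P$, extending $T$ on $S^4=\partial B^5$. Using a $\widehat{T}$-equivariant collar of $\partial B^5$ to push $\mathrm{int}(M_0)$ into $\mathrm{int}(B^5)$ rel $\partial M_0=F$ then gives a properly embedded, $\widehat{T}$-invariant $3$-manifold $M\subset B^5$ with $\partial M=F$, as required.
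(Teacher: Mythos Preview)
Your argument is correct, but it takes a genuinely different route from the paper's proof. The paper avoids dealing with the fixed circle $\gamma\subset F$ head-on: using Proposition~\ref{prop:BL_bounds_surface} it performs the butterfly-link band surgery on each $F_i$ to obtain a closed surface $F'=F_1'\cup F_2'\subset S^4$ on which the involution now acts \emph{freely} (exchanging the two components). Because the quotient of $F'$ is orientable, Naik's argument \cite[Proposition~4.1]{Naik} applies verbatim to produce an equivariant Seifert $3$-manifold $M'$ for $F'$ in $S^4$; after pushing $M'$ into $B^5$, the two solid tubes $I\times D^2$ furnished by Proposition~\ref{prop:BL_bounds_surface} are glued back on to undo the band surgeries and recover a $3$-manifold bounding $F$ itself. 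In contrast, you keep $F$ intact, pass to the branched quotient $Y$, cap the half-surface $\overline{F}$ with a disk $R$ in the branch sphere, bound by an arbitrary $W$, and then use an innermost-circle surgery along $B$ to force $W\cap B=R$ so that the pullback is a genuine $3$-manifold with boundary $F$. What the paper's approach buys is modularity: it reduces to the free-action case and cites existing results as black boxes, at the cost of a somewhat indirect cut-and-reglue manoeuvre. What your approach buys is directness: no auxiliary link or tube is needed, but you must justify the local model near the corner $\overline{\gamma}$ (so that $\mathrm{int}(W)\cap B$ really consists only of circles in $R'$ and so that $p^{-1}(\overline{M})$ is smooth along $\gamma$), which you correctly identify as the crux. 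Both arguments ultimately rest on the same homological fact that a closed orientable surface in a homology $4$-sphere bounds a $3$-manifold.
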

\begin{proof}
For $i \in \{1,2\}$, applying Proposition \ref{prop:BL_bounds_surface} to $F_i$ gives that the butterfly link $\BL(K)$ is a 2-periodic link bounding an orientable equivariant surface $F_i'$. Then $F' = F_1' \cup F_2'$ is a closed surface in $S^4$. Since $F_1$ and $F_2$ are butterfly surfaces, the quotient of $F'$ by $\tau |_{F'}$ is an orientable surface. Hence the argument proving Proposition 4.1 of \cite{Naik} applies, giving that the surface $F'$ bounds an equivariant 3-manifold $M' \subset S^4$. 

Thinking of $S^4 = \partial B^5$, we can extend $\tau$ to an involution (which we again refer to as $\tau$) of $B^5$ by taking the cone of $\tau\colon S^4 \to S^4$. Under this extension, we can perform an equivariant isotopy pushing $M'$ into the interior of $B^5$ fixing $\partial M'$ to get a 3-manifold $\widehat{M'}$ properly embedded in $B^5$. Now for $i \in\{1,2\}$, the last part of Proposition \ref{prop:BL_bounds_surface} applied to $F_i$ gives an $(I \times D^2) \subset B^4$. Gluing these together along their shared boundary in $S^3$, we obtain an equivariant 3-manifold $N = (I \times D^2) \subset S^4$ such that $F' = [F \backslash (I \times \partial D^2)] \cup [(\partial I) \times D^2]$. Then performing an equivariant isotopy to $(\widehat{M'} \cup N) \subset B^5$ so that it is properly embedded in $B^5$ gives the desired 3-manifold $M$.
\end{proof}

The following theorem shows that the $g$-signature is well-defined.
\begin{theorem}
The $g$-signature of a directed strongly invertible knot $(K,\tau)$ is independent of the choice of butterfly surface and extension of the strong inversion to $B^4$. 
\end{theorem}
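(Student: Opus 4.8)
The plan is to follow the well-definedness proof in the periodic case almost verbatim, substituting Lemma \ref{lemma:butterfly_bounds_eq3} for the bounding-$3$-manifold step. First I would fix two butterfly surfaces $F_1, F_2$ for $(K,\tau)$ together with extensions $\overline{\tau}_1, \overline{\tau}_2 \colon (B^4, F_i) \to (B^4, F_i)$ of $\tau$, and form the closed pair $(S^4, F) = (B^4, F_1) \cup (-B^4, -F_2)$ equipped with the involution $\overline{\tau}$ restricting to $\overline{\tau}_i$ on each half. By Lemma \ref{lemma:butterfly_bounds_eq3}, the surface $F \subset S^4 = \partial B^5$ bounds a properly embedded $3$-manifold $M \subset B^5$ which is equivariant for an involution (again denoted $\tau$) on $B^5$ extending $\overline{\tau}$.

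Next I would lift $\tau \colon B^5 \to B^5$ to an involution $\widetilde{\tau}$ on the double branched cover $\Sigma(B^5, M)$, by the same reasoning as in Proposition \ref{prop:symmetry_lifts} (the index-$2$ subgroup determining the cover is characteristic, so $\tau$ preserves it and lifts, and the order is preserved). Among the two lifts I would select the one restricting to the distinguished lifts $\widetilde{\tau}_i$ on the two boundary pieces $\Sigma(B^4, F_i) \subset \partial \Sigma(B^5, M) = \Sigma(S^4, F)$. This requires a consistency check: the fixed-point set of $\overline{\tau}$ on $S^4$ is a $2$-sphere $A = A_1 \cup A_2$, where $A_i \subset B^4$ is the fixed disk of $\overline{\tau}_i$; the fixed arcs $\alpha_i = F_i \cap A_i$ glue to a circle $\alpha \subset A$ cutting the sphere into the disk $C^-$ containing the non-distinguished half-axes and the disk $C^+$ containing the distinguished half-axes. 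The distinguished lift on each $\Sigma(B^4, F_i)$ fixes $q^{-1}(C_i^-)$ pointwise (Definition \ref{def:invertible_unique_lift}), and these agree with a single lift on $\Sigma(S^4, F)$ fixing $q^{-1}(C^-)$ pointwise, which in turn is the restriction of the lift of $\tau$ to $\Sigma(B^5, M)$ fixing the preimage of the corresponding component of the fixed $3$-ball in $B^5$.

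With the lift chosen, I would conclude exactly as in the periodic case. Since $(\Sigma(S^4, F), \widetilde{\tau}) = \partial(\Sigma(B^5, M), \widetilde{\tau})$, the $g$-signature of the boundary vanishes, so $\widetilde{\sigma}(\Sigma(S^4, F), \widetilde{\tau}) = 0$ by \cite[Section 1]{Gordongsig}. Novikov additivity (again \cite[Section 1]{Gordongsig}) then gives
\[
0 = \widetilde{\sigma}(\Sigma(S^4, F), \widetilde{\tau}) = \widetilde{\sigma}(\Sigma(B^4, F_1), \widetilde{\tau}_1) - \widetilde{\sigma}(\Sigma(B^4, F_2), \widetilde{\tau}_2),
\]
where the minus sign comes from the orientation reversal on $-B^4$. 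Hence $\widetilde{\sigma}(\Sigma(B^4, F_1), \widetilde{\tau}_1) = \widetilde{\sigma}(\Sigma(B^4, F_2), \widetilde{\tau}_2)$, which is the asserted independence. Because $\tau$ has order $2$, only the single power $i = 1$ arises, so no averaging is needed here.

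The step I expect to be the main obstacle is the consistency check for the distinguished lift in the second paragraph: ensuring the three distinguished lifts (on the two branched covers of $B^4$ and on the branched cover of $B^5$) are compatible under restriction. This hinges on correctly identifying how the fixed-arc circle $\alpha$ separates the fixed sphere $A$ and on tracking which component carries the non-distinguished half-axes, which is precisely the data Definition \ref{def:invertible_unique_lift} uses to pin down the lift.
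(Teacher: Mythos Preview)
Your proposal is correct and follows essentially the same argument as the paper: form the closed pair $(S^4,F)$, invoke Lemma~\ref{lemma:butterfly_bounds_eq3} to bound $F$ by an equivariant $3$-manifold in $B^5$, lift the involution to the branched double cover, and apply vanishing of the $g$-signature on a boundary together with Novikov additivity. In fact your treatment of the consistency check for the distinguished lifts is more explicit than the paper's, which simply asserts that the lift ``can be chosen to restrict to the lift of $\overline{\tau}_i$ on each $\Sigma(B^4,F_i)$ used in the definition of $\widetilde{\sigma}(K)$.''
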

\begin{proof}
For $i \in \{1,2\}$, let $F_i$ be a butterfly surface for $K$ in $B^4$ and let $\overline{\tau}_i\colon (B^4,F_i) \to (B^4,F_i)$ be an extension of $\tau$ to $B^4$. Then let $(S^4,F) = (B^4,F_1) \cup (-B^4, -F_2)$ with $\overline{\tau}:(S^4,F) \to (S^4,F)$ the symmetry which restricts to $\overline{\tau}_i$ on $(B^4,F_i)$ for $i \in \{1,2\}$. By Lemma \ref{lemma:butterfly_bounds_eq3}, the surface $F$ bounds a properly embedded 3-manifold $M \subset B^5$ which is equivariant under some extension of $\overline{\tau}$ to $B^5$. 

The symmetry $\overline{\tau}$ on $B^5$ lifts to a symmetry $\widetilde{\tau}$ on the double branched cover $\Sigma(B^5,\widehat{M})$ of $B^5$ over $\widehat{M}$ by a similar argument to that of Proposition \ref{prop:symmetry_lifts}. This lift can be chosen to restrict to the lift of $\overline{\tau}_i$ on each $\Sigma(B^4,F_i) \subset \Sigma(S^4,F)$ used in the definition of $\widetilde{\sigma}(K)$. Since $\partial (\Sigma(B^5,\widehat{M}),\widetilde{\tau}) = (\Sigma(S^4,F), \widetilde{\tau})$, we have that $\widetilde{\sigma}(\Sigma(S^4,F), \widetilde{\tau}) = 0$ by \cite[Section 1]{Gordongsig}. Hence by Novikov additivity (again, see \cite[Section 1]{Gordongsig}),
\[
0 = \widetilde{\sigma}(\Sigma(S^4,F), \widetilde{\tau}) = \widetilde{\sigma}(\Sigma(B^4,F_1), \widetilde{\tau}) - \widetilde{\sigma}(\Sigma(B^4,F_2), \widetilde{\tau}).
\]
\end{proof}
\begin{remark} \label{rmk:antipode_gsig_negate}
In general, the $g$-signature depends on the direction of $K$. In fact by Proposition \ref{prop:GL_lattice_lift}, if $K$ has a butterfly Seifert surface then $\widetilde{\sigma}(K) = -\widetilde{\sigma}(K^-)$.
\end{remark}
The same proof as for Proposition \ref{prop:gsig_concordance}, but applied to butterfly surfaces, gives the following proposition.
\begin{proposition}
The $g$-signature of a directed strongly invertible knot is an equivariant concordance invariant.
\end{proposition}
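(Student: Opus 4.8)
The plan is to adapt the argument from Proposition~\ref{prop:gsig_concordance} to the strongly invertible setting, the only new subtlety being that we must work with butterfly surfaces rather than arbitrary equivariant surfaces, and use the already-established fact that $\widetilde{\sigma}(K)$ does not depend on the chosen butterfly surface. First I would take two equivariantly concordant directed strongly invertible knots $K$ and $K'$, realized by an equivariant concordance cylinder $C = c(S^1 \times I) \subset S^3 \times I$ with respect to an involution $\tau$ on $S^3 \times I$. Assuming $K$ bounds a butterfly surface $S \subset B^4$ (the case $\widetilde{\sigma}(K) = \infty$ is handled at the end), gluing $S$ to $C$ along $K$ produces an equivariant surface $S' \subset B^4$ with $\partial S' = K'$.

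The key step is to check that $S'$ is again a butterfly surface, i.e.\ that its pointwise fixed arc is separating. The fixed arc $\alpha \subset S$ runs between the two fixed points of $K$ and separates $S$ into two halves $S_0, S_1$ exchanged by the involution. The fixed set of $\tau$ on the cylinder $C$ consists of two arcs joining the fixed points of $K$ to those of $K'$, and these cut $C$ into two halves $C_0, C_1$ which are also exchanged by $\tau$; this is exactly where the directedness condition in the definition of equivariant concordance is used, since it guarantees the half-axes lie in the same component of $F \backslash c(S^1 \times I)$. Concatenating $\alpha$ with the two cylinder arcs yields a single fixed arc on $S'$ joining the fixed points of $K'$, separating $S'$ into $S_0 \cup C_0$ and $S_1 \cup C_1$. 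Hence $S'$ is a butterfly surface and $\widetilde{\sigma}(K')$ may be computed from it.

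Next I would pass to double branched covers. Writing $\Sigma(B^4, S') = \Sigma(B^4, S) \cup_{\Sigma(S^3, K)} \Sigma(S^3 \times I, C)$, the second piece is the double branched cover of a concordance, hence a homology cobordism from $\Sigma(S^3, K)$ to $\Sigma(S^3, K')$, so the inclusion $\Sigma(B^4, S) \hookrightarrow \Sigma(B^4, S')$ induces an isomorphism $H_2(\Sigma(B^4, S); \mathbb{C}) \to H_2(\Sigma(B^4, S'); \mathbb{C})$ preserving the intersection pairing. Because the distinguished lifts of the two extensions are pinned down by the same directedness data (Definition~\ref{def:invertible_unique_lift}) and agree on the overlap, this isomorphism is equivariant, and therefore identifies the hermitian intersection forms together with their symmetries. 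It follows that $\widetilde{\sigma}(\Sigma(B^4, S), \widetilde{\tau}) = \widetilde{\sigma}(\Sigma(B^4, S'), \widetilde{\tau}\,')$, that is, $\widetilde{\sigma}(K) = \widetilde{\sigma}(K')$. For the $\infty$ case, since an equivariant concordance can be reversed, gluing the reversed cylinder to any butterfly surface for $K'$ would produce one for $K$; thus ``bounds a butterfly surface'' is itself an equivariant concordance invariant, so $\widetilde{\sigma}(K) = \infty$ exactly when $\widetilde{\sigma}(K') = \infty$.

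The main obstacle I expect is the verification that $S'$ is a butterfly surface together with the matching of the distinguished lifts across the gluing; once these are in hand, the remaining identification of $g$-signatures is formal, being a direct analogue of the homological argument in Proposition~\ref{prop:gsig_concordance}.
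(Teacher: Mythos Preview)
Your proposal is correct and follows exactly the approach the paper takes: the paper's own proof is a single sentence (``The same proof as for Proposition~\ref{prop:gsig_concordance}, but applied to butterfly surfaces''), and you have simply filled in the details that this sentence elides, namely the verification that gluing the concordance cylinder to a butterfly surface yields another butterfly surface, the compatibility of the distinguished lifts, and the $\infty$ case. There is no substantive difference in strategy.
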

As for the usual knot signature, we have the following additivity property.
\begin{proposition}
\label{prop:gsigsum}
The $g$-signature (for strongly invertible knots) is additive under equivariant connect sum. 
\end{proposition}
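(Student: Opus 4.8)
The plan is to produce a single butterfly surface for $K_1 \widetilde{\#} K_2$ as an equivariant boundary connect sum of butterfly surfaces for the summands, and then to show that the corresponding double branched cover splits equivariantly, so that the $g$-signature adds over the splitting. It suffices to treat the case where both $K_1$ and $K_2$ bound butterfly surfaces in $B^4$, since this is where $\widetilde{\sigma}$ is finite; in the remaining case at least one $\widetilde{\sigma}(K_i) = \infty$, and the same equivariant decomposition below (run in reverse, cutting a butterfly surface for the connect sum along the separating $3$-ball and surgering) forces $\widetilde{\sigma}(K_1 \widetilde{\#} K_2) = \infty$ as well. So fix butterfly surfaces $F_1, F_2 \subset B^4$ for $K_1, K_2$ with extensions $\overline{\tau}_1, \overline{\tau}_2$. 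Forming the equivariant boundary connect sum $F = F_1 \natural F_2$ in the connect-summing region produces a butterfly surface for $K_1 \widetilde{\#} K_2$ whose fixed arc is the union of the two fixed arcs and whose genus is $g(F_1) + g(F_2)$.

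Next I would locate the separating region downstairs and lift it. The equivariant connect-summing $2$-sphere $\Sigma^2 \subset S^3$ meets $K_1 \widetilde{\#} K_2$ in two points and bounds an equivariant properly embedded $3$-ball $Y^3 \subset B^4$ which meets $F$ in a single (fixed) arc and separates $B^4$ into two $4$-balls containing $F_1$ and $F_2$. Passing to the double branched cover, $q^{-1}(Y^3)$ is the double cover of $Y^3 \cong B^3$ branched over an unknotted arc, hence a $3$-ball, and it separates $\Sigma(B^4,F)$ into copies of $\Sigma(B^4,F_1)$ and $\Sigma(B^4,F_2)$. Thus $\Sigma(B^4,F) = \Sigma(B^4,F_1) \cup_{B^3} \Sigma(B^4,F_2)$ is a boundary connect sum; since $q^{-1}(Y^3)$ is contractible, Mayer--Vietoris identifies $H_2(\Sigma(B^4,F))$ with the orthogonal direct sum $H_2(\Sigma(B^4,F_1)) \oplus H_2(\Sigma(B^4,F_2))$ as intersection forms.

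Because $\Sigma^2$, $Y^3$, and hence $q^{-1}(Y^3)$ are all equivariant, the distinguished lift $\widetilde{\tau}$ on $\Sigma(B^4,F)$ preserves this decomposition and restricts to a lift on each piece. The step I expect to be the main obstacle is verifying that these restrictions are exactly the distinguished lifts defining $\widetilde{\sigma}(K_1)$ and $\widetilde{\sigma}(K_2)$, which requires tracking the half-axes through the connect sum. The non-chosen half-axis $\eta^-$ of $K_1 \widetilde{\#} K_2$ is the union $\eta_1^- \cup \eta_2^-$ of the non-chosen half-axes of the summands, so the component $C$ of the fixed set containing $\eta^-$ restricts on each side to the component containing $\eta_i^-$. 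Since $\widetilde{\tau}$ pointwise fixes $q^{-1}(C)$, its restriction to $\Sigma(B^4,F_i)$ pointwise fixes $q^{-1}(C_i)$ with $\eta_i^- \subset C_i$, which is precisely the defining property of the distinguished lift for $K_i$ (Definition \ref{def:invertible_unique_lift}).

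Finally, with $H_2(\Sigma(B^4,F))$ written as a $\widetilde{\tau}$-equivariant orthogonal direct sum of the two summands, the $g$-signature — the difference of the traces of $\widetilde{\tau}_*$ on the positive and negative definite parts — is additive over the splitting. Hence $\widetilde{\sigma}(K_1 \widetilde{\#} K_2) = \widetilde{\sigma}(\Sigma(B^4,F),\widetilde{\tau}) = \widetilde{\sigma}(\Sigma(B^4,F_1),\widetilde{\tau}_1) + \widetilde{\sigma}(\Sigma(B^4,F_2),\widetilde{\tau}_2) = \widetilde{\sigma}(K_1) + \widetilde{\sigma}(K_2)$, as desired.
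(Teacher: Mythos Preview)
Your argument is correct and follows the same strategy as the paper: take the equivariant boundary connect sum of butterfly surfaces, pass to the double branched cover which splits as a boundary connect sum along the lifted separating $3$-ball, and conclude by additivity of the $g$-signature over the resulting equivariant orthogonal decomposition of $H_2$. Your explicit verification that the distinguished lift on $\Sigma(B^4,F)$ restricts to the distinguished lifts on each piece, by tracking the non-chosen half-axis $\eta^- = \eta_1^- \cup \eta_2^-$ through the connect sum, fills in a detail the paper's proof leaves implicit.

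One caution: your reduction in the case where some $\widetilde{\sigma}(K_i) = \infty$ is not fully justified. You assert that an arbitrary butterfly surface for $K_1 \widetilde{\#} K_2$ can be cut along an equivariant separating $3$-ball in $B^4$ to produce butterfly surfaces for the summands, but a generic such $3$-ball need not meet the surface in a single arc, and arranging this by equivariant surgery in $B^4$ is nontrivial (unlike the Seifert-surface situation in $S^3$ handled in Proposition~\ref{prop:add}). The paper sidesteps this entirely by proving the identity only when both summands bound butterfly surfaces; you may simply do the same.
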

\begin{proof}
Let $K_1$ and $K_2$ be directed strongly invertible knots which bound butterfly surfaces $S_1$ and $S_2$ in $B^4$ respectively. Then $K = K_1 \widetilde{\#} K_2$ bounds a butterfly surface $S$ in $B^4$ where $(B^4,S)$ is the boundary connected sum of $(B^4,S_1)$ with $(B^4,S_2)$. Passing to the double branched covers, $H_2(\Sigma(B^4,S))$ is isomorphic to $H_2(\Sigma(B^4,S_1)) \oplus H_2(\Sigma(B^4,S_2))$ and this isomorphism respects the corresponding involutions from the definition of the $g$-signature. Hence $\widetilde{\sigma}(K) = \widetilde{\sigma}(K_1) + \widetilde{\sigma}(K_2)$.
\end{proof}

We now have the main theorem of Section \ref{subsec:strongly_invertible_gsig}, giving a lower bound on the butterfly genus.

\begin{thm:siconcordancebound}
If $K$ is a directed strongly invertible knot which bounds a butterfly surface in $B^4$, then $\bg_4(K) \geq |\widetilde{\sigma}(K)|/2$.
\end{thm:siconcordancebound}

\begin{proof}
The proof is identical to Theorem \ref{thm:gsigbound}.
\end{proof}

Note that $\bg_4(K)$ does not depend on the direction of $K$ so that this theorem gives a lower bound on the butterfly 4-genus for each choice of direction. 

As the following example shows, the $g$-signature can be used to prove that the butterfly 4-genus may be arbitrarily larger than the 4-genus.

\begin{figure}
\begin{overpic}[width=350pt, grid=false]{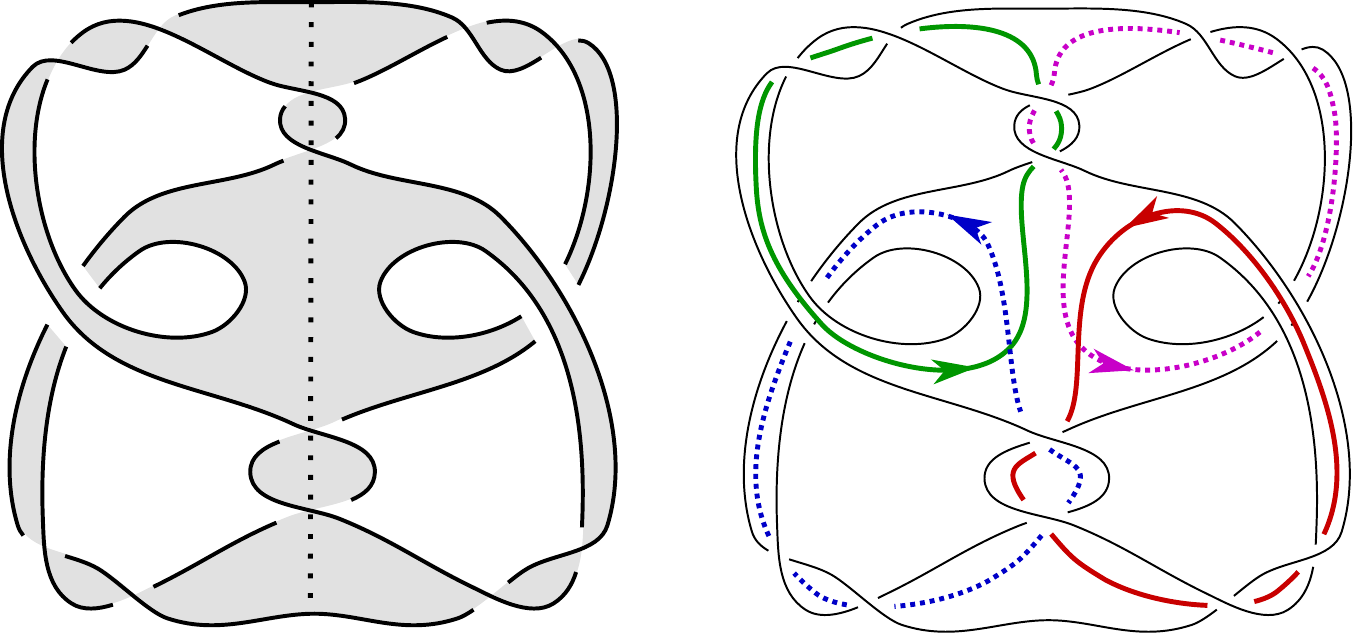}
    \put (60, 38) {$a$}
    \put (94, 38) {$b$}
    \put (60, 9) {$c$}
    \put (94, 9) {$d$}
  \end{overpic}
\caption{The directed strong inversion $9_{46}a^-$ (left). The chosen half-axis is shown as a dotted line and the shaded surface is a butterfly surface $S$. On the right is a basis for $H_1(S)$.}
\label{fig:9_46}
\end{figure}

\begin{example}
\label{ex:invertible}
Consider the butterfly surface $S$ for the directed strongly invertible knot $(K,\tau) = 9_{46}a^-$ shown in Figure \ref{fig:9_46}. As in Definition \ref{def:strongly_invertible_gsig}, $\widetilde{\sigma}(K) = \widetilde{\sigma}(\Sigma(B^4,S),\widetilde{\tau})$, and by Proposition \ref{prop:GL_lattice_lift} we identify $\widetilde{\tau}_* \colon H_2(\Sigma(B^4,S)) \to H_2(\Sigma(B^4,S))$ with $\tau_*\colon H_1(S) \to H_1(S)$ (using $\mathbb{C}$ coefficients) where the intersection form is identified with the Gordon-Litherland pairing $\mathcal{G}_S$.

Take the basis $\{a,b,c,d\}$ for $H_1(S)$ shown in Figure \ref{fig:9_46}. The strong inversion exchanges $a$ with $-b$ and $c$ with $-d$. We will now compute the $g$-signature using Proposition \ref{prop:compute_gsig}. The $(+1)$-eigenspace $H(+1)$ of $\tau_*$ has basis $\{a-b,c-d\}$, and the $(-1)$-eigenspace $H(-1)$ of $\tau_*$ has basis $\{a+b,c+d\}$. We then have 
\[
   \restr{\mathcal{G}_S}{H(+1)}=
  \left[ {\begin{array}{cc}
   -4 & -2 \\
   -2 & -4 \\
  \end{array} } \right] \mbox{ and }
  \restr{\mathcal{G}_S}{H(-1)}=
  \left[ {\begin{array}{cc}
   4 & -2 \\
   -2 & 4 \\
  \end{array} } \right],
\]
where $\mathcal{G}_S$ is the Gordon-Litherland pairing. In particular, $\sigma(\restr{\mathcal{G}_S}{H(+1)}) = -2$ and $\sigma(\restr{\mathcal{G}_S}{H(-1)}) = 2$, so that $\widetilde{\sigma}(9_{46}a^-) = -4$. 

Now let $K_n$ be the equivariant connect sum of $K$ with itself $n$ times. Since $\widetilde{\sigma}$ is additive under equivariant connect sum by Proposition \ref{prop:gsigsum}, we see that $\widetilde{\sigma}(K_n) = -4n$. By Theorem \ref{thm:gsigbound}, $\bg_4(K) \geq 2n$, and since $S$ is a genus 2 butterfly surface for $K$, $\bg_4(K) \leq 2n$. Hence $\bg_4(K) = 2n$. However $K$ is (non-equivariantly) slice, so $K_n$ is as well. Thus the difference between the butterfly 4-genus and the non-equivariant 4-genus can be arbitrarily large.
\end{example}

\clearpage
\setcounter{footnote}{0}
\appendix

\section{Table of genera and invariants for strongly invertible knots}
\label{sec:table}
In this appendix we provide a table of strongly invertible concordance invariants for low-crossing directed strongly invertible knots. First, we summarize the notation used in the table.
\begin{itemize}
\item $\BL(K)$ is the butterfly link; see Definition \ref{def:butterfly_link}.
\item $\QBL(K)$ is the quotient butterfly link; see Definition \ref{def:quotient_butterfly_link}.
\item $\mathfrak{f}$ is the underlying knot, forgetting the strong inversion.
\item $\mathfrak{b}$ is one component of the butterfly link $\BL(K)$; see Theorem \ref{thm:bandqb}.
\item $\mathfrak{qb}$ is the non-axis component of the quotient butterfly link $\QBL(K)$; see Theorem \ref{thm:bandqb}.
\item $\lk(K)$ is the linking number between one component of $\BL(K)$ and the axis of symmetry; see Definition \ref{def:axis_linking}.
\item $\eta$ is Sakuma's $\eta$-polynomial \cite{Sakuma}. 
\item $\eta(\BL(K))$ is the Kojima-Yamasaki $\eta$-polynomial of the butterfly link; see Definition \ref{def:KY}.
\item $\eta(\QBL(K))$ is the Kojima-Yamasaki $\eta$-polynomial of the quotient butterfly link; see Definition \ref{def:KY}.
\item $g_4$ is the (non-equivariant) smooth 4-genus.
\item $\widetilde{g}_4$ is the equivariant 4-genus; see Definition \ref{def:equivariant_4genus}.
\item $\bg_4(K)$ is the butterfly 4-genus; see Definition \ref{def:butterfly_4genus}.
\item $\widetilde{\sigma}$ is the $g$-signature; see Definition \ref{def:strongly_invertible_gsig}.
\end{itemize}
The $\eta$-polynomial $\sum a_i t^i$ is denoted $[a_0,a_1,a_2,\dots]$ (recall that $a_i = a_{-i}$). A dash ``-'' indicates that the invariant is not defined. Specifically, $\eta(\QBL(K))$ is undefined when $\lk(K) \neq 0$, and $\widetilde{\sigma}(K)$ is undefined when $\lk(K) \neq 0$ or $\lk(K^-) \neq 0$. We compute $\eta(\BL(K))$ only in the cases where one component of $\BL(K)$ is the unknot, in which case the polynomial can be computed using the method in \cite{Sakuma}. The strong inversions in the table are given by rotation around a vertical axis, and the half-axes distinguished by the directions are indicated with a dotted line segment. The superscript $+$ and $-$ indicate opposite choices of half-axes (antipodes). 

Mirrors are not included since the invariants are essentially the same (see for example Proposition \ref{prop:reverse_mirror_invariants}). Similarly, the orientation on the axis is omitted since none of the numerical invariants depend on it. \\


\begin{longtable}{c|c||rc|rc|rcc}
 Name & Diagram&\multicolumn{1}{c}{}&\multicolumn{1}{c}{}&Invariants&\multicolumn{1}{c}{}&\multicolumn{1}{c}{}&\multicolumn{1}{c}{}\\* \cline{1-9}

 & \multirow{8}{*}{\scalebox{.6}{\includegraphics{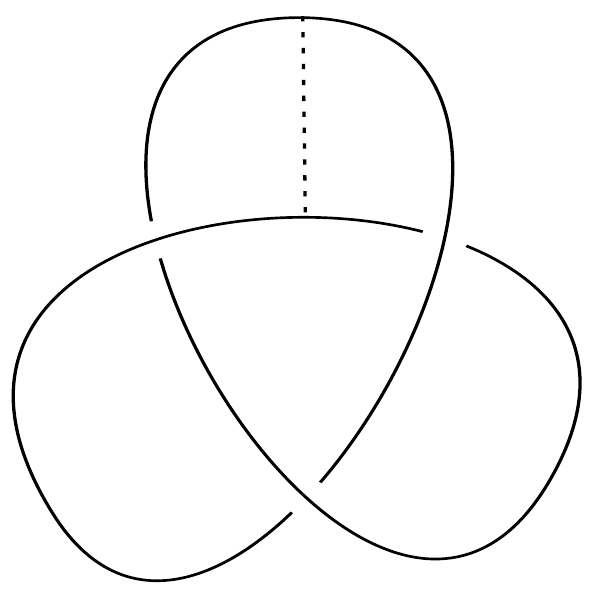}}}&&&&&&\\*
 $3_1^+$&&$\mathfrak{f}:$&$3_1$&&&$g_4:$&1&\\*\cline{3-9}
 &&&&&&&& \\*
 &&$\mathfrak{b}:$&$0_1$&$\eta:$&$[-2,0,1]$&$\widetilde{g}_4:$&1&\\*\cline{3-9}
 &&&&&&&& \\*
 &&$\mathfrak{qb}:$&$0_1$&$\eta(\BL):$&$[0]$&$\bg_4:$&$\infty$&\\*\cline{3-9}
 &&&&&&&& \\*
 &&$\lk:$&$2$&$\eta(\QBL):$&-&$\widetilde{\sigma}:$&-& \\\hline\hline

  & \multirow{8}{*}{\scalebox{.6}{\includegraphics{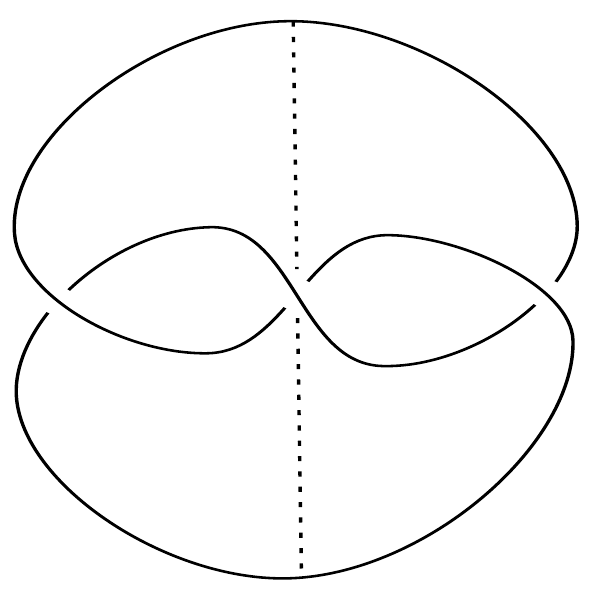}}}&&&&&&&\\*
 $3_1^-$&&$\mathfrak{f}:$&$3_1$&&&$g_4:$&1&\\*\cline{3-9}
 &&&&&&&& \\*
 &&$\mathfrak{b}:$&$0_1$&$\eta:$&$[-2,0,1]$&$\widetilde{g}_4:$&1&\\*\cline{3-9}
 &&&&&&&& \\*
 &&$\mathfrak{qb}:$&$m3_1$&$\eta(\BL):$&$[-2,0,1]$&$\bg_4:$&$\infty$&\\*\cline{3-9}
 &&&&&&&& \\*
 &&$\lk:$&$2$&$\eta(\QBL):$&-&$\widetilde{\sigma}:$&-& \\\hline\hline

  & \multirow{8}{*}{\scalebox{.6}{\includegraphics{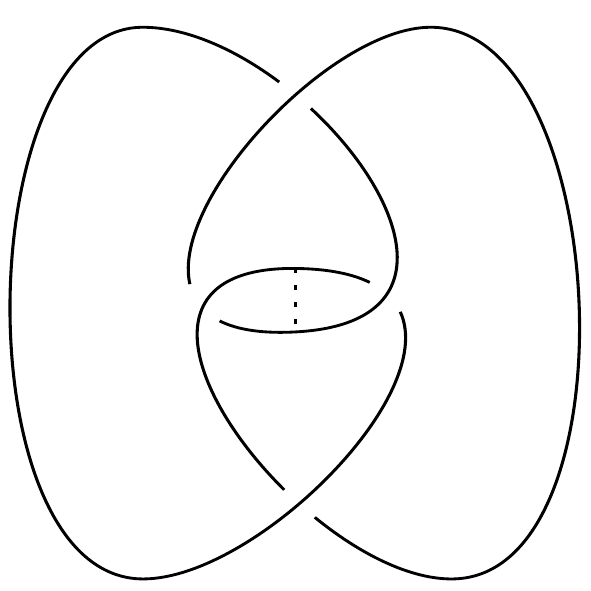}}}&&&&&&&\\*
 $4_1^+$&&$\mathfrak{f}:$&$4_1$&&&$g_4:$&1&\\*\cline{3-9}
 &&&&&&&& \\*
 &&$\mathfrak{b}:$&$0_1$&$\eta:$&$[-2,1,1,-1]$&$\widetilde{g}_4:$&1\\*\cline{3-9}
 &&&&&&&& \\*
 &&$\mathfrak{qb}:$&$0_1$&$\eta(\BL):$&$[0]$&$\bg_4:$&$\infty$&\\*\cline{3-9}
 &&&&&&&& \\*
 &&$\lk:$&$-2$&$\eta(\QBL):$&-&$\widetilde{\sigma}:$&-& \\\hline\hline

  & \multirow{8}{*}{\scalebox{.6}{\includegraphics{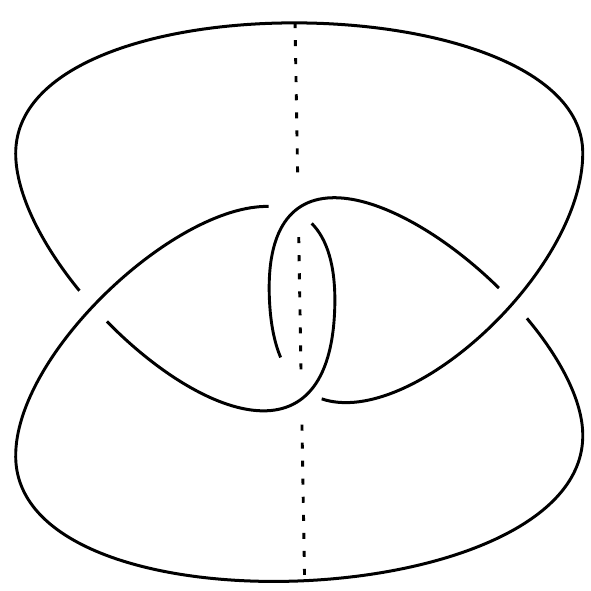}}}&&&&&&&\\*
 $4_1^-$&&$\mathfrak{f}:$&$4_1$&&&$g_4:$&1&\\*\cline{3-9}
 &&&&&&&& \\*
 &&$\mathfrak{b}:$&$3_1$&$\eta:$&$[-2,1,1,-1]$&$\widetilde{g}_4:$&1&\\*\cline{3-9}
 &&&&&&&& \\*
 &&$\mathfrak{qb}:$&$5_1$&$\eta(\BL):$&not computed&$\bg_4:$&$\infty$&\\*\cline{3-9}
 &&&&&&&& \\*
 &&$\lk:$&$2$&$\eta(\QBL):$&-&$\widetilde{\sigma}:$&-& \\\hline\hline

   & \multirow{8}{*}{\scalebox{.6}{\includegraphics{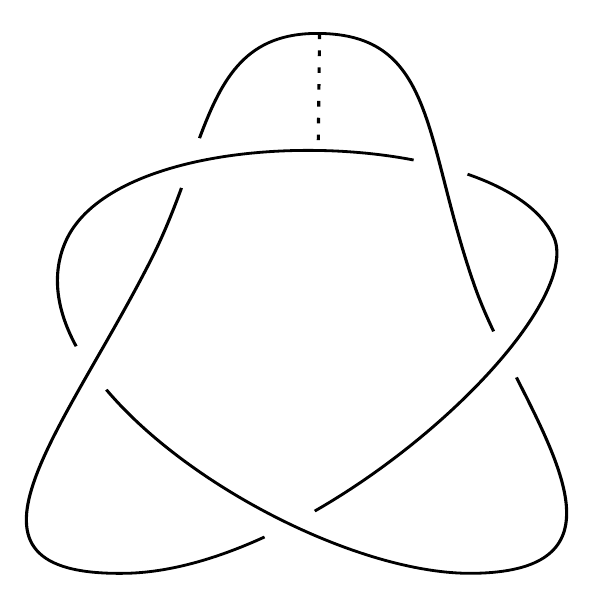}}}&&&&&&&\\*
 $5_1^+$&&$\mathfrak{f}:$&$5_1$&&&$g_4:$&2&\\*\cline{3-9}
 &&&&&&&& \\*
 &&$\mathfrak{b}:$&$0_1$&$\eta:$&$[-2,0,1]$&$\widetilde{g}_4:$&2&\\*\cline{3-9}
 &&&&&&&& \\*
 &&$\mathfrak{qb}:$&$0_1$&$\eta(\BL):$&$[0]$&$\bg_4:$&$\infty$&\\*\cline{3-9}
 &&&&&&&& \\*
 &&$\lk:$&$2$&$\eta(\QBL):$&-&$\widetilde{\sigma}:$&-& \\\hline\hline

 & \multirow{8}{*}{\scalebox{.6}{\includegraphics{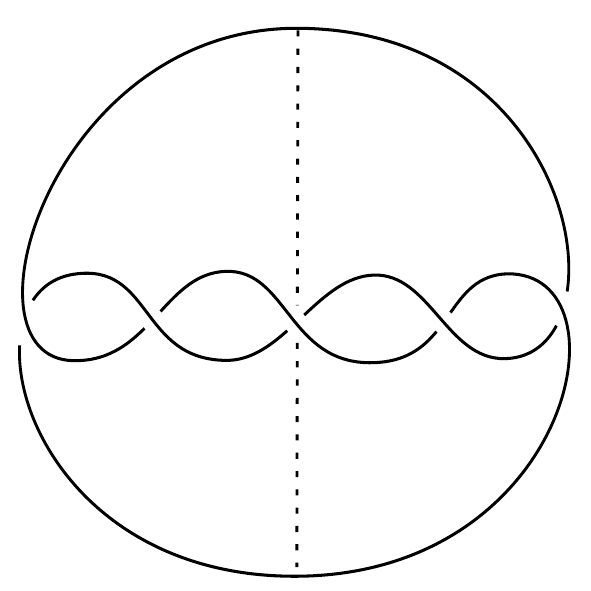}}}&&&&&&&\\*
 $5_1^-$&&$\mathfrak{f}:$&$5_1$&&&$g_4:$&2&\\*\cline{3-9}
 &&&&&&&& \\*
 &&$\mathfrak{b}:$&$0_1$&$\eta:$&$[-2,0,1]$&$\widetilde{g}_4:$&2&\\*\cline{3-9}
 &&&&&&&& \\*
 &&$\mathfrak{qb}:$&$4_1$&$\eta(\BL):$&$[0]$&$\bg_4:$&$\infty$&\\*\cline{3-9}
 &&&&&&&& \\*
 &&$\lk:$&$2$&$\eta(\QBL):$&-&$\widetilde{\sigma}:$&-& \\\hline\hline

& \multirow{8}{*}{\scalebox{.6}{\includegraphics{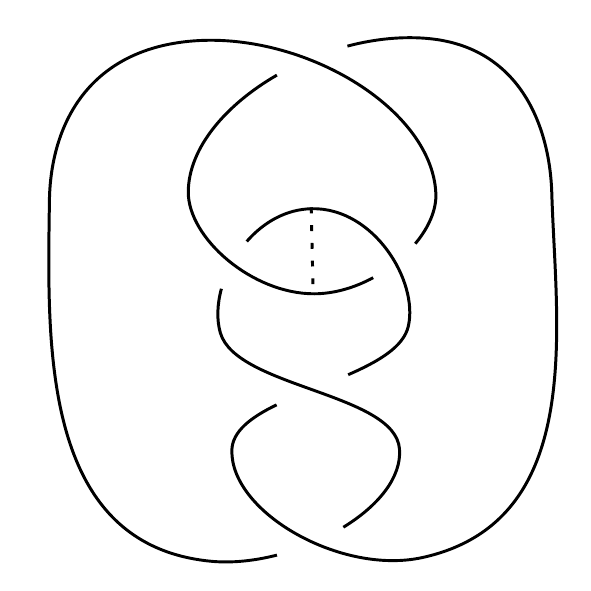}}}&&&&&&&\\*
 $5_2a^+$&&$\mathfrak{f}:$&$5_2$&&&$g_4:$&1&\\*\cline{3-9}
 &&&&&&&& \\*
 &&$\mathfrak{b}:$&$0_1$&$\eta:$&$[-4,1,2,-1]$&$\widetilde{g}_4:$&1&\\*\cline{3-9}
 &&&&&&&& \\*
 &&$\mathfrak{qb}:$&$0_1$&$\eta(\BL):$&$[-2,0,1]$&$\bg_4:$&$\infty$&\\*\cline{3-9}
 &&&&&&&& \\*
 &&$\lk:$&$4$&$\eta(\QBL):$&-&$\widetilde{\sigma}:$&-& \\\hline\hline

 & \multirow{8}{*}{\scalebox{.6}{\includegraphics{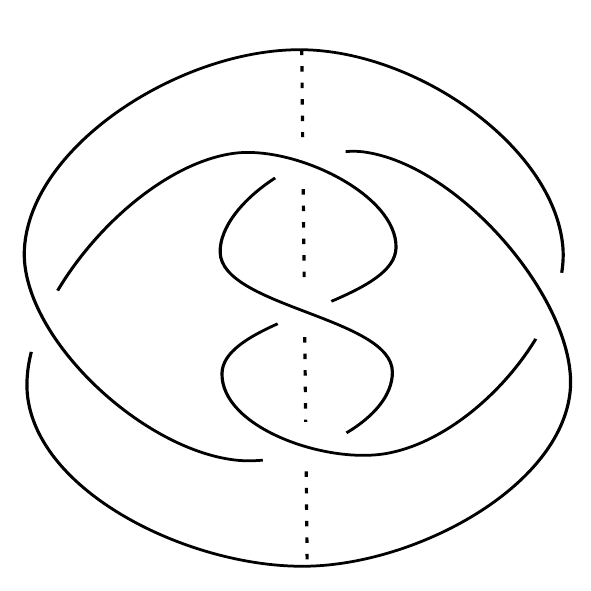}}}&&&&&&&\\*
 $5_2a^-$&&$\mathfrak{f}:$&$5_2$&&&$g_4:$&1&\\*\cline{3-9}
 &&&&&&&& \\*
 &&$\mathfrak{b}:$&$m3_1$&$\eta:$&$[-4,1,2,-1]$&$\widetilde{g}_4:$&1&\\*\cline{3-9}
 &&&&&&&& \\*
 &&$\mathfrak{qb}:$&$m7_1$&$\eta(\BL):$&not computed&$\bg_4:$&$\infty$&\\*\cline{3-9}
 &&&&&&&& \\*
 &&$\lk:$&$3$&$\eta(\QBL):$&-&$\widetilde{\sigma}:$&-& \\\hline\hline

  & \multirow{8}{*}{\scalebox{.6}{\includegraphics{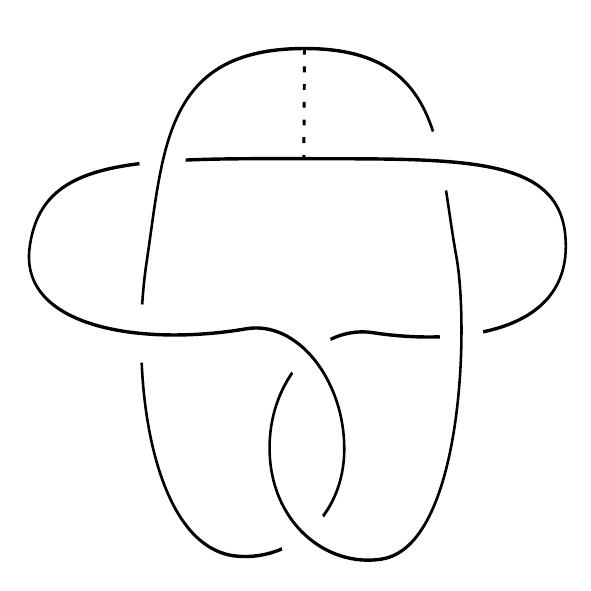}}}&&&&&&\\*
 $5_2b^+$&&$\mathfrak{f}:$&$5_2$&&&$g_4:$&1&\\*\cline{3-9}
 &&&&&&&& \\*
 &&$\mathfrak{b}:$&$0_1$&$\eta:$&$[-2,1,0,-1,1]$&$\widetilde{g}_4:$&1&\\*\cline{3-9}
 &&&&&&&& \\*
 &&$\mathfrak{qb}:$&$0_1$&$\eta(\BL):$&$[0]$&$\bg_4:$&$2$\hfootnote{The butterfly 4-genus $\widetilde{bg}_4$ does not depend on the choice of half-axis. Hence $\widetilde{bg}_4(5_2b^+)= \widetilde{bg}_4(5_2b^-) = 2$.}&\\*\cline{3-9}
 &&&&&&&& \\*
 &&$\lk:$&$0$&$\eta(\QBL):$&$[-2,0,1]$&$\widetilde{\sigma}:$&2\hfootnote{This follows from the computation of $\widetilde{\sigma}(5_2b^-)$, see Remark \ref{rmk:antipode_gsig_negate}.}& \\\hline\hline

  & \multirow{8}{*}{\scalebox{.6}{\includegraphics{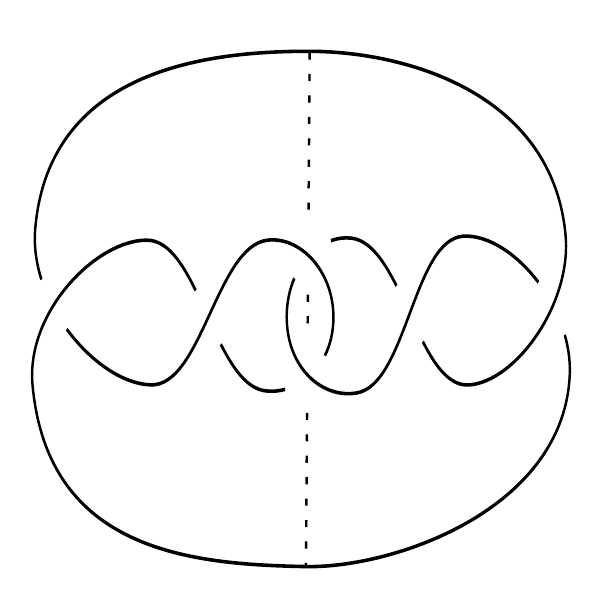}}}&&&&&&\\*
 $5_2b^-$&&$\mathfrak{f}:$&$5_2$&&&$g_4:$&1&\\*\cline{3-9}
 &&&&&&&& \\*
 &&$\mathfrak{b}:$&$3_1$&$\eta:$&$[-2,1,0,-1,1]$&$\widetilde{g}_4:$&1&\\*\cline{3-9}
 &&&&&&&& \\*
 &&$\mathfrak{qb}:$&$5_2$&$\eta(\BL):$&not computed&$\bg_4:$&$2$\hfootnote{The butterfly Seifert surface in Figure \ref{fig:52butterfly} (left) can be pushed equivariantly into $B^4$. To see that the symmetry on $5_2$ shown in Figure \ref{fig:52butterfly} depicts $5_2b^-$, it is sufficient to calculate $\mathfrak{b}$.}&\\*\cline{3-9}
 &&&&&&&& \\*
 &&$\lk:$&$0$&$\eta(\QBL):$&$[-2,0,1]$&$\widetilde{\sigma}:$&-2& \\\hline\hline

   & \multirow{8}{*}{\scalebox{.6}{\includegraphics{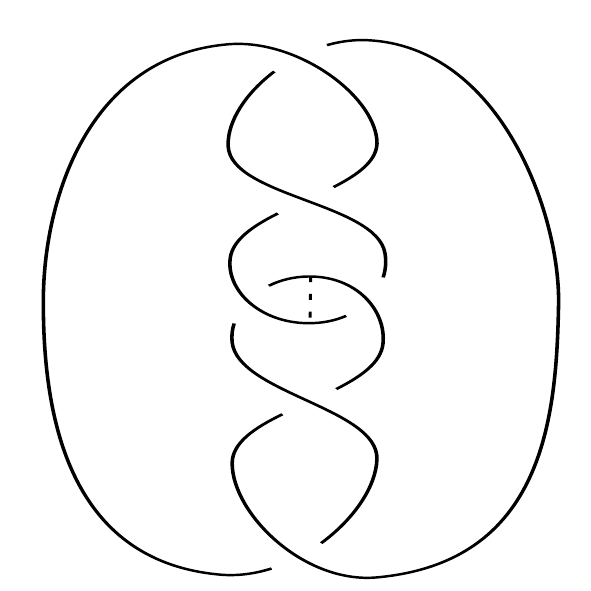}}}&&&&&&\\*
 $6_1a^+$&&$\mathfrak{f}:$&$6_1$&&&$g_4:$&0&\\*\cline{3-9}
 &&&&&&&& \\*
 &&$\mathfrak{b}:$&$0_1$&$\eta:$&$[-2,1,0,0,1,-1]$&$\widetilde{g}_4:$&1&\\*\cline{3-9}
 &&&&&&&& \\*
 &&$\mathfrak{qb}:$&$0_1$&$\eta(\BL):$&$[-2,0,1]$&$\bg_4:$&$\infty$&\\*\cline{3-9}
 &&&&&&&& \\*
 &&$\lk:$&$4$&$\eta(\QBL):$&$-$&$\widetilde{\sigma}:$&-& \\\hline\hline

  & \multirow{8}{*}{\scalebox{.6}{\includegraphics{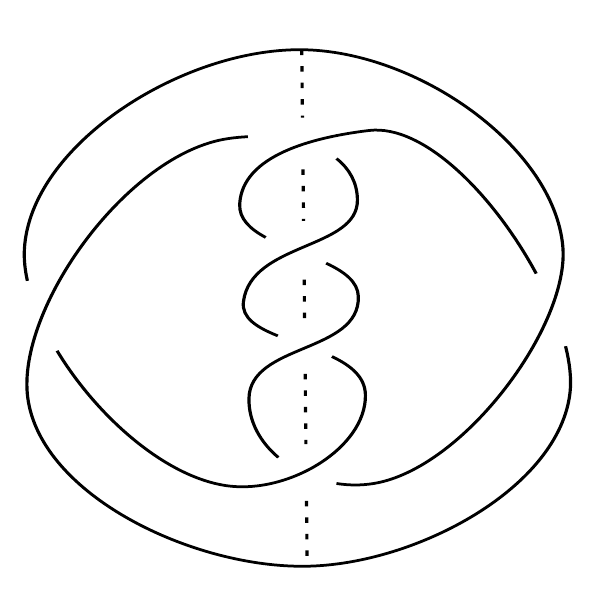}}}&&&&&&\\*
 $6_1a^-$&&$\mathfrak{f}:$&$6_1$&&&$g_4:$&0&\\*\cline{3-9}
 &&&&&&&& \\*
 &&$\mathfrak{b}:$&$m5_1$&$\eta:$&$[-2,1,0,0,1,-1]$&$\widetilde{g}_4:$&1&\\*\cline{3-9}
 &&&&&&&& \\*
 &&$\mathfrak{qb}:$&$9_1$&$\eta(\BL):$&not computed&$\bg_4:$&$\infty$&\\*\cline{3-9}
 &&&&&&&& \\*
 &&$\lk:$&$4$&$\eta(\QBL):$&$-$&$\widetilde{\sigma}:$&-& \\\hline\hline

   & \multirow{8}{*}{\scalebox{.6}{\includegraphics{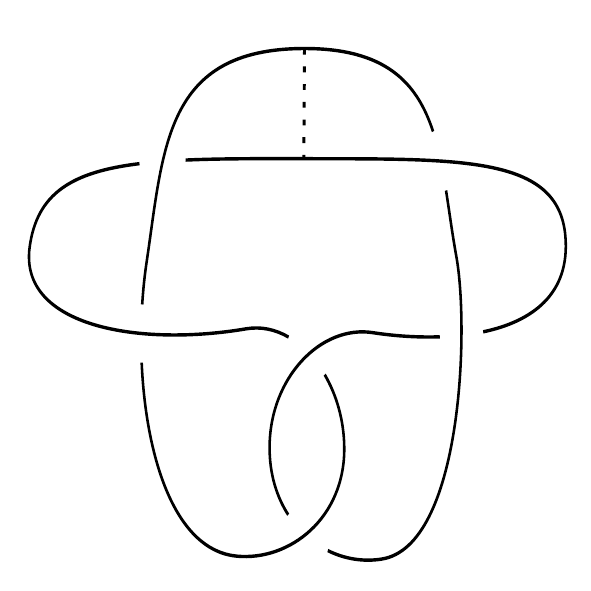}}}&&&&&&\\*
 $6_1b^+$&&$\mathfrak{f}:$&$6_1$&&&$g_4:$&0&\\*\cline{3-9}
 &&&&&&&& \\*
 &&$\mathfrak{b}:$&$0_1$&$\eta:$&$[4,-1,-2,1]$&$\widetilde{g}_4:$&1&\\*\cline{3-9}
 &&&&&&&& \\*
 &&$\mathfrak{qb}:$&$0_1$&$\eta(\BL):$&$[0]$&$\bg_4:$&$2$&\\*\cline{3-9}
 &&&&&&&& \\*
 &&$\lk:$&$0$&$\eta(\QBL):$&$[-2,0,1]$&$\widetilde{\sigma}:$&0\hfootnote{This follows from the computation of $\widetilde{\sigma}(6_1b^-)$, see Remark \ref{rmk:antipode_gsig_negate}.}& \\\hline\hline

  & \multirow{8}{*}{\scalebox{.6}{\includegraphics{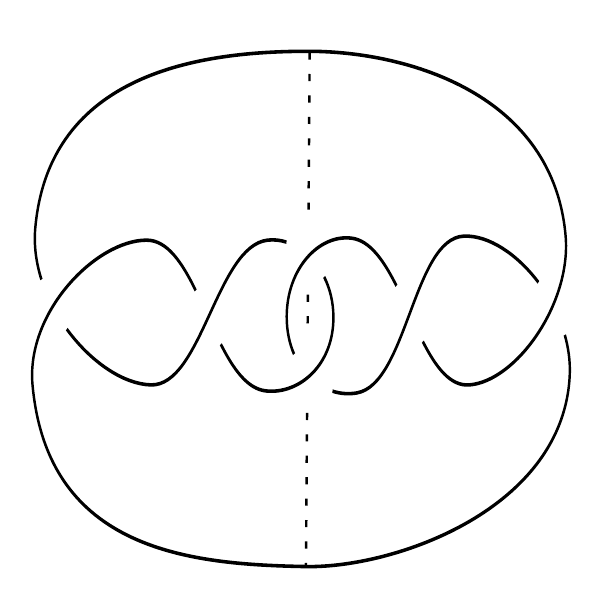}}}&&&&&&\\*
 $6_1b^-$&&$\mathfrak{f}:$&$6_1$&&&$g_4:$&0&\\*\cline{3-9}
 &&&&&&&& \\*
 &&$\mathfrak{b}:$&$4_1$&$\eta:$&$[4,-1,-2,1]$&$\widetilde{g}_4:$&1&\\*\cline{3-9}
 &&&&&&&& \\*
 &&$\mathfrak{qb}:$&$m5_2$&$\eta(\BL):$&not computed&$\bg_4:$&$2$\hfootnote{The butterfly Seifert surface in Figure \ref{fig:52butterfly} (right) can be pushed equivariantly into $B^4$. To see that the symmetry on $6_1$ shown in Figure \ref{fig:52butterfly} depicts $6_1b^-$, it is sufficient to calculate $\mathfrak{b}$.}&\\*\cline{3-9}
 &&&&&&&& \\*
 &&$\lk:$&$0$&$\eta(\QBL):$&$[-2,0,1]$&$\widetilde{\sigma}:$&0& \\\hline\hline

    & \multirow{8}{*}{\scalebox{.6}{\includegraphics{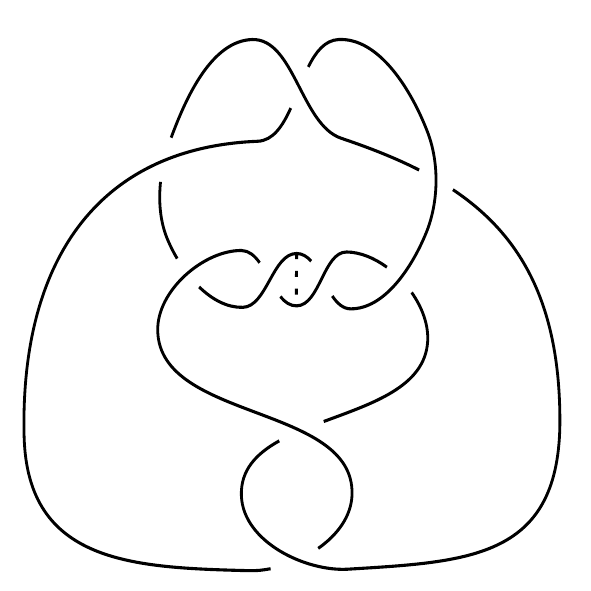}}}&&&&&&\\*
 $8_9^+$&&$\mathfrak{f}:$&$8_9$&&&$g_4:$&0&\\*\cline{3-9}
 &&&&&&&& \\*
 &&$\mathfrak{b}:$&$0_1$&$\eta:$&$[0]$&$\widetilde{g}_4:$&$1$&\\*\cline{3-9}
 &&&&&&&& \\*
 &&$\mathfrak{qb}:$&$0_1$&$\eta(\BL):$&$[2,0,-1]$&$\bg_4:$&$\infty$&\\*\cline{3-9}
 &&&&&&&& \\*
 &&$\lk:$&$0$&$\eta(\QBL):$&$[0]$&$\widetilde{\sigma}:$&-& \\\hline\hline

  & \multirow{8}{*}{\scalebox{.6}{\includegraphics{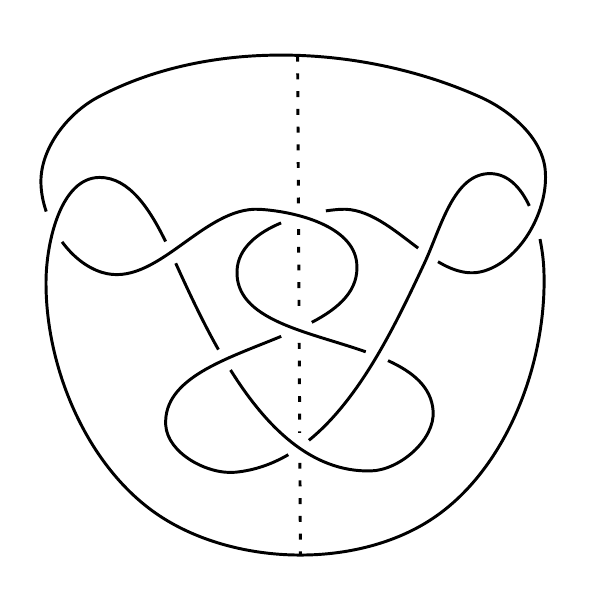}}}&&&&&&\\*
 $8_9^-$&&$\mathfrak{f}:$&$8_9$&&&$g_4:$&0&\\*\cline{3-9}
 &&&&&&&& \\*
 &&$\mathfrak{b}:$&$0_1$&$\eta:$&$[0]$&$\widetilde{g}_4:$&$1$&\\*\cline{3-9}
 &&&&&&&& \\*
 &&$\mathfrak{qb}:$&$m8_8$&$\eta(\BL):$&not computed&$\bg_4:$&$\infty$&\\*\cline{3-9}
 &&&&&&&& \\*
 &&$\lk:$&$-1$&$\eta(\QBL):$&$-$&$\widetilde{\sigma}:$&-& \\\hline\hline
\end{longtable}
\newpage
\setcounter{footnote}{0}
\section{Table of examples with non-trivial Donaldson's theorem obstruction} \label{sec:Donaldson_table}
In this appendix we give a table of all periodic and strongly invertible knots with a symmetric alternating diagram having at most 11 crossings for which Theorem \ref{thm:eq_embedding} provides an obstruction to the equivariant 4-genus $\widetilde{g}_4(K)$ being equal to  $|\sigma(K)|/2$. In all cases, we in fact have $g_4(K) = |\sigma(K)|/2$ so that $\widetilde{g}_4(K) > g_4(K)$. The notation $\star \to K'$ indicates that $K'$ is the knot obtained by equivariantly changing the crossings marked with a $\star$. We then compute upper bounds on $\widetilde{g}_4$ using Proposition \ref{prop:crossing_change_genus} and the computations of $\widetilde{g}_4$ in Appendix \ref{sec:table}.  The transvergent symmetries are shown as rotation around a vertical dotted line, and the intravergent symmetries are shown as rotation around a dot in the center of the diagram. Unlike in Appendix \ref{sec:table}, the strongly invertible knots here are not directed. \\

\begin{longtable}{l|c||c|l}
\hline \hline

\multicolumn{2}{c||}{$9_{28}:$ transvergent, periodic}&\multicolumn{2}{c}{$9_{28}:$ intravergent, strongly invertible}\\*\cline{1-4}
 &\multirow{12}{*}{\scalebox{.2}{\includegraphics{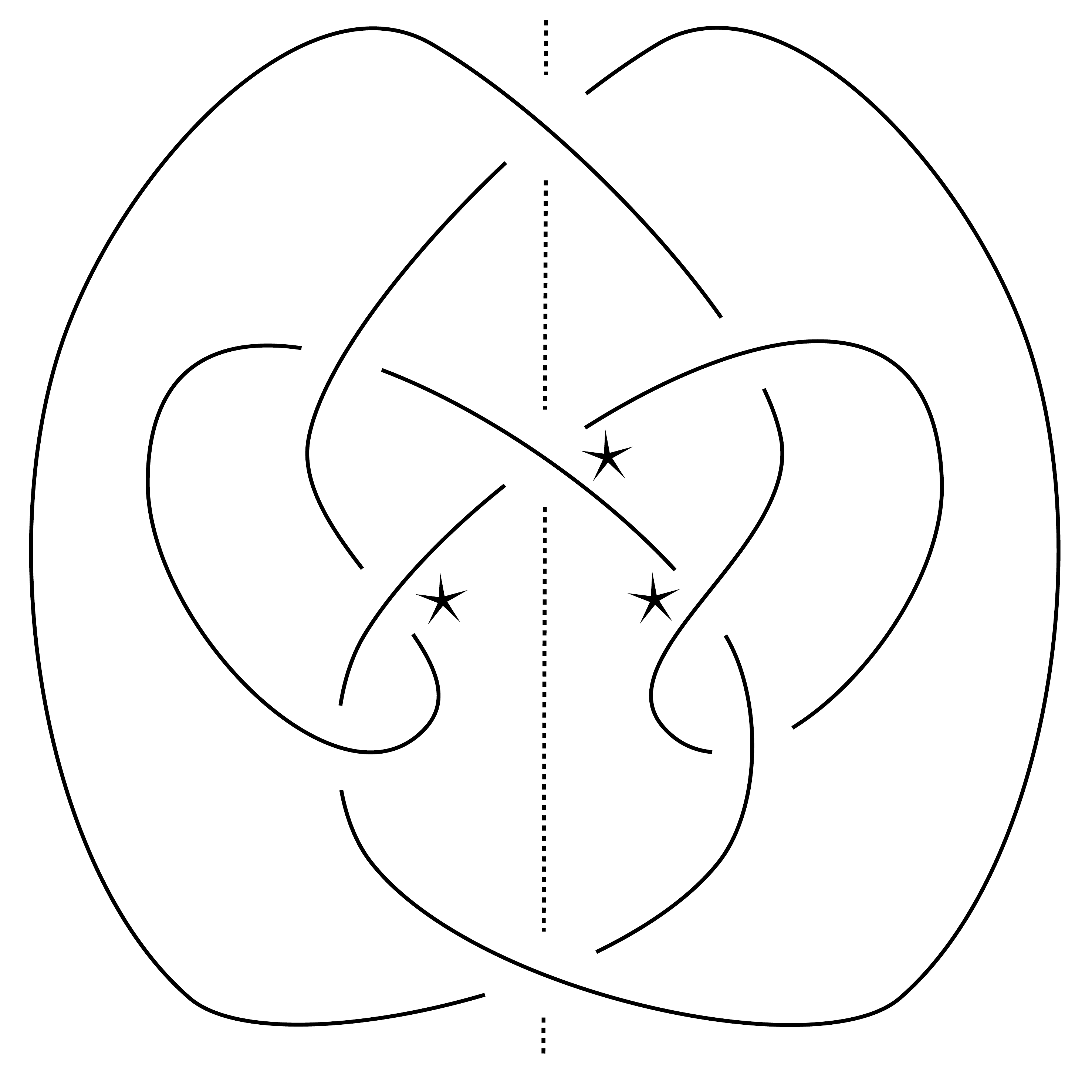}}}&\multirow{12}{*}{\scalebox{.2}{\includegraphics{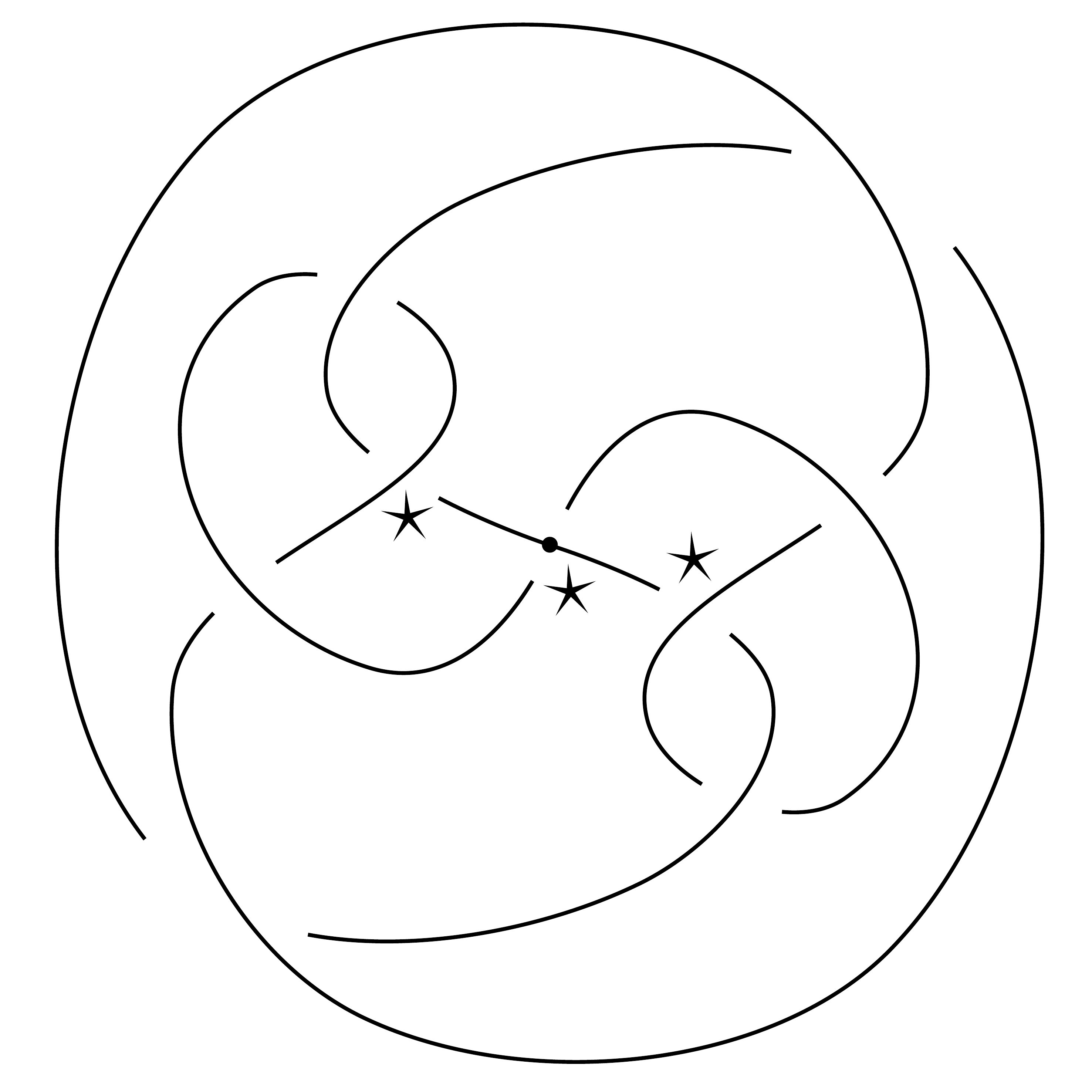}}}&\\*
 $\sigma : -2$&&&$\sigma : -2$\\*\cline{1-1}\cline{4-4}
 &&&\\*
 $g_4:\ 1$&&&$g_4:\ 1$ \\*\cline{1-1}\cline{4-4}
 &&& \\*
 $\widetilde{g}_4 \geq 2$&&&$\widetilde{g}_4 \geq 2$ \\*\cline{1-1}\cline{4-4}
 &&& \\*
 $\widetilde{g}_4\leq 3$&&&$\widetilde{g}_4 \leq 3$ \\*\cline{1-1}\cline{4-4}
 &&& \\*
 $\star \to$&&&$\star \to$ \\*
 unknot&&&unknot \\*
 &&&\\ \hline\hline 

\multicolumn{2}{c||}{$9_{29}:$ intravergent, strongly invertible}&\multicolumn{2}{c}{$9_{40}:$ intravergent, strongly invertible}\\*\cline{1-4}
 &\multirow{12}{*}{\scalebox{.2}{\includegraphics{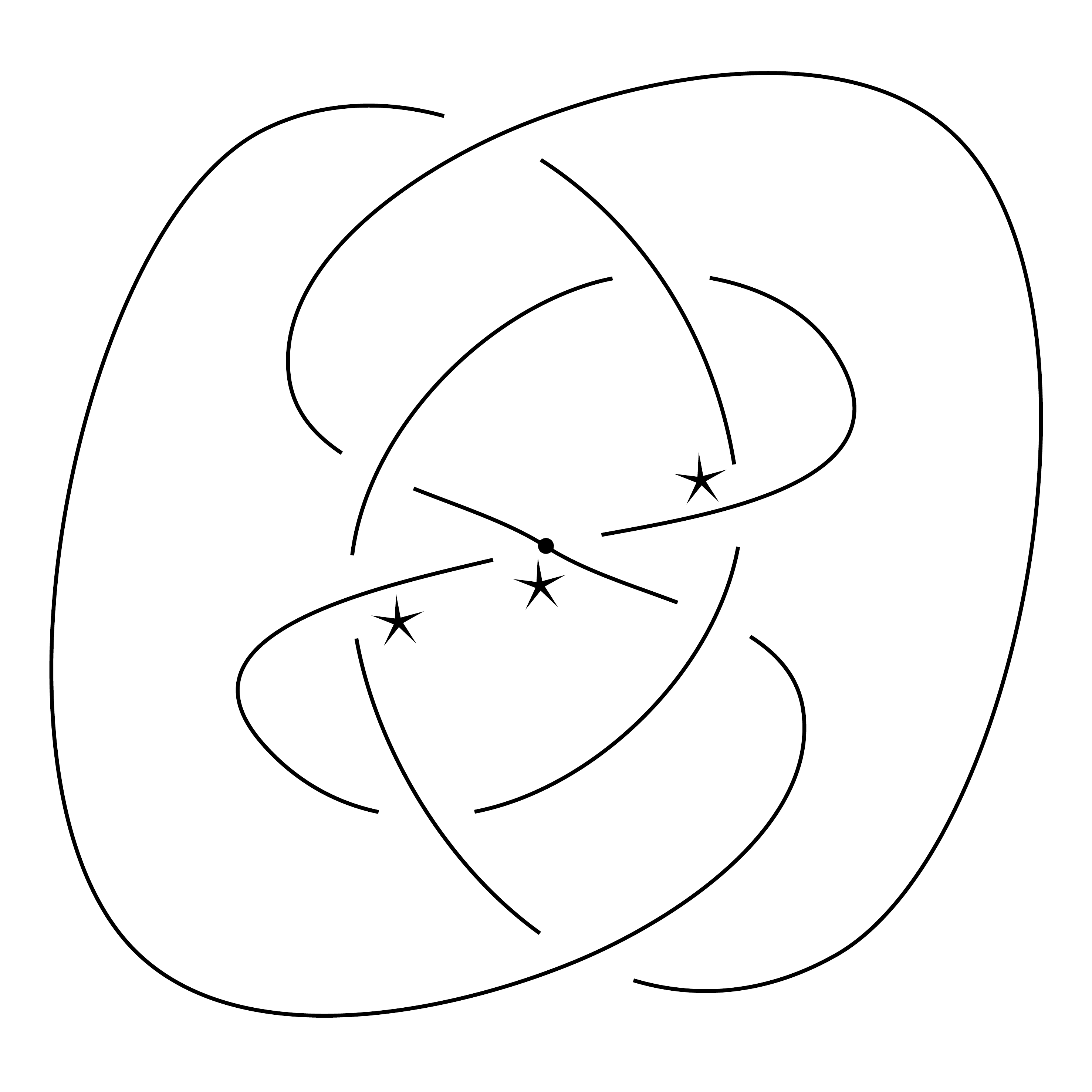}}}&\multirow{12}{*}{\scalebox{.2}{\includegraphics{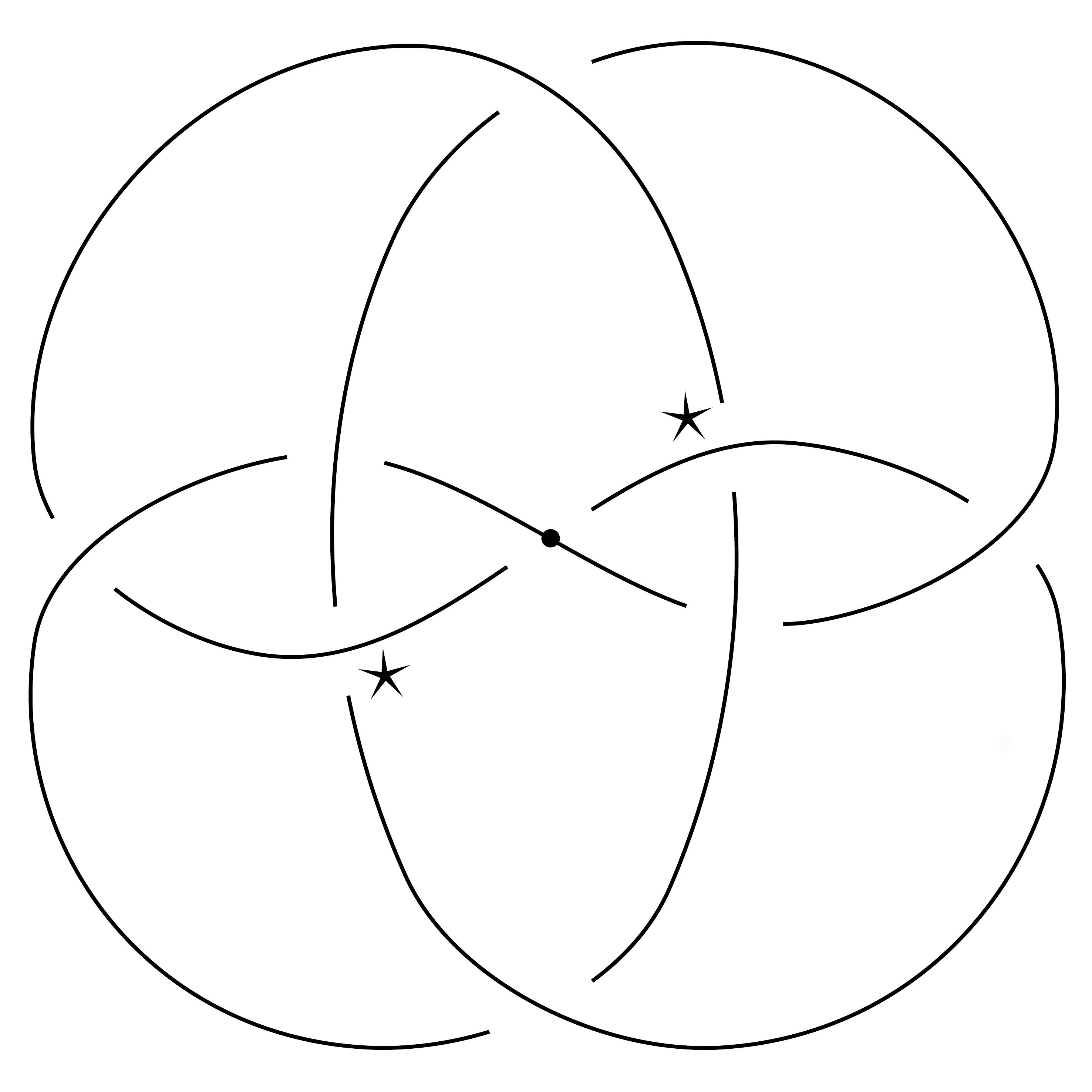}}}&\\*
 $\sigma : -2$&&&$\sigma : -2$\\*\cline{1-1}\cline{4-4}
 &&&\\*
 $g_4:\ 1$&&&$g_4:\ 1$ \\*\cline{1-1}\cline{4-4}
 &&& \\*
 $\widetilde{g}_4 \geq 2$&&&$\widetilde{g}_4 \geq 2$ \\*\cline{1-1}\cline{4-4}
 &&& \\*
 $\widetilde{g}_4\leq 3$&&&$\widetilde{g}_4 \leq 2$ \\*\cline{1-1}\cline{4-4}
 &&& \\*
 $\star \to$&&&$\star \to$ \\*
 unknot&&&unknot \\*
 &&&\\ \hline\hline 

 \multicolumn{2}{c||}{$9_{40}:$ transvergent, periodic}&\multicolumn{2}{c}{$9_{41}:$ transvergent, strongly invertible}\\*\cline{1-4}
 &\multirow{12}{*}{\scalebox{.2}{\includegraphics{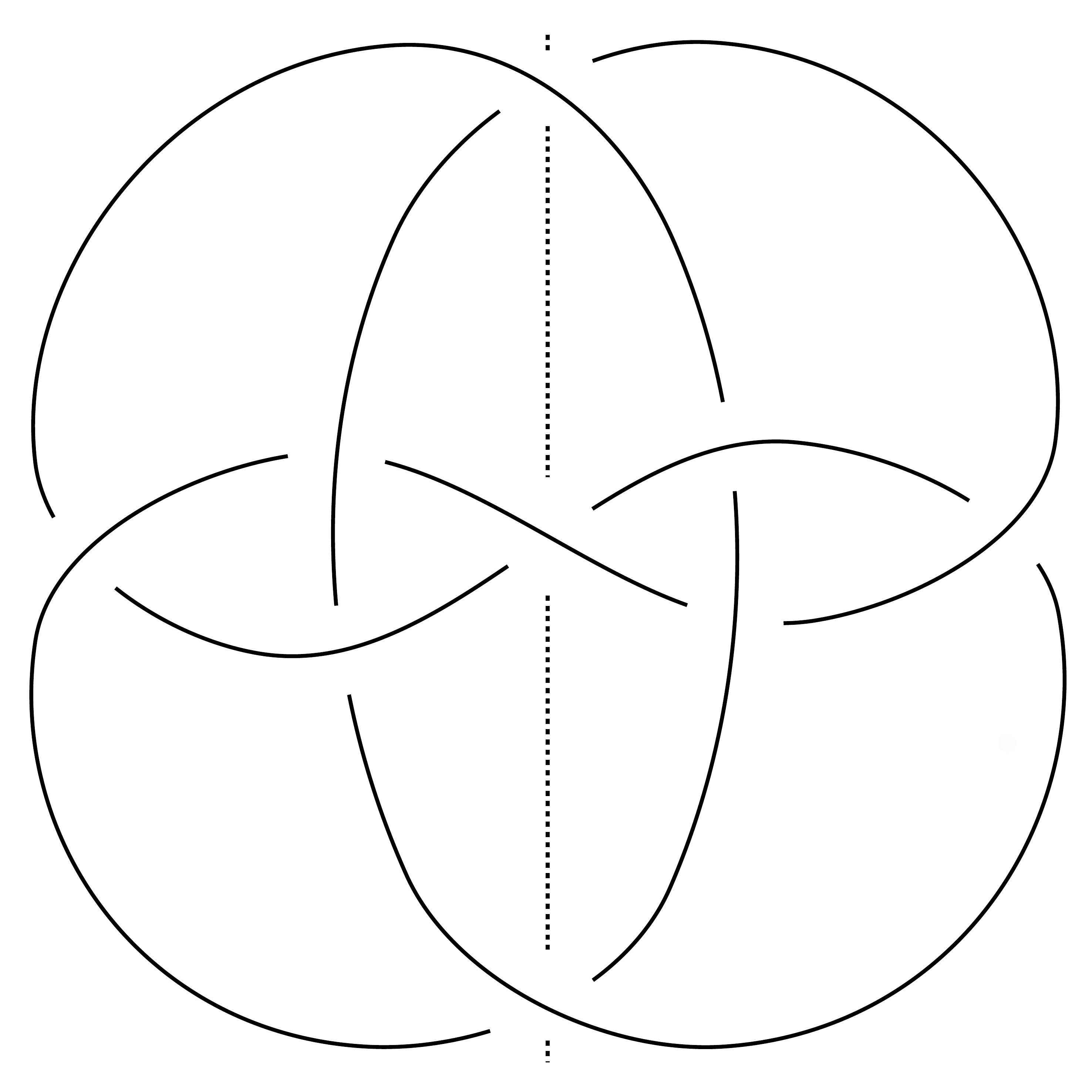}}}&\multirow{12}{*}{\scalebox{.2}{\includegraphics{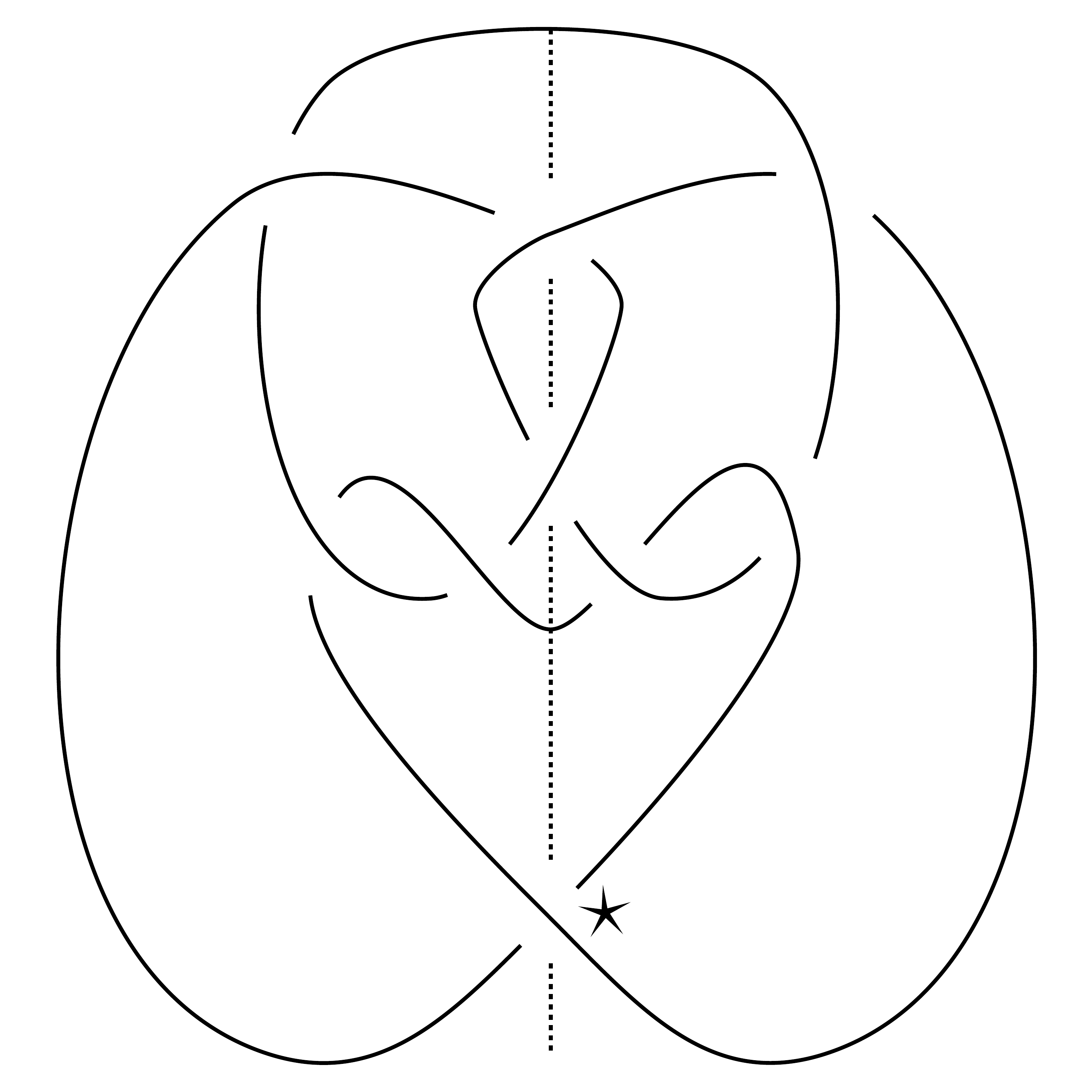}}}&\\*
 $\sigma : -2$&&&$\sigma : 0$\\*\cline{1-1}\cline{4-4}
 &&&\\*
 $g_4:\ 1$&&&$g_4:\ 0$ \\*\cline{1-1}\cline{4-4}
 &&& \\*
 $\widetilde{g}_4 \geq 3$&&&$\widetilde{g}_4 \geq 1\hfootnote{This was also shown in \cite[Appendix II]{Sakuma} and \cite[Theorem 1.11]{CorksInvolutions}.}$ \\*\cline{1-1}\cline{4-4}
 &&& \\*
 $\widetilde{g}_4\leq 3$&&&$\widetilde{g}_4 \leq 2$ \\*\cline{1-1}\cline{4-4}
 &&& \\*
 See&&&$\star \to 5_2$ \\*
 note.\footnote{The lower bound $\widetilde{g}_4 \geq 3$ is obtained by Proposition \ref{prop:RH}, and the upper bound $\widetilde{g}_4 \leq g_3 = 3$ is obtained by Edmonds' theorem \cite{Edmonds}.}&&& \\*
 &&&\\ \hline\hline 

\multicolumn{2}{c||}{$10_{64}:$ transvergent, strongly invertible}&\multicolumn{2}{c}{$10_{74}:$ transvergent, strongly invertible}\\*\cline{1-4}
 &\multirow{12}{*}{\scalebox{.2}{\includegraphics{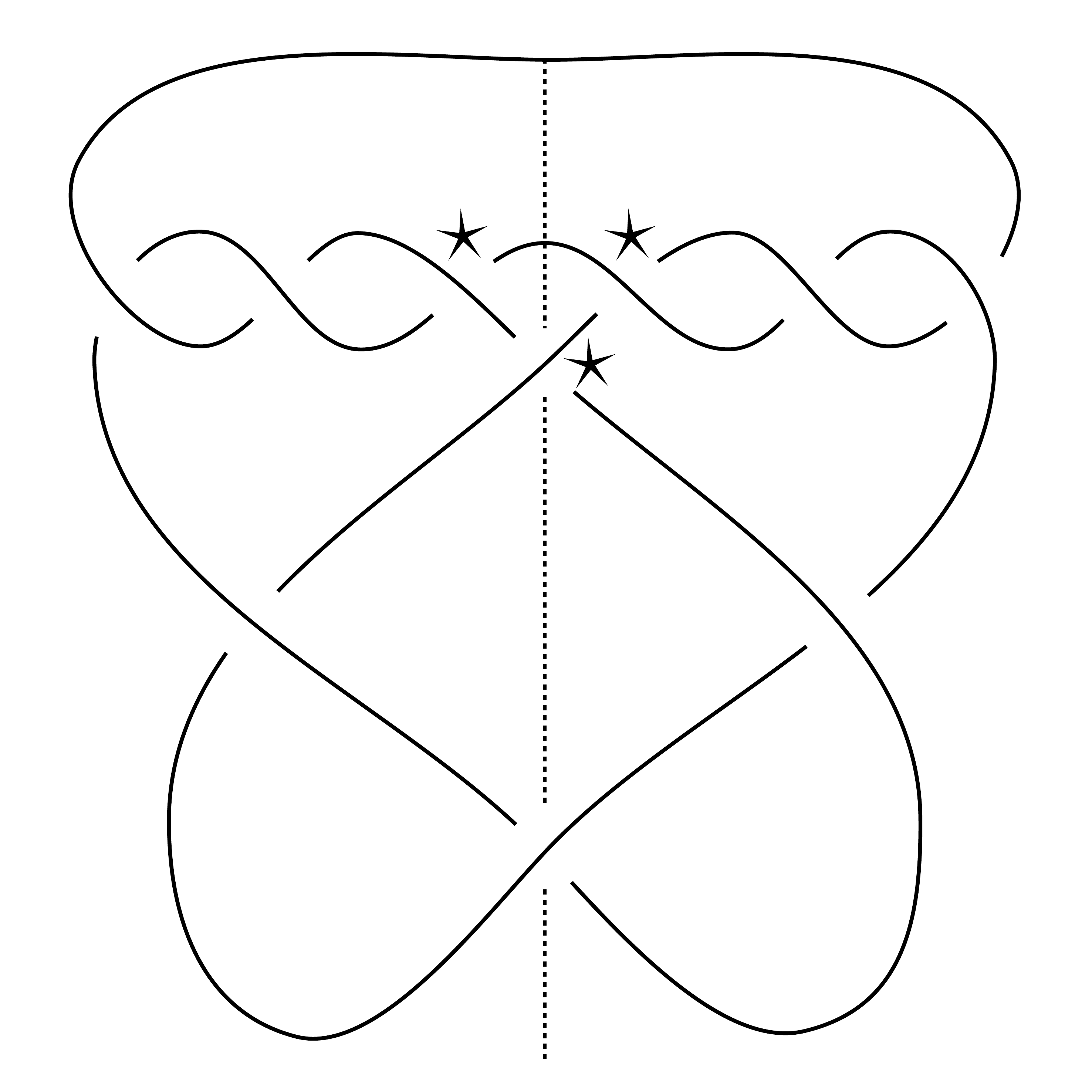}}}&\multirow{12}{*}{\scalebox{.2}{\includegraphics{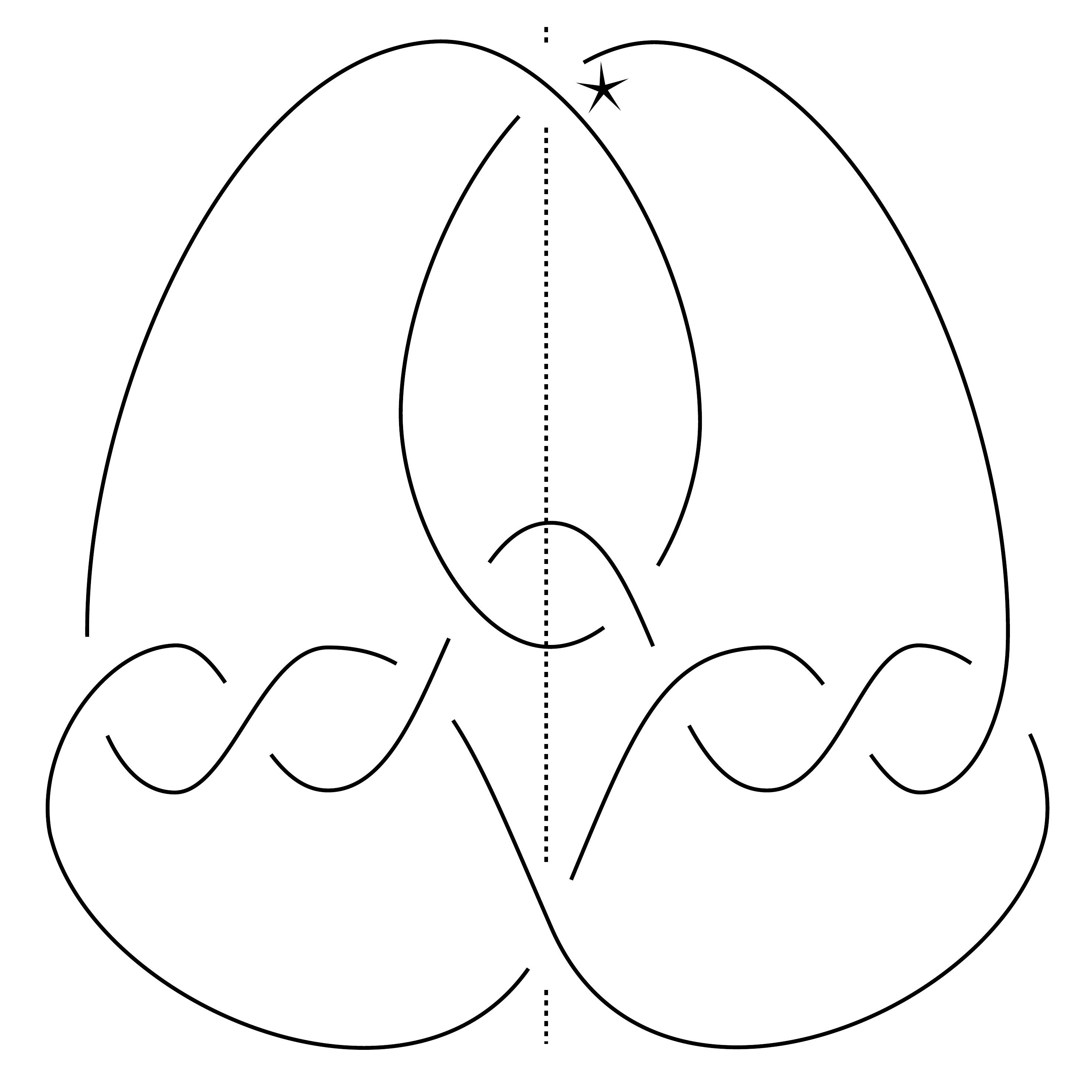}}}&\\*
 $\sigma : -2$&&&$\sigma : -2$\\*\cline{1-1}\cline{4-4}
 &&&\\*
 $g_4:\ 1$&&&$g_4:\ 1$ \\*\cline{1-1}\cline{4-4}
 &&& \\*
 $\widetilde{g}_4 \geq 2$&&&$\widetilde{g}_4 \geq 2$ \\*\cline{1-1}\cline{4-4}
 &&& \\*
 $\widetilde{g}_4\leq 3$&&&$\widetilde{g}_4 \leq 2$ \\*\cline{1-1}\cline{4-4}
 &&& \\*
 $\star \to$&&&$\star \to 3_1$ \\*
 unknot&&& \\*
 &&&\\ \hline\hline 

\multicolumn{2}{c||}{$10_{75}:$ transvergent, strongly invertible}&\multicolumn{2}{c}{$10_{103}:$ transvergent, strongly invertible}\\*\cline{1-4}
 &\multirow{12}{*}{\scalebox{.2}{\includegraphics{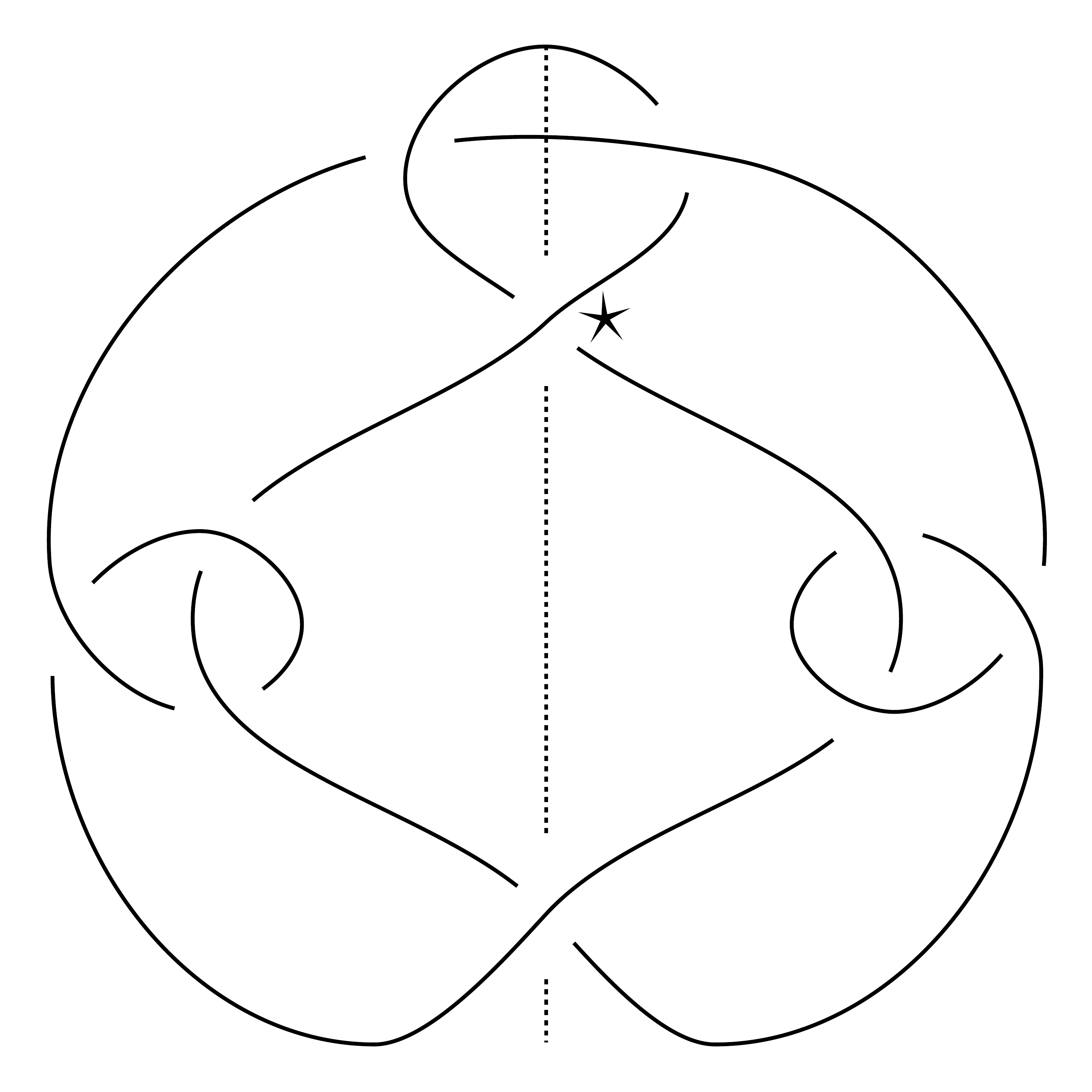}}}&\multirow{12}{*}{\scalebox{.2}{\includegraphics{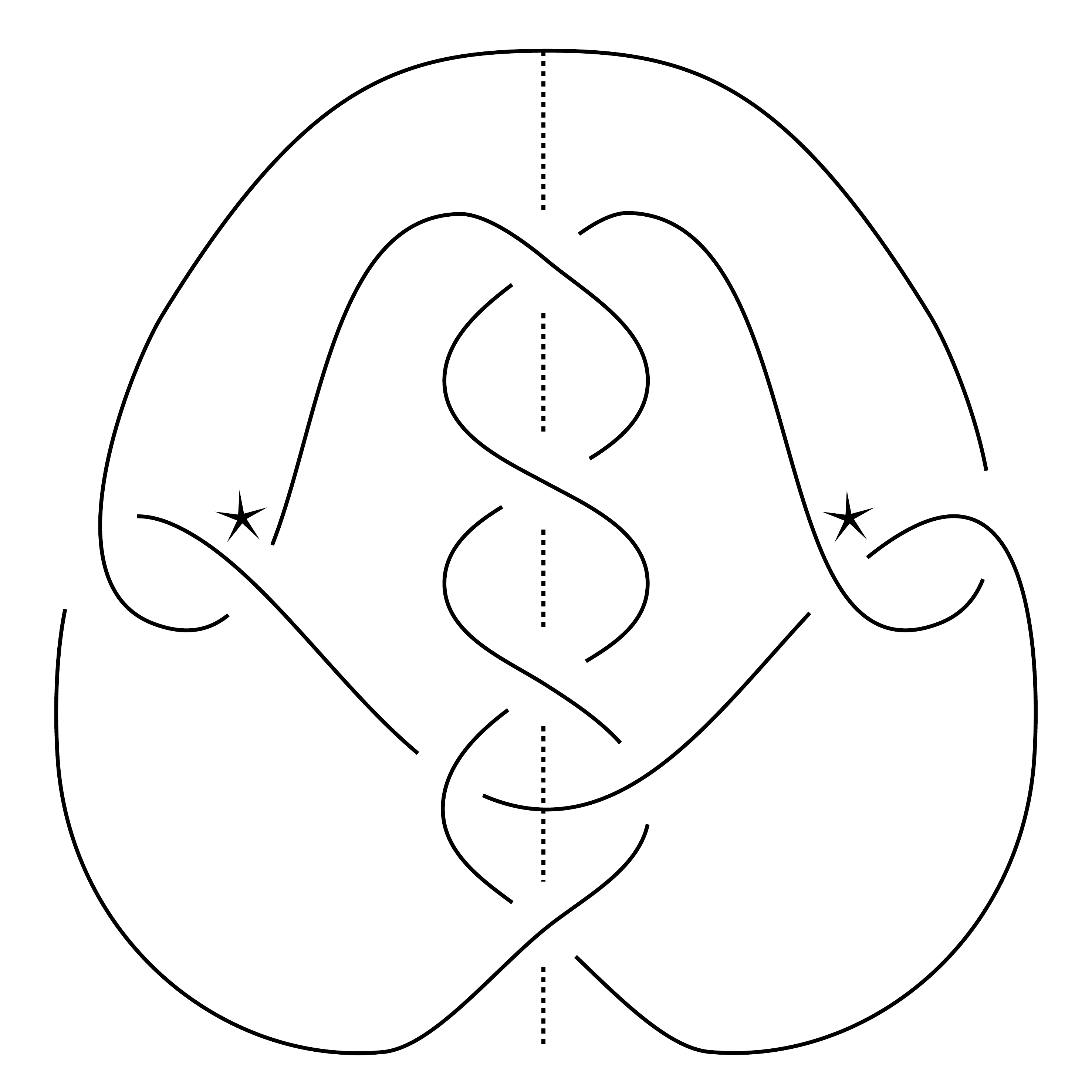}}}&\\*
 $\sigma : 0$&&&$\sigma : -2$\\*\cline{1-1}\cline{4-4}
 &&&\\*
 $g_4:\ 0$&&&$g_4:\ 1$ \\*\cline{1-1}\cline{4-4}
 &&& \\*
 $\widetilde{g}_4 \geq 1\, \ufootnote{This was also shown in \cite[Theorem 1.11]{CorksInvolutions}.}$&&&$\widetilde{g}_4 \geq 2$ \\*\cline{1-1}\cline{4-4}
 &&& \\*
 $\widetilde{g}_4\leq 2$&&&$\widetilde{g}_4 \leq 3$ \\*\cline{1-1}\cline{4-4}
 &&& \\*
 $\star \to 3_1$&&&$\star \to 3_1$ \\*
 &&& \\*
 &&& \\ \hline\hline

\multicolumn{2}{c||}{$10_{123}:$ transvergent, strongly invertible}&\multicolumn{2}{c}{$11_{125}:$ intravergent, strongly invertible}\\*\cline{1-4}
 &\multirow{12}{*}{\scalebox{.2}{\includegraphics{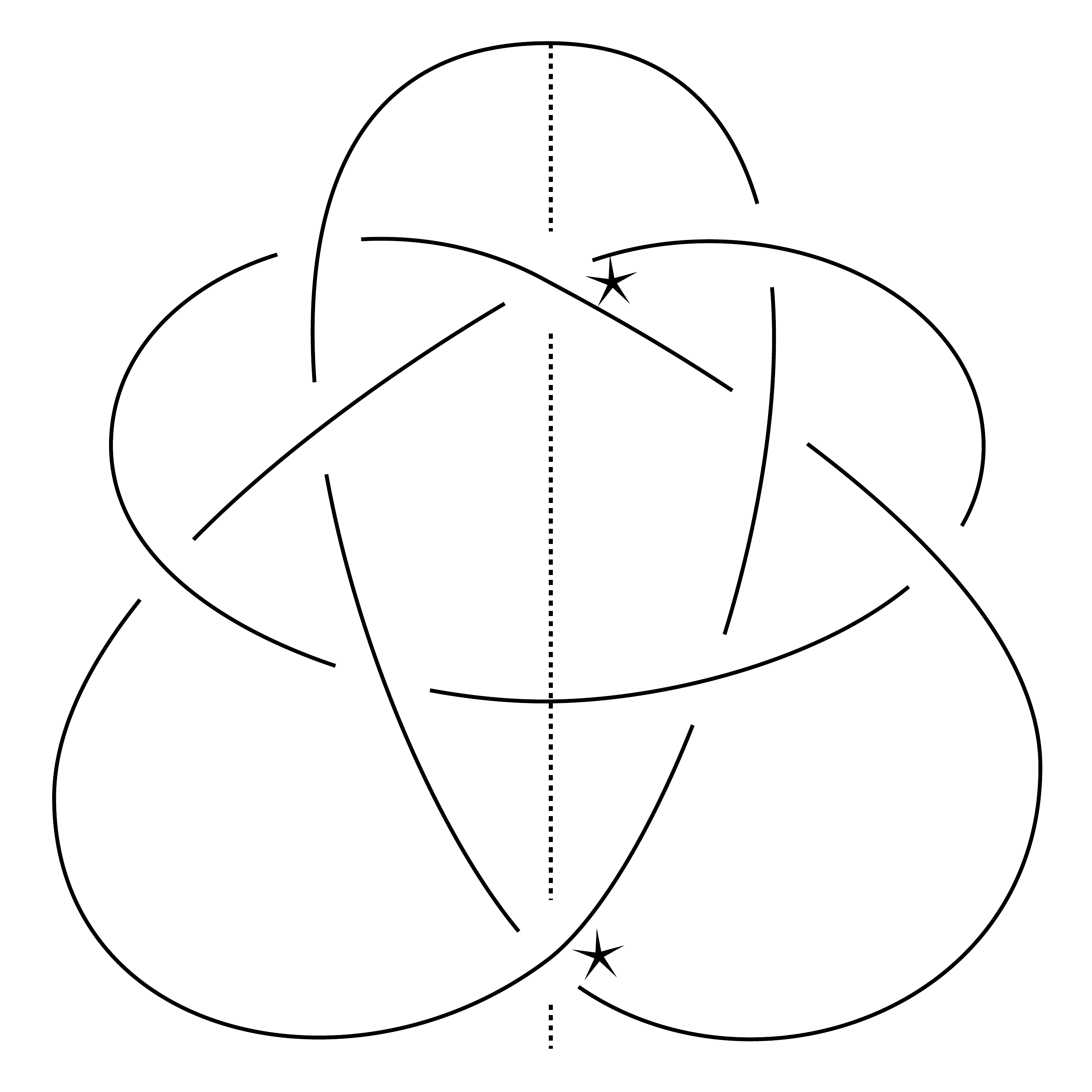}}}&\multirow{12}{*}{\scalebox{.2}{\includegraphics{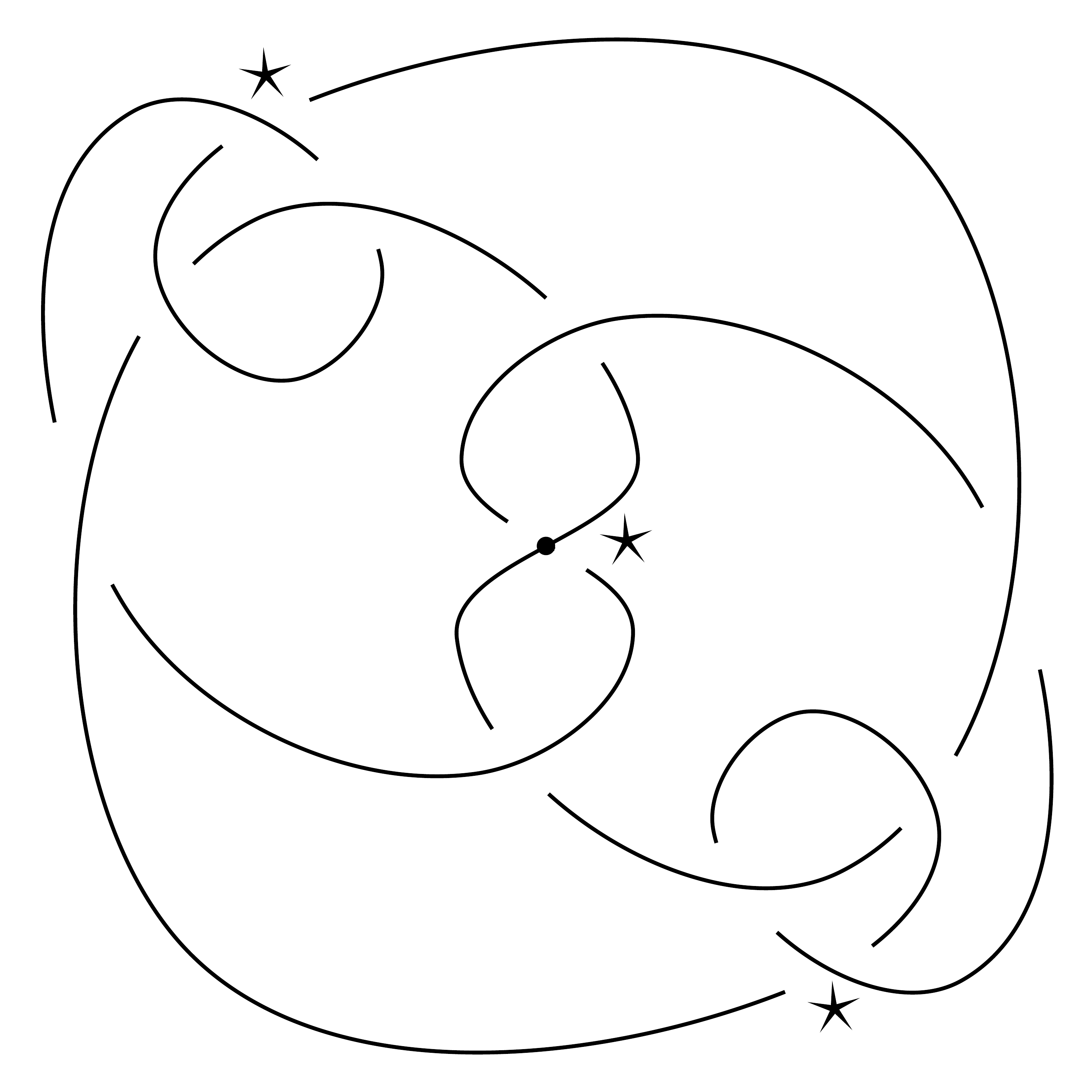}}}&\\*
 $\sigma : 0$&&&$\sigma : -2$\\*\cline{1-1}\cline{4-4}
 &&&\\*
 $g_4:\ 0$&&&$g_4:\ 1$ \\*\cline{1-1}\cline{4-4}
 &&& \\*
 $\widetilde{g}_4 \geq 1$&&&$\widetilde{g}_4 \geq 2$ \\*\cline{1-1}\cline{4-4}
 &&& \\*
 $\widetilde{g}_4\leq 2$&&&$\widetilde{g}_4 \leq 3$ \\*\cline{1-1}\cline{4-4}
 &&& \\*
 $\star \to$&&&$\star \to$ \\*
 unknot&&&unknot \\*
 &&& \\ \hline\hline

\multicolumn{2}{c||}{$11_{125}:$ transvergent, strongly invertible}&\multicolumn{2}{c}{$11_{157}:$ intravergent, strongly invertible}\\*\cline{1-4}
 &\multirow{12}{*}{\scalebox{.2}{\includegraphics{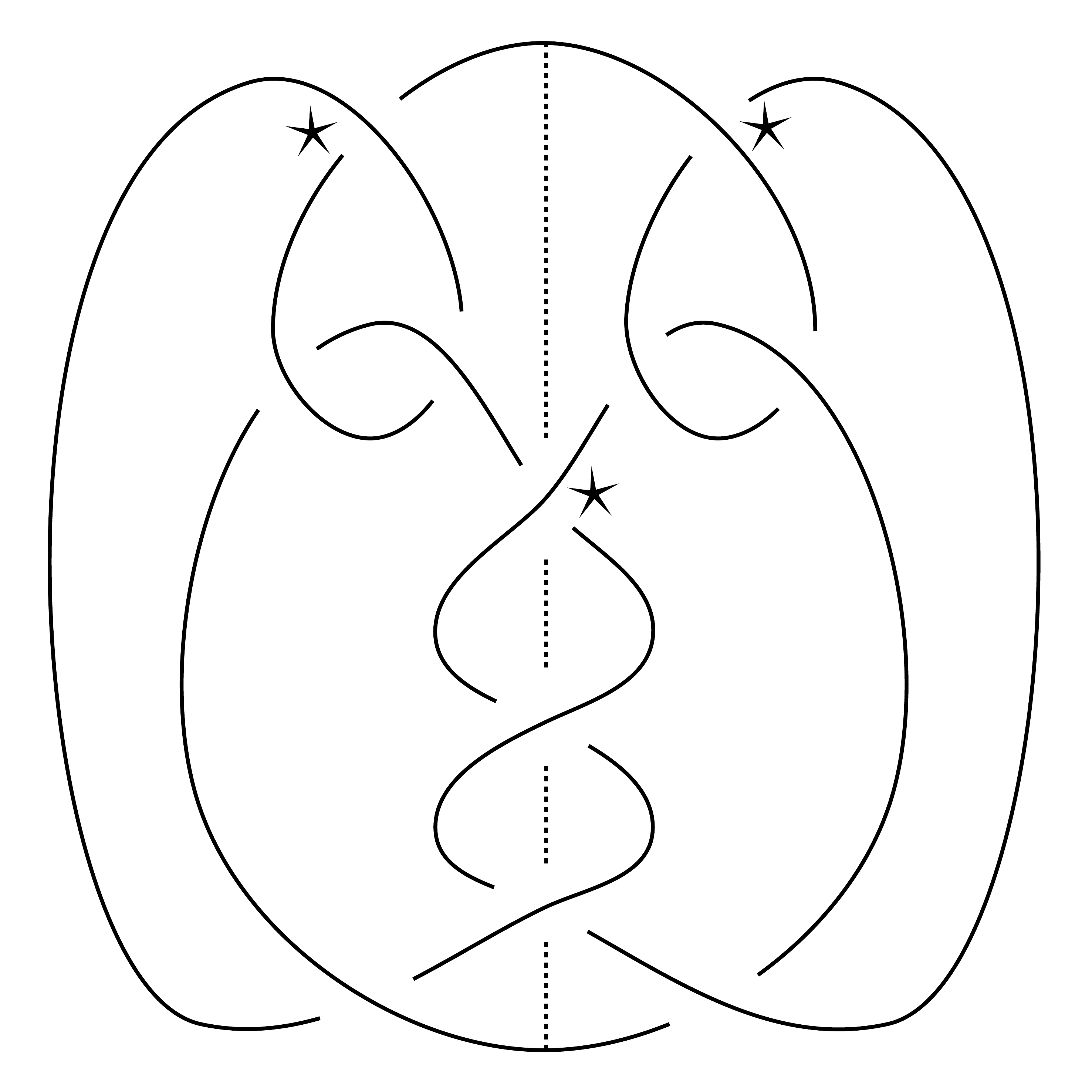}}}&\multirow{12}{*}{\scalebox{.2}{\includegraphics{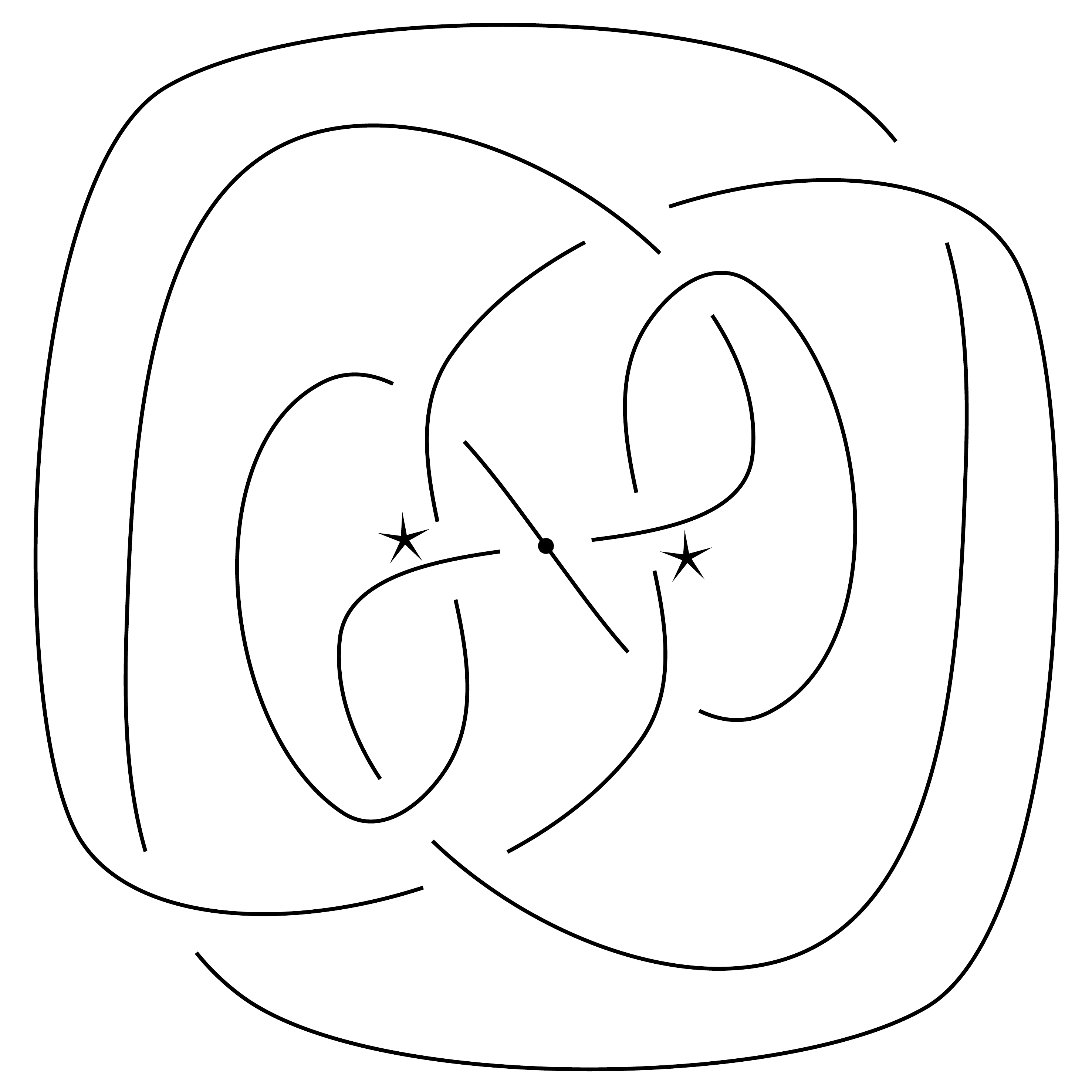}}}&\\*
 $\sigma : -2$&&&$\sigma : -2$\\*\cline{1-1}\cline{4-4}
 &&&\\*
 $g_4:\ 1$&&&$g_4:\ 1$ \\*\cline{1-1}\cline{4-4}
 &&& \\*
 $\widetilde{g}_4 \geq 2$&&&$\widetilde{g}_4 \geq 2$ \\*\cline{1-1}\cline{4-4}
 &&& \\*
 $\widetilde{g}_4\leq 3$&&&$\widetilde{g}_4 \leq 3$ \\*\cline{1-1}\cline{4-4}
 &&& \\*
 $\star \to$&&&$\star \to 5_2$ \\*
 unknot&&& \\*
 &&& \\ \hline\hline

\multicolumn{2}{c||}{$11_{157}:$ transvergent, strongly invertible}&\multicolumn{2}{c}{$11_{161}:$ intravergent, strongly invertible}\\*\cline{1-4}
 &\multirow{12}{*}{\scalebox{.2}{\includegraphics{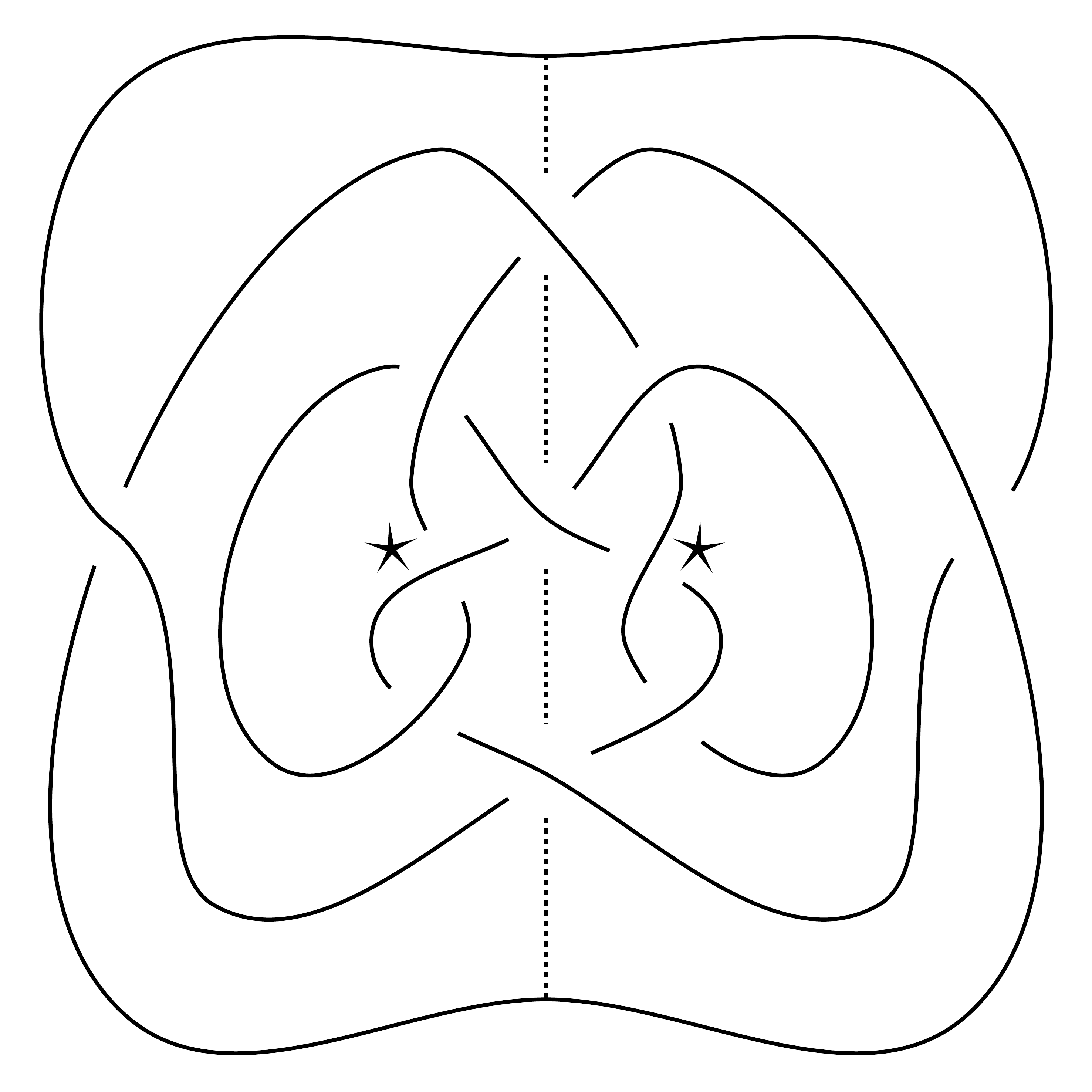}}}&\multirow{12}{*}{\scalebox{.2}{\includegraphics{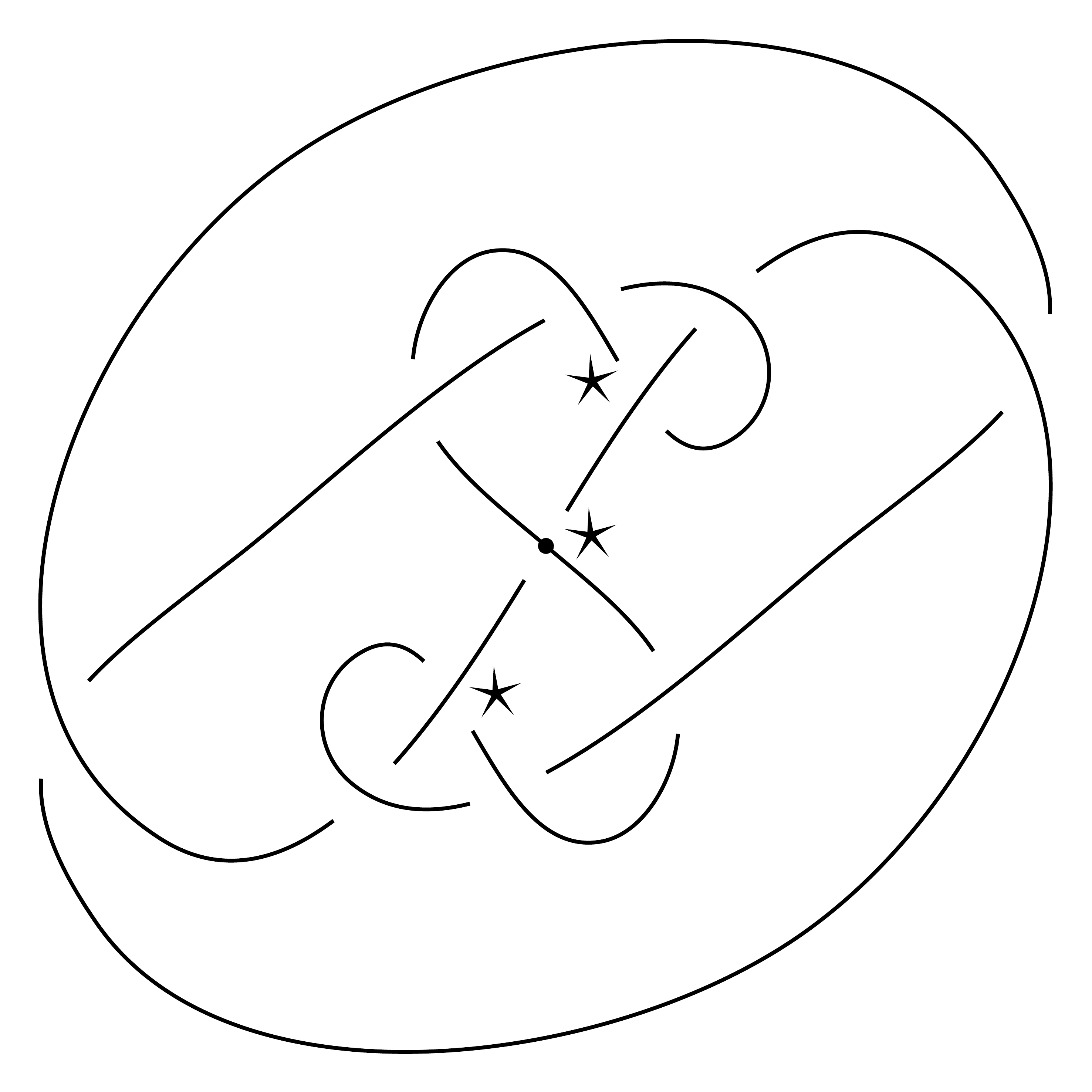}}}&\\*
 $\sigma : -2$&&&$\sigma : -2$\\*\cline{1-1}\cline{4-4}
 &&&\\*
 $g_4:\ 1$&&&$g_4:\ 1$ \\*\cline{1-1}\cline{4-4}
 &&& \\*
 $\widetilde{g}_4 \geq 2$&&&$\widetilde{g}_4 \geq 2$ \\*\cline{1-1}\cline{4-4}
 &&& \\*
 $\widetilde{g}_4\leq 3$&&&$\widetilde{g}_4 \leq 3$ \\*\cline{1-1}\cline{4-4}
 &&& \\*
 $\star \to 5_2$&&&$\star \to$ \\*
 &&&unknot \\*
 &&& \\ \hline\hline

\multicolumn{2}{c||}{$11_{161}:$ transvergent, periodic}&\multicolumn{2}{c}{$11_{192}:$ intravergent, strongly invertible}\\*\cline{1-4}
 &\multirow{12}{*}{\scalebox{.2}{\includegraphics{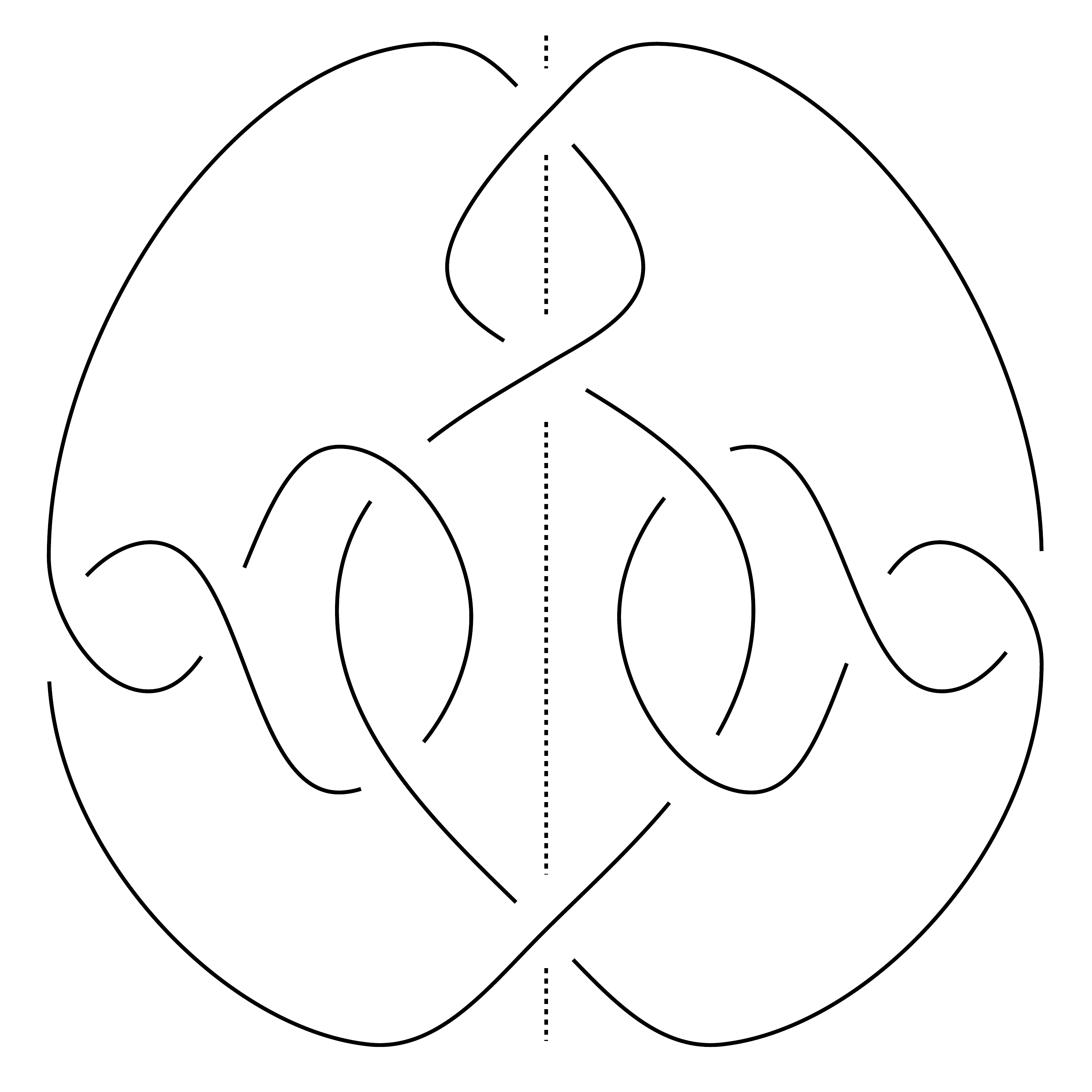}}}&\multirow{12}{*}{\scalebox{.2}{\includegraphics{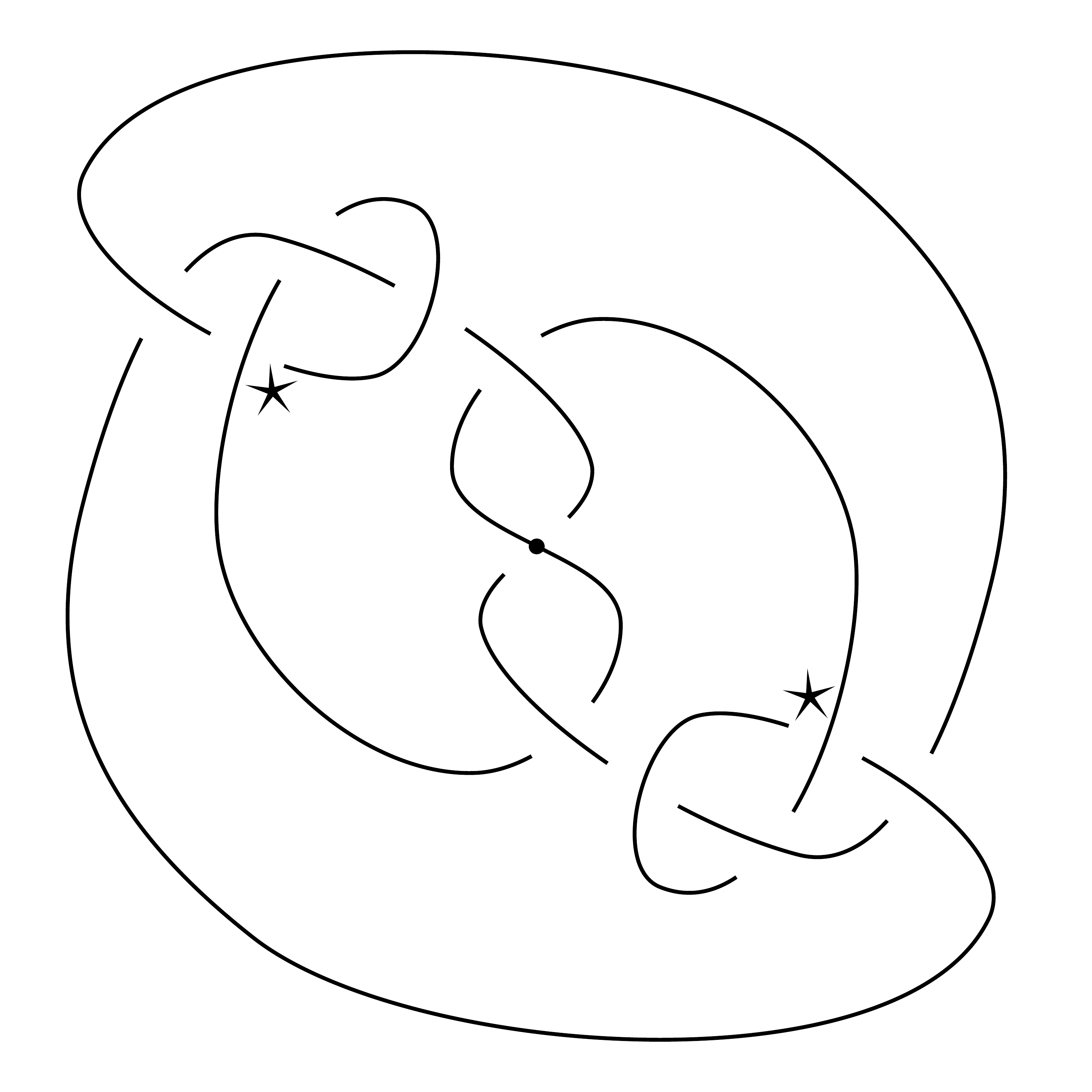}}}&\\*
 $\sigma : -2$&&&$\sigma : -2$\\*\cline{1-1}\cline{4-4}
 &&&\\*
 $g_4:\ 1$&&&$g_4:\ 1$ \\*\cline{1-1}\cline{4-4}
 &&& \\*
 $\widetilde{g}_4 \geq 3$&&&$\widetilde{g}_4 \geq 2$ \\*\cline{1-1}\cline{4-4}
 &&& \\*
 $\widetilde{g}_4\leq 3$&&&$\widetilde{g}_4 \leq 3$ \\*\cline{1-1}\cline{4-4}
 &&& \\*
 See&&&$\star \to 4_1$ \\*
 note.\footnote{The lower bound $\widetilde{g}_4 \geq 3$ is obtained by Proposition \ref{prop:RH}, and the upper bound $\widetilde{g}_4 \leq g_3 = 3$ is obtained by Edmonds' theorem \cite{Edmonds}.}&&& \\*
 &&& \\ \hline\hline

 \multicolumn{2}{c||}{$11_{192}:$ transvergent, strongly invertible}&\multicolumn{2}{c}{$11_{195}:$ transvergent, strongly invertible}\\*\cline{1-4}
 &\multirow{12}{*}{\scalebox{.2}{\includegraphics{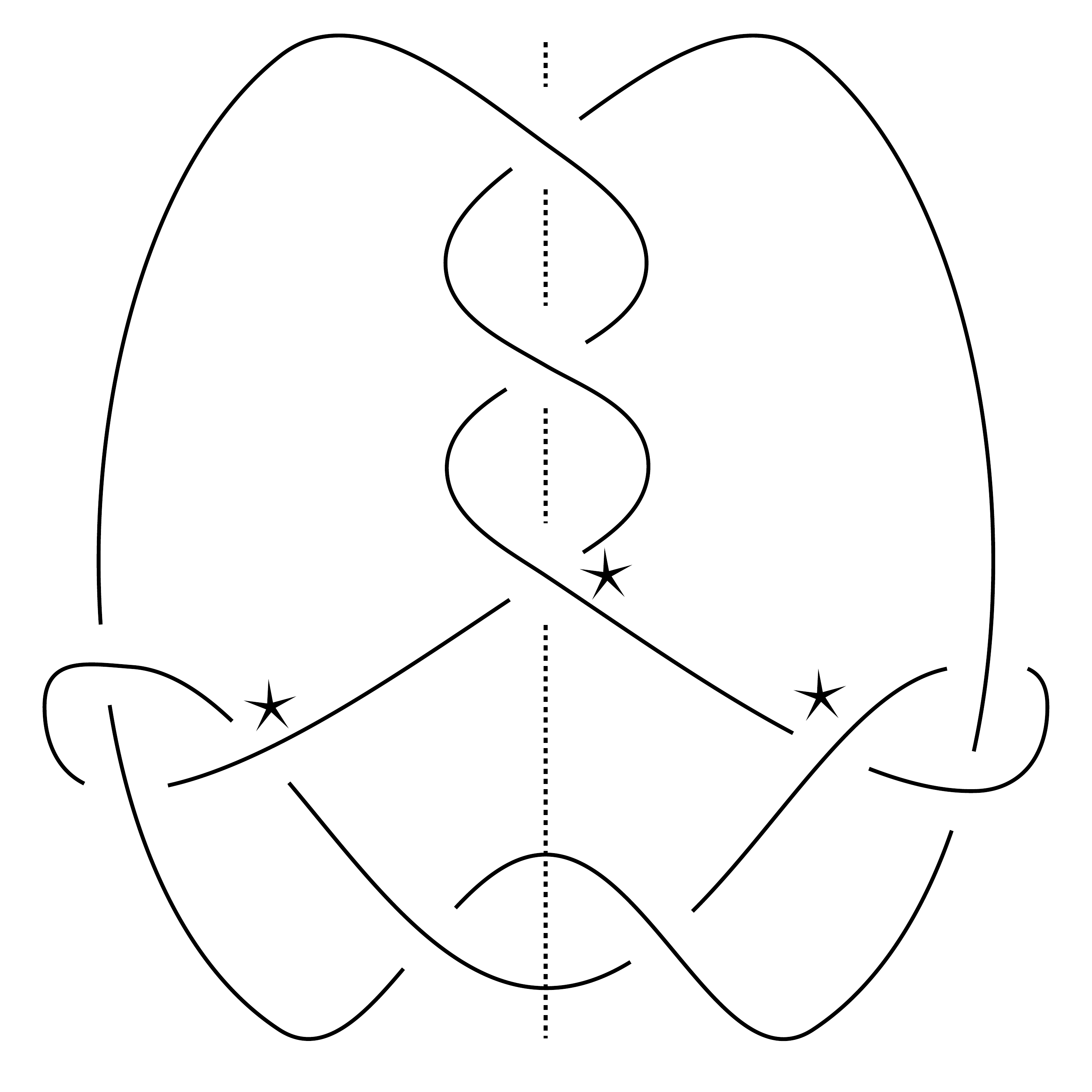}}}&\multirow{12}{*}{\scalebox{.2}{\includegraphics{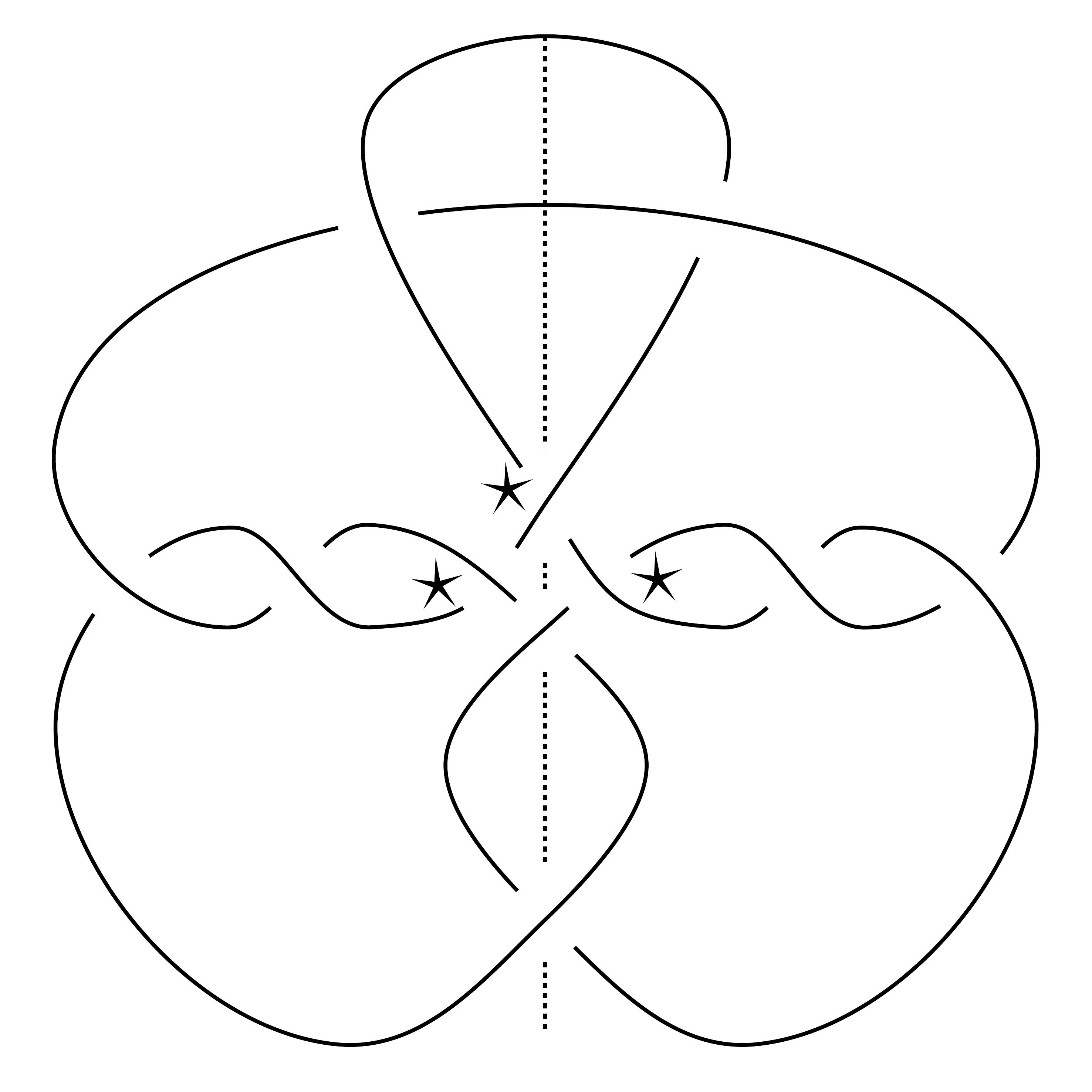}}}&\\*
 $\sigma : -2$&&&$\sigma : -2$\\*\cline{1-1}\cline{4-4}
 &&&\\*
 $g_4:\ 1$&&&$g_4:\ 1$ \\*\cline{1-1}\cline{4-4}
 &&& \\*
 $\widetilde{g}_4 \geq 2$&&&$\widetilde{g}_4 \geq 2$ \\*\cline{1-1}\cline{4-4}
 &&& \\*
 $\widetilde{g}_4\leq 3$&&&$\widetilde{g}_4 \leq 3$ \\*\cline{1-1}\cline{4-4}
 &&& \\*
 $\star \to$&&&$\star \to$ \\*
 unknot&&&unknot \\*
 &&& \\ \hline\hline

\multicolumn{2}{c||}{$11_{313}:$ transvergent, strongly invertible}&\multicolumn{2}{c}{$11_{360}:$ transvergent, strongly invertible}\\*\cline{1-4}
 &\multirow{12}{*}{\scalebox{.2}{\includegraphics{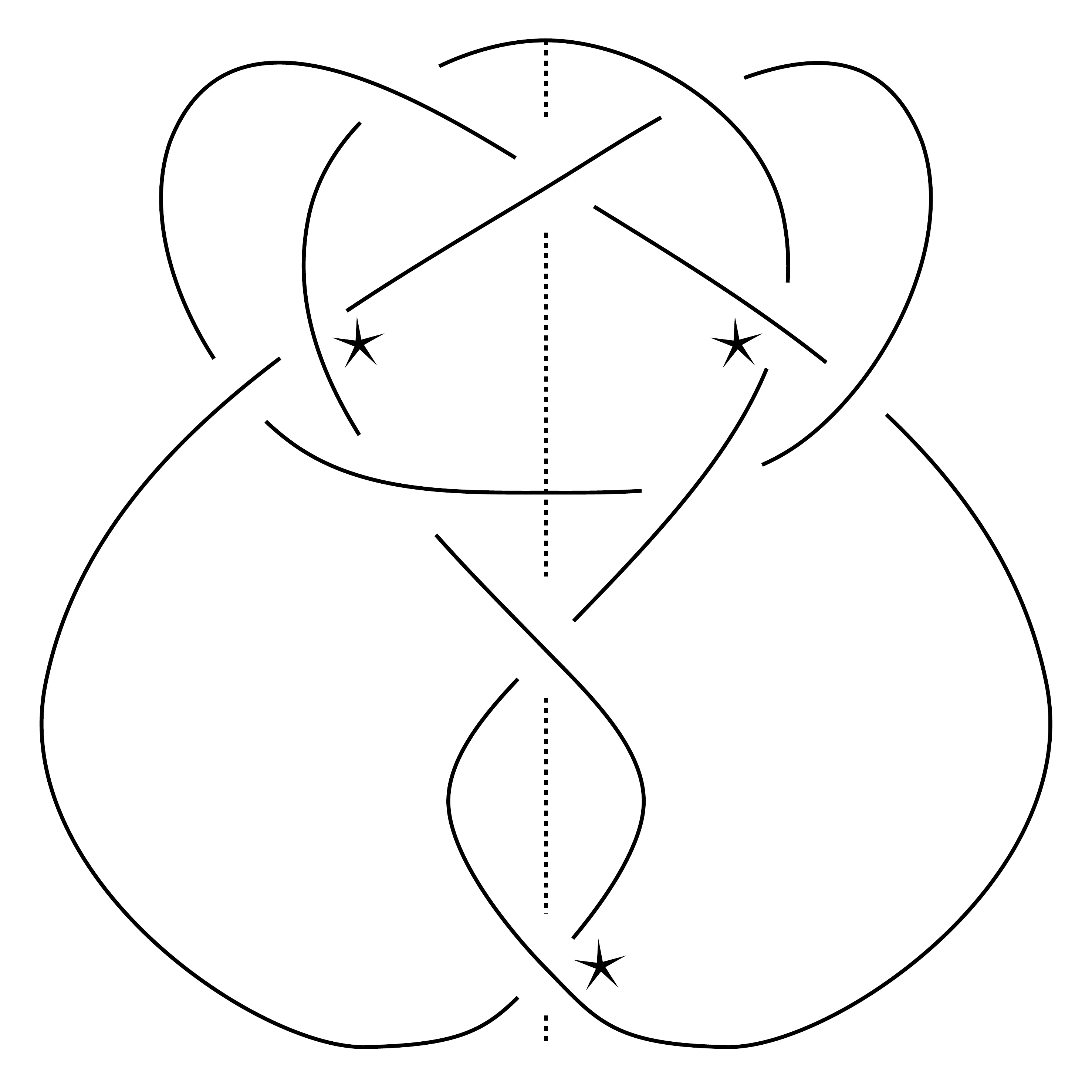}}}&\multirow{12}{*}{\scalebox{.2}{\includegraphics{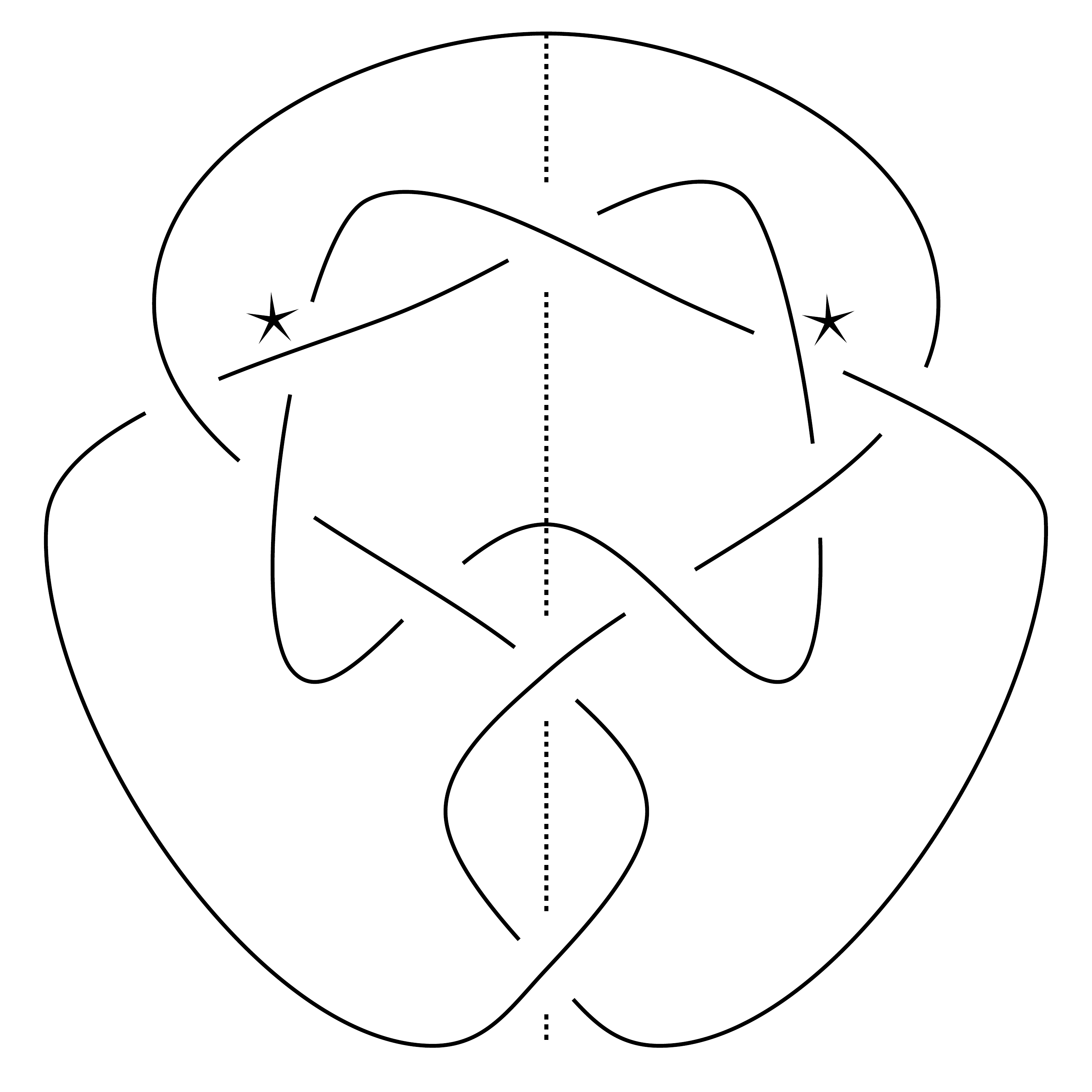}}}&\\*
 $\sigma : -2$&&&$\sigma : -2$\\*\cline{1-1}\cline{4-4}
 &&&\\*
 $g_4:\ 1$&&&$g_4:\ 1$ \\*\cline{1-1}\cline{4-4}
 &&& \\*
 $\widetilde{g}_4 \geq 2$&&&$\widetilde{g}_4 \geq 2$ \\*\cline{1-1}\cline{4-4}
 &&& \\*
 $\widetilde{g}_4\leq 3$&&&$\widetilde{g}_4 \leq 3$ \\*\cline{1-1}\cline{4-4}
 &&& \\*
 $\star \to$&&&$\star \to 5_2$ \\*
 unknot&&& \\*
 &&& \\ \hline\hline

\multicolumn{2}{c||}{$11_{366}:$ transvergent, strongly invertible}&\multicolumn{2}{c}{$11_{402}:$ transvergent, strongly invertible}\\*\cline{1-4}
 &\multirow{12}{*}{\scalebox{.2}{\includegraphics{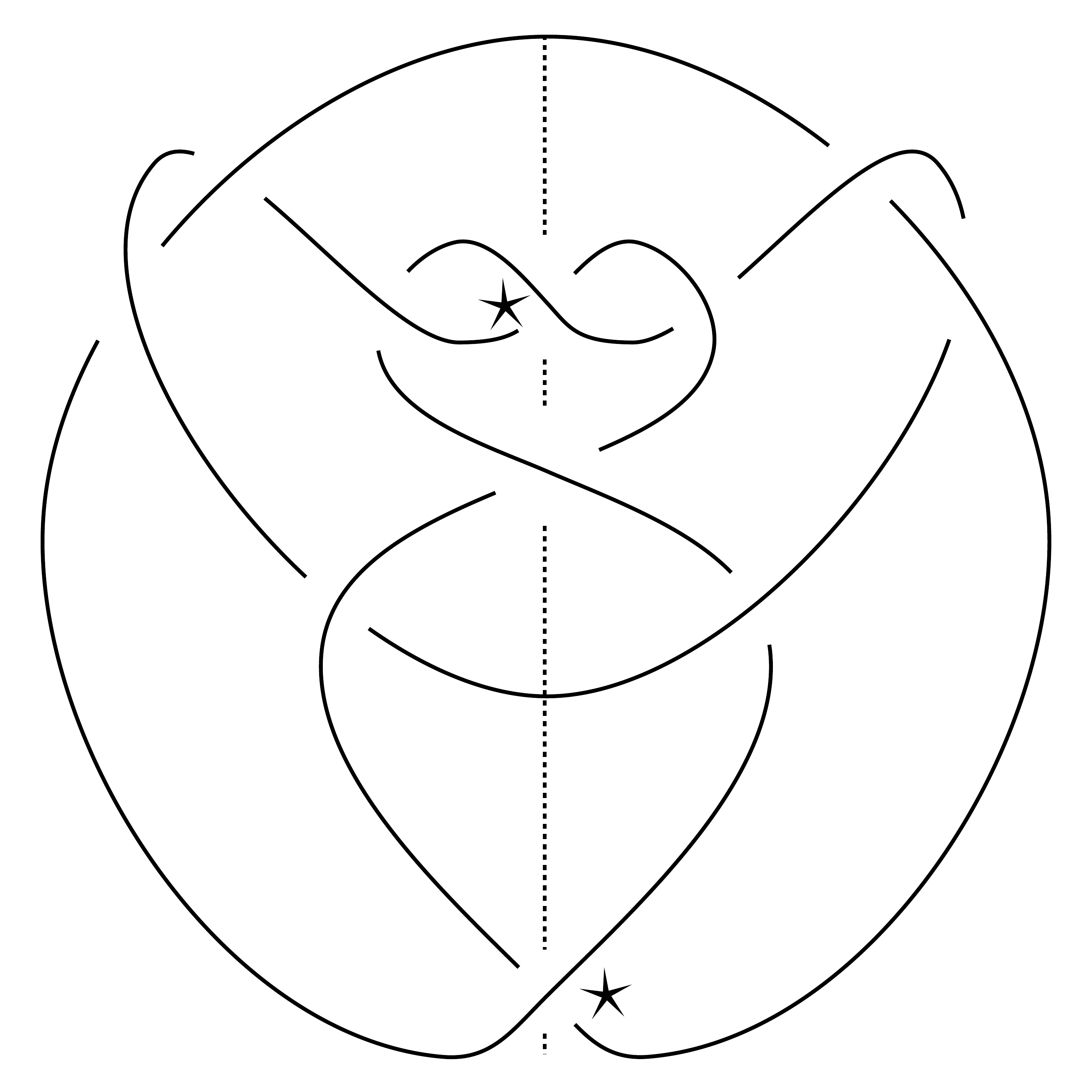}}}&\multirow{12}{*}{\scalebox{.2}{\includegraphics{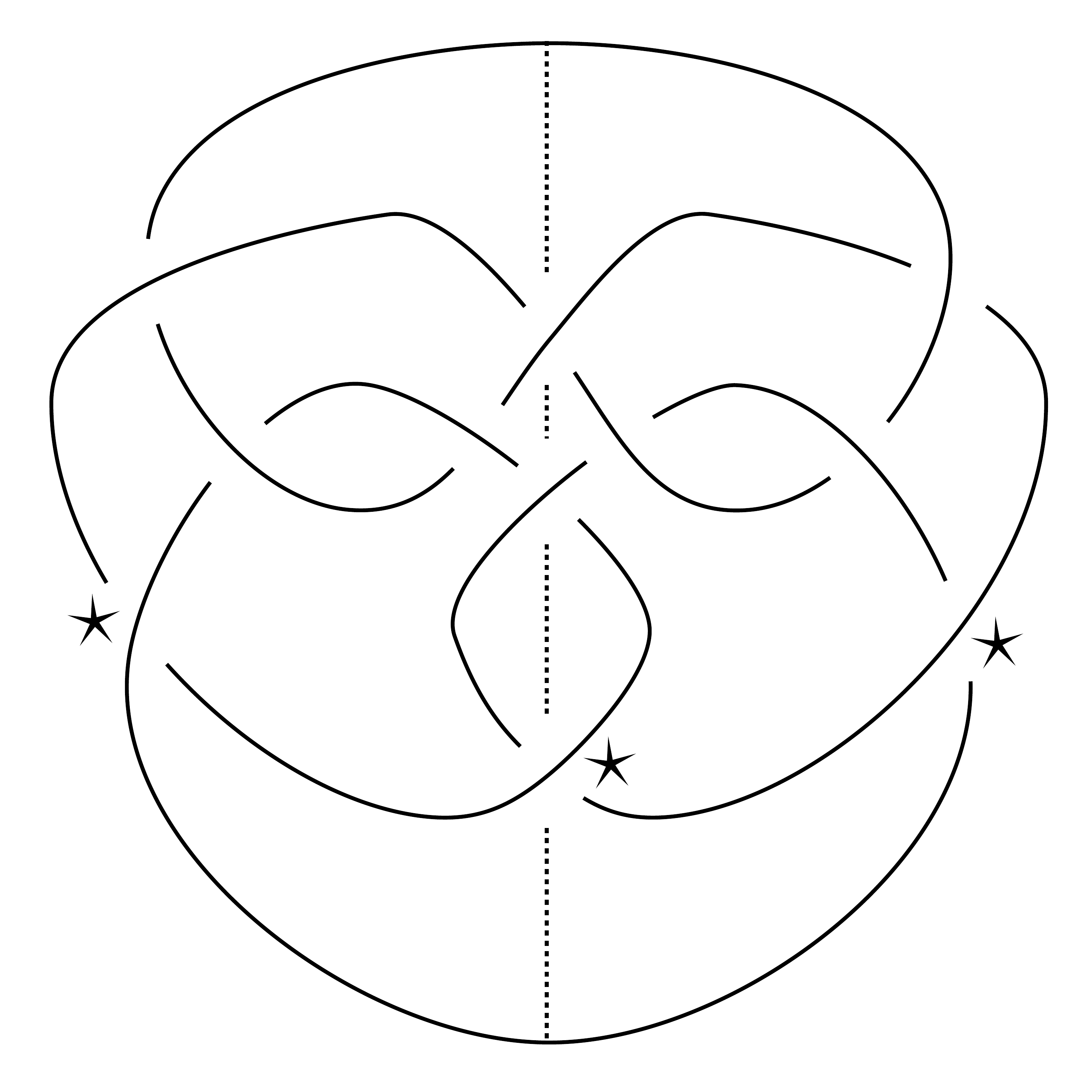}}}&\\*
 $\sigma : -2$&&&$\sigma : -2$\\*\cline{1-1}\cline{4-4}
 &&&\\*
 $g_4:\ 1$&&&$g_4:\ 1$ \\*\cline{1-1}\cline{4-4}
 &&& \\*
 $\widetilde{g}_4 \geq 2$&&&$\widetilde{g}_4 \geq 2$ \\*\cline{1-1}\cline{4-4}
 &&& \\*
 $\widetilde{g}_4\leq 3$&&&$\widetilde{g}_4 \leq 3$ \\*\cline{1-1}\cline{4-4}
 &&& \\*
 $\star \to 5_2$&&&$\star \to$ \\*
 &&&unknot \\*
 &&& \\ \hline\hline

\multicolumn{2}{c||}{$11_{443}:$ transvergent, strongly invertible}&\multicolumn{2}{c}{$11_{446}:$ transvergent, strongly invertible}\\*\cline{1-4}
 &\multirow{12}{*}{\scalebox{.2}{\includegraphics{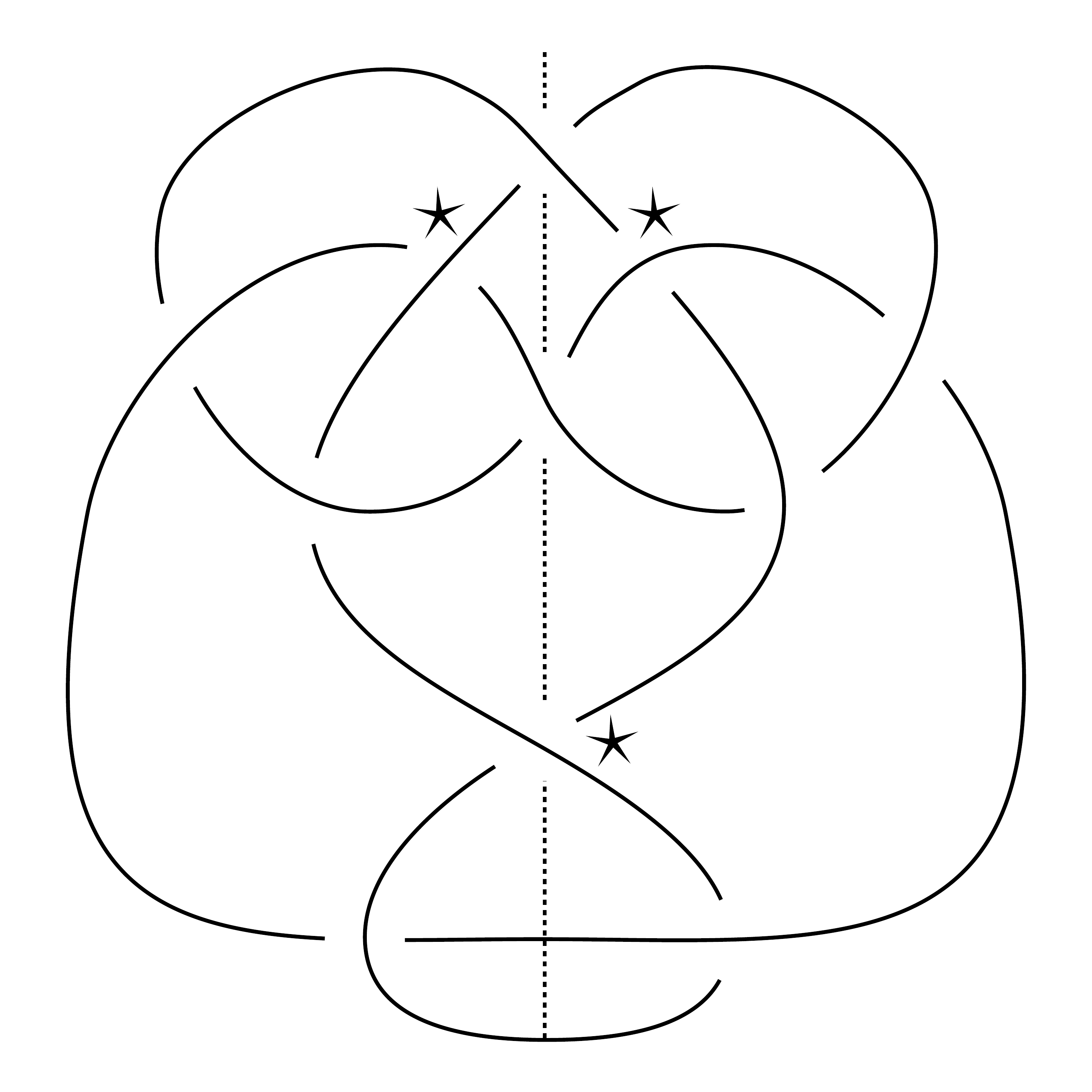}}}&\multirow{12}{*}{\scalebox{.2}{\includegraphics{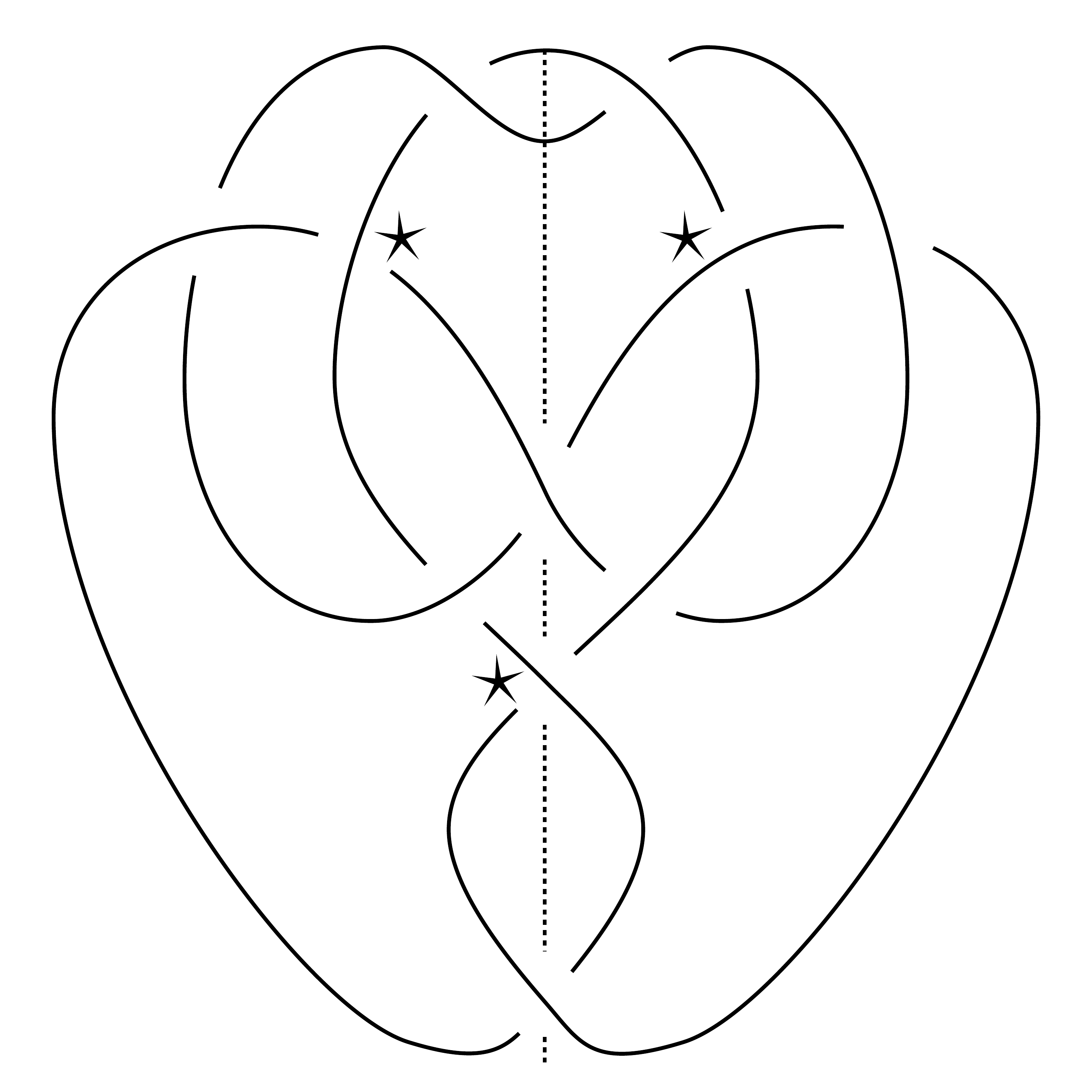}}}&\\*
 $\sigma : -2$&&&$\sigma : -2$\\*\cline{1-1}\cline{4-4}
 &&&\\*
 $g_4:\ 1$&&&$g_4:\ 1$ \\*\cline{1-1}\cline{4-4}
 &&& \\*
 $\widetilde{g}_4 \geq 2$&&&$\widetilde{g}_4 \geq 2$ \\*\cline{1-1}\cline{4-4}
 &&& \\*
 $\widetilde{g}_4\leq 3$&&&$\widetilde{g}_4 \leq 3$ \\*\cline{1-1}\cline{4-4}
 &&& \\*
 $\star \to$&&&$\star \to$ \\*
 unknot&&&unknot \\*
 &&& \\ \hline\hline

\multicolumn{2}{c||}{$11_{461}:$ intravergent, strongly invertible}&\multicolumn{2}{c}{$11_{471}:$ intravergent, strongly invertible}\\*\cline{1-4}
&\multirow{12}{*}{\scalebox{.2}{\includegraphics{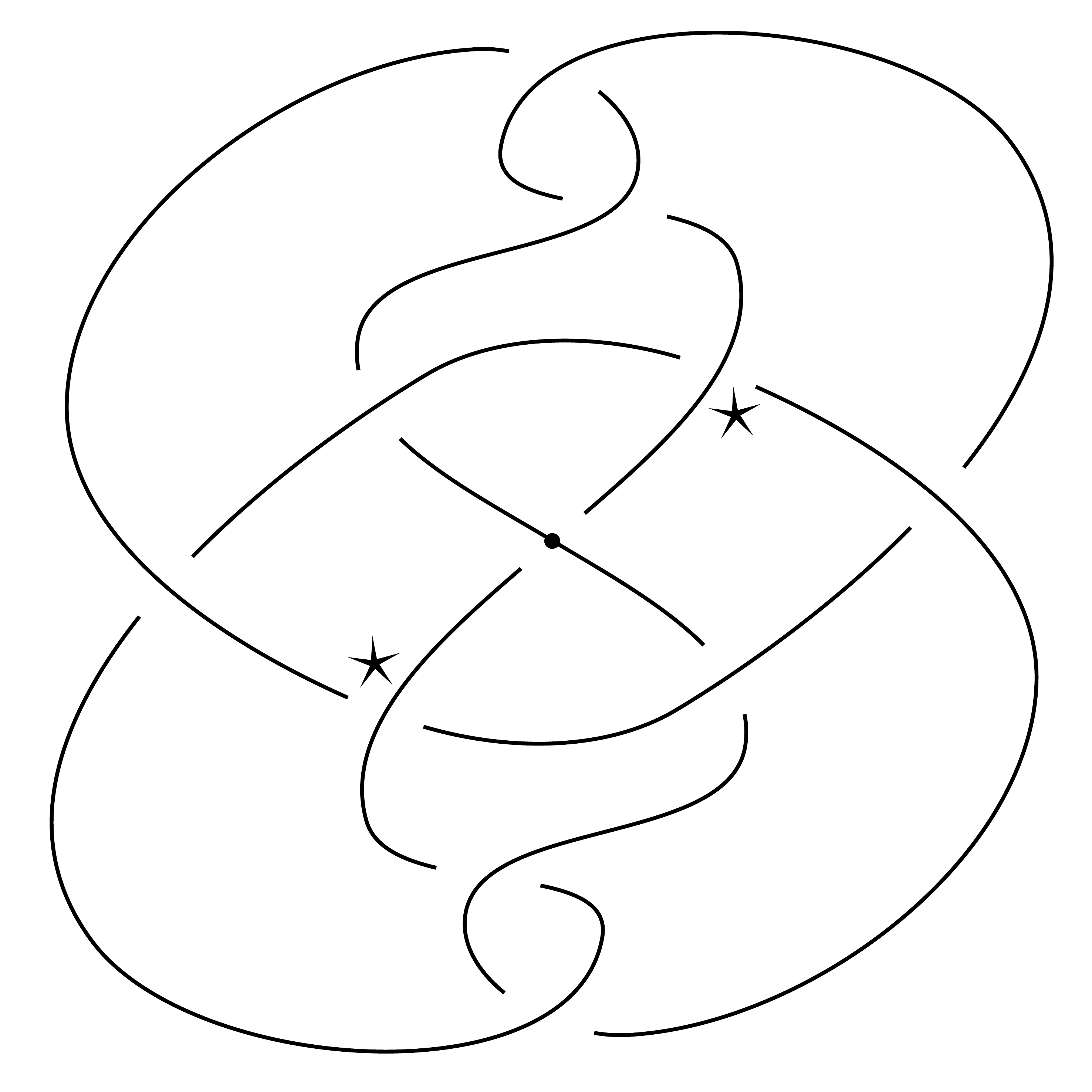}}}&\multirow{12}{*}{\scalebox{.2}{\includegraphics{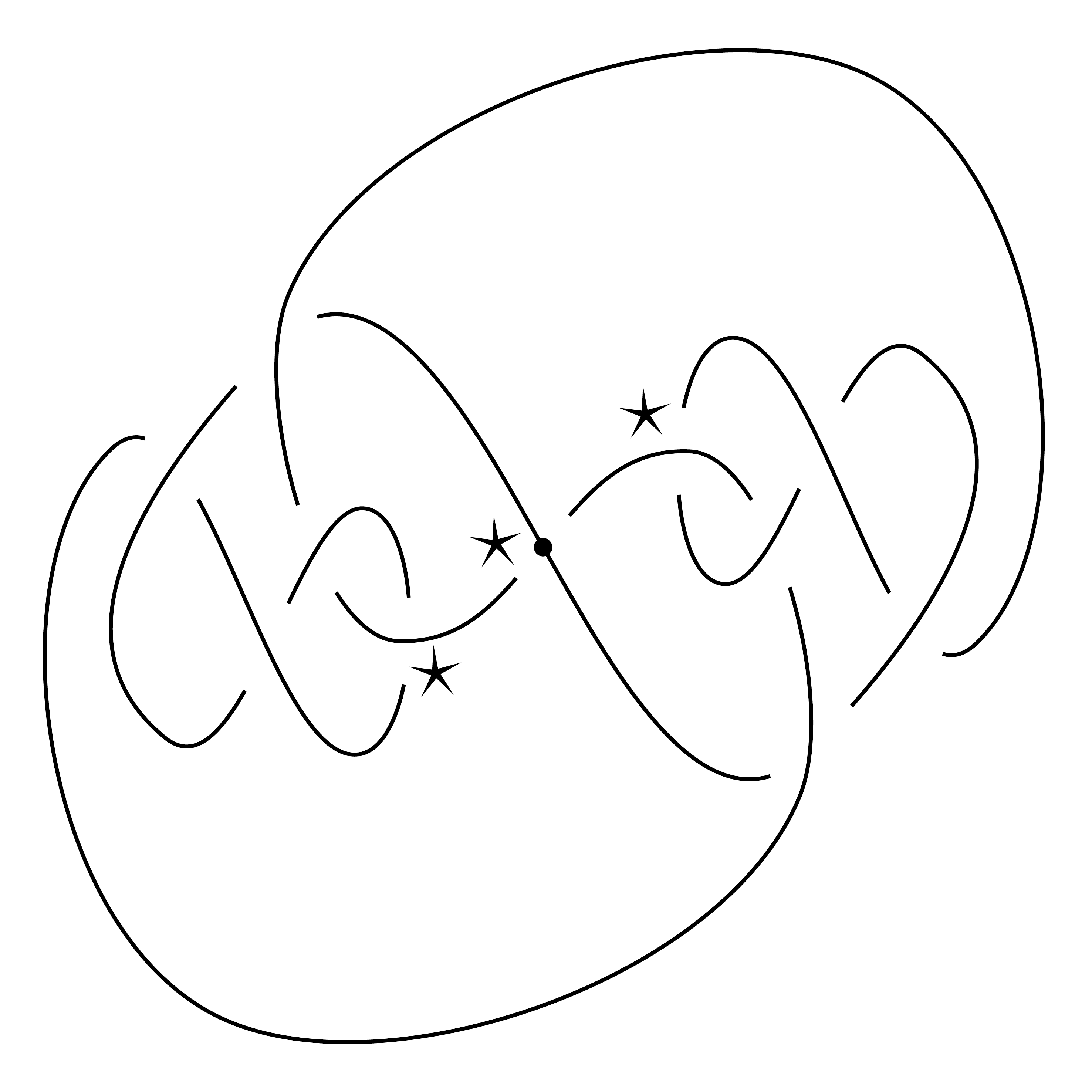}}}&\\*
 $\sigma : -2$&&&$\sigma : -2$\\*\cline{1-1}\cline{4-4}
 &&&\\*
 $g_4:\ 1$&&&$g_4:\ 1$ \\*\cline{1-1}\cline{4-4}
 &&& \\*
 $\widetilde{g}_4 \geq 2$&&&$\widetilde{g}_4 \geq 2$ \\*\cline{1-1}\cline{4-4}
 &&& \\*
 $\widetilde{g}_4\leq 3$&&&$\widetilde{g}_4 \leq 3$ \\*\cline{1-1}\cline{4-4}
 &&& \\*
 $\star \to 4_1$&&&$\star \to$ \\*
 &&&unknot \\*
 &&& \\\hline\hline
 \end{longtable}
\newpage
\begin{CJK}{UTF8}{min}
\bibliography{bibliography}
\bibliographystyle{alpha}
\end{CJK}
\end{document}